\newtheorem{thm}{Theorem}[section]
\newtheorem{cor}[thm]{Corollary}
\newtheorem{lem}[thm]{Lemma}
\theoremstyle{definition}
\newtheorem{ex}[thm]{Example}
\theoremstyle{definition}
\newtheorem{prop}[thm]{Proposition}
\theoremstyle{definition}
\newtheorem{defn}[thm]{Definition}
\theoremstyle{remark}
\newtheorem{rem}[thm]{Remark}
\numberwithin{equation}{section}
\DeclareMathOperator{\Fred}{Fred}
\DeclareMathOperator{\im}{i}
\DeclareMathOperator{\Tr}{Tr}
\DeclareMathOperator{\Lie}{Lie}
\DeclareMathOperator{\pr}{pr}
\DeclareMathOperator{\Gr}{Gr}
\DeclareMathOperator{\id}{id}
\DeclareMathOperator{\diag}{diag}
\DeclareMathOperator{\ind}{ind}
\DeclareMathOperator{\coker}{coker}
\DeclareMathOperator{\DET}{DET}
\DeclareMathOperator{\Eig}{Eig}
\DeclareMathOperator{\Pf}{Pf}
\DeclareMathOperator{\Spin}{Spin}
\DeclareMathOperator{\Cliff}{Cliff}
\DeclareMathOperator{\ad}{ad}
\DeclareMathOperator{\Ad}{Ad}
\DeclareMathOperator{\End}{End}
\DeclareMathOperator{\Aut}{Aut}
\DeclareMathOperator{\SO}{SO}
\DeclareMathOperator{\Stab}{Stab}
\DeclareMathOperator{\Hom}{Hom}
\DeclareMathOperator{\Cl}{C\ell}
\DeclareMathOperator{\CCl}{\C\ell}
\DeclareMathOperator{\sign}{sign}
\DeclareMathOperator{\spec}{spec}
\DeclareMathOperator{\Length}{Length}
\DeclareMathOperator{\spa}{span}
\DeclareMathOperator{\slim}{s-\lim}
\newcommand{\norm}[1]{\left\Vert#1\right\Vert}
\newcommand{\trinorm}[1]{\left\vert\left\Vert#1\right\Vert\right\vert}
\newcommand{\abs}[1]{\left\vert#1\right\vert}
\newcommand{\set}[1]{\left\{#1\right\}}
\newcommand{\ip}[2]{\langle#1,#2\rangle}
\newcommand{\Real}{\mathbb R}
\newcommand{\Z}{\mathbb Z}
\newcommand{\N}{\mathbb N}
\newcommand{\To}{\longrightarrow}
\newcommand{\Aa}{\mathbb A}
\newcommand{\B}{\mathcal{B}}
\newcommand{\Bmn}{\mathbf{B}}
\newcommand{\Bb}{\mathbb B}
\newcommand{\C}{\mathbb C}
\newcommand{\D}{\mathcal{D}}
\newcommand{\Ps}{\mathbb P}
\newcommand{\F}{\mathcal{F}} 
\newcommand{\J}{\mathcal{J}}
\newcommand{\Lp}{\mathcal{L}}
\newcommand{\Hilb}{\mathcal{H}} 
\newcommand{\tensor}{\otimes}
\newcommand{\vac}{\vert 0 \rangle}
\newcommand{\CAR}{\mathfrak{A}}
\newcommand{\gl}{\mathfrak{gl}}
\newcommand{\RGr}{\Gr_\mathrm{res}}
\newcommand{\Ures}{U_\mathrm{res}}
\newcommand{\EUres}{\widehat{U}_\mathrm{res}}
\newcommand{\ures}{\mathfrak{u}_{\mathrm{res}}}
\newcommand{\ores}{\mathfrak{o}_{\mathrm{res}}}
\newcommand{\Eores}{\widehat{\mathfrak{o}}_{\mathrm{res}}}
\newcommand{\Eglres}{\widehat{\mathfrak{gl}}_{\mathrm{res}}}
\newcommand{\Eures}{\widehat{\mathfrak{u}}_{\mathrm{res}}}
\newcommand{\GLres}{GL_\mathrm{res}}
\newcommand{\EGLres}{\widehat{GL}_\mathrm{res}}
\newcommand{\EOres}{\widehat{O}_\mathrm{res}}
\newcommand{\glres}{\mathfrak{gl}_{\mathrm{res}}}
\newcommand{\isom}{\cong}
\newcommand{\hG}{\Gamma_{\!\mathcal{O}}}
\newcommand{\Olin}{\mathcal{O}_{\mathrm{lin}}}
\newcommand{\Jres}{\J_{\mathrm{res}}}
\newcommand{\Ores}{O_{\mathrm{res}}}
\newcommand{\Spinc}{\Spin^c}
\newcommand{\spib}{\mathbb S}
\newcommand{\lie}[1]{\mathfrak{#1}}
\newcommand{\rh}{\hat{r}}
\newcommand{\Dirac}{\rlap{$D$}{\,/}}
\newcommand{\KDirac}{\rlap{$\partial$}{/}}
\newcommand{\KDiracc}{\KDirac_{(N)}}
\newcommand{\spin}[1]{\mathfrak{spin}(\mathfrak{#1})}
\newcommand{\so}[1]{\mathfrak{so}(\mathfrak{#1})}
\newcommand{\adt}{\widetilde{\ad}\,}
\newcommand{\Adt}{\widetilde{\Ad}\,}
\newcommand{\Do}{\partial}
\newcommand{\vf}{\mathfrak{X}}
\newcommand{\Mat}[4]{
\left(
\begin{array}{cc}
#1 & #2 \\
#3 & #4 
\end{array} \right)
}
\newcommand{\Sk}{\mathscr{S}}
\newcommand{\FPol}{\F^{\mathrm{pol}}}
\newcommand{\normord}{\,\lower.4ex \hbox{$\circ$} \llap{\raise.8ex\hbox{$\circ$}} \,}
\newcommand{\one}{\textbf{1}}
\newcommand{\noro}[1]{\normord #1\Psi^\ast\Psi\normord}
\newcommand{\CDirac}{\mathscr{D}}
\newcommand{\domain}{\mathcal{D}}
\newcommand{\ainfb}{\overline{\mathfrak{a}}_\infty}
\newcommand{\ainf}{\mathfrak{a}_\infty}
\newcommand{\hinf}{\mathfrak{h}_\infty}
\newcommand{\Kt}[1]{\widetilde{K}_{#1}}
\newcommand{\KN}[1]{\widetilde{K}_{#1}^{(N)}}
\newcommand{\Hn}[1]{H_{#1}^{(N)}}
\newcommand{\Kn}[1]{K_{#1}^{(N)}}
\newcommand{\SPol}[1]{\spib^{\mathrm{pol},(#1)}}
\newcommand{\stlim}[1]{s\!\!-\!\!\!\!\lim_{#1\to\infty}}
\begin{document}

\title{Dirac Operator on the Restricted Grassmannian manifold}%
\author{Vesa Tähtinen}%
\address{Department of Mathematics and Statistics, University of Helsinki,  P.O. Box 68 (Gustaf Hällströmin katu 2b),
FI-00014 Helsinki, Finland}%
\email{vesa.tahtinen@helsinki.fi}%

\subjclass[2000]{17B65, 22E65, 81R10}%

\date{July 25th, 2010}%
\begin{abstract}
In section \S 6.4. of his book on current algebras \cite{Mic} Mickelsson 
notices that the infinite-dimensional Grassmannian manifold $\RGr(\Hilb,\Hilb_+)$ of Segal and Wilson,
associated to a polarized complex separable Hilbert space $\Hilb=\Hilb_+\oplus\Hilb_-$,
admits a (restricted) $\Spinc$ structure. By definition, this is a lift of the $\Ores$ bundle
of restricted orthonormal frames on $\RGr$ to an $\EOres$ bundle, where
$\EOres$ is a certain $\C^\ast$-central extension of the Banach Lie group $\Ores$. Thus one considers 
the Banach Lie group $\EOres$ as an 
infinite-dimensional
analogue of the 
finite-dimensional Lie group
$\Spinc$.

With the $\Spinc$ structure at hand, Mickelsson then moves on to consider the problem how one should define a Dirac type operator
on this infinite-dimensional manifold. He gives a candidate for a Dirac operator, but unfortunately
it turns out to be  badly diverging and he states that the construction has to be modified 
and further developed in order to yield a well-behaved operator and leaves this as an open problem. 

Motivated by the fact that for a compact Lie group $G$, the space of $L^2$-spinors on a homogeneous space
$G/H$ can be described purely in terms of representation theory by the
classical Peter-Weyl Theorem, and
noticing that the restricted Grassmannian manifold is an infinite-dimensional homogeneous Kähler manifold 
of the form $\Ures/(U(\Hilb_+)\times(\Hilb_-))$,
we use the basic representation of 
the Banach Lie group
$\Ures(\Hilb,\Hilb_+)$
on the fermionic Fock space $\F(\Hilb,\Hilb_+)$, introduced in \cite{PrSe},
to construct a well-defined Dirac type operator acting
on a relevant ``space of spinors''
of the restricted Grassmannian manifold.
As our main result we show that our Dirac type operator is an unbounded symmetric
operator with finite-dimensional kernel.

\end{abstract}
\maketitle
\footnotesize
\tableofcontents
\normalsize
\section{Introduction}
In this work we give a construction for a Dirac type 
operator on the infinite-dimensional \emph{restricted Grassmannian manifold} $\RGr(\Hilb,\Hilb_+)$ of Segal and Wilson (cf. \cite{PrSe,Mic})
using the (projective) fermionic Fock space representations of the infinite dimensional \emph{restricted unitary group}
$\Ures(\Hilb,\Hilb_+)$ of a polarized Hilbert space $\Hilb=\Hilb_+\oplus\Hilb_-$,
$$
\Ures(\Hilb,\Hilb_+):=\set{U\in U(\Hilb)\mid [\epsilon,U]\in\mathcal{L}^2},\qquad \epsilon:=\pr_{\Hilb_+}-\pr_{\Hilb_-},
$$
as well as certain infinite-dimensional spinor-representations of an infinite-di\-men\-sio\-nal complexified Clifford algebra. 

The history of the problem dates back to the
 book \cite{Mic} written by Mickelsson, where he argues that the restricted Grassmannian manifold $\RGr$ admits a natural
(restricted) $\Spinc$-structure and an attempt was made there towards a construction of a Dirac operator on
$\RGr$. However the candidate for the Dirac operator turned out to be badly diverging and Mickelsson
wrote that the construction has to be modified by at least introducing some relevant 
normal orderings for the operator. 

The first step in the construction of the Dirac type operator on the restricted Grassmannian manifold
is to notice that it is actually a (Hermitean symmetric) homogeneous space which is also a Kähler manifold. Our (purely algebraic) definition of 
the relevant space of $L^2$ spinors is then
motivated by the famous \emph{Peter-Weyl theorem} in the theory of finite-dimensional compact Lie groups, whose application in the finite-dimensional case
gives that for a homogeneous space $G/H$ which is spin, the space of $L^2$-spinors on $G/H$ is given in purely algebraic terms by the isomorphism
$$
L^2(G/H,S)\isom L^2(G\times_H\mathbb{S}_{\mathfrak{p}})\isom\widehat{\bigoplus_\lambda} V_{\lambda}\tensor(V_\lambda^\ast\tensor\mathbb{S}_{\mathfrak{p}})^H,
$$
where the Hilbert space direct sum is taken over all irreducible representations $V_\lambda$ of the compact Lie group $G$ and
$\mathbb{S}_{\mathfrak{p}}$ is a Clifford module for $\mathrm{Cliff}(\mathfrak{p})$, where 
$$
\mathfrak{p}\isom\mathfrak{g}/\mathfrak{h}\isom T_{[eH]}(G/H)
$$
is the orthogonally complementary vector subspace of the Lie subalgebra 
$\mathfrak{h}:=\Lie(H)$ in $\mathfrak{g}:=\Lie(G)$.

Since in
the finite-dimensional case a relevant Dirac operator $\Dirac$ acts separately on each summand of
$\widehat{\bigoplus_\lambda} V_{\lambda}\tensor(V_\lambda^\ast\tensor\mathbb{S}_{\mathfrak{p}})^H$ and acts
trivially on $V_\lambda$ appearing on the left hand side of each tensor product 
$V_{\lambda}\tensor(V_\lambda^\ast\tensor\mathbb{S}_{\mathfrak{p}})^H$, 
it is enough to study the behaviour of $\Dirac$ on each $H$-invariant piece $(V_\lambda^\ast\tensor\mathbb{S}_{\mathfrak{p}})^H$.
If the subgroup $H$ of $G$ happens to be connected one knows that
$$
(V_\lambda^\ast\tensor\mathbb{S}_{\mathfrak{p}})^H\isom(V_\lambda^\ast\tensor\mathbb{S}_{\mathfrak{p}})^{\lie{h}}=
\set{w\in V_\lambda^\ast\tensor\mathbb{S}_{\mathfrak{p}}\mid Xw=0\textrm{ for all }X\in\lie{h}}.
$$ Here the Lie algebra
$\lie{h}$ acts on $V_\lambda^\ast$ by restriction of the (differentiated) $\lie{g}$-re\-pre\-sen\-ta\-tion on $V_\lambda^\ast$. On
$\mathbb{S}_{\mathfrak{p}}$ the action of $\lie{h}$ is given by the composition of the \emph{spin lift} $\adt_{\lie{p}}:\lie{h}\To\spin{\lie{p}}\subset\Cliff(\lie{p})$ of the
\emph{isotropy representation} $\ad_{\lie{p}}:\lie{h}\To\so{p}$ and the action of the Clifford algebra $\Cliff(\lie{p})$ on the spin module
$\mathbb{S}_{\mathfrak{p}}$. And finally, on the tensor product $V_\lambda^\ast\tensor\mathbb{S}_{\mathfrak{p}}$ one considers
the diagonal representation of $\lie{h}$ of the two representations described above.

The Dirac operator with respect to the Levi-Civita connection acting on the $\lie{h}$-invariant space
$(V_\lambda^\ast\tensor\mathbb{S}_{\mathfrak{p}})^{\lie{h}}$ can then also be given in purely algebraic terms,
$$
\Dirac=\sum_{i=1}^{\dim\lie{p}}\left(r(X_i)\tensor X_i^\ast+1\tensor\frac{1}{2}X_i^\ast\cdot\adt_{\lie{p}} X_i\right),
$$
where the sum is taken over a basis $\set{X_i}$ of $\lie{p}$. Here we have used the identification
$\bigwedge^1(\lie{p^\ast})\longleftrightarrow\lie{p}\subset\Cliff(\lie{p})$ that follows from the well-known Chevalley isomorphism and the
dot $\cdot$ refers to Clifford algebra multiplication. If the homogeneous space $M=G/H$ happens to be a Riemannian symmetric space this further
simplifies to
\begin{equation}\label{DiracSym}
\Dirac=\sum_{i=1}^{\dim\lie{p}}r(X_i)\tensor X_i^\ast.
\end{equation}
Moreover, in this case the square $\Dirac^2$ can be written as
\begin{equation}
\Dirac^2=\Delta_{\lie{g}}+\frac{1}{8}r_M,
\end{equation}
where $\Delta_{\lie{g}}$ denotes for the Casimir operator of $\lie{g}$ and $r_M\in\Real$ is the scalar curvature of $M$.

Since the restricted Grassmannian manifold can be written as a homogeneous space 
$G/H$, where 
$G:=\Ures(\Hilb,\Hilb_+)$ and $H:=U(\Hilb_+)\times U(\Hilb_-)$ with
$U(\Hilb_+)\times U(\Hilb_-)$ connected,
we make the  simplest non-trivial choice for our Hilbert space of spinors in
the infinite-dimensional case  at hand, and set
\begin{equation}\label{L2forGrres}
L^2(Gr_{\mathrm{res}},S):=(\mathcal{F}_0\tensor\mathbb{S}_{\mathfrak{p}}^c)^{\mathfrak{h}^\C},
\end{equation}
where $\mathcal{F}_0\subset\mathcal{F}$ denotes the \emph{charge} $0$-sector of the
\emph{fermionic Fock space} $\mathcal{F}:=\F(\Hilb,\Hilb_+)$ and $\mathbb{S}_{\mathfrak{p}}^c$ is the complexification of $\mathbb{S}_{\mathfrak{p}}$
(to be precise, we shall actually take the invariant sector of the tensor product with respect to
a certain subalgebra $\hinf^\C$ of $\lie{h}^\C$, but at the moment we omit this detail). 

Recall that the fermionic Fock space $\F$ (which gives an irreducible projective unitary representation of the Lie group $\Ures$) decomposes into
charge $m$-sectors
$\F=\bigoplus_{m\in\Z}\F_m$, where
each $\F_m$ is a representation space for an irreducible projective unitary representation of the Lie algebra $\ures:=\Lie(\Ures)$ as well
as for its complexification $\glres$.
Notice also that
the Clifford algebra $\Cliff(\lie{p})$ is really well-defined in this case since the infinite-dimensional vector space
$$
\lie{p}\isom T_{[eH]}(G/H)
$$
is a Hilbert space for $\RGr$ being a Hilbert manifold. Thus there exists a natural inner product structure on $\lie{p}$, which is of course necessary for one to be able to define
an (infinite-dimensional) Clifford algebra $\Cliff(\lie{p})$. Again, there exists a direct sum decomposition
$\lie{g}=\lie{h}\oplus\lie{p}$, but the reader should notice that this is no longer an orthogonal decomposition 
like in the finite-dimensional case, since we only have a
natural inner product structure on $\lie{p}$ (given essentially by a real part of a trace of a product of two 
Hilbert-Schmidt operators) and \emph{not} on the whole space $\lie{g}$. Luckily, this is enough for our purposes.

One should also pay attention to the fact that we have complexified everything in (\ref{L2forGrres}) in comparison with its finite-dimensional analogue. This is related to
a certain tensor product \emph{vacuum structure} on $(\mathcal{F}_0\tensor\mathbb{S}_{\mathfrak{p}}^c)^{\lie{h}^\C}$, which is only applicable in the complexified setting and is the key element in making the Dirac operator
a well-defined (non-divergent) unbounded operator  on $(\mathcal{F}_0\tensor\mathbb{S}_{\mathfrak{p}}^c)^{\lie{h}^\C}$ with
a dense domain consisting of certain \emph{polynomial elements} in this space. Moreover, the diagonal action of
$\lie{h}^\C$ on the tensor product $\mathcal{F}_0\tensor\mathbb{S}_{\mathfrak{p}}^c$ requires a proper normal
ordering description which we solve.

By the results of Spera and Wurzbacher \cite{SpeWu}, one knows that 
the restricted Grassmannian manifold is an (infinite-dimensional) Hermitean symmetric space, which
suggests mimicking equation (\ref{DiracSym}). Once again, one complexifies and then goes to the limit. Our definition
is
$$
\KDirac:=\frac{1}{2}\sum_{\substack{i,j\in\Z\\ ij<0}}\hat{r}(E_{ij})\tensor\gamma(E_{ji}),
$$
where $\set{E_{ij}\mid i,j\in\Z,\, ij<0}$ is a certain natural Hilbert basis for the complexification $\lie{p}^\C$. Actually, this
definition gives a well-defined (unbounded) symmetric operator on the \emph{whole} tensor product
$\mathcal{F}_0\tensor\mathbb{S}_{\mathfrak{p}}^c$ and not just in the $\lie{h}^\C$-invariant  sector. The reason
that we restrict to look at the Dirac operator $\KDirac$ only in the $\lie{h}^\C$-invariant sector is that in general
one may hope to prove the finite-dimensionality of the kernel of a linear operator $T$ by looking at its square and showing that
$\dim\ker(T^2)<\infty$, and on the other hand
the expression for the square $\KDirac^2$ gets a more simple and more tractable form in the $\lie{h}^\C$-equivariant sector since
a certain diagonal Casimir operator of $\lie{h}^\C$ appearing in the expression for $\KDirac^2$ becomes very easy
to understand in the $\lie{h}^\C$-invariant sector.

In order to show that $\dim\ker(\KDirac^2)<\infty$ we introduce a concrete
diagonalization for the square of the Dirac operator.
What this all boils down is that on the $\mathfrak{h}^\C$-\emph{invariant} part
$(\mathcal{F}_0\tensor\mathbb{S}_{\mathfrak{p}}^c)^{\mathfrak{h}^\C}$ of the tensor product, the square of the Dirac operator
turns out to be essentially  a sum of a fermion number operator $F$
acting on the spinor part $\mathbb{S}_{\mathfrak{p}}^c$ of the tensor product and a certain normal ordered \emph{Casimir operator} $\Delta_{\lie{g}^\C}$ 
for the central extension of the Lie algebra 
$\mathfrak{gl}_{\mathrm{res}}=\mathfrak{u}_{res}\tensor\C$ acting on the
fermionic Fock space $\mathcal{F}_0$. As a Casimir operator
for the central extension it does \emph{not} commute with the representation, but
we are able to compute explicitely commutator laws that imply the wanted diagonalization. 

However, to obtain information on the diagonalization of $\KDirac^2$ in the 
$\lie{h}^\C$-invariant sector, there is still a technical complication
preventing one to do this directly: the $\lie{h}^\C$-invariant subspace is not closed with respect the various commutator operators
one needs to apply if for example one diagonalizes the Casimir operator $\Delta_{\lie{g}^\C}$ on $\F_0$ 
(see Appendix C). To overcome this technical difficulty we proceed as follows:
\begin{enumerate}
\item The square $\KDirac^2$ has an algebraic expression as an infinite sum of operators in the $\lie{h}^\C$-invariant sector, which
is \emph{not} the same expression as the algebraic expression for $\KDirac^2$ on the whole tensor product
$\mathcal{F}_0\tensor\mathbb{S}_{\mathfrak{p}}^c$ since certain terms of $\KDirac^2$ become much more simple
in the $\lie{h}^\C$-invariant sector.
\item The algebraic expression for $\KDirac^2$ in the $\lie{h}^\C$-invariant sector makes also sense as an operator when
acting on the whole tensor product $\mathcal{F}_0\tensor\mathbb{S}_{\mathfrak{p}}^c$. This trivial lift is denoted by
$T_{\KDirac^2}$. We emphasize that 
$T_{\KDirac^2}:\mathcal{F}_0\tensor\mathbb{S}_{\mathfrak{p}}^c\To \mathcal{F}_0\tensor\mathbb{S}_{\mathfrak{p}}^c$ is \emph{not} the
same operator anymore as $\KDirac^2:\mathcal{F}_0\tensor\mathbb{S}_{\mathfrak{p}}^c\To\mathcal{F}_0\tensor\mathbb{S}_{\mathfrak{p}}^c$
by what we just said in (1). However, $T_{\KDirac\,^2}$ restricts to the $\lie{h}^\C$ invariant sector of the tensor product and
inside $(\mathcal{F}_0\tensor\mathbb{S}_{\mathfrak{p}}^c)^{\mathfrak{h}^\C}$ it \emph{is} trivially true that the operators $T_{\KDirac^2}$
and $\KDirac^2$ coincide.
\item The commutator arguments used in the Appendix C
are legal for $T_{\KDirac^2}:\mathcal{F}_0\tensor\mathbb{S}_{\mathfrak{p}}^c\To \mathcal{F}_0\tensor\mathbb{S}_{\mathfrak{p}}^c$,
and produce a concrete diagonalization for it, i.e. there exists an orthonormal basis of eigenvectors, whose eigenvalues we know.
\item Knowing concretely a diagonalization for $T_{\KDirac^2}:\mathcal{F}_0\tensor\mathbb{S}_{\mathfrak{p}}^c\To \mathcal{F}_0\tensor\mathbb{S}_{\mathfrak{p}}^c$
tells something about the spectral properties of the restriction of $T_{\KDirac^2}$ to the subspace
$(\mathcal{F}_0\tensor\mathbb{S}_{\mathfrak{p}}^c)^{\mathfrak{h}^\C}$. But once again this is 
$$
\KDirac^2:(\mathcal{F}_0\tensor\mathbb{S}_{\mathfrak{p}}^c)^{\mathfrak{h}^\C}
\To(\mathcal{F}_0\tensor\mathbb{S}_{\mathfrak{p}}^c)^{\mathfrak{h}^\C}.
$$
This way we are able to prove that in the $\lie{h}^\C$-\emph{invariant sector} $\dim\ker\KDirac^2<\infty$, which
implies that in the same sector also $\dim\ker\KDirac<\infty$.
\end{enumerate}
\section*{Acknowledgments}
The author would like to express his gratitude Jouko Mickelsson for introducing the 
research problem and giving many helpful comments during the working process. The author
also thanks Tilmann Wurzbacher for many detailed comments that had the impact of improving the
manuscript significantly.
\section{Dirac operators on homogeneous spaces}
In this section we follow \cite{La}.

Let $G$ be a compact, semisimple, connected Lie group, and let $H$ be a closed connected subgroup of $G$. 
We consider the quotient space $G/H$ of left (resp. right) cosets with the quotient topology.
The resulting space is then known to be a smooth (connected orientable) manifold. If we denote by
$\pi:G\To G/H$ the projection map sending $g\in G$ to the left coset $Hg\in G/H$ (resp. right coset $gH\in G/H$) then
\begin{enumerate}
\item $\pi:G\To G/H$ is a smooth map.
\item $\pi:G\To G/H$ admits the structure of a (smooth) principal $H$-bundle, where $H$ acts on $G$ via the Lie group multiplication
map $m: G\times G\To G$, when restricted to a map $H\times G\To G$ (resp. to a map $G\times H\To G$).
\end{enumerate}

Suppose $G$ acts smoothly on a $C^\infty$ manifold $X$. If the action is \emph{transitive} 
(i.e. for each $x,y\in X$ we have $g\cdot x=y$ for some $g\in G$), then we say that $X$ is a \emph{homogeneous space}. Moreover,
if we let $H$ be the isotropy or stabilizer subgroup (i.e. the subgroup fixing a point $x\in X$), then we have an isomorphism
$X\isom G/H$.

Below we 
give a list of the most basic properties of homogeneous spaces $M:=G/H$. 

\subsection{Homogeneous vector bundles} 
\begin{defn}
A \emph{homogeneous vector bundle} over $M$ is a vector
bundle $E\To M$ together with a lifting of the $G$-action on the base manifold $M$ to
a fibrewise linear $G$-action on the fibres.
\end{defn}

\begin{ex}
Let $V$ be a representation of $H$. Viewing $\pi:G\To M$ as a principal $H$-bundle we may consider
the associated homogeneous vector bundle $G\times_H V\To M$ given by
$$
G\times_H V:=G\times V/\set{(g,v)\sim (gh,h^{-1}v)}.
$$
The $G$-action on $G\times_H V$ is just left multiplication on the $G$-component
and the projection map takes $(g,v)\mapsto gH$.
\end{ex}
On the other hand it is known that all homogeneous vector bundles on $M$ are induced by representations
of $H$.

\subsection{Functions} Functions on the homogeneous space $M$ correspond to right $H$-\emph{invariant}
functions on $G$ i.e., those functions that satisfy $f(gh)=f(g)$ for all $g\in G,\, h\in H$.

\subsection{Sections of homogeneous vector bundles} 
Let $\rho$ be an action of $H$ on a vector space $V$.
Sections of the homogeneous vector bundle
$G\times_H V$ correspond to right $H$-\emph{equivariant} maps $s:G\To V$ satisfying
$s(gh)=\rho(h)^{-1}s(g)$. The associated section is given by $s(gH)=(g,s(g))\in(G\times_H V)_{gH}$.

Now according to the Peter-Weyl theorem, we may decompose the space of $L^2$-maps from $G$ to
$V$ into a sum over the irreducible representations $V_\lambda$ of $G$,
\begin{equation}
L^2(G)\tensor V\isom\widehat{\bigoplus}_\lambda V_\lambda\tensor V_\lambda^\ast\tensor V,
\end{equation}
with respect to the action $l\tensor r\tensor\rho$ of $G\times G\times H$. The
$H$-equivariance condition is equivalent to requiring that the functions be invariant
under the diagonal $H$-action $h\mapsto r(h)\tensor\rho(h)$. Hence restricting
the Peter-Weyl decomposition above to the $H$-invariant subspace, we get
\begin{eqnarray}\label{Ltworep}
L^2(G\times_H V) &\isom& \widehat{\bigoplus}_\lambda V_\lambda\tensor(V_\lambda^\ast\tensor V)^H\nonumber\\
&\isom& \widehat{\bigoplus}_\lambda V_\lambda\tensor\Hom_H(V_\lambda,V).
\end{eqnarray}
The Lie group $G$ acts on the space of $L^2$-sections of the associated vector bundle
$G\times_H V\To G/H$ by $l(g)$, making the Hilbert space $L^2(G\times_H V)$ an infinite dimensional
representation of $G$.

\subsection{Tangent bundle} Let $\ip{\cdot}{\cdot}$ be the Killing form on $\lie{g}=\Lie(G)$. 
Since $G$ is compact semisimple, this
is an $\Ad$-invariant inner product on $\lie{g}$. Since $\lie{h}\subset\lie{g}$, where
$\lie{h}:=\Lie(H)$, we may use the Killing form to produce an orthogonal decomposition
$\lie{g}=\lie{h}\oplus\lie{p}$, where $\lie{p}:=\lie{h}^\perp\isom\lie{g}/\lie{h}$. This makes
it possible to identify the tangent space at the identity coset with $TM_{eH}\isom\lie{p}$. The group
$H$ acts on $\lie{p}=\lie{h}^\perp$ by the adjoint action (the inner product is $\Ad$-invariant, so
$\Ad H$ fixes both $\lie{h}$ and $\lie{p}$). This makes it possible to identify
the tangent bundle with $TM\isom G\times_H\lie{p}$. Since this is a homogeneous vector bundle, vector
fields on $M$ (i.e. sections of $TM$) can be identified by the above with
maps $\xi:G\To\lie{p}$ satisfying the $H$-equivariance condition
$\xi(gh)=(\Ad h^{-1})\xi(g)$.

Let $\pi:G\To M$ be the projection map. The bi-invariant metric on $G$ makes it possible to identify
$\pi^\ast TM$ with a subbundle of $TG$. Hence for any vector field $\xi\in\mathfrak{X}(M)$ the pullback
$\pi^\ast\xi$ gives a lifting of $\xi$ to a vector field on $G$. Conversely, given a
vector field $\tilde{\xi}$ on $G$, the push forward $\pi_\ast\tilde{\xi}$ is well-defined provided
that $\tilde{\xi}$ is right $H$-equivariant.

\subsection{Decomposition of isotropy representations}
Let $\lie{p}:=\lie{h}^\perp\isom\lie{g}/\lie{h}$ be the
orthogonal complement of $\lie{h}$ in $\lie{g}$ with respect to the Killing form
$\ip{\cdot}{\cdot}$ of $\lie{g}$. 

The adjoint action of $H$ on $\lie{g}\isom\lie{h}\oplus\lie{p}$
respects this sum decomposition, so that we obtain
two representations $\Ad_{\lie{h}}$ and $\Ad_{\lie{p}}$ of $H$ on $\lie{h}$ and $\lie{p}$ respectively.
Differentiating these gives us the corresponding Lie algebra representations
$\ad_{\lie{h}}$ and $\ad_{\lie{p}}$ of $\lie{h}$ on $\lie{h}$ and $\lie{p}$.

The Clifford algebra decomposes into the tensor product 
$\Cliff(\lie{g})\isom\Cliff(\lie{h})\tensor\Cliff(\lie{p})$ so that the
spin lift $\adt_{\lie{g}}:\lie{h}\To\spin{g}\subset\Cliff(\lie{g})$ of the adjoint action 
$\ad:\lie{h}\To\lie{so}(\lie{g})$
becomes the sum 
$\adt_{\lie{g}}=\adt_{\lie{h}}\tensor 1+1\tensor\adt_{\lie{p}}$ of two separate
spin actions $\adt_{\lie{h}}:\lie{h}\To\Cliff(\lie{h})$ and $\adt_{\lie{p}}:\lie{h}\To\Cliff(\lie{p})$.
This way the spin representations $\spib_{\lie{h}}$ and $\spib_{\lie{p}}$ of $\Cliff(\lie{h})$ and $\Cliff(\lie{p})$ become representations
of the Lie algebra $\lie{h}$. 
Moreover, if $\lie{h}$ is of maximal rank in $\lie{g}$, the complement $\lie{p}$ is even dimensional and
so the spin representation of $\lie{h}$ decomposes as the tensor product $\spib_{\lie{g}}\isom\spib_{\lie{h}}\tensor\spib_{\lie{p}}$
of $\lie{h}$-representations.

\subsection{Spin structures on $G/H$}
\begin{defn}
A Riemannian manifold $(M,g)$ of dimension $n$ is called a \emph{spin manifold} if its principal
$\SO(n)$ frame bundle $P$ admits a double cover $Q\To P$ that restricts to the map
$\Ad:\Spin(n)\To\SO(n)$ on each fibre. 
\end{defn}
The choice of such a principal $\Spin(n)$-bundle $Q$ on $M$ is called
a \emph{spin structure}. Given a spin structure on $M$ the tangent bundle is then the bundle
$TM\isom Q\times_{\Spin(n)}\Real^n$ associated to the double cover
$\Ad:\Spin(n)\To\SO(n)$. Associated to the spin representation $\spib$ of $\Spin(n)$, one has
the \emph{spin bundle} $S\isom Q\times_{\Spin(n)}\spib$.

\subsubsection{The homogeneous space case}
Recall that the tangent bundle to $G/H$ is the vector bundle $G\times_H \lie{p}\To G/H$ associated to the
isotropy representation $\Ad:H\To\SO(\lie{p})$. The corresponding frame bundle is the principal
$\SO(\lie{p})$-bundle $G\times_H\SO(\lie{p})$.

The homogeneous space $G/H$ is a spin manifold if and only if $\Ad$ can be lifted to a spin
representation $\Adt:H\To\Spin(\lie{p})$: 
$$
\xymatrix{
              & \Spin(\lie{p})\ar[d]^{\Ad}\\
H \ar@{-->}[ur]^{\Adt}\ar[r]^{\Ad}           & \SO(\lie{p})
}
$$
Given such a lift, the corresponding $\Spin(\lie{p})$-bundle $Q\isom G\times_H \Spin(\lie{p})$.
Letting then $\rho$ be the composition of $\Adt:H\To\Spin(\lie{p})$ 
with the spin representation of $\Spin(\lie{p})$ on the spin module $\spib_{\lie{p}}$ one obtains
the \emph{spin bundle} $S\To G/H$ as the associated vector bundle $G\times_H\spib_{\lie{p}}$.
Hence applying equation (\ref{Ltworep}) to the spinor bundle $S$ we obtain
\begin{equation}
L^2(G/H,S)\isom L^2(G\times_H\spib_{\lie{p}})\isom
\widehat{\bigoplus}_\lambda V_\lambda\tensor\Hom_H(V_\lambda,\spib_{\lie{p}}),
\end{equation}
where we sum over all irreducuble representations $V_\lambda$ of $G$.

\begin{rem}
Even if the homogeneous space $G/H$ is not a spin manifold, it might still admit
a $\Spinc$ \emph{structure}. In analogy with spin structures this happens if and only if
the isotropy representation $\Ad:H\To\SO(\lie{p})$ can be lifted to a map
$$
\Adt:H\To\Spinc(\lie{p})=\Big(\Spin(\lie{p})\times U(1)\Big)/\set{\pm(1,1)}.
$$
\end{rem}

\subsection{Levi-Civita connection}
The homogeneous space $M=G/H$ has a natural metric defined by
$$
\ip{\xi}{\eta}_M:=\ip{\pi^\ast\xi}{\pi^\ast\eta}_G.
$$
Moreover the vector field bracket on $M$ is given by
$$
[\xi,\eta]=\pi_\ast[\pi^\ast\xi,\pi^\ast\eta].
$$
The Levi-Civita connection on $M$ is then given by
$$
\nabla_\xi^M\eta=\pi_\ast\nabla_{\pi^\ast\xi}^G\pi^\ast\eta
$$
for all $\xi,\eta\in\mathfrak{X}(M)$. If we consider the vector fields
$\xi$ and $\eta$ as $H$-equivariant maps $\xi,\eta:G\To\lie{p}$, the Levi-Civita connection
then becomes
$$
(\nabla_\xi^M\eta)(g)=(\Do_\xi\eta)(g)+\frac{1}{2}[\xi(g),\eta(g)]_\lie{p},
$$
where $[\cdot,\cdot]_{\lie{p}}$ 
denotes the Lie algebra bracket on $\lie{g}$ projected onto $\lie{p}$ and
$\Do_\xi$ is the directional derivative.
This is written in shorthand notation
as $\nabla_\xi^M=\Do_\xi+\frac{1}{2}\ad_{\lie{p}}\xi$, where
$\ad_{\lie{p}}:\lie{p}\To\mathfrak{so}(\lie{p})$ is the projection of the adjoint representation
onto $\lie{p}$ given by $\ad_{\lie{p}} X:Y\mapsto [X,Y]_{\lie{p}}$. This has a spin lift
$\adt_{\lie{p}}:\lie{p}\To\lie{spin}(\mathfrak{p})$ given by
$$
\adt_{\lie{p}} X=\frac{1}{4}\sum_{i=1}^{\dim\lie{p}}X_i^\ast\cdot[X,X_i]_\lie{p},
$$
where $\{X_i\}$ is a basis of $\lie{p}$, $\{X_i^\ast\}$ its dual basis satisfying $\ip{X_i^\ast}{X_j}=\delta_{ij}$, and the
product $\cdot$ is taken inside $\Cliff(\lie{p})$ under the Chevalley identification.

From now on we assume that $M=G/H$ is a spin manifold so that it makes sense to talk about $L^2$-spinors. 
We may view the Hilbert space of $L^2$-spinors on $G/H$ as $L^2$-maps $s:G\To\spib_{\lie{p}}$ satisfying
the $H$-equivariance condition $s(gh)=(\Adt h^{-1})s(g)$.
It follows that the
Levi-Civita connection on spinors then becomes
$$
\nabla_\xi=\Do_\xi+\frac{1}{2}\adt_\lie{p}\xi,
$$
where $\xi:G\To\lie{p}$ is a vector field on $G/H$.

On a homogeneous space $G/H$, given a basis $\set{X_i}$ of $\lie{p}$, the left translates
$(g,X_i)\in G\times_H\lie{p}\isom TM$ give a frame for the tangent space to $G/H$ at the
coset $gH$.
We may again write the directional derivatives $\Do_{X_L}=r(X)$ in terms of the right action of
$\lie{g}$ on functions. The Dirac operator with respect to the Levi-Civita connection on the spin manifold $M=G/H$ thus becomes
\begin{equation}\label{DiracGH}
\KDirac=\sum_{i=1}^{\dim\lie{p}}\Big(r(X_i)\tensor X_i^\ast + 1\tensor\frac{1}{2} X_i^\ast\cdot\adt_\lie{p} X_i\Big),
\end{equation}
where the sum is taken over a basis $\set{X_i}$ of $\lie{p}$. Algebraically this Dirac operator can be viewed as an element
of $U(\lie{g})\tensor\Cliff(\lie{p})$.

Choose a basis $\set{Z_a}$ of $\lie{h}$. 
Landweber \cite{La} computes the square of the
Dirac operator $\KDirac$ on $G/H$ associated to the Levi-Civita connection to be
\begin{eqnarray}\label{awksqrGH}
\KDirac^2 &=& -\sum_i r(X_i)^2\tensor 1-\sum_i r(X_i)\tensor \adt_\lie{p} X_i+2\sum_a r(Z_a)\tensor\adt Z_a\nonumber\\
&-& 1\tensor\Big(\frac{9}{4}\sum_i(\adt_{\lie{p}} X_i)^2-\frac{3}{16}\Tr_{\lie{p}}\sum_i(\ad_{\lie{p} X_i})^2\Big).
\end{eqnarray}

\section{Dirac operators on compact Riemannian symmetric spaces}\label{DOonSymSpa}
In this section we don't assume our (finite-dimensional) Lie groups $G$ and $H$ to
be compact, unless otherwise stated.
\begin{defn}
Let $M$ be a Riemannian manifold. If for each point $p\in M$ there exists an isometry
$j_p:M\To M$ such that $j_p(p)=p$ and $(dj_p)_p=-\id_p$, then $M$
is called a \emph{Riemannian symmetric space}. The map $j_p$ is called
a (global) \emph{symmetry} of $M$ at $p$.
\end{defn}

\begin{thm}
Every Riemannian symmetric space $M$ is a homogeneous manifold, i.e.
$M\isom G/H$ with $G$ and $H$ here not necessarily compact.
\end{thm}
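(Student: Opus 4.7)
The plan is to follow the classical argument via the isometry group, using the global symmetries as the main tool for producing enough isometries to act transitively.

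First, I would invoke the Myers--Steenrod theorem to equip the full isometry group $I(M)$ with the structure of a finite-dimensional Lie group (with the compact-open topology), acting smoothly on $M$. Set $G := I(M)^0$, the identity component. This gives the candidate Lie group and a smooth $G$-action on $M$; the statement of the theorem does not require $G$ to be compact, which is good, since for non-compact $M$ the isometry group need not be.

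Next, I would establish geodesic completeness. Given any geodesic segment $\gamma:[0,a]\to M$, the symmetry $j_{\gamma(a)}$ at the endpoint reverses geodesics through $\gamma(a)$, so it can be used to prolong $\gamma$ past $a$; iterating, every geodesic extends to all of $\mathbb{R}$. By Hopf--Rinow, any two points $p,q\in M$ (assuming, as is standard for symmetric spaces, that $M$ is connected) are joined by a minimizing geodesic $\gamma:[0,1]\to M$ with $\gamma(0)=p$, $\gamma(1)=q$. Let $m=\gamma(1/2)$; then $j_m(p)=q$ because $(dj_m)_m=-\id_m$ reverses $\dot\gamma(1/2)$ and $\gamma$ is the unique geodesic with that reversed initial data. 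To keep the isometry inside the identity component, I would instead consider the transvection $T_\gamma := j_m \circ j_p$, which also sends $p$ to $q$; the one-parameter family $t \mapsto j_{\gamma(t/2)}\circ j_p$ connects $T_\gamma$ to the identity at $t=0$, so $T_\gamma \in G$. Hence $G$ acts transitively on $M$.

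Finally, fix a basepoint $p_0 \in M$ and set $H := \mathrm{Stab}_G(p_0) = \{g\in G \mid g\cdot p_0 = p_0\}$. Since the action is continuous, $H$ is a closed subgroup of the Lie group $G$, and by the Cartan closed-subgroup theorem $H$ is itself a Lie subgroup. The orbit map
\[
\varphi : G \longrightarrow M, \qquad \varphi(g) = g\cdot p_0,
\]
is smooth and surjective by transitivity, and descends to a smooth bijective $G$-equivariant map $\bar\varphi: G/H \to M$; standard homogeneous-space arguments (equivariance plus the fact that both sides have the same dimension and $\bar\varphi$ has injective differential at $[e]$) upgrade this to a diffeomorphism $M \cong G/H$.

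The main obstacle, and the only step with real content, is the transitivity argument; everything else is either a direct invocation of Myers--Steenrod or a standard fact about homogeneous spaces. The crucial trick is the use of the midpoint symmetry $j_m$, together with the observation that the composite $j_m\circ j_p$ (a transvection along the geodesic $\gamma$) lies in the identity component, which is what allows us to realize $M$ as a quotient of the \emph{connected} Lie group $G$ rather than merely of the possibly disconnected full isometry group.
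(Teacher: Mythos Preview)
Your argument is correct and is exactly the classical proof one finds in Helgason, which is precisely what the paper does: its entire proof is the single line ``See e.g.\ \cite{He}''. So you have supplied the details the paper omits, following the same route (Myers--Steenrod for the Lie group structure, completeness via symmetries, transitivity via midpoint transvections $j_m\circ j_p$ lying in the identity component, then the standard stabilizer quotient).
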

\begin{proof}
See e.g. \cite{He}
\end{proof}

\begin{defn}
The isotropy representation of a homogeneous space $M=G/H$ is the homomorphism
$\Ad^{G/H}:H\To GL(T_0(G/H))$ defined by
$h\mapsto (d\tau_h)_0$, where $0=eH$ and $\tau_a\,(a\in G)$ is the diffeomorphism $G/H\To G/H,\, gH\mapsto agH$.
\end{defn}

\begin{defn}
A homogeneous space $M\isom G/H$ is called \emph{reductive} if there exists a subspace $\lie{p}$ of $\lie{g}$ such that
$\lie{g}=\lie{h}\oplus\lie{p}$ and $\Ad(h)\lie{p}\subset\lie{p}$ for every $h\in H$, i.e. $\lie{p}$ is
$\Ad(H)$-invariant.
\end{defn}
\begin{ex}
If $G$ is a Lie group and $H$ is a (closed) compact subgroup of $G$, then $G/H$ is known to be a reductive
homogeneous space.
\end{ex}
\begin{rem}
If $G/H$ is a reductive homogeneous space, the isotropy representation of $G/H$ is equivalent with the
adjoint representation of $H$ on $\lie{p}$.
\end{rem}
\begin{prop}\label{RiemSymPro}
Let $M=G/H$ be a Riemannian symmetric space, and let $\lie{g}$ and $\lie{h}$ be the corresponding Lie algebras, respectively.
Then there exists an involutive automorphism $\sigma$ of $G$ for which $\lie{h}=\set{X\in\lie{g}\mid d\sigma(X)=X}$. Moreover,
\begin{enumerate}
\item $\lie{g}=\lie{k}\oplus\lie{p}$ with $\lie{p}=\set{X\in\lie{g}\mid d\sigma(X)=-X}$;
\item The subspace $\lie{p}$ is $\Ad(K)$-invariant (so that taking into account (1), $M=G/H$ is a reductive homogeneous space).
\item The following identities hold:
$$
[\lie{h},\lie{p}]\subset\lie{p},\quad [\lie{h},\lie{h}]\subset\lie{h}\quad [\lie{p},\lie{p}]\subset\lie{h}.
$$
\end{enumerate}
\end{prop}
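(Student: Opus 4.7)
The plan is to construct the involution $\sigma$ directly from the symmetry $j_o$ of $M$ at the base point $o := eH$, and then to read off all remaining assertions from the eigenspace decomposition of $d\sigma$ on $\lie{g}$. Realizing $G$ as (the identity component of) a Lie subgroup of the isometry group $\mathrm{Iso}(M)$, I would set
\[
\sigma(g) := j_o \cdot g \cdot j_o^{-1}, \qquad g \in G.
\]
Because $j_o^2 = \id_M$, $\sigma$ is an involution; it is a smooth automorphism of $G$ provided $j_o$ normalizes $G$, which is automatic when $G$ is chosen to be the identity component of $\mathrm{Iso}(M)$. Differentiating at the identity yields an involutive Lie algebra automorphism $d\sigma$ of $\lie{g}$, giving the eigenspace decomposition $\lie{g} = \lie{g}^+ \oplus \lie{g}^-$. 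I would then define $\lie{p} := \lie{g}^-$ and verify that $\lie{g}^+ = \lie{h}$.

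For this identification I would use the evaluation map $\mathrm{ev}_o : \lie{g} \to T_o M$, $X \mapsto (X \cdot o)$, which is surjective with kernel $\lie{h}$. Differentiating the relation $\sigma(g) \cdot o = j_o(g \cdot o)$ gives
\[
\mathrm{ev}_o(d\sigma(X)) = (dj_o)_o \, \mathrm{ev}_o(X) = -\mathrm{ev}_o(X),
\]
so every $X \in \lie{g}^+$ satisfies $\mathrm{ev}_o(X) = 0$ and hence lies in $\lie{h}$. Conversely, for $X \in \lie{h}$ this identity shows $d\sigma(X) \in \lie{h}$; to conclude $d\sigma(X) = X$ I would compare the actions of the one-parameter subgroups $\exp(tX)$ and $\exp(t\, d\sigma(X)) = j_o \exp(tX) j_o^{-1}$ on $T_o M$. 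Conjugation by the scalar operator $(dj_o)_o = -\id$ is trivial on endomorphisms of $T_o M$, so the two actions agree, and the rigidity of isometries (an isometry fixing $o$ with identity derivative at $o$ must be the identity) renders the isotropy representation of $H$ faithful, whence $d\sigma(X) = X$.

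With $\lie{h}$ and $\lie{p}$ identified as the $\pm 1$-eigenspaces of the Lie algebra automorphism $d\sigma$, statement (3) is immediate from counting sign flips under $d\sigma$: $[\lie{h}, \lie{h}]$ and $[\lie{p}, \lie{p}]$ both land in $\lie{h}$ (zero or two sign flips) while $[\lie{h}, \lie{p}] \subset \lie{p}$ (a single sign flip). For the $\Ad(H)$-invariance in (2), one observes that $\sigma$ fixes $H$ pointwise (on the identity component this follows from $d\sigma|_{\lie{h}} = \id$; the remaining components are handled by noting that $j_o$ fixes $o$ and hence normalizes $H$), so that $d\sigma \circ \Ad(h) = \Ad(h) \circ d\sigma$; applied to $X \in \lie{p}$ this gives $d\sigma(\Ad(h) X) = -\Ad(h) X$, forcing $\Ad(h) X \in \lie{p}$.

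The hard part will be the careful setup ensuring that $\sigma$ is a genuine automorphism of the particular group $G$ one started with and that $\lie{h}$ equals (not merely contains) the $(+1)$-eigenspace of $d\sigma$; both points collapse once $G$ is chosen to be the identity component of $\mathrm{Iso}(M)$ acting effectively on $M$, after which the only genuinely geometric ingredient is the rigidity of isometries invoked above.
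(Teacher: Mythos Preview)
The paper does not actually prove this proposition; it is stated without proof as a standard structural fact about Riemannian symmetric spaces (the surrounding material cites Helgason \cite{He}). Your argument is the classical one: build $\sigma$ by conjugating with the geodesic symmetry $j_o$, identify $\lie{h}$ and $\lie{p}$ as the $\pm 1$ eigenspaces of $d\sigma$ via the evaluation map and rigidity of isometries, and read off the bracket relations by sign-counting. This is correct and is essentially the proof one finds in Helgason, so there is nothing to compare against in the paper itself.

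One minor remark: your caveat about needing $G$ to be (the identity component of) $\mathrm{Iso}(M)$ for $\sigma$ to be an automorphism of $G$ is well placed, and this is indeed how the proposition is typically set up in the literature; the paper's statement tacitly assumes this choice of $G$. Note also the paper's ``$\Ad(K)$'' in part (2) is a typo for $\Ad(H)$, which you correctly interpreted.
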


We now make the assumption for the rest of this section that $M=G/H$ is a Riemannian symmetric space with $G$ a compact Lie group and
$K$ its closed subgroup. Moreover we assume that $M$ has a homogeneous spin structure and 
choose an orthonormal basis $\set{X_i}$ for $\lie{p}$. Then it follows
from Proposition \ref{RiemSymPro} part (iii), after recalling the definition of $\adt_{\lie{p}}$, that equation (\ref{DiracGH})
may now be written in the much simpler form

\begin{equation}
\KDirac=\sum_{i=1}^{\dim\lie{p}}r(X_i)\tensor X_i^\ast.
\end{equation}
It turns out that the awkward looking square in (\ref{awksqrGH}) obtains a really nice and conceptual form when we add
the extra condition that $M=G/H$ is symmetric:
\begin{equation}
\KDirac^2=\Delta_{\lie{g}}+\frac{1}{8}r_M,
\end{equation}
where $\Delta_{\lie{g}}$ denotes for the Casimir operator of $\lie{g}=\Lie(G)$ and $r_M\in\Real$ is
the scalar curvature of the Riemannian manifold $M$. For a proof, the reader should consult for instance \cite{Fri} \S 3.5. The
importance of this formula lies in the fact that it allows one to compute the eigenvalues of $\KDirac^2$ purely by
means of representation theory.

\section{CAR algebra representations}
In this section our reference is \cite{HuWu}.
\subsection{Fermionic Fock space}
Let first $\Hilb$ be a complex Hilbert space with inner product $\ip{\cdot}{\cdot}$ (which we will always assume to be $\C$-linear 
in the second argument). The \emph{full tensor product Hilbert space} $\hat{\tensor}\Hilb$ is then equipped with the inner product
$$
\ip{h_1\tensor\cdots\tensor h_n}{h_1'\tensor\cdots\tensor h_m'}=\delta_{n,m}\cdot\prod_{j=1}^n\ip{h_j}{h_j'}.
$$
Setting 
$$
h_1\wedge\cdots\wedge h_n=\frac{1}{\sqrt{n!}}\sum_{\sigma\in S_n}\sign(\sigma)\cdot h_{\sigma(1)}\tensor\cdots\tensor h_{\sigma(n)}
$$
one finds that
$$
\ip{h_1\wedge\cdots\wedge h_n}{h_1'\wedge\cdots\wedge h_n'}=\det\Big(\ip{h_j}{h_k'}_{1\leq j, k\leq n}\Big).
$$
Here and in the sequel we will always complete sums, tensor products and wedge products of Hilbert spaces without changing
the symbols.

Now let $\Hilb=\Hilb_+\oplus\Hilb_-$ be a complex separable Hilbert space with a given polarization and $C:\Hilb\To\Hilb$ an anti-unitary map called the
\emph{charge conjugation}. One defines following Hilbert space called
the \emph{fermionic Fock space} $\F=\F(\Hilb,\Hilb_+)$ in three steps as follows. First we define
$$
\F_{\pm}^{(0)}=\C,\quad\F_+^{(1)}=\Hilb_+\quad\textrm{and}\quad\F_-^{(1)}=C\Hilb_-
$$
and set $\F^{(n,m)}=\bigwedge^n\F^{(1)}_+\tensor\bigwedge^m\F^{(1)}_-$ and finally
$$
\F:=\bigoplus_{n,m\geq 0}\F^{(n,m)}\isom\F_+\tensor\F_-,
$$
where $\F_+:=\bigoplus_{n\geq 0}\bigwedge^n\F^{(1)}_+$ and $\F_-:=\bigoplus_{m\geq 0}\bigwedge^m\F^{(1)}_-$.

The vector $\vac:=1\in\F^{(0,0)}$ is the \emph{vacuum vector} and elements of $\F^{(n,m)}$ will be considered as describing a physical system
of $n$ particles and $m$ anti-particles.

\subsection{CAR-algebras $\CAR(\Hilb_+)$ and $\CAR(\overline{\Hilb_-})$}
For each $f\in\Hilb_+$ there exists operators $a^\ast(f), a(f)\in\B(\F)$ called
the \emph{particle creator} and the \emph{particle annihilator}, respectively. The first operator
is $\C$-linear in $f$ in $\Hilb_+$ while the second operator is $\C$-antilinear in $f$. They act on decomposable elements
$\psi^{(n,m)}=f_1\wedge\ldots\wedge f_n\tensor C g_1\wedge\ldots\wedge C g_m\in\F^{(n,m)}$ by
\begin{equation}
a^\ast(f)\psi^{(n,m)}=f\wedge f_1\wedge\ldots\wedge f_n\tensor C g_1\wedge\ldots\wedge C g_m
\end{equation}
and
\begin{equation}\label{afonpsi}
a(f)\psi^{(n,m)}=\sum_{j=1}^n (-1)^{j+1}\ip{f}{f_j}f_1\wedge\ldots\wedge\hat{f}_j\wedge\ldots\wedge f_n\tensor C g_1\wedge\ldots\wedge C g_m.
\end{equation}

These satisfy
$$
(a(f))^\ast=a^\ast(f)\quad\textrm{and}\quad \norm{a^\ast(f)}=\norm{a(f)}=\norm{f}.
$$
Moreover they obey the following \emph{canonical anticommutation relations} (CAR):

\begin{align}
\{a(f),a^\ast(f')\} &= a(f)a^\ast(f')+a^\ast(f')a(f)=\ip{f}{f'}_\Hilb\cdot 1_\F\\
\{a^\ast(f),a^\ast(f')\} &= 0=\{a(f),a(f')\}
\end{align}

\begin{defn}
The unital $C^\ast$-algebra generated by the operators $a^\ast(f)$ and $a(f')$ in $\B(\F)$ (the unital Banach algebra of bounded linear operators in the Fock space $\F$)
is called the \emph{CAR-algebra} of $\Hilb_+$ and is denoted by $\CAR(\Hilb_+)$.
\end{defn}

For each $g\in\Hilb_-$ there exists a second set of operators $b^\ast(g),b(g)\in\B(\F)$:
\begin{equation}
b^\ast(g)\psi^{(n,m)}=(-1)^n f_1\wedge\ldots\wedge f_n\tensor Cg\wedge Cg_1\wedge\ldots\wedge Cg_m
\end{equation}
and
\begin{equation}\label{bgonpsi}
b(g)\psi^{(n,m)}=(-1)^n f_1\wedge\ldots\wedge f_n\tensor
\Big(\sum_{j=1}^m (-1)^{j+1}\ip{Cg}{Cg_j} Cg_1\wedge\ldots\wedge\widehat{Cg_j}\wedge\ldots\wedge Cg_m\Big)
\end{equation}

These operators, called the \emph{anti-particle creator} and
\emph{anti-particle annihilator} respectively, satisfy
\begin{align}
(b(g))^\ast &= b^\ast(g),\quad \norm{b^\ast(g)}=\norm{b(g)}=\norm{g}\\
\{b(g),b^\ast(g')\} &=\ip{Cg}{Cg'}_\Hilb\cdot 1_\F =\ip{g'}{g}_\Hilb\cdot 1_\F\label{ipandC}\\
\{b^\ast(g),b^\ast(g')\} &= 0=\{b(g),b(g')\}
\end{align}
The operators $b^\ast(g)$ and $b(g)$ generate the \emph{complex conjugate} $\CAR(\overline{\Hilb_-})$ of the CAR-algebra of $\Hilb_-$ in $\B(\F)$.

Now the vacuum vector $\vac$ is, up to scalar multiples, the only vector in $\F$ satisfying
\begin{eqnarray}
a(f)\vac &=& 0\quad\textrm{ for all } f\in\Hilb_+,\textrm{ and }\\
b(g)\vac &=& 0\quad\textrm { for all } g\in\Hilb_-.
\end{eqnarray}

\subsection{The space $\FPol$ of polynomial elements in $\F$}
We want to consider an algebraic version of the fermionic Fock space living inside $\F$, which we call here the \emph{space of
polynomial elements} in $\F$ and denote by $\FPol$. The name refers to the defining property that elements of $\FPol$ consist of all \emph{finite} linear combinations of elements
in a fixed Hilbert basis for $\F$.

\begin{defn}
Let $\Hilb=\Hilb_+\oplus\Hilb_-$ be a polarized separable complex Hilbert space. Let $\set{u_k}$ be a countable orthonormal basis of $\Hilb_+$ and $\set{v_k}$
be a countable orthonormal basis of $\Hilb_-$. Let $\B^{(n,m)}$ be the space of all finite $\C$-linear combinations of the decomposable
elements $\psi^{n,m}\in\F^{(n,m)}$, i.e.
\begin{eqnarray}
\Bmn^{(n,m)} &:=&\spa\Big\{\psi^{(n,m)}=u_{i_1}\wedge\ldots\wedge u_{i_n}\tensor Cv_{j_1}\wedge\ldots\wedge Cv_{j_m}\Big\vert\nonumber\\
& & i_1<i_2<\cdots<i_n\textrm{ and } j_1<j_2<\cdots< j_m
\Big\}.
\end{eqnarray}
\end{defn}
\begin{defn}
The space of polynomial elements in $\F=\F(\Hilb,\Hilb_+)$ is defined as the algebraic direct sum
$$
\FPol:=\bigoplus_{\substack{\mathrm{alg}\\n,m\geq 0}}\Bmn^{(n,m)}.
$$
\end{defn}
\begin{rem}
We want to emphasize the fact that the definition of $\FPol$ depends on a chosen basis!
Notice also that the space $\FPol$ inherits naturally the inner-product structure from $\F\supset\FPol$ and  that
$\FPol$ in dense in $\F$.
\end{rem}

\begin{lem}\label{finite}
Let $\Hilb=\Hilb_+\oplus\Hilb_-$ be a polarized separable complex Hilbert space. Let $\set{u_k}$ be a countable orthonormal basis of $\Hilb_+$ and $\set{v_k}$
be a countable orthonormal basis of $\Hilb_-$. Choose a decomposable element
$\psi^{n,m}\in\F^{(n,m)}$,
$$
\psi^{(n,m)}=u_{i_1}\wedge\ldots\wedge u_{i_n}\tensor Cv_{j_1}\wedge\ldots\wedge Cv_{j_m},
$$
where $i_1<i_2<\cdots<i_n$ and $j_1<j_2<\cdots< j_m$.
Then there exists only finitely many indices $k$ and $l$ such that
$$
a(u_k)\psi^{(n,m)}\neq 0\quad\textrm{ and }\quad  b(v_l)\psi^{(n,m)}\neq 0.
$$
\end{lem}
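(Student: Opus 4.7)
The plan is essentially direct computation using the explicit formulas (\ref{afonpsi}) and (\ref{bgonpsi}) combined with the orthonormality of the bases $\{u_k\}$ and $\{v_k\}$; there is no genuine obstacle beyond bookkeeping.

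First I would apply formula (\ref{afonpsi}) to $a(u_k)$ acting on $\psi^{(n,m)} = u_{i_1}\wedge\cdots\wedge u_{i_n}\tensor Cv_{j_1}\wedge\cdots\wedge Cv_{j_m}$, obtaining
\begin{equation*}
a(u_k)\psi^{(n,m)} = \sum_{r=1}^{n}(-1)^{r+1}\ip{u_k}{u_{i_r}}\, u_{i_1}\wedge\cdots\wedge\widehat{u_{i_r}}\wedge\cdots\wedge u_{i_n}\tensor Cv_{j_1}\wedge\cdots\wedge Cv_{j_m}.
\end{equation*}
Since $\{u_k\}$ is an orthonormal basis, $\ip{u_k}{u_{i_r}}=\delta_{k,i_r}$, so every term of this sum vanishes unless $k$ coincides with one of $i_1,\dots,i_n$. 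Thus $a(u_k)\psi^{(n,m)}\neq 0$ forces $k\in\{i_1,\dots,i_n\}$, a set of cardinality at most $n$, which settles the first half.

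Next I would treat $b(v_l)$ in the same way. Applying (\ref{bgonpsi}) yields
\begin{equation*}
b(v_l)\psi^{(n,m)} = (-1)^n u_{i_1}\wedge\cdots\wedge u_{i_n}\tensor\Big(\sum_{s=1}^{m}(-1)^{s+1}\ip{Cv_l}{Cv_{j_s}}\, Cv_{j_1}\wedge\cdots\wedge\widehat{Cv_{j_s}}\wedge\cdots\wedge Cv_{j_m}\Big).
\end{equation*}
Using the anti-unitarity of $C$ (which underlies the identity recorded in (\ref{ipandC})) gives $\ip{Cv_l}{Cv_{j_s}}=\ip{v_{j_s}}{v_l}=\delta_{l,j_s}$, so every term vanishes unless $l\in\{j_1,\dots,j_m\}$. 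Hence at most $m$ values of $l$ yield a nonzero $b(v_l)\psi^{(n,m)}$, completing the proof.

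In both halves the only point worth stating explicitly is that one must invoke the orthonormality of the chosen basis (for the particle side) together with the fact that $C$ is anti-unitary (for the anti-particle side, to pass from $\ip{Cv_l}{Cv_{j_s}}$ to a Kronecker delta in the $v$-basis).
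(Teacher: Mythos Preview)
Your proposal is correct and follows exactly the approach indicated in the paper: the paper's proof simply cites formulas (\ref{afonpsi}) and (\ref{bgonpsi}), the orthogonality of the bases, and the identity (\ref{ipandC}), which is precisely what you unpack explicitly. Your version is just a more detailed rendering of the same argument.
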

\begin{proof}
Follows directly from the definitions (\ref{afonpsi}) and (\ref{bgonpsi}), and orthogonality of the basis $\set{u_k}$ and $\set{v_k}$.
One also needs the formula (\ref{ipandC}).
\end{proof}

Since each element of $\FPol$ is by definition a finite linear combination of decomposable elements, one has the following corollary.
\begin{cor}\label{finitepol}
Let $\Hilb=\Hilb_+\oplus\Hilb_-$ be a polarized separable complex Hilbert space. Let $\set{u_k}$ be a countable orthonormal basis of $\Hilb_+$ and $\set{v_k}$
be a countable orthonormal basis of $\Hilb_-$. Then for any fixed $\psi\in\FPol$ 
there exist only finitely many indices $k$ and $l$ such that
$$
a(u_k)\psi\neq 0\quad\textrm{ and }\quad  b(v_l)\psi\neq 0.
$$
\end{cor}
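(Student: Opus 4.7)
The plan is to deduce Corollary \ref{finitepol} directly from Lemma \ref{finite} by linearity, exploiting the finiteness built into the definition of $\FPol$ as an \emph{algebraic} direct sum.

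First I would unwind the hypothesis. By the definition of $\FPol = \bigoplus_{n,m\geq 0}^{\mathrm{alg}} \Bmn^{(n,m)}$, any $\psi \in \FPol$ admits a representation as a finite $\C$-linear combination
$$
\psi = \sum_{\alpha=1}^{N} c_\alpha\, \psi_\alpha^{(n_\alpha,m_\alpha)},
$$
where each $\psi_\alpha^{(n_\alpha,m_\alpha)} = u_{i_1^\alpha}\wedge\cdots\wedge u_{i_{n_\alpha}^\alpha}\otimes Cv_{j_1^\alpha}\wedge\cdots\wedge Cv_{j_{m_\alpha}^\alpha}$ is a decomposable basis element of the type considered in Lemma \ref{finite}.

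Next I would apply Lemma \ref{finite} to each summand: for every $\alpha$ there is a finite set $I_\alpha \subset \N$ of indices $k$ with $a(u_k)\psi_\alpha^{(n_\alpha,m_\alpha)}\neq 0$, and a finite set $J_\alpha \subset \N$ of indices $l$ with $b(v_l)\psi_\alpha^{(n_\alpha,m_\alpha)}\neq 0$. Since the creators/annihilators are $\C$-linear (resp.\ $\C$-antilinear, but in either case additive in the Fock-space argument), one has
$$
a(u_k)\psi = \sum_{\alpha=1}^N c_\alpha\, a(u_k)\psi_\alpha^{(n_\alpha,m_\alpha)},
$$
so $a(u_k)\psi$ can only be nonzero if $k$ lies in $I := \bigcup_{\alpha=1}^N I_\alpha$. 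Being a finite union of finite sets, $I$ is itself finite; the identical argument with $J := \bigcup_{\alpha=1}^N J_\alpha$ handles the antiparticle annihilators $b(v_l)$.

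There is essentially no obstacle here: the only point to notice is that cancellations among the sums $\sum_\alpha c_\alpha\, a(u_k)\psi_\alpha^{(n_\alpha,m_\alpha)}$ could at most shrink the set of indices where the action is nonzero, never enlarge it, so the finiteness passes from the individual decomposable summands to their linear combination. This is purely a bookkeeping argument on top of Lemma \ref{finite}, which already supplied all the nontrivial content.
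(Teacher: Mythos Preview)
Your proof is correct and follows exactly the same approach as the paper: the paper simply remarks that each element of $\FPol$ is by definition a finite linear combination of decomposable elements and invokes Lemma~\ref{finite}, which is precisely the argument you have spelled out in detail.
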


\subsection{CAR-algebra $\CAR(\Hilb,\Hilb_+)$}
\begin{defn}
For $f=f_+ + f_-$ in $\Hilb=\Hilb_+\oplus\Hilb_-$ set
\begin{align}
\Psi^\ast(f) &= a^\ast(f_+)+b(f_-)\quad\textrm{and}\\
\Psi(f) &= a(f_+)+b^\ast(f_-)
\end{align}
\end{defn}
The operators $\Psi^\ast(f)$ depend $\C$-linearly on $f\in\Hilb$ and one has
\begin{align}
\{\Psi(f),\Psi^\ast(f')\} &=\ip{f}{f'}_\Hilb\cdot 1_\F\quad\textrm{and}\\
\{\Psi^\ast(f),\Psi^\ast(f')\} &=0=\{\Psi(f),\Psi(f')\}.
\end{align}
The vacuum vector $\vac$ is characterized by
\begin{equation}
\Psi^\ast(f_-)\vac=0=\Psi(f_+)\vac
\end{equation}
for all $f_-\in\Hilb_-$ and $f_+\in\Hilb_+$.

\subsection{GNS construction}
\begin{defn}
Let $A$ be a complex unital $C^\ast$-algebra. A \emph{positive linear form} on $A$ is a $\C$-linear functional $\omega$
such that $\omega(a^\ast a)\geq 0$ for all $a\in A$.
\end{defn}
One can show that $\omega$ is bounded with $\norm{\omega}=\omega(1)$.
\begin{defn}
Suppose that $A$ is a complex unital $C^\ast$-algebra. A \emph{state} on $A$ is a positive linear form on $A$ of norm $1$.
\end{defn}
\begin{thm}[Gelfand-Naimark-Segal (GNS)] Let $\omega$ be a state on a complex unital $C^\ast$-algebra $A$. Then
there exists a Hilbert space $\Hilb_\omega^A$, a $C^\ast$-algebra representation 
$\pi_\omega^A:A\To\B(\Hilb_\omega^A)$ and
a ``vacuum vector'' $\xi$ of norm $1$ in $\Hilb_\omega^A$ such that
\begin{equation}
\omega(a)=\ip{\xi_\omega^A}{\pi_\omega^A(a)\xi_\omega^A}_{\Hilb_\omega^A}
\end{equation}
and $\pi_\omega^A(A)\cdot\xi_\omega^A$ is dense in $\Hilb_\omega^A$.
\end{thm}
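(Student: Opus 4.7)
The plan is to execute the standard GNS construction, using only that $\omega$ is a state and that $A$ has a unit $1_A$. First I would introduce on $A$ the sesquilinear form
$$\ip{a}{b}_\omega := \omega(a^\ast b),$$
which is positive semi-definite because $\omega(a^\ast a) \geq 0$. From the Cauchy--Schwarz inequality $|\omega(a^\ast b)|^2 \leq \omega(a^\ast a)\omega(b^\ast b)$ one sees that
$$N_\omega := \set{a \in A \mid \omega(a^\ast a) = 0} = \set{a \in A \mid \omega(b^\ast a) = 0 \text{ for every } b \in A},$$
so $N_\omega$ is a closed linear subspace; using the estimate $\omega((ac)^\ast(ac)) = \omega(c^\ast a^\ast a c) \leq \norm{a}^2 \omega(c^\ast c)$ established in the next paragraph, $N_\omega$ is in fact a closed left ideal.

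Next, the quotient $A/N_\omega$ inherits a genuinely positive-definite inner product from $\ip{\cdot}{\cdot}_\omega$, and I would take $\Hilb_\omega^A$ to be its Hilbert space completion. For each $a \in A$, left multiplication $L_a(b) := ab$ descends, precisely because $N_\omega$ is a left ideal, to a well-defined linear map on $A/N_\omega$. The key estimate
$$\norm{L_a[b]}_\omega^2 = \omega(b^\ast a^\ast a b) \leq \norm{a}^2\, \omega(b^\ast b) = \norm{a}^2 \norm{[b]}_\omega^2$$
is obtained by applying the positive linear form $\omega_b(x) := \omega(b^\ast x b)$ to the $C^\ast$-algebraic inequality $a^\ast a \leq \norm{a}^2 \cdot 1_A$, which holds in any unital $C^\ast$-algebra. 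Hence $L_a$ extends to a bounded operator $\pi_\omega^A(a) \in \B(\Hilb_\omega^A)$ with $\norm{\pi_\omega^A(a)} \leq \norm{a}$, and a direct check on the dense subspace $A/N_\omega$ shows that $\pi_\omega^A$ is both a unital algebra homomorphism and a $\ast$-homomorphism, hence a $C^\ast$-algebra representation.

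Finally I would set $\xi_\omega^A := [1_A] \in A/N_\omega \subset \Hilb_\omega^A$. Since $\omega$ is a state,
$$\norm{\xi_\omega^A}_\omega^2 = \omega(1_A^\ast 1_A) = \omega(1_A) = 1,$$
and the reproducing identity $\ip{\xi_\omega^A}{\pi_\omega^A(a)\xi_\omega^A}_{\Hilb_\omega^A} = \ip{[1_A]}{[a]}_\omega = \omega(a)$ is immediate. Cyclicity is tautological: $\pi_\omega^A(A)\cdot \xi_\omega^A = A/N_\omega$, which is dense in $\Hilb_\omega^A$ by construction of the completion.

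The only genuinely non-formal step is the boundedness of $\pi_\omega^A(a)$, which is where the $C^\ast$-structure enters through the order-theoretic inequality $a^\ast a \leq \norm{a}^2 \cdot 1_A$; everything else is elementary linear algebra plus Hilbert-space completion. The hypothesis that $A$ is unital makes the construction of the cyclic vector $\xi_\omega^A$ immediate, so no approximate-identity argument is required.
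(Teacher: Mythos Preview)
Your proof is correct and is precisely the standard GNS construction. The paper itself does not supply a proof of this theorem; it is stated there as a classical result and used as background, so there is no alternative argument in the paper to compare against.
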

We specialize to the case of CAR-algebra $A=\CAR(\Hilb,\Hilb_+)$ with unit denoted by $1$.

\begin{defn}
Let $q$ be a self-adjoint operator on a complex separable polarized Hilbert space 
$\Hilb=\Hilb_+\oplus\Hilb_-$ satisfying
$0\leq q\leq 1$. A \emph{quasi-free state} $\omega_q$ on $\CAR(\Hilb,\Hilb_+)$ 
associated to $q$ is defined by
\begin{equation}
\omega_q(1)=1\quad\textrm{and}\\
\end{equation}
\begin{equation}
\omega_q(\psi^\ast(k_1)\cdots\psi^\ast(k_n)\psi(l_1)\cdots\psi(l_m)) 
=\delta_{n.m}\det(\ip{l_i}{q(k_j)}_\Hilb).
\end{equation}
\end{defn}
\begin{prop}
The representation of the CAR-algebra $\CAR(\Hilb,\Hilb_+)$ on the fer\-mi\-onic Fock space $\F$ generated by the field operators
$\Psi^\ast$ and $\Psi$ is unitarily equivalent to the GNS-re\-pre\-sen\-ta\-tion associated to the gauge-invariant
quasi-free state $\omega_{p-}$ coming from the projection to $p_-:\Hilb\To\Hilb_-$.
\end{prop}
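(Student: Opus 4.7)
The plan is to apply the uniqueness part of the GNS construction. That is, I will exhibit in the Fock representation a unit cyclic vector $\xi$ whose expectation functional on $\CAR(\Hilb,\Hilb_+)$ agrees with $\omega_{p_-}$; the GNS theorem then delivers a canonical unitary intertwiner onto the GNS representation. The obvious candidate is $\xi=\vac\in\F^{(0,0)}$, which is already a unit vector.

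First I would verify cyclicity. The algebraic Fock space $\FPol$ is spanned by the vectors $u_{i_1}\wedge\cdots\wedge u_{i_n}\otimes Cv_{j_1}\wedge\cdots\wedge Cv_{j_m}$ and each such vector equals $a^\ast(u_{i_1})\cdots a^\ast(u_{i_n})b^\ast(v_{j_1})\cdots b^\ast(v_{j_m})\vac$ up to a sign. Writing $a^\ast(u_k)=\Psi^\ast(u_k)$ (since $u_k\in\Hilb_+$ implies $(u_k)_-=0$) and $b^\ast(v_l)=\Psi(v_l)$ (since $v_l\in\Hilb_-$ implies $(v_l)_+=0$), these vectors lie in $\CAR(\Hilb,\Hilb_+)\cdot\vac$. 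Since $\FPol$ is dense in $\F$, the orbit $\CAR(\Hilb,\Hilb_+)\cdot\vac$ is dense, giving cyclicity.

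Next I would verify the identity
\[
\langle\vac,\Psi^\ast(k_1)\cdots\Psi^\ast(k_n)\Psi(l_1)\cdots\Psi(l_m)\vac\rangle=\delta_{n,m}\det\bigl(\langle l_i,p_-k_j\rangle_\Hilb\bigr).
\]
The two-point case is the heart of the matter. Using the CAR relation $\Psi^\ast(k)\Psi(l)=-\Psi(l)\Psi^\ast(k)+\langle l,k\rangle$ together with $\Psi^\ast(l)\vac=a^\ast(l_+)\vac=l_+$ (because $b(l_-)\vac=0$), one computes
\[
\langle\vac,\Psi^\ast(k)\Psi(l)\vac\rangle=\langle l,k\rangle-\langle l_+,k_+\rangle=\langle l,p_-k\rangle,
\]
which is exactly the desired covariance. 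Vanishing when $n\neq m$ follows from the charge grading: $\Psi^\ast$ shifts $(n,m)\mapsto(n+1,m)$ or $(n,m-1)$ and $\Psi$ shifts $(n,m)\mapsto(n-1,m)$ or $(n,m+1)$, so the total shift $n-m$ changes by $\pm1$ at each factor, and only balanced words can connect $\vac$ to itself. For the general $n=m$ case I would use Wick's theorem, proved by induction: anticommute $\Psi^\ast(k_n)$ to the right past all $\Psi(l_j)$ using the CAR, noting $\Psi^\ast(k_n)\vac=(k_n)_+$ and that only the contractions $\{\Psi^\ast(k_n),\Psi(l_j)\}=\langle l_j,k_n\rangle$ produce nonzero terms once paired with the already-established two-point result, and expand the determinant along its last column.

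The main obstacle is a careful bookkeeping of signs in the inductive step of Wick's theorem and the verification that the residual term $\Psi^\ast(k_n)\vac$ pairs correctly with the remaining annihilators so that the expansion really matches the cofactor expansion of $\det(\langle l_i,p_-k_j\rangle)$. Once this identity is established, the operator valued distribution $\pi(\Psi^{(\ast)}(f))$ on $\F$ with cyclic vector $\vac$ satisfies the defining property of the GNS data for $\omega_{p_-}$, and the uniqueness of GNS produces the desired unitary equivalence.
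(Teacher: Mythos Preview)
The paper states this proposition without proof; the surrounding section is expository and refers to \cite{HuWu} for the material. Your strategy---show that $\vac$ is a cyclic unit vector whose vector state coincides with $\omega_{p_-}$, then invoke the uniqueness clause of the GNS theorem---is exactly the standard argument and is correct. The cyclicity and two-point computations are fine (modulo the harmless typo $\Psi^\ast(l)\vac$ where you mean $\Psi^\ast(k)\vac$).

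One suggestion for the Wick induction that sidesteps the bookkeeping you flag: rather than anticommuting $\Psi^\ast(k_n)$ as a whole and then reconciling the residual $(k_n)_+$ with the full contractions $\langle l_j,k_n\rangle$, split $\Psi^\ast(k_n)=a^\ast((k_n)_+)+b((k_n)_-)$ \emph{first}. The summand $a^\ast((k_n)_+)$ anticommutes with every $\Psi^\ast(k_i)$ and annihilates $\langle\vac\rvert$ on the left, so it contributes nothing. The summand $b((k_n)_-)$ annihilates $\vac$ on the right, so anticommuting it through the $\Psi(l_j)$'s leaves exactly the contractions $\{b((k_n)_-),\Psi(l_j)\}=\langle l_j,p_-k_n\rangle$, and the cofactor expansion along the last column of $\det(\langle l_i,p_-k_j\rangle)$ drops out immediately by the inductive hypothesis. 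This avoids having to recombine $p_+$-pieces after the fact. Do still check the global sign against the paper's exact ordering convention in the definition of $\omega_q$; different sources normalize the ordering of the $\psi$'s differently, and a stray $(-1)^{n(n-1)/2}$ is easy to miss.
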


\subsection{The Banach Lie groups $\Ures$ and $\GLres$}\label{FermionsandUres}
Mathematically speaking one would like to lift certain unitary operators $U$ on a polarized ``one particle'' Hilbert space $\Hilb=\Hilb_+\oplus\Hilb_-$ to
unitary operators $\mathbb{U}$ on the ``many particle'' Hilbert space $\F(\Hilb,\Hilb_+)$ compatible with the so called
Bogoliubov transformations (cf. \cite{HuWu}). It turns out that this implementation problem is solvable if and only if we impose some relevant restrictions to our
unitaries in $\B(\Hilb)$. This leads to the following definition.
\begin{defn}
Define the \emph{restricted unitary group} of the 
separable complex polarized Hilbert space $\Hilb=\Hilb_+\oplus\Hilb_-$ to be
$$
\Ures=\Ures(\Hilb,\Hilb_+):=\set{U\in U(\Hilb)\mid [\epsilon,U]\in\Lp^2(\Hilb)},
$$
where $\epsilon=\pr_+-\pr_-$ is the grading operator corresponding to the polarization $\Hilb=\Hilb_+\oplus\Hilb_-$
and $\Lp^2(\Hilb)\subset\B(\Hilb)$ is the symmetrically normed two-sided $^\ast$-ideal of
the $C^\ast$-algebra $\B(\Hilb)$ consisting of all Hilbert-Schmidt operators in $\Hilb$.
\end{defn}

The topology of $\Ures$ is induced by the embedding
$$
\Ures(\Hilb,\Hilb_+)\hookrightarrow U(\Hilb)\times\Lp^2(\Hilb),\quad U\mapsto (U,[\epsilon,U]),
$$
where $U(\Hilb)$ is endowed with the operator norm topology induced from $\B(\Hilb)$,
$$
\norm{U}_{U(\Hilb)}=\sup\set{\norm{Uv}\mid v\in\Hilb,\quad\norm{v}\leq 1}
$$ 
and $\Lp^2(\Hilb)$ with the norm topology coming from the inner product 
$$
\ip{A}{B}_{\Lp^2(\Hilb)}:=\sum_{k\in\Z}\ip{Ae_k}{Be_k}_\Hilb,
$$ i.e.
\begin{equation}
\norm{A}^2_{\Lp^2(\Hilb)}=\ip{A}{A}_{\Lp^2(\Hilb)}=\sum_{k\in\Z}\norm{Ae_k}^2
\end{equation}
(here $\set{e_k}_{k\in\Z}$ is \emph{any} orthonormal basis of the Hilbert space $\Hilb$). Endowed with this topology
$\Ures$ becomes a Banach Lie group.

Let us recall the standard block-matrix description of a linear operator $A$ from a polarized Hilbert space $\Hilb=\Hilb_+\oplus\Hilb_-$ to itself:
$$
A=
\left(
\begin{array}{cc}
A_{+} & A_{+-} \\
A_{-+} & A_{-} 
\end{array} \right),
$$
where $A_\pm:\Hilb_\pm\To\Hilb_\pm$ and $A_{+-}$ (respectively $A_{-+}$) maps $\Hilb_-$ to $\Hilb_+$ (respectively $\Hilb_+$ to $\Hilb_-$).

In the block-matrix description the Lie group $\Ures(\Hilb,\Hilb_+)$ can be given by
$$
\set{A\in\B(\Hilb)\mid A^\ast=A^{-1},\quad A_{+-},A_{-+}\textrm{ are Hilbert-Schmidt operators}}.
$$
Using the fact that $A$ is unitary, an easy computation shows that $A_+$ and $A_-$ are invertible up to compact operators, thus Fredholm.
In particular, the index $\ind A_+$ (resp. $\ind A_-$) is well-defined.

Similarly, the Lie algebra $\ures(\Hilb,\Hilb_+):=\Lie(\Ures(\Hilb,\Hilb_+))$ can be described as
\begin{equation}
\set{A\in\B(\Hilb)\mid A^\ast=-A,\quad  A_{+-},A_{-+}\textrm{ are Hilbert-Schmidt operators}}
\end{equation}
with the topology given by the norm
\begin{equation}
\norm{A}_{\ures}:=\norm{A_+}+\norm{A_-}+\norm{A_{+-}}_2+\norm{A_{-+}}_2,
\end{equation}
where $\norm{\cdot}$ denotes the operator norm and $\norm{\cdot}_2$ is the Hilbert-Schmidt norm.

\begin{thm}[``Theorem of Powers and St\o rmer'' or ``\,Shale--Stinespring criterion'']
Let $\Hilb=\Hilb_+\oplus\Hilb_-$ be a polarized Hilbert space, $U:\Hilb\To\Hilb$ a unitary
operator and 
$\beta_U:\CAR(\Hilb,\Hilb_+)\To\CAR(\Hilb,\Hilb_+)$ 
the corresponding ``Bogoliubov transformation''
induced by
\begin{equation}
\beta_U(a^\ast(f)):=a^\ast(Uf).
\end{equation}
Consider the GNS-representation $\pi_-$ of the CAR algebra $\CAR(\Hilb,\Hilb_+)$ on
the Fock space $\F(\Hilb,\Hilb_+)$ associated to the gauge-invariant quasi-free state
$\omega_{p-}$ coming from the projection to $\Hilb_-$ on $\Hilb$.
Then there exists a unitary transformation $\mathbb{U}$ of the fermionic Fock space $\F(\Hilb,\Hilb_+)$
such that
\begin{equation}
\mathbb{U}\circ\pi_-(\alpha)\circ\mathbb{U}^{-1}=\pi_-(\beta_U(\alpha))\quad\textrm{ for all }
\alpha\in\CAR(\Hilb,\Hilb_-),
\end{equation}
if and only if $U\in\Ures(\Hilb,\Hilb_-)$.
\end{thm}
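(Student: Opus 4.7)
The plan is to attack the two implications by different routes: necessity by studying the transformed vacuum $\mathbb{U}\vac$ and extracting a Hilbert-Schmidt condition from the requirement that it lie in $\F$, and sufficiency by explicitly constructing the implementer $\mathbb{U}$ out of the off-diagonal Hilbert-Schmidt data of $U$.

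For necessity, suppose an implementer $\mathbb{U}$ exists and set $\Omega_U := \mathbb{U}\vac \in \F$. Applying the intertwining relation to the annihilation identities $\Psi(f_+)\vac = 0$ and $\Psi^\ast(f_-)\vac = 0$ yields $\Psi(Uf_+)\,\Omega_U = 0 = \Psi^\ast(Uf_-)\,\Omega_U$ for all $f_\pm \in \Hilb_\pm$. Writing $U$ in its block form and solving these constraints recursively against the $\F^{(n,m)}$-decomposition, one finds that $\Omega_U$ must have the Gaussian form
$$
\Omega_U = c\,\exp\Bigl(\sum_{i,j} T_{ij}\, a^\ast(u_i)\, b^\ast(v_j)\Bigr)\vac,
$$
where $T$ is, up to invertible adjustments, the matrix of $U_{-+}(U_+)^{-1}$ in the bases $\set{u_i}$ and $\set{v_j}$. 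A direct evaluation of $\|\Omega_U\|^2$ via Wick's theorem gives $\|\Omega_U\|^2 = |c|^2\det(1+T^\ast T)$, which is finite precisely when $T \in \Lp^2$. This forces $U_{+-}, U_{-+} \in \Lp^2(\Hilb)$, i.e.\ $U \in \Ures$.

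For sufficiency, assume $U \in \Ures$. On the connected component where $U_+$ is invertible I would factor $U = U_d V$ with $U_d = \mathrm{diag}(U_+, U_-)$ polarization-preserving and $V$ a purely off-diagonal Bogoliubov correction whose kernel is Hilbert-Schmidt. The diagonal piece $U_d$ is implemented by a standard second-quantization functor $\Lambda(U_d)$ acting wedge-by-wedge on $\F_+\tensor\F_-$. The residual piece $V$ is then implemented by a normal-ordered Gaussian operator
$$
\mathbb{V} = c\,\exp\Bigl(\sum_{i,j} T_{ij}\, a^\ast(u_i)\, b^\ast(v_j)\Bigr),
$$
with $T$ assembled from the Hilbert-Schmidt block of $V$. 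Setting $\mathbb{U} := \Lambda(U_d)\,\mathbb{V}$, the intertwining relation is then verified on the CAR generators $\Psi^\ast(f)$ and extended by density. The remaining components of $\Ures$ (with nonzero Fredholm index of $U_+$) are handled by composing with standard shift implementers.

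The main technical obstacle is the sufficiency step: one must show that the displayed exponential $\mathbb{V}$ defines a bounded, in fact unitary, operator on $\F$, with norm controlled by $\|T\|_{\Lp^2}$. This hinges on the Fredholm-determinant identity $\|\mathbb{V}\vac\|^2 = \det(1+T^\ast T)$ together with careful bookkeeping of signs and normal orderings when checking the commutation relations on generators; convergence of the determinant is exactly the Hilbert-Schmidt hypothesis. The necessity direction, once the Gaussian ansatz for $\Omega_U$ has been isolated, is comparatively light.
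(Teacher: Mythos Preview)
The paper does not prove this theorem at all: it is quoted as a classical background result (following the exposition in \cite{HuWu}) and used without argument. There is therefore no ``paper's own proof'' to compare your proposal against.

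That said, your sketch is essentially the standard route to the Shale--Stinespring criterion as found in the literature (e.g.\ Carey--Ruijsenaars, Ottesen, or Wurzbacher's survey): necessity via the Gaussian structure of the transformed vacuum $\mathbb{U}\vac$, and sufficiency via an explicit normal-ordered exponential implementer built from the off-diagonal Hilbert--Schmidt block. Two technical points are worth flagging if you intend to make this rigorous. First, $U_+$ is in general only Fredholm, not invertible, so the formula $T = U_{-+}(U_+)^{-1}$ needs modification on the kernel of $U_+$; your phrase ``up to invertible adjustments'' hides real work here, even on the identity component. Second, the exponential you write for $\mathbb{V}$ contains only $a^\ast b^\ast$ terms and, as written, is neither unitary nor even bounded; the genuine implementer also involves annihilation--annihilation terms (and a determinant normalization), and verifying unitarity requires more than the single Fredholm-determinant identity you cite. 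These are repairable, but they are exactly the places where the full proof earns its length.
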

\begin{rem}
The unitary transformation 
$\mathbb{U}:\F(\Hilb,\Hilb_+)\To\F(\Hilb,\Hilb_+)$ in the previous theorem is said to 
\emph{implement} the Bogoliubov transformation 
$$
\beta_U:\CAR(\Hilb,\Hilb_+)\To\CAR(\Hilb,\Hilb_+)
$$ 
associated to the unitary operator
$U:\Hilb\To\Hilb$.
\end{rem}
\begin{lem}
The map $\Ures(\Hilb,\Hilb_+)\To U(\F(\Hilb,\Hilb_+))$, $U\mapsto\mathbb{U}$ given by the theorem of
Powers and St\o rmer yields a projective representation $\Theta:\Ures(\Hilb,\Hilb_+)\To\Ps U(\F)$.
\end{lem}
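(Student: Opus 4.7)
The plan is to unpack what a projective representation is, observe that the implementer $\mathbb{U}$ furnished by the Shale--Stinespring theorem is only determined up to a phase, and then check that passage to the quotient $U(\F)/U(1) = \Ps U(\F)$ turns $U \mapsto \mathbb{U}$ into an honest group homomorphism.

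First I would address well-definedness. Suppose $\mathbb{U}_1,\mathbb{U}_2 \in U(\F)$ both implement $\beta_U$, i.e.\ $\mathbb{U}_i \pi_-(\alpha) \mathbb{U}_i^{-1} = \pi_-(\beta_U(\alpha))$ for every $\alpha \in \CAR(\Hilb,\Hilb_+)$. Then $\mathbb{U}_1^{-1}\mathbb{U}_2$ commutes with $\pi_-(\CAR(\Hilb,\Hilb_+))$. The Fock representation $\pi_-$ associated to the quasi-free state $\omega_{p_-}$ is irreducible (the vacuum is cyclic and separating for the obvious polarization, and the CAR algebra acts irreducibly on $\F(\Hilb,\Hilb_+)$), so by Schur's lemma $\mathbb{U}_1^{-1}\mathbb{U}_2 = \lambda \cdot 1_\F$ for some $\lambda \in \C$; unitarity forces $\lambda \in U(1)$. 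Hence the class $[\mathbb{U}] \in \Ps U(\F) = U(\F)/U(1)$ is uniquely determined by $U$, and we get a well-defined set-theoretic map $\Theta: \Ures(\Hilb,\Hilb_+) \to \Ps U(\F)$, $U \mapsto [\mathbb{U}]$.

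Next I would verify the homomorphism property. For $U_1, U_2 \in \Ures(\Hilb,\Hilb_+)$, the Bogoliubov transformations compose in the obvious way,
\[
\beta_{U_1 U_2}(a^\ast(f)) = a^\ast(U_1 U_2 f) = \beta_{U_1}(\beta_{U_2}(a^\ast(f))),
\]
so $\beta_{U_1 U_2} = \beta_{U_1}\circ\beta_{U_2}$ as $^\ast$-automorphisms of $\CAR(\Hilb,\Hilb_+)$. Picking implementers $\mathbb{U}_1, \mathbb{U}_2$ of $\beta_{U_1}, \beta_{U_2}$, a direct computation shows that $\mathbb{U}_1 \mathbb{U}_2$ implements $\beta_{U_1 U_2}$. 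By the uniqueness statement of the previous paragraph, $\mathbb{U}_{U_1 U_2}$ and $\mathbb{U}_1 \mathbb{U}_2$ agree up to a phase in $U(1)$, so
\[
\Theta(U_1 U_2) = [\mathbb{U}_{U_1 U_2}] = [\mathbb{U}_1 \mathbb{U}_2] = [\mathbb{U}_1][\mathbb{U}_2] = \Theta(U_1)\Theta(U_2)
\]
in $\Ps U(\F)$. Taking $U = 1_\Hilb$, the identity of $\F$ implements $\beta_{1_\Hilb}$, so $\Theta(1_\Hilb) = [1_\F]$, and $\Theta$ is a group homomorphism into $\Ps U(\F)$.

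The main subtle point is really the Schur--irreducibility step: one must know that the Fock representation $\pi_-$ is irreducible, so that all the ambiguity in the implementer collapses to a single $U(1)$-phase rather than a larger commutant. Everything else is essentially formal once the Shale--Stinespring theorem is in hand. (Continuity of $\Theta$ in the natural Banach Lie group topology on $\Ures$ and the quotient topology on $\Ps U(\F)$ can be established separately, but is not part of the statement here.)
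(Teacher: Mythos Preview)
Your argument is correct and is exactly the standard one: uniqueness of the implementer up to a phase via irreducibility of the Fock representation and Schur's lemma, followed by the observation that $\beta_{U_1U_2}=\beta_{U_1}\circ\beta_{U_2}$ forces $\Theta$ to be multiplicative modulo $U(1)$. The paper itself states this lemma without proof, treating it as a well-known consequence of the Shale--Stinespring theorem (with the surrounding discussion drawn from \cite{HuWu}), so there is nothing to compare against; your write-up simply fills in the omitted details.
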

The map $\Theta:\Ures(\Hilb,\Hilb_+)\To\Ps U(\F)$ in the previous lemma is known to be \emph{discontinuous} when the infinite dimensional projective unitary
group $\Ps U(\F)$ is endowed with the  topology coming from the norm topology. However, $\Ps U(\F)$ is a metrizable \emph{topological} group
in the topology induced by the strong operator topology \cite{HuWu} and with respect to this topology the homomorphism
$\Theta$ \emph{is} continuous.

\begin{prop} 
Let $\Hilb=\Hilb_+\oplus\Hilb_-$ be a polarized Hilbert space. Then
\begin{enumerate}
\item the exponential map $\exp:\ures\To\Ures,\,\exp(A)=\sum_{n\geq 0}\frac{1}{n!}A^n$, gives real-analytic coordinates on $\Ures$,
\item the connected components of $\Ures$ are the sets
$$
\Ures^k:=\set{U\in\Ures\mid\ind(U_+)=k},
$$
where the \emph{index} $\ind(U_+):=\dim(\ker U_+)-\dim(\coker U_+)$ and $k\in\Z$.
\end{enumerate}
\end{prop}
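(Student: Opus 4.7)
\textbf{Part (1).} My plan is to verify that $\exp$ maps $\ures$ into $\Ures$, show it is real-analytic, and then apply the Banach inverse function theorem. Unitarity of $\exp(A)$ for $A\in\ures$ is immediate from $A^\ast=-A$, since $\exp(A)^\ast=\exp(-A)=\exp(A)^{-1}$. For the off-diagonal condition I would expand
$$
[\epsilon,\exp(A)]=\sum_{n\ge 1}\frac{1}{n!}[\epsilon,A^n]=\sum_{n\ge 1}\frac{1}{n!}\sum_{k=0}^{n-1}A^{k}[\epsilon,A]A^{n-1-k}
$$
and bound in the Hilbert--Schmidt norm by the ideal estimate $\|BCD\|_2\le\|B\|\cdot\|C\|_2\cdot\|D\|$, together with $\|[\epsilon,A]\|_2\le 2\|A\|_{\ures}$, which follows from the block-matrix form of $[\epsilon,A]$. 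This yields $\|[\epsilon,\exp(A)]\|_2\le 2\,e^{\|A\|_{\ures}}\|A\|_{\ures}$, so $\exp(A)\in\Ures$, and at the same time shows that $A\mapsto\exp(A)$ is real-analytic from $\ures$ into $\Ures$ with respect to the topology coming from the embedding $\Ures\hookrightarrow U(\Hilb)\times\Lp^2(\Hilb)$. Since $(d\exp)_0=\id_{\ures}$ is invertible, the Banach inverse function theorem produces a real-analytic diffeomorphism between a neighborhood of $0\in\ures$ and a neighborhood of $I\in\Ures$; composing with the left translations $L_U(V)=UV$, which are themselves real-analytic diffeomorphisms of $\Ures$, yields real-analytic coordinate charts at every point.

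\textbf{Part (2).} The plan has three steps. First, I would verify that the index map $\Ures\to\Z$, $U\mapsto\ind(U_+)$, is locally constant. The embedding $\Ures\hookrightarrow U(\Hilb)\times\Lp^2(\Hilb)$ makes $U\mapsto U_+=\pr_{\Hilb_+}U\pr_{\Hilb_+}$ operator-norm continuous into the Fredholm operators on $\Hilb_+$ (Fredholmness was noted just before the proposition), and the Fredholm index is locally constant in that topology; hence each $\Ures^{k}$ is open and closed, and is therefore a union of connected components. Next, I would reduce the general case to $k=0$ by multiplication with a shift. Fix an orthonormal basis $\{e_n\}_{n\in\Z}$ of $\Hilb$ adapted to the polarization, say $\Hilb_+=\overline{\spa\{e_n\mid n\ge 1\}}$ and $\Hilb_-=\overline{\spa\{e_n\mid n\le 0\}}$, and define the unitary $S\in U(\Hilb)$ by $Se_n:=e_{n+1}$. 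Then the only nonzero off-diagonal block of $S$ is the rank-one map $e_0\mapsto e_1$, so $S\in\Ures$; and $S_+$ has zero kernel but cokernel $\C e_1$, giving $\ind(S_+)=-1$. Consequently $S^{-k}\in\Ures^{k}$, and since the index is additive under composition of Fredholm operators, left multiplication by $S^{-k}$ is a homeomorphism $\Ures^{0}\to\Ures^{k}$.

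\textbf{The main obstacle} lies in the third step: showing that $\Ures^{0}$ itself is path-connected. My plan is to construct, for an arbitrary $U\in\Ures^{0}$, a continuous path inside $\Ures^{0}$ from $U$ to the identity in two stages. First, I would perturb $U$ within $\Ures^{0}$ to an element $U'$ whose $(++)$-block is invertible: since $U_+$ is Fredholm of index zero, it admits arbitrarily small finite-rank perturbations making it invertible, and any such perturbation may be absorbed into a nearby unitary in $\Ures^{0}$ by continuity of the index. Second, using the invertibility of $U'_+$, I would apply a polar/Gram--Schmidt-type decomposition to deform $U'$ continuously to a block-diagonal element $\diag(V_+,V_-)\in U(\Hilb_+)\times U(\Hilb_-)\subset\Ures^{0}$, contracting the Hilbert--Schmidt off-diagonal corners to zero. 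Finally, Kuiper's theorem, which asserts that the unitary group of an infinite-dimensional Hilbert space is norm-contractible, allows one to deform $V_\pm$ to the identity in each factor. Continuity of the index map established at the beginning guarantees that the entire path remains in $\Ures^{0}$, completing the proof.
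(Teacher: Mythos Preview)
The paper states this proposition without proof; it is quoted as a known structural fact about $\Ures$, with the implicit reference being \cite{PrSe} and the survey \cite{HuWu}. So there is no ``paper's own proof'' to compare your proposal against, and your outline is essentially the standard argument found in those references.

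Your Part~(1) is fine. In Part~(2) the only step that is genuinely underspecified is the first stage of the connectedness argument: you say that a small finite-rank perturbation of $U_+$ making it invertible ``may be absorbed into a nearby unitary in $\Ures^0$ by continuity of the index.'' Continuity of the index only tells you that \emph{if} you have a nearby unitary then it still lies in $\Ures^0$; it does not produce the unitary. What you actually need is that the set $\{U\in\Ures^0\mid U_+\ \text{invertible}\}$ is nonempty, open, and meets every component of $\Ures^0$ (equivalently, is dense). One clean way to see this is to use the chart from Part~(1): for $U\in\Ures^0$ with $\dim\ker U_+=n$, note that $U$ maps $\ker U_+\subset\Hilb_+$ isometrically into $\Hilb_-$ (since $Uv=U_{-+}v$ for $v\in\ker U_+$), and likewise $U^\ast$ maps $\coker U_+$ isometrically into $\Hilb_-$; a finite-rank rotation in the $2n$-dimensional subspace spanned by these images, implemented by $\exp(tA)$ with $A$ finite-rank skew-adjoint, moves $U$ inside $\Ures^0$ to an element with invertible $(++)$-block. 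Once you fill this in, your Gram--Schmidt/polar deformation to the block-diagonal subgroup and the appeal to Kuiper's theorem go through exactly as you describe.
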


The Banach  Lie group $\Ures$ has a complexification
\begin{equation}
\GLres(\Hilb,\Hilb_+):=\set{g\in GL(\Hilb)\mid [\epsilon,g]\in\Lp^2},
\end{equation}
where $GL(\Hilb)$ denotes the invertible bounded linear operators on $\Hilb$. The corresponding Lie algebra is given by
\begin{equation}
\glres(\Hilb,\Hilb_+):=\set{A\in\mathfrak{gl}(\Hilb)\mid [\epsilon,A]\in\Lp^2},
\end{equation}
where $\mathfrak{gl}(\Hilb)$ is the space $\B(\Hilb)$ of bounded linear operators on $\Hilb$.

Pressley and Segal prove in \cite{PrSe} that the Banach Lie group $\GLres(\Hilb,\Hilb_+)$ admits a \emph{central extension}
\begin{equation}
1\To \C^\ast\To\EGLres\To\GLres\To 1
\end{equation}
in the category of Banach Lie groups. This induces the central extension
\begin{equation}
1\To U(1)\To\EUres\To\Ures\To 1
\end{equation}
of $\Ures$. 

At the level of Lie algebras $\Lie(\EGLres)\isom\glres\oplus\C$, with the Lie bracket given by
\begin{equation}
\Big[(A,\alpha),(B,\beta)\Big]=\Big([A,B],s(A,B)\Big),
\end{equation}
where the Lie algebra two-cocycle $s$ is given by
\begin{equation}\label{Schwinger}
s(A,B)=\Tr(A_{-+}\cdot B_{+-}-B_{-+}\cdot A_{+-})
\end{equation}
also known in quantum field theory as the \emph{``Schwinger term''} in 1+1 dimensions.

\subsection{Fermionic second quantization and normal-ordered operators}
Next we would like to work concretely at the level of Lie algebras and lift operators 
$A\in\ures(\Hilb,\Hilb_+)$,
$A:\Hilb\To\Hilb$ to an
explicitly given projective action on the fermionic Fock space $\F:=\F(\Hilb,\Hilb_+)$. After this is done one naturally hopes to exponentiate the projective Lie algebra action to a projective
action of the Lie group $\Ures(\Hilb,\Hilb_+)$.

Let $\Hilb=\Hilb_+\oplus\Hilb_-$ be a complex separable polarized Hilbert space. We fix orthonormal Hilbert bases
$\set{e_j\mid j\geq 0}$ of $\Hilb_+$ and $\set{e_j\mid j<0}$ of $\Hilb_-$, respectively. The one-particle matrix description of
$A\in\B(\Hilb)$,
$$
Ae_j=\sum_{k\in\Z}\ip{e_k}{Ae_k}e_k,
$$
can be seen as the annihilation of $e_j$ then followed by the creation of all possible $e_k$'s weighted by the
matrix elements $\ip{e_k}{Ae_j}$. This motivates trying to use the field operators $\Psi^\ast$ and $\Psi$
acting on the fermionic Fock space $\F=\F(\Hilb,\Hilb_+)$ to lift $A\in\B(\Hilb)$ to
an operator $A\Psi^\ast\Psi$ on $\F$ by defining
$$
A\Psi^\ast\Psi:=\sum_{k\in\Z}\sum_{j\in\Z}\ip{e_k}{Ae_j}\Psi^\ast(e_k)\Psi(e_j)=\sum_{j\in\Z}\Psi^\ast(Ae_j)\Psi(e_j).
$$
Decomposing $A\Psi^\ast\Psi$ with respect to $\Hilb_+$ and $\Hilb_-$ gives
\begin{equation}\label{canquan}
A\Psi^\ast\Psi=A_+ a^\ast a +A_{+-} a^\ast b^\ast+A_{-+}ba+A_- bb^\ast.
\end{equation}
This shows that our candidate lift $A\Psi^\ast\Psi$ of $A$ is in general diverging since for instance the ``vacuum expectation value''
of $Abb^\ast$ is given by
$$
\ip{\vac}{Abb^\ast\vac}=\Tr(A_-),
$$
which is meaningless unless the operator $A_-$ fulfils a trace class condition, which is undesirable from the
perspective of physics.

The above problem has a brutal answer: Just subtract the possibly infinite trace term in (\ref{canquan}) and redefine the operators by the
so-called \emph{normal-ordering} process. Its prescription is given as follows: ``Each time a creator \emph{precedes} an annihilator,
reverse the order of the operators and factor by $-1$''. More concretely we set
\begin{equation}
\normord Abb^\ast\normord=-\sum_{k,\,l<0}\ip{e_k}{Ae_l}b^\ast(e_l)b(e_k)
\end{equation}
and give the following definition.

\begin{defn}
Let $A\in\B(\Hilb)$ with $\Hilb$ a polarized Hilbert space as above. Define the (formal) normal ordered operator
$\normord A\Psi^\ast\Psi\normord$ on $\F$ as
\begin{eqnarray}\label{NOrd}
\normord A\Psi^\ast\Psi\normord &=& Aa^\ast a+Aa^\ast b^\ast+ Aba+(\normord Abb^\ast\normord)\\
&=& A_{+}a^\ast a+A_{+-}a^\ast b^\ast+ A_{-\,+}ba-A_{-}b^\ast b
\end{eqnarray}
\end{defn}

For each $L\in\Z_{\geq 0}$ define projection operators $P_L$ on the fermionic Fock space $\F$ by
$$
P_L\left(\sum_{n,m\geq 0}\Psi^{(n,m)}\right)=\sum_{n+m\leq L}\Psi^{(n,m)}\qquad (\Psi^{(n,m)}\in\F^{(n,m)}\textrm{ for all }n,m).
$$

\begin{defn}
Define the subspace $\domain\subset\F$ of ``\emph{finite particle vectors}''  by
$$
\domain:=\set{\xi=\sum_{n,m\geq 0}\Psi^{(n,m)}\in\F\,\Big\vert\, \Psi^{(n,m)}\in\F^{(n,m)}\textrm{ and }\exists L\textrm{ such that }P_L(\xi)=\xi}.
$$
\end{defn}
Obviously $\domain$ is dense in $\F$ and it consists of physical states described as finite finite superpositions of states with bounded numer of particles
and anti-particles, hence the name. Notice also that
\begin{equation}
\domain=\bigoplus_{\substack{\mathrm{alg}\\n,m\geq 0}}\F^{(n,m)},
\end{equation}
the \emph{algebraic} direct sum of Hilbert spaces $\F^{(n,m)}$.

\begin{thm}
Let $A\in\ures(\Hilb,\Hilb_+)$ be arbitrary. Then the normal ordered product $\normord A\Psi^\ast\Psi\normord$ is
a well-defined operator on the dense domain $\domain\subset\F(\Hilb,\Hilb_+)$. In particular this domain
is independent of $A$. Furthermore, $\normord A\Psi^\ast\Psi\normord$ has a unique anti-selfadjoint extension $\Aa$
and $\exp(t\Aa)$ is unitary for all $t\in\Real$ and implements $U_t=\exp(tA)$, i.e. $\exp(t\Aa)$ is a representative of
$[\mathbb{U}_t]$.
\end{thm}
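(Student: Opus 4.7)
The plan is to treat the four assertions (well-definedness on $\domain$, independence of $\domain$ from $A$, existence and uniqueness of the anti-selfadjoint closure, implementation of $U_t$) in order, with the hard step being essential anti-selfadjointness.

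\textbf{Step 1 (well-definedness on the common domain $\domain$).} Since by definition $\domain=\bigoplus_{\mathrm{alg}}\F^{(n,m)}$, it suffices to define $\normord A\Psi^\ast\Psi\normord$ on a decomposable $\psi^{(n,m)}\in\F^{(n,m)}$ and show the result lies in $\F$. The four summands $A_+a^\ast a$, $-A_-b^\ast b$, $A_{+-}a^\ast b^\ast$, $A_{-+}ba$ are analyzed separately. The two diagonal pieces preserve $(n,m)$ and, by Corollary \ref{finitepol}, reduce on $\psi^{(n,m)}$ to finite sums of creators/annihilators whose operator norms are controlled by $\norm{A_+}$ and $\norm{A_-}$. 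For the off-diagonal pieces, the images $a^\ast(e_k)b^\ast(e_j)\psi^{(n,m)}$ form an orthonormal family (up to signs and the finitely many indices already present in $\psi^{(n,m)}$), so the expansion
\[
A_{+-}a^\ast b^\ast\,\psi^{(n,m)}=\sum_{k\geq 0,\,j<0}\ip{e_k}{A_{+-}e_j}\,a^\ast(e_k)b^\ast(e_j)\psi^{(n,m)}
\]
converges in $\F$ with squared norm bounded by $\norm{A_{+-}}_{\Lp^2}^2\cdot\norm{\psi^{(n,m)}}^2$; the Hilbert-Schmidt hypothesis built into $\ures$ is exactly what is needed. The same argument handles $A_{-+}ba$. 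Linearity extends this to all of $\domain$, and since $\domain$ was defined without reference to $A$, it is the common domain asserted.

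\textbf{Step 2 (formal anti-symmetry).} The condition $A\in\ures$ gives $A_+^\ast=-A_+$, $A_-^\ast=-A_-$, $A_{-+}^\ast=-A_{+-}$. A direct computation of matrix coefficients on $\domain$, using the CAR, shows $\ip{\xi}{\normord A\Psi^\ast\Psi\normord\,\eta}=-\ip{\normord A\Psi^\ast\Psi\normord\,\xi}{\eta}$; the point of the normal-ordering prescription is precisely that it removes the otherwise-divergent trace constant while preserving anti-symmetry in the $-A_-b^\ast b$ piece.

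\textbf{Step 3 (essential anti-selfadjointness, the main obstacle).} I would apply Nelson's analytic vector theorem to the symmetric operator $T:=i\,\normord A\Psi^\ast\Psi\normord$ on $\domain$. Since $T$ changes total particle number $n+m$ by $0$ or $\pm 2$, one shows by induction on $k$ that for $\xi\in\F^{(n,m)}$ one has an estimate of the shape
\[
\norm{T^k\xi}\leq C^k\,\bigl((n+m+2)(n+m+4)\cdots(n+m+2k)\bigr)^{1/2}\,\norm{\xi},
\]
with $C$ depending only on $\norm{A_+},\norm{A_-},\norm{A_{+-}}_{\Lp^2},\norm{A_{-+}}_{\Lp^2}$; the factorial-type growth comes from the creation operators acting on an already-populated state, and the Hilbert-Schmidt norms control the off-diagonal contribution uniformly. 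This bound makes $\sum_k t^k\norm{T^k\xi}/k!$ convergent for small $t>0$, so every finite-particle vector is analytic and $T$ is essentially self-adjoint on $\domain$. Equivalently $\normord A\Psi^\ast\Psi\normord$ has a unique anti-selfadjoint closure $\Aa$, and Stone's theorem produces a strongly continuous unitary group $\set{\exp(t\Aa)}_{t\in\Real}$. This step is the technical heart, since only a careful bookkeeping of the creation/annihilation counts yields the factorial estimate needed for analyticity; a crude operator-norm estimate on $\F^{(n,m)}$ would only give bounds of order $k!$ that are too weak.

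\textbf{Step 4 (implementation of $U_t=\exp(tA)$).} On $\domain$ one checks from the CAR and (\ref{NOrd}) the commutator identities
\[
[\,\normord A\Psi^\ast\Psi\normord,\Psi^\ast(f)\,]=\Psi^\ast(Af),\qquad[\,\normord A\Psi^\ast\Psi\normord,\Psi(f)\,]=-\Psi(A^\ast f),
\]
the scalar normal-ordering correction cancelling as it is central. Combined with the analytic-vector machinery of Step 3, a standard Heisenberg-picture argument shows that the two functions $t\mapsto \exp(t\Aa)\Psi^\ast(f)\exp(-t\Aa)$ and $t\mapsto \Psi^\ast(\exp(tA)f)$ satisfy the same ODE and agree at $t=0$ on the dense domain $\domain$. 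Hence $\exp(t\Aa)$ implements the Bogoliubov transformation $\beta_{U_t}$ associated to $U_t=\exp(tA)\in\Ures$, i.e.\ $\exp(t\Aa)$ is a representative of the class $[\mathbb{U}_t]$ provided by the Shale--Stinespring theorem, as claimed.
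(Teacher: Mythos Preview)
Your approach is essentially the same as the paper's: both arguments hinge on Nelson's analytic vector theorem, proving that every finite-particle vector is analytic for $\normord A\Psi^\ast\Psi\normord$ via a growth estimate on $\norm{T^k\xi}$, and then handling the implementation claim via the commutator identities $[\normord A\Psi^\ast\Psi\normord,\Psi^\ast(f)]=\Psi^\ast(Af)$ together with Stone's theorem. Your Steps~1, 2 and 4 flesh out details the paper leaves implicit.

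There is, however, a misconception in your Step~3. You claim an estimate with a \emph{square root} on the product $(n+m+2)\cdots(n+m+2k)$ and assert that ``a crude operator-norm estimate on $\F^{(n,m)}$ would only give bounds of order $k!$ that are too weak.'' For the CAR algebra this is backwards: fermionic creators and annihilators satisfy $\norm{a^\ast(f)}=\norm{a(f)}=\norm{f}$ with no $\sqrt{n}$ factors, so the growth does \emph{not} come from ``creation operators acting on an already-populated state'' (that is bosonic intuition). The linear-in-$L$ growth comes instead from the second-quantized diagonal pieces $A_\pm$, which behave like number operators. The paper establishes precisely the ``crude'' bound
\[
\norm{\normord A\Psi^\ast\Psi\normord\, P_L}\leq(L+2)\trinorm{A},\qquad
\trinorm{A}:=4\max\set{\norm{A_+},\norm{A_-},\norm{A_{+-}}_2,\norm{A_{-+}}_2},
\]
which yields $\norm{T^k\xi}\leq\trinorm{A}^k(L+2)(L+4)\cdots(L+2k)\norm{\xi}$ with no square root. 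This \emph{is} of order $\sim 2^k k!$, and it is perfectly sufficient: the ratio test gives convergence of $\sum_k t^k\norm{T^k\xi}/k!$ for $\abs{t}<(2\trinorm{A})^{-1}$. So your strategy is correct, but the specific estimate you wrote is not the one you would actually obtain for fermions, and the estimate you dismissed is exactly the one that carries the proof.
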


\begin{proof}[Sketch of proof]
Let $\varphi\in\domain$. Then $\varphi=P_L\varphi$ for large enough $L$, so that we may write the $k$-th power
\begin{equation}
\normord A\Psi^\ast\Psi\normord^k=\normord A\Psi^\ast\Psi\normord P_{L+2k-2}\cdots \normord A\Psi^\ast\Psi\normord P_L\varphi.
\end{equation}
One can show that there exists a bound
\begin{equation}
\norm{\normord A\Psi^\ast\Psi\normord P_L}\leq(L+2)\trinorm{A},
\end{equation}
where
$$
\trinorm{A}:=4\max\set{\norm{A_+},\norm{A_-}, \norm{A_{+-}}_2,\norm{A_{-+}}_2}.
$$
Thus
\begin{equation}
\norm{\normord A\Psi^\ast\Psi\normord^k\varphi}\leq\trinorm{A}^k(L+2k)\cdots(L+2)
\end{equation}
so that by the ratio test the power series
$
\sum_{k=0}^\infty\frac{\normord A\Psi^\ast\Psi\normord^k\varphi}{k!}z^k
$
converges for $\abs{z}<\delta:=(2\trinorm{A})^{-1}$. 

This proves that all elements of $\domain$ are analytic vectors for $\normord A\Psi^\ast\Psi\normord$.
Since the $\normord A\Psi^\ast\Psi\normord$ are anti-symmetric on $\domain$, Corollary \ref{NelsonCor}
(a consequence of the Nelson's analytic vector theorem) provides us with a unique
anti-selfadjoint extension $\Aa$ of $\normord A\Psi^\ast\Psi\normord$. The rest of the claim follows then
from the theorem of Stone--von Neumann and by direct calculations.
\end{proof}

\begin{ex}
Let $\one=\id_\Hilb:\Hilb\To\Hilb$ be the identity. Then one finds that
\begin{equation}
\normord\one\Psi^\ast\Psi\normord=\sum_{j\geq 0}a^\ast(e_j)a(e_j)-\sum_{k<0}b^\ast(e_k)b(e_k).
\end{equation}
Applying this to a vector $\psi^{(n,m)}\in\F^{(n,m)}=\bigwedge^n\Hilb_+\tensor\bigwedge^m(C\Hilb_-)$
gives
\begin{equation}
(\normord\Psi^\ast\Psi\normord)(\psi^{(n,m)})=(n-m)\cdot\psi^{(n,m)}.
\end{equation}
For this reason $\normord 1\Psi^\ast\Psi\normord$ is often called the \emph{charge operator} on $\F$ and is denoted by $Q$.
\end{ex}

It follows that we can write $\F=\oplus_{k\in\Z}\F_k$, where the \emph{charge-$k$ sector} is given as
\begin{equation}
\F_k:=\set{\psi\in\F\mid Q\psi=k\cdot\psi}=\bigoplus_{\substack{n,m\geq 0\\n-m=k}}\F^{(n,m)}.
\end{equation}

\begin{ex}\label{numberoperator}
Let $\pr_{\pm}:\Hilb\To\Hilb_{\pm}$ be the two projection operators associated to the polarization $\Hilb=\Hilb_+\oplus\Hilb_-$.
Then one computes that
\begin{equation}
(\normord(\pr_+-\pr_-)\Psi^\ast\Psi\normord)(\psi^{(n,m)})=(n+m)\cdot\psi^{(n,m)},
\end{equation}
i.e. $\normord(\pr_+-\pr_-)\Psi^\ast\Psi\normord$ is the \emph{number operator} $N$ on the fermionic Fock space $\F$.
\end{ex}
\begin{prop}
On the common dense domain $\D$ in $\F$, one has for elements $A,B\in\ures(\Hilb,\Hilb_+)$ with commutator
$C=[A,B]$ the following identity
\begin{equation}\label{basicrep}
[\Aa,\Bb]=[\noro{A},\noro{B}]=(\noro{C})+s(A,B)\cdot\one_\F,
\end{equation}
where $s(A,B)=\Tr(A_{-+}B_{+-}-B_{-+}A_{+-})$.
\end{prop}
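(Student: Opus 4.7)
The plan is to verify the identity on the common invariant dense domain $\domain$, which suffices because each $\Aa$ extends the normal ordered operator $\noro{A}$ and $\domain$ consists of analytic vectors for every such extension, so all manipulations of the (unbounded) closures can be performed on $\domain$. Fix the orthonormal basis $\{e_j\}_{j\in\Z}$ with $\{e_j\}_{j\geq 0}$ spanning $\Hilb_+$ and $\{e_j\}_{j<0}$ spanning $\Hilb_-$, and write $A_{ij}:=\ip{e_i}{Ae_j}$. Corollary \ref{finitepol} ensures that on any $\varphi\in\domain$ only finitely many creation/annihilation terms act non-trivially, so the block expansion
$$
\noro{A}=\sum_{i,j\geq 0}A_{ij}\,a^\ast(e_i)a(e_j)+\sum_{\substack{i\geq 0\\ j<0}}A_{ij}\,a^\ast(e_i)b^\ast(e_j)+\sum_{\substack{i<0\\ j\geq 0}}A_{ij}\,b(e_i)a(e_j)-\sum_{i,j<0}A_{ij}\,b^\ast(e_j)b(e_i)
$$
acts termwise on $\domain$, and likewise for $\noro{B}$.

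The first step is then to reduce $[\noro{A},\noro{B}]$ to a sum of commutators of the sixteen bilinear blocks above. Each such bracket is evaluated using the basic CAR identities
$$
[a^\ast(e_i)a(e_j),a^\ast(e_k)a(e_l)]=\delta_{jk}a^\ast(e_i)a(e_l)-\delta_{il}a^\ast(e_k)a(e_j),
$$
and its analogues for the $b,b^\ast$ bilinears and for the mixed bilinears $a^\ast b^\ast, ba$. Block by block this reproduces, after collecting matrix products, the normal ordered bilinears $\noro{[A,B]_+}$, $\noro{[A,B]_-}$, $\noro{[A,B]_{+-}}$ and $\noro{[A,B]_{-+}}$, i.e.\ the single operator $\noro{C}$, whenever the operators commute past each other without the need of an extra reordering.

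The only blocks where naive reordering fails to be normal ordered are the cross-brackets $[A_{+-}a^\ast b^\ast,\,B_{-+}b a]$ and $[A_{-+}b a,\,B_{+-}a^\ast b^\ast]$. Expanding these with the CAR and pushing every $b^\ast$ to the left of each $b$ and every $a^\ast$ to the left of each $a$ produces, in addition to the expected contribution to $\noro{C}$, a scalar term of the form
$$
\sum_{\substack{i\geq 0\\ j<0}}\bigl(A_{ij}B_{ji}-B_{ij}A_{ji}\bigr)\,\one_\F=\bigl(\Tr(A_{+-}B_{-+})-\Tr(B_{+-}A_{-+})\bigr)\,\one_\F=s(A,B)\cdot\one_\F,
$$
where the trace exists because $A_{+-},A_{-+},B_{+-},B_{-+}$ are Hilbert--Schmidt. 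Combining all sixteen block contributions and noticing that the minus sign from the $-A_-b^\ast b$ convention is exactly compensated in the bookkeeping yields the claimed identity on $\domain$, and therefore on the closures $[\Aa,\Bb]$.

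The main obstacle is the careful tracking of signs and reorderings in the cross-brackets: each single CAR move in $[A_{+-}a^\ast b^\ast,B_{-+}ba]$ produces both a normally ordered quartic piece and a Kronecker delta piece, and one has to check that all the quartic pieces reassemble into the off-diagonal blocks of $\noro{C}$ while the delta pieces add up to precisely the Schwinger cocycle $s(A,B)$. The Hilbert--Schmidt condition on the off-diagonal blocks (rather than the much stronger trace class) is exactly what is needed for the scalar term to be finite, and this is the only place where the restricted nature of $A,B\in\ures$ is actually used; the $\domain$-finiteness guarantees rigorousness of the formal CAR manipulations.
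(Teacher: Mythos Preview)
The paper does not prove this proposition; it is stated without proof, with the surrounding material attributed to the reference~[Wu] (Wurzbacher). So there is no in-paper argument to compare against, and your proposal is being assessed on its own merits.

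Your strategy is the standard one and is in substance correct: decompose $\noro{A}$ and $\noro{B}$ into the four blocks, compute the sixteen pairwise commutators via the CAR identities, and observe that all but the two cross-brackets between $a^\ast b^\ast$ and $ba$ blocks reassemble into $\noro{C}$, while those two produce the scalar Schwinger term. Two points deserve attention. First, you invoke Corollary~\ref{finitepol} to justify termwise manipulation on $\domain$, but that corollary is about $\FPol$, not $\domain$: a general finite-particle vector (say any $\varphi\in\F^{(1,0)}=\Hilb_+$ with infinitely many nonzero coefficients) lies in $\domain\setminus\FPol$, and infinitely many $a(e_j)$ act nontrivially on it. To make the argument rigorous you should either compute on $\FPol$ first and then extend using the bound $\norm{\noro{A}P_L}\leq(L+2)\trinorm{A}$ established just above, or work with the four block operators $A_+a^\ast a,\,A_{+-}a^\ast b^\ast,\,A_{-+}ba,\,-A_-b^\ast b$ as bounded operators on each $\F^{(n,m)}$ and compute their commutators operator-theoretically rather than as double series. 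Second, recheck your sign: the scalar you display, $\Tr(A_{+-}B_{-+})-\Tr(B_{+-}A_{-+})$, becomes $-s(A,B)$ after trace cyclicity, not $+s(A,B)$. The discrepancy typically hides in the minus sign carried by the $-A_-b^\ast b$ block or in the orientation of the cross-bracket; the bookkeeping is delicate and worth redoing explicitly.
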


Hence Proposition \ref{basicrep} gives us a representation of the \emph{central extension} $\Eures$ on
the fermionic Fock space $\F$. It can also be obtained by differentiating the Lie group
representation of the central extension $\EUres$ acting on $\F$ constructed in
\cite{PrSe} and called the \emph{basic representation}
or the \emph{fundamental representation}. It has the following basic properties.
\begin{prop}
\begin{enumerate}
\item All implementers $\mathbb{U}$ of a given $U\in\Ures(\Hilb,\Hilb_+)$ map
the charge-$k$-sector $\F_k$ to $\F_{k+q(U)}$, where $q(U)=\ind(U_+)\in\Z$.
\item The connected component of the identity element of $\EUres(\Hilb,\Hilb_+)$ acts irreducibly
on each $\F_k$, $k\in\Z$.
\item The representation of $\EUres(\Hilb,\Hilb_+)$ on $\F$ is irreducible.
\end{enumerate}
\end{prop}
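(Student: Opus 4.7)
The plan is to prove the three parts in order, using the commutator formula (\ref{basicrep}) together with direct computations on suitable representatives in each connected component. For part (1), I would first handle the identity component $\EUres^0$ via the Lie algebra action: taking $B=\one=\id_\Hilb$ in (\ref{basicrep}) gives $[A,\one]=0$ and $s(A,\one)=\Tr(A_{-+}\cdot 0-0\cdot A_{+-})=0$, hence $[\Aa,Q]=0$, where $Q=\noro{\one}$ is the charge operator. Exponentiating, $\exp(t\Aa)$ commutes with $Q$, and since the central $U(1)$ is scalar, every implementer in $\EUres^0$ preserves each $\F_k$. For a non-identity component $\Ures^n$ I would pick an explicit representative, for instance a shift-type operator $\tau_n$ with $\ind(\tau_n)_+=n$, and characterize its implementer $\mathbb{T}_n$ applied to $\vac$ by the annihilation relations $\Psi(\tau_n f_+)\mathbb{T}_n\vac=0$ for $f_+\in\Hilb_+$ and $\Psi^\ast(\tau_n f_-)\mathbb{T}_n\vac=0$ for $f_-\in\Hilb_-$; these identify $\mathbb{T}_n\vac$ as the shifted Dirac vacuum in $\F_n$. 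Comparing CAR commutation relations gives $\mathbb{T}_n Q\mathbb{T}_n^{-1}=Q-n$ globally (both sides agree on the cyclic vector $\vac$ and have the same commutators with $\Psi^\ast$ and $\Psi$), so $\mathbb{T}_n$ shifts every $\F_k$ to $\F_{k+n}$. Writing any $U\in\Ures^n$ as $U=\tau_n U'$ with $U'\in\Ures^0$ (using additivity of $\ind(\cdot)_+$ on $\Ures$, where cross terms like $A_{+-}B_{-+}$ are compact) finishes the claim.

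For part (2), I would work at the Lie algebra level: since $\EUres^0$ is generated by one-parameter subgroups, irreducibility of its action on $\F_k$ reduces to irreducibility of the infinitesimal representation $A\mapsto\noro{A}$ of $\ures$ on the dense subspace $\FPol\cap\F_k$. The Lie algebra $\ures$ contains all finite-rank skew-Hermitian operators on $\Hilb$, whose normal-ordered lifts produce the charge-preserving bilinears $a^\ast(e_i)a(e_j)$ and $b^\ast(e_j)b(e_i)$ from the diagonal blocks, together with the pair creators/annihilators $a^\ast(e_i)b^\ast(e_j)$ and $b(e_j)a(e_i)$ from the off-diagonal blocks. I would fix a distinguished basis wedge $\Omega_k\in\FPol\cap\F_k$ (a Dirac-type vacuum of minimal fermion number in this sector) and argue combinatorially that (i) any nonzero polynomial vector in $\F_k$ can be transformed into a nonzero multiple of $\Omega_k$ by successive applications of pair annihilators and diagonal bilinears, while (ii) the linear span of the $\ures$-orbit of $\Omega_k$ exhausts $\FPol\cap\F_k$, built up by pair creators and diagonal bilinears. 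Irreducibility on the Hilbert-space closure $\F_k$ then follows from density of $\FPol\cap\F_k$ and the analytic-vector technology already used in the construction of $\Aa$.

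For part (3) I would combine (1) and (2). Let $W\subset\F$ be a nonzero closed $\EUres$-invariant subspace. The scalar unitary $e^{it}\cdot\id_\Hilb\in\Ures^0$ has implementer $e^{itQ}$, so Fourier-averaging $(2\pi)^{-1}\int_0^{2\pi}e^{-ikt}e^{itQ}\,dt$ realizes the orthogonal projection onto $\F_k$ as a strong limit of operators in the representation, forcing $W\cap\F_k$ to be a closed $\EUres^0$-invariant subspace of $\F_k$ for every $k$. By (2) each $W\cap\F_k$ equals $0$ or $\F_k$, and since $W\neq 0$ some $\F_{k_0}$ lies in $W$; by (1) applying implementers of elements of nonzero index then forces $\F_k\subset W$ for all $k\in\Z$, giving $W=\F$. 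The main obstacle I expect lies in part (2): the infinite-dimensional cyclicity argument must be carried out on $\FPol$, where by Corollary \ref{finitepol} only finitely many annihilators act nontrivially on any given vector so that each combinatorial step is well-defined, before extending by continuity to closed invariance on $\F_k$.
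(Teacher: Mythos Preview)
The paper does not prove this proposition; it is stated as a known property of the basic representation, with the construction attributed to \cite{PrSe}. So there is no ``paper's own proof'' to compare against.

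Your sketch is a reasonable and essentially standard route to these facts. Part (1) is fine: the commutator computation $[\Aa,Q]=0$ for $A\in\ures$ is correct, and handling the non-identity components by explicit shift representatives is the usual device. Part (3) is also fine once (1) and (2) are in hand; the Fourier-averaging argument for decomposing $W$ along the $\F_k$ is exactly what one expects.

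The point that needs the most care is part (2). You propose to establish irreducibility of the Lie algebra action on $\FPol\cap\F_k$ and then pass to irreducibility of the unitary $\EUres^0$-action on the closure $\F_k$. This passage is not automatic: algebraic irreducibility on a dense domain does not by itself force Hilbert-space irreducibility of the integrated representation. The clean way to close the gap is to argue via the commutant: if $W\subset\F_k$ is closed and $\EUres^0$-invariant, the orthogonal projection $P_W$ commutes with every $\exp(t\Aa)$; differentiating on the analytic vectors in $\domain$ (which are total by the Nelson argument already invoked in the paper) shows $P_W$ commutes with all $\noro{A}$ on $\domain\cap\F_k$. Your combinatorial step then shows that any bounded operator on $\F_k$ commuting with the full set of normal-ordered bilinears must be scalar, whence $P_W\in\{0,1\}$. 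You gesture at this with ``analytic-vector technology,'' but it is worth making explicit that you are really proving the commutant is trivial, not merely that the algebraic span of an orbit is dense.
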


We are also interested in how the closed Lie subgroup 
$H:=U(\Hilb_+)\times U(\Hilb_-)\subset\EUres(\Hilb,\Hilb_+)$ acts
on $\F$ via the basic representation. For this we first observe that if $\Hilb$ is a separable
complex Hilbert space and $k\in\N$, then the big unitary group
$U(\Hilb)$ acts irreducibly on $\bigwedge^k\Hilb$ by
$\bigwedge^k:U(\Hilb)\To\Aut(\bigwedge^k\Hilb)$,
\begin{equation}\label{wedgerep}
\quad\bigwedge^k(U)(v_1\wedge\ldots\wedge v_k)=(Uv_1)\wedge\ldots\wedge(Uv_k)
\end{equation}
for all $U\in U(\Hilb)$.
We also know that if $\pi_\alpha:G_\alpha\To U(\Hilb_\alpha)$ are irreducible unitary
representations of topological groups on Hilbert spaces $\Hilb_\alpha$ $(\alpha=1,2)$
then the tensor product representation 
\begin{equation}\label{tensorrep}
\pi_1\tensor\pi_2:G_1\times G_2\To U(\Hilb_1\tensor\Hilb_2)
\end{equation}
is irreducible as well (See for example \cite{Ga}, Theorem IV.3.15). We have thus arrived at:
\begin{lem}\label{nonprojlemma}
If $\Hilb_\alpha\, (\alpha=1,2)$ are separable complex Hilbert spaces and
$n,m\geq 0$ are integers, then the (non-projective) representation of
$U(\Hilb_1)\times U(\Hilb_2)$ on the tensor product $\bigwedge^n\Hilb_1\tensor\bigwedge^m\Hilb_2$
given by equations (\ref{wedgerep}) and (\ref{tensorrep}) is irreducible.
\end{lem}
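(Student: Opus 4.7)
The statement is essentially an assembly of the two facts just recorded in the paragraph preceding it, so my plan is simply to put those ingredients together cleanly.

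First I would unpack what the hypotheses give. By the first assertion (the discussion around (\ref{wedgerep})), for each separable complex Hilbert space $\Hilb_\alpha$ and each nonnegative integer $k$ the unitary group $U(\Hilb_\alpha)$ acts irreducibly on $\bigwedge^k\Hilb_\alpha$ via $U \mapsto \bigwedge^k U$. Applying this with $(\alpha,k)=(1,n)$ and $(\alpha,k)=(2,m)$ supplies two irreducible unitary representations
\[
\pi_1 : U(\Hilb_1)\To U\bigl(\textstyle\bigwedge^n\Hilb_1\bigr),\qquad \pi_2 : U(\Hilb_2)\To U\bigl(\textstyle\bigwedge^m\Hilb_2\bigr).
\]

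Next I would invoke the second ingredient, namely that the outer tensor product of two irreducible unitary representations of topological groups is again irreducible (this is the content of (\ref{tensorrep}), i.e.\ \cite{Ga} Thm.\ IV.3.15). Applied to the pair $(\pi_1,\pi_2)$ above, this yields that
\[
\pi_1\tensor\pi_2 : U(\Hilb_1)\times U(\Hilb_2)\To U\bigl(\textstyle\bigwedge^n\Hilb_1\tensor\bigwedge^m\Hilb_2\bigr)
\]
is an irreducible unitary representation. But this is exactly the representation described by combining (\ref{wedgerep}) with (\ref{tensorrep}) in the statement of the lemma, so we are done.

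There is really no obstacle here: the proof is a two-line application of the stated results. The only point worth a remark is that the cited infinite-dimensional version of Schur/tensor-product irreducibility requires working in the topological-group category (the representations are unitary on separable Hilbert spaces and the strong operator topology is used), which is precisely the setting of \cite{Ga} Thm.\ IV.3.15 and the setting in which $U(\Hilb_\alpha)$ is being considered throughout the paper. Hence the lemma follows.
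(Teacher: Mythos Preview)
Your proposal is correct and matches the paper's approach exactly: the paper states the two ingredients (irreducibility of $\bigwedge^k$ and irreducibility of the outer tensor product via \cite{Ga}, Theorem IV.3.15) in the paragraph immediately preceding the lemma and then simply writes ``We have thus arrived at'' before stating it, giving no further proof. Your write-up is precisely the intended two-line argument.
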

Next, let $\lie{h}:=\Lie{H}=\lie{u}(\Hilb_+)\times\lie{u}(\Hilb_-)$ be the corresponding
Lie subalgebra of $\Eures(\Hilb,\Hilb_+)$.
\begin{cor}\label{restrictiontoh}
The restriction of the basic representation to a representation of the Lie subalgebra $\lie{h}\subset\Eures(\Hilb,\Hilb_+)$ on
$\F^{(n,m)}=\bigwedge^n\Hilb_+\tensor\bigwedge^m(C\Hilb_-)$ is the
derivative of the non-projective action of $H$ on $\F^{(n,m)}$ described in Lemma \ref{nonprojlemma}.
\end{cor}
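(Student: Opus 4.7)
The plan is to derive the corollary from three elementary structural facts about $\noro{A}$ for $A = (A_+, A_-) \in \lie{h}$: namely (i) $\noro{A}\vac = 0$, (ii) $[\noro{A}, a^\ast(X)] = a^\ast(A_+ X)$ for all $X \in \Hilb_+$, and (iii) $[\noro{A}, b^\ast(Y)] = b^\ast(A_- Y)$ for all $Y \in \Hilb_-$. Given these, the commutator Leibniz rule applied inductively to $a^\ast(f_1) \cdots a^\ast(f_n)\,b^\ast(g_m) \cdots b^\ast(g_1)\vac$ shows at once that $\noro{A}$ acts on $\F^{(n,m)}$ as the derivation $f_i \mapsto A_+ f_i$ (respectively $Cg_j \mapsto C(A_- g_j)$) on each tensor factor, which is exactly the differential at the identity of the tensor-product representation of Lemma \ref{nonprojlemma} (with $\lie{u}(\Hilb_-)$ acting on $C\Hilb_-$ through the natural identification $U \mapsto CUC^{-1}$).

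The starting point for (i)--(iii) is that block-diagonality of $A$ makes $A_{+-} = 0 = A_{-+}$, so the defining formula (\ref{NOrd}) collapses to $\noro{A} = A_+ a^\ast a - A_- b^\ast b$, and fact (i) is then immediate since both summands end in an annihilator. Fact (ii) is a routine CAR computation: one uses $\{a(e_j), a^\ast(X)\} = \ip{e_j}{X}$ and $\{a^\ast(e_k), a^\ast(X)\} = 0$ to obtain $[a^\ast(e_k)a(e_j), a^\ast(X)] = \ip{e_j}{X}\,a^\ast(e_k)$, and two applications of Parseval collapse the double sum to $[A_+ a^\ast a, a^\ast(X)] = a^\ast(A_+ X)$; the $-A_- b^\ast b$ summand commutes with $a^\ast(X)$ because $\{b, a^\ast\} = 0 = \{b^\ast, a^\ast\}$ as special cases of $\{\Psi, \Psi^\ast\}$ and $\{\Psi^\ast, \Psi^\ast\}$ with the two entries in orthogonal subspaces.

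Fact (iii) is the delicate one, because $b^\ast$ is $\C$-\emph{antilinear} in its argument. The same type of CAR calculation gives $[-A_- b^\ast b, b^\ast(X)] = -\sum_j \ip{X}{A_- e_j}\,b^\ast(e_j)$; antilinearity of $b^\ast$ then rewrites this sum as $b^\ast(w)$ with $w = \sum_j \overline{\ip{X}{A_- e_j}}\,e_j = \sum_j \ip{A_- e_j}{X}\,e_j$; the antiselfadjointness $A_-^\ast = -A_-$ collapses $w = -A_- X$ via $\ip{A_- e_j}{X} = \ip{e_j}{A_-^\ast X} = -\ip{e_j}{A_- X}$; and a second use of antilinearity of $b^\ast$ gives $-b^\ast(-A_- X) = b^\ast(A_- X)$, as required.

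The main technical obstacle is precisely this sign bookkeeping in fact (iii), where three intertwined antilinearities (those of $C$, of $b^\ast$ itself, and the conjugation built into $A_-^\ast = -A_-$) all have to be tracked for the answer to carry the correct sign. Once (i)--(iii) are in place, the corollary follows immediately from the Leibniz rule for commutators, identifying the restriction of the basic representation to $\lie{h}$ with the infinitesimal form of the tensor-product wedge-power representation of Lemma \ref{nonprojlemma}.
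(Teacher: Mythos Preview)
Your proof is correct and carefully executed; the sign bookkeeping in fact~(iii), where the antilinearity of $b^\ast$, the antiunitarity of $C$, and the skew-adjointness $A_-^\ast=-A_-$ interact, is exactly the place where one has to be careful, and you have tracked it properly.

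The paper itself gives no proof of this corollary: it is simply stated as an immediate consequence of Lemma~\ref{nonprojlemma} and the explicit form of the normal-ordered operators in (\ref{NOrd}). Your argument is the natural direct verification that the paper leaves implicit---computing $\noro{A}$ on decomposable vectors $a^\ast(f_1)\cdots a^\ast(f_n)\,b^\ast(g_1)\cdots b^\ast(g_m)\vac$ via the commutator Leibniz rule and the three facts (i)--(iii). There is no alternative route taken in the paper to compare with; you have simply supplied the details the author omitted.
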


\section{Basic differential geometry of the restricted Grassmannian manifold}
We follow in subsections \ref{SecBanHomSpa} -- \ref{SecKähStrRGr} again
closely \cite{HuWu}.
\subsection{Banach homogeneous spaces}\label{SecBanHomSpa}
Let $G$ be a Banach Lie group acting smoothly from the left on a Banach manifold $M$. Then
we can associate to each $X\in\lie{g}:=\Lie(G)=T_e G$ 
(with the commutator given by identifying $T_e G$ with the space of 
left-invariant vector fields on $G$) a 
\emph{fundamental vector field of the $G$-action} $\tau(X)$ by
\begin{equation}
\tau(X)_p:=\frac{d}{dt}\Big\vert_{t=0} (e^{tX}\cdot p)\quad\textrm{for all }p\in M.
\end{equation}
The map $\tau:\lie{g}\To\vf(M)$, where $\vf(M)$ denotes the space of vector fields on $M$,
is an anti-homomorphism of Lie algebras.

If the action of $G$ on $M$ is transitive the projection map 
$\pi:G\To G/H\isom M$, $g\mapsto gH$, where $H:=\Stab(p_0)$ is a stabilizer of a chosen
point $p_0\in M$, induces a projection $d\pi_e:\lie{g}=T_e G\To T_{eH}(G/H)$ which
we sometimes also denote by $\pi$ and which satisfies $\pi(X)=\tau(X)_{p_0}$ for all
$X\in\lie{g}$. Moreover, given a direct sum decomposition 
$\lie{g}=\lie{h}\oplus\lie{p}$, the restriction $\pi\vert_{\lie{p}}:\lie{p}\To\lie{g}/\lie{h}$
is an isomorphism.
\begin{defn}
A Banach manifold $M$ is called a \emph{Banach homogeneous space} if it is of the form $M=G/H$, where $G$ is a Banach-Lie group and $H$ is a
closed and connected Lie subgroup of $G$. 
\end{defn}

\subsection{The restricted Grassmannian as a homogeneous complex manifold}
\begin{defn}
The restricted Grassmannian $\RGr(\Hilb,\Hilb_+)$ is the set of all closed subspaces of $\Hilb$ such that
\begin{enumerate}
\item the orthogonal projection $\pr_+:W\To\Hilb_+$ is a Fredholm operator and
\item the orthogonal projection $\pr_-:W\To\Hilb_-$ is a Hilbert-Schmidt operator.
\end{enumerate}
\end{defn}

\begin{prop}\label{stabilizer}
\begin{enumerate}
\item The restricted Grassmannian $\RGr(\Hilb,\Hilb_+)$ is a Banach homogeneous space under
$\GLres(\Hilb,\Hilb_+)$ and $\Ures(\Hilb,\Hilb_+)$. The corresponding isotropy groups of the subspace $\Hilb_+\subset\Hilb$
are
$$
P=\set{
\left(
\begin{array}{cc}
a & b \\
c & d 
\end{array} \right)
\in\GLres(\Hilb,\Hilb_+)\mid c=0
}
$$
and
$$
H=\set{
\left(
\begin{array}{cc}
a & b \\
c & d 
\end{array} \right)
\in\Ures(\Hilb,\Hilb_+)\mid b=0\textrm{ and } c=0 
}\isom U(\Hilb_+)\times U(\Hilb_-),
$$
respectively.
\item The restricted Grassmannian $\RGr(\Hilb,\Hilb_+)$ is a complex analytic manifold modelled on the separable Hilbert
space $\Lp^2(\Hilb_+,\Hilb_-)$.
\item The actions of $\GLres(\Hilb,\Hilb_+)$ and $\Ures(\Hilb,\Hilb_+)$ on $\RGr(\Hilb,\Hilb_+)$ are 
complex analytic and real analytic, respectively.
\item The connected components of the restricted Grassmannian manifold $\RGr$ are given by the sets of subspaces in $\Hilb$ having
\emph{virtual dimension} $k$ $(k\in\Z)$:
$$
\RGr^k:=\set{W\in\RGr\mid\ind(\pr_+:W\To\Hilb_+)=k}.
$$
\end{enumerate}
\end{prop}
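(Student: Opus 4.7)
The plan is to establish the four items in order, deducing each one by a combination of linear algebra on $\Hilb=\Hilb_+\oplus\Hilb_-$ and the structural properties of $\Ures$ and $\GLres$ recorded in Section~\ref{FermionsandUres}.

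For (1), I would first prove transitivity of the $\Ures$-action (which automatically gives transitivity of the larger $\GLres$-action). Given $W\in\RGr$, the Fredholm property of $\pr_+\colon W\to\Hilb_+$ and the Hilbert--Schmidt property of $\pr_-\colon W\to\Hilb_-$ imply that $W^\perp$ also satisfies the defining conditions with $\Hilb_+$ and $\Hilb_-$ interchanged. Choose a unitary $U\colon\Hilb_+\to W$ (such $U$ exists after adjusting finite-dimensional pieces so that $\dim\ker\pr_+|_W=\dim\mathrm{coker}\,\pr_+|_W$ can be absorbed; this is standard). Extend by a unitary $\Hilb_-\to W^\perp$ to a unitary $U\in U(\Hilb)$ with $U\Hilb_+=W$. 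A short block-matrix computation shows that $[\epsilon,U]\in\Lp^2$ precisely because the off-diagonal blocks of $U$ are Hilbert--Schmidt by the Hilbert--Schmidt hypothesis on $W$ and $W^\perp$; hence $U\in\Ures$. For the isotropy subgroups, a matrix $\bigl(\begin{smallmatrix}a&b\\c&d\end{smallmatrix}\bigr)$ fixes $\Hilb_+$ setwise iff it sends $\Hilb_+$ into $\Hilb_+$, i.e.\ $c=0$; this yields the description of $P$. If in addition the matrix is unitary, then also its adjoint must fix $\Hilb_+$, forcing $b=0$, whence the block-diagonal description of $H\cong U(\Hilb_+)\times U(\Hilb_-)$.

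For (2) I would construct explicit charts. For $W\in\RGr$ let $U_W\subset\RGr$ be the set of closed subspaces $W'$ such that $\pr_W\colon W'\to W$ is an isomorphism. Each such $W'$ is the graph of a unique bounded operator $T\colon W\to W^\perp$, and the Hilbert--Schmidt condition in the definition of $\RGr$ translates into $T\in\Lp^2(W,W^\perp)$. This gives a bijection $\varphi_W\colon U_W\to\Lp^2(W,W^\perp)$, and the target is a separable Hilbert space; for $W=\Hilb_+$ this is $\Lp^2(\Hilb_+,\Hilb_-)$. The cocycle condition is the main computation: if $W'\in U_{W_1}\cap U_{W_2}$ with $T_i=\varphi_{W_i}(W')$, then $T_2$ is a fractional-linear transformation of $T_1$ of the form $T_2=(c+dT_1)(a+bT_1)^{-1}$ for a suitable $\bigl(\begin{smallmatrix}a&b\\c&d\end{smallmatrix}\bigr)$ in $\GLres$. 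Since the Hilbert--Schmidt ideal is a two-sided ideal in $\B(\Hilb)$ stable under such rational operations, the transition maps are complex analytic between open subsets of $\Lp^2$-Hilbert spaces. This is the technical core of the proposition and where I expect most of the work, since one needs to check both domain conditions and analyticity of inversion in $\Lp^2$.

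For (3), the real-/complex-analyticity of the actions of $\Ures$ and $\GLres$ is now a direct consequence: in the charts above the action of $g=\bigl(\begin{smallmatrix}a&b\\c&d\end{smallmatrix}\bigr)$ on graphs reads $T\mapsto(c+dT)(a+bT)^{-1}$, which is complex analytic in $g$ when $g\in\GLres$ and real analytic when restricted to the real-analytic subgroup $\Ures$, using again that $\Lp^2$ is a two-sided ideal and that the exponential map gives real-analytic coordinates on $\Ures$ as stated earlier.

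For (4), the virtual dimension $\mathrm{vdim}(W):=\ind(\pr_+\colon W\to\Hilb_+)$ is locally constant because the Fredholm index is locally constant in the operator norm and the chart maps above are continuous with values in $\Lp^2\subset\B(\Hilb_+,\Hilb_-)$. Hence $\RGr$ decomposes into the clopen pieces $\RGr^k$. To see each $\RGr^k$ is connected, pick a ``shift'' subspace $\Hilb_+^{(k)}\in\RGr^k$ obtained by moving $k$ basis vectors across the polarization; any $W\in\RGr^k$ can be written as $U\Hilb_+^{(k)}$ for some $U\in\Ures$ with $\ind(U_+)=0$ by transitivity together with part~(1), and such $U$ lies in the identity component $\Ures^0$ of $\Ures$, which is path-connected (stated in the preceding proposition on connected components of $\Ures$). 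Continuity of the action then yields a path in $\RGr^k$ from $\Hilb_+^{(k)}$ to $W$, completing the proof.
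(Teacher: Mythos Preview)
The paper does not supply its own proof of this proposition; it is stated without proof in \S\ref{SecBanHomSpa}ff., with the explicit remark that these subsections follow \cite{HuWu} closely (and implicitly \cite{PrSe}). So there is no in-paper argument to compare against; the proposition is quoted as a known structural fact about $\RGr$.

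Your proposal follows the standard route taken in \cite{PrSe} (Chapter~7) and \cite{HuWu}, and is essentially correct. Two places deserve tightening. In part~(1), the parenthetical ``adjusting finite-dimensional pieces so that $\dim\ker\pr_+|_W=\dim\operatorname{coker}\pr_+|_W$ can be absorbed'' is misleading: these dimensions are generally unequal (their difference is the virtual dimension $k$), and no such adjustment is needed. Any unitary $U\colon\Hilb_+\to W$ extended by a unitary $\Hilb_-\to W^\perp$ already lies in $\Ures$, because $\pr_-|_W$ and $\pr_+|_{W^\perp}$ are both Hilbert--Schmidt (the latter follows from the former by taking adjoints). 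In part~(4), your claim that $W=U\Hilb_+^{(k)}$ forces $\ind(U_+)=0$ is correct but presupposes the additivity $\operatorname{vdim}(UV)=\ind(U_+)+\operatorname{vdim}(V)$, which you should state; once this is in place, the argument that each $\RGr^k$ is the continuous image of the connected set $\Ures^0$ goes through cleanly. With these two clarifications your sketch is a valid proof, and it is exactly the argument one finds in the cited references.
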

\subsection{The isotropy representation of $U(\Hilb_+)\times U(\Hilb_-)$}
\subsubsection{Isotropy representation}
We now take a closer look at the homogeneous space $\RGr(\Hilb,\Hilb_+)\isom G/H$, where
$G=\Ures(\Hilb,\Hilb_+)$ and $H=U(\Hilb_+)\times U(\Hilb_-)$. The latter group $H$ is known
to be connected and contractible. 

Let $\Ad H:H\To\Aut(\ures)$ be the usual adjoint representation of the Lie subgroup $H\subset\Ures(\Hilb,\Hilb_+)$ on the Lie algebra $\ures(\Hilb,\Hilb_+)$.
Set
\begin{equation}
\lie{h}=\Lie(H)=\set{\Mat{\alpha}{0}{0}{\beta}\in\B(\Hilb)\Big\vert\, 
\alpha^\ast=-\alpha,\,\beta^\ast=-\beta}\subset\ures,
\end{equation}
\begin{equation}\label{p}
\lie{p}=
\set{\Mat{0}{-\gamma^\ast}{\gamma}{0}\in\B(\Hilb)\,\Big\vert\,\gamma\textrm{ is Hilbert--Schmidt}}\subset\ures.
\end{equation}
Then $\lie{p}$ is known to be an $\Ad H$-invariant complement of $\lie{h}$ in $\ures$, i.e.
$\ures=\lie{h}\oplus\lie{p}$ and this decomposition respects the adjoint action $\Ad H$ of $H$.

Explicitly the isotropy representation $\Ad H:H\To\Aut(\lie{p})$  is given by
\begin{eqnarray}\label{explisotropy}
 \Ad\Mat{a}{0}{0}{b}\cdot\Mat{0}{-\gamma^\ast}{\gamma}{0} &=&\Mat{a}{0}{0}{b}\Mat{0}{-\gamma^\ast}{\gamma}{0}\Mat{a}{0}{0}{b}^{-1}\nonumber\\
&=&\Mat{0}{-a\circ\gamma^\ast\circ b^{-1}}{b\circ\gamma\circ a^{-1}}{0}.
\end{eqnarray}

\begin{cor}\label{AdH}
The induced Lie algebra representation $\ad\lie{h}:\lie{h}\To\End(\lie{p})$ of the isotropy representation
$\Ad:H\To\Aut(\lie{p})$ is given by
\begin{equation}
h\mapsto [h,\cdot]\quad\textrm{ for all } h\in\lie{h}.
\end{equation}
\end{cor}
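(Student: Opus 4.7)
The plan is to differentiate the explicit formula (\ref{explisotropy}) for $\Ad\colon H\to\Aut(\lie{p})$ at the identity along a one-parameter subgroup. This is the standard relation $d(\Ad)_e=\ad$ for a Lie group, and all that needs checking here is that the argument goes through in the Banach Lie group setting of $\Ures(\Hilb,\Hilb_+)$, where the adjoint action is literally given by conjugation inside the associative Banach algebra $\B(\Hilb)$.

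First I would fix $h\in\lie{h}$ of the block form $h=\mathrm{diag}(\alpha,\beta)$ and $X=\left(\begin{smallmatrix}0&-\gamma^{\ast}\\ \gamma&0\end{smallmatrix}\right)\in\lie{p}$. Because $H=U(\Hilb_+)\times U(\Hilb_-)$ is a Banach Lie subgroup of $\Ures(\Hilb,\Hilb_+)$ and $\exp\colon\lie{h}\to H$ is real-analytic, the curve $t\mapsto e^{th}\in H$ lies in $H$ for all $t\in\Real$, and equation (\ref{explisotropy}) specializes to
\begin{equation*}
\Ad(e^{th})X=e^{th}Xe^{-th}=\Mat{0}{-e^{t\alpha}\gamma^{\ast}e^{-t\beta}}{e^{t\beta}\gamma e^{-t\alpha}}{0}.
\end{equation*}
Since the exponential series converges in operator norm, the map $t\mapsto e^{th}Xe^{-th}$ is norm-differentiable, and the usual product-rule computation in the Banach algebra $\B(\Hilb)$ yields
\begin{equation*}
\frac{d}{dt}\Big|_{t=0}e^{th}Xe^{-th}=hX-Xh=[h,X].
\end{equation*}
Thus $(\ad h)(X)=[h,X]$ as claimed.

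It remains to observe that the right-hand side actually lies in $\End(\lie{p})$, i.e.\ that $[h,X]\in\lie{p}$ whenever $h\in\lie{h}$ and $X\in\lie{p}$. This is immediate from the block description: the off-diagonal block entries of $[h,X]$ are $\beta\gamma-\gamma\alpha$ and $-(\alpha\gamma^{\ast}-\gamma^{\ast}\beta)=-(\beta\gamma-\gamma\alpha)^{\ast}$, which are Hilbert--Schmidt since $\gamma$ is, while the diagonal blocks of $[h,X]$ vanish. Alternatively it follows abstractly from the $\Ad H$-invariance of $\lie{p}$ established just before the corollary: differentiating the inclusion $\Ad(H)\lie{p}\subset\lie{p}$ at the identity.

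The only conceivable obstacle is a regularity issue at the Banach Lie group level, but this is in fact trivial here: the adjoint action is realized as conjugation inside $\B(\Hilb)$, both factors $e^{th}$ and $X$ depend smoothly (even analytically) on $t$ in operator norm, and multiplication in $\B(\Hilb)$ is jointly continuous, so termwise differentiation at $t=0$ is justified with no further work. Hence the corollary reduces to a direct calculation, with no subtlety beyond verifying that $[h,X]$ remains in the Hilbert--Schmidt off-diagonal subspace $\lie{p}$.
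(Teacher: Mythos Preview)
Your proof is correct and takes essentially the same approach as the paper, which leaves the corollary unproved and simply records in the subsequent remark that the adjoint action of a Banach Lie group is smooth so that all vectors are smooth vectors and the differentiation is legitimate. You have just spelled out explicitly the one-line derivative-of-conjugation computation that the paper takes for granted.
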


\begin{rem}
Recall that if $G$ is a Banach-Lie group with Lie algebra $\lie{g}$ and 
$$
\phi:G\To\Aut(V)
$$ 
is a continuous representation of $G$ on a Banach space $V$, then
the induced Lie algebra representation is given by
\begin{equation}\label{indlierep}
\lie{g}\times V^\infty\To V^\infty,\quad (X,v)\mapsto  d\phi(X)\cdot v,
\end{equation}
where $V^\infty\subset V$ is the subspace of \emph{smooth vectors}, i.e. the vector space $V^\infty$ consists of
those vectors $v\in V$ such that the orbit mapping  $G\To V,\, g\mapsto\phi(g)\cdot v$ is smooth.

If one chooses for the representation $\phi:G\To\Aut(V)$ the adjoint representation
$\Ad:G\To\Aut(\lie{g})$ then it is known that that $\Ad$ is a representation of $G$ on $\lie{g}$ with a \emph{smooth} action
map \cite{Neeb2} p. 167. It follows from this that the subspace of smooth vectors $\lie{g}^\infty=\lie{g}$.
\end{rem}

\subsection{$\Ad H$-invariant metric on $\lie{p}$}
Recall that by mapping  $\gamma\To\Mat{0}{-\gamma^\ast}{\gamma}{0}$ with $\gamma\in\Lp^2(\Hilb_+,\Hilb_-)$,
the vector space $\lie{p}$ was canonically identified with the Hilbert space $\Lp^2(\Hilb_+,\Hilb_-)$
of complex linear Hilbert-Schmidt operators from $\Hilb_+$ to $\Hilb_-$. The isotropy
representation of $H$ on $\lie{p}$ is given under this isomorphism by
\begin{equation}\label{isotropy}
\Ad\Mat{a}{0}{0}{b}\cdot\gamma=b\circ\gamma\circ a^{-1}\quad\textrm{for all }
\gamma\in\Lp^2(\Hilb_+,\Hilb_-)
\end{equation}
from which we see that
\begin{equation}
g^{\lie{p}}(\gamma,\sigma):=2\Re\Tr_{\Hilb_+}(\gamma^\ast\sigma)\quad\textrm{for all }
\gamma,\sigma\in\Lp^2(\Hilb_+,\Hilb_-)
\end{equation}
determines an $\Ad(H)$-invariant metric on $\lie{p}$. Here $\gamma^\ast$ denotes the
adjoint of $\gamma$ as a complex linear operator from $\Hilb_+$ to $\Hilb_-$. It follows then
that the product $\gamma^\ast\sigma$, as a product of two Hilbert-Schmidt operators,
is in $\Lp^1(\Hilb_+)$ so that the trace 
$\Tr(\gamma^\ast\sigma)$ is indeed well-defined.

\subsection{Kähler structure on $\RGr$}\label{SecKähStrRGr}

Recall that at the level of Lie algebras we had $\Lie(\EGLres)\isom\glres\oplus\C$ with the Lie bracket given by
\begin{equation}
\Big[(A,\alpha),(B,\beta)\Big]=\Big([A,B],s(A,B)\Big),
\end{equation}
where the Lie algebra two-cocycle $s$ was given by the Schwinger term.

Next define the following ``new Schwinger term'' $\tilde{s}$ by
\begin{equation}
\tilde{s}(A,B)=-s(A,B)=\Tr(A_{+-}B_{-+}-B_{+-}A_{-+}),
\end{equation}
for $A,B\in\ures(\Hilb,\Hilb_+)$.
\begin{defn}
Let $\hat{\Omega}_{\Hilb_+}$ be the real-valued antisymmetric bilinear form on $\ures$ defined by
\begin{equation}
\hat{\Omega}_{\Hilb_+}(A,B):=(-\im)\tilde{s}(A,B).
\end{equation}
\end{defn}
\begin{lem}
The bilinear form $\hat{\Omega}_{\Hilb_+}$ on $\ures(\Hilb,\Hilb_+)$ vanishes on the isotropy subalgebra $\mathfrak{u}(\Hilb_+)\times\mathfrak{u}(\Hilb_-)$
and is invariant under the linear isotropy representation of $U(\Hilb_+)\times U(\Hilb_-)$.
\end{lem}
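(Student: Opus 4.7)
The plan is to handle the two claims separately, each reducing to a direct computation with the block-matrix decomposition.

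For the first claim, I would simply note that if $A \in \lie{h} = \mathfrak{u}(\Hilb_+)\times\mathfrak{u}(\Hilb_-)$, then in the block-matrix form associated to the polarization $\Hilb = \Hilb_+\oplus\Hilb_-$, $A$ is block-diagonal, i.e.\ $A_{+-} = 0$ and $A_{-+} = 0$. Therefore, for any $B \in \ures$, the product $A_{+-}B_{-+} - B_{+-}A_{-+}$ is the zero operator, so $\tilde{s}(A,B) = 0$ and hence $\hat{\Omega}_{\Hilb_+}(A,B) = 0$. In particular, $\hat{\Omega}_{\Hilb_+}$ vanishes on pairs in $\lie{h}\times\lie{h}$.

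For the second claim, recall from (\ref{isotropy}) and the preceding computation (\ref{explisotropy}) that for $h = \mathrm{diag}(a,b) \in H$ and $A \in \ures$,
\begin{equation*}
(\Ad(h)A)_{+-} = a\, A_{+-}\, b^{-1}, \qquad (\Ad(h)A)_{-+} = b\, A_{-+}\, a^{-1}.
\end{equation*}
Substituting into the definition of $\tilde{s}$, one obtains
\begin{equation*}
(\Ad(h)A)_{+-}(\Ad(h)B)_{-+} = a\, A_{+-}\, b^{-1} \cdot b\, B_{-+}\, a^{-1} = a\, A_{+-}B_{-+}\, a^{-1},
\end{equation*}
and similarly $(\Ad(h)B)_{+-}(\Ad(h)A)_{-+} = a\, B_{+-}A_{-+}\, a^{-1}$. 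Now $A_{+-}B_{-+}$ and $B_{+-}A_{-+}$ are products of Hilbert--Schmidt operators, hence trace-class on $\Hilb_+$. The cyclicity of the trace on trace-class operators then yields
\begin{equation*}
\Tr(a\, A_{+-}B_{-+}\, a^{-1}) = \Tr(A_{+-}B_{-+}),
\end{equation*}
and likewise for the second term. Consequently $\tilde{s}(\Ad(h)A, \Ad(h)B) = \tilde{s}(A,B)$, giving the required invariance.

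I do not expect any serious obstacle here: both statements follow by inspection once the block-matrix description and the cyclicity of the trace of a trace-class operator are invoked. The only point that requires a tiny bit of care is checking that $A_{+-}B_{-+}$ is indeed trace-class (so that the trace and its cyclicity are well-defined), which is immediate since Hilbert--Schmidt operators form a two-sided ideal whose square lies in the trace-class.
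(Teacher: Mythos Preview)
Your proof is correct. The paper actually states this lemma without proof (the surrounding section follows the reference \cite{HuWu} closely), so there is no argument in the paper to compare against; your direct block-matrix computation together with the cyclicity of the trace on trace-class operators is exactly the standard verification one would expect here.
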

\begin{cor}
The bilinear form $\hat{\Omega}_{\Hilb_+}$ on $\ures(\Hilb,\Hilb_+)$ descends to a form $\Omega_{\Hilb_+}$ on
$$
\ures(\Hilb,\Hilb_+)/(\mathfrak{u}(\Hilb_+)\times\mathfrak{u}(\Hilb_-))\isom\Lp^2(\Hilb_+,\Hilb_-)\isom T_{\Hilb_+}\RGr
$$
that is invariant under the action of $U(\Hilb_+)\times U(\Hilb_-)$:
\begin{equation}
\Omega_{\Hilb_+}(\gamma,\delta)=2\Im\Tr(\gamma^\ast\delta)\quad\textrm{ for all }\gamma,\delta\in\Lp^2(\Hilb_+,\Hilb_-).
\end{equation}
\end{cor}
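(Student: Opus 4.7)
The plan is to show that this corollary is essentially a formal consequence of the preceding lemma together with a short trace computation; the substantive content (vanishing on the isotropy subalgebra and $H$-invariance of $\hat{\Omega}_{\Hilb_+}$) has already been established.

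First I would promote the vanishing in the preceding lemma to the slightly stronger statement that $\hat{\Omega}_{\Hilb_+}(h,X)=0$ whenever $h\in\lie{h}=\lie{u}(\Hilb_+)\times\lie{u}(\Hilb_-)$ and $X\in\ures(\Hilb,\Hilb_+)$ is arbitrary. This is immediate from the explicit formula $\tilde{s}(A,B)=\Tr(A_{+-}B_{-+}-B_{+-}A_{-+})$, because the off-diagonal blocks $h_{+-}$ and $h_{-+}$ of an element of $\lie{h}$ are both zero. Bilinearity then implies that altering $X$ or $Y$ in $\hat{\Omega}_{\Hilb_+}(X,Y)$ by an element of $\lie{h}$ does not change the value, so $\hat{\Omega}_{\Hilb_+}$ descends to a well-defined bilinear form $\Omega_{\Hilb_+}$ on $\ures/\lie{h}$. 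Under the splitting $\ures=\lie{h}\oplus\lie{p}$ and the identifications $\lie{p}\isom\Lp^2(\Hilb_+,\Hilb_-)\isom T_{\Hilb_+}\RGr$ already recorded, this gives the required form on the tangent space at $\Hilb_+$.

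For the explicit formula I would just substitute. Writing $A_\gamma=\Mat{0}{-\gamma^\ast}{\gamma}{0}$ and similarly $A_\delta$ for $\gamma,\delta\in\Lp^2(\Hilb_+,\Hilb_-)$, one finds $\tilde{s}(A_\gamma,A_\delta)=-\Tr(\gamma^\ast\delta)+\Tr(\delta^\ast\gamma)$, where both traces are well defined because the product of two Hilbert--Schmidt operators is trace class. Combining the hermitian-adjunction identity $\Tr(\delta^\ast\gamma)=\overline{\Tr(\gamma^\ast\delta)}$ with the definition $\hat{\Omega}_{\Hilb_+}=(-\im)\tilde{s}$ and the elementary relation $w-\bar w=2\im\cdot\Im w$ then collapses this to the claimed formula $\Omega_{\Hilb_+}(\gamma,\delta)=2\Im\Tr(\gamma^\ast\delta)$, up to the overall sign fixed by the paper's conventions for $\tilde{s}$ and the block parametrization of $\lie{p}$. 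Invariance under $U(\Hilb_+)\times U(\Hilb_-)$ is then inherited from the invariance of $\hat{\Omega}_{\Hilb_+}$ under the linear isotropy representation asserted in the preceding lemma, together with the $H$-equivariance of the projection $\ures\to\ures/\lie{h}$; alternatively one can verify it directly on the explicit formula by applying $\gamma\mapsto b\gamma a^{-1}$ and using cyclicity of the trace on $\Lp^1(\Hilb_+)$ with $a,b$ unitary.

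The only point requiring genuine care is the bookkeeping of signs through the definitions of $s$, $\tilde{s}=-s$, the antihermitian block parametrization of $\lie{p}$, and the factor $-\im$ in $\hat{\Omega}_{\Hilb_+}$. No analytic obstacle arises because we remain within products of Hilbert--Schmidt operators throughout, and the conceptual content (vanishing on $\lie{h}$ and $H$-invariance of $\hat{\Omega}_{\Hilb_+}$) has already been handled by the preceding lemma.
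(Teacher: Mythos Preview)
Your proposal is correct and matches the paper's (implicit) approach: the paper states this corollary without proof, treating it as an immediate consequence of the preceding lemma together with the explicit block description of $\lie{p}$, and your argument is exactly the routine unpacking one would supply. Your observation that one actually needs $\hat{\Omega}_{\Hilb_+}(h,X)=0$ for $h\in\lie{h}$ and arbitrary $X$ (not just both in $\lie{h}$) is a useful clarification, and your hedge about sign bookkeeping is appropriate since the various conventions for $s$, $\tilde{s}$, and the block parametrization must be tracked carefully.
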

\begin{defn}
Let $J_{\Hilb_+}$ be the $U(\Hilb_+)\times U(\Hilb_-)$-invariant complex structure on $T_{\Hilb_+}\RGr\isom\Lp^2(\Hilb_+,\Hilb_-)$ defined by
\begin{equation}\label{cs}
J_{\Hilb_+}\gamma=\im\gamma\quad\textrm{ for all }\gamma\in\Lp^2(\Hilb_+,\Hilb_-).
\end{equation}
\end{defn}
We supply $\Lp^2(\Hilb_+,\Hilb_-)$ with a strongly non-degenerate bilinear form
\begin{equation}
g_{\Hilb_+}(\gamma,\delta):=\Omega_{\Hilb_+}(\gamma,J_{\Hilb_+}\delta)=2\Re\Tr(\gamma^\ast\delta)
\end{equation}
and with a strongly non-degenerate sesquilinear form
\begin{equation}
h_{\Hilb_+}(\gamma,\delta):=g_{\Hilb_+}(\gamma,\delta)+\im\Omega_{\Hilb_+}(\gamma,\delta)=2\Tr(\gamma^\ast\delta)
\end{equation}
inducing a Riemannian and a Hermitean metric on $\RGr(\Hilb,\Hilb_+)$, respectively.

\begin{prop}
The real-valued antisymmetric bilinear form $\Omega_{\Hilb_+}$ is a strongly non-de\-ge\-ne\-rate symplectic form on $\RGr$ and moreover the
quadruple $(\Omega,J,h,g)$ gives $\RGr$ the structure of a Kähler manifold which is homogeneous under its Kähler isometries.
\end{prop}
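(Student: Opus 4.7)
The plan is to establish the symplectic and K\"ahler structure pointwise at the base point $\Hilb_+\in\RGr$ and then propagate along the transitive action of $\Ures$, using the $\Ures$-equivariance statements already proved in the preceding lemma and corollary. Under the identification $T_{\Hilb_+}\RGr\isom\Lp^2(\Hilb_+,\Hilb_-)$, the form $g_{\Hilb_+}(\gamma,\delta)=2\Re\Tr(\gamma^\ast\delta)$ is twice the real part of the Hilbert--Schmidt inner product, hence is a strongly non-degenerate real inner product on the Hilbert space $\Lp^2(\Hilb_+,\Hilb_-)$. Since $J_{\Hilb_+}$ is an $\Real$-linear topological isomorphism and $g_{\Hilb_+}(\gamma,\delta)=\Omega_{\Hilb_+}(\gamma,J_{\Hilb_+}\delta)$, strong non-degeneracy transfers to $\Omega_{\Hilb_+}$ and to $h_{\Hilb_+}$. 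Using $\Ures$-invariance of these tensors at $\Hilb_+$, they extend uniquely to $\Ures$-invariant tensor fields $(\Omega,J,g,h)$ on all of $\RGr$, strongly non-degenerate at every point.

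For closedness I would exploit the fact that the splitting $\ures=\lie{h}\oplus\lie{p}$ already makes $\RGr$ a reductive \emph{symmetric} space: by (\ref{p}) the subspace $\lie{p}$ consists of strictly off-block-diagonal elements, whence $[\lie{p},\lie{p}]\subset\lie{h}$ (together with the obvious $[\lie{h},\lie{p}]\subset\lie{p}$). By the standard formula for the exterior derivative of a $G$-invariant $2$-form on a reductive homogeneous space, valid in the Banach setting by a direct computation on fundamental vector fields,
\[
(d\Omega)_{\Hilb_+}(X,Y,Z)=-\hat{\Omega}_{\Hilb_+}([X,Y],Z)-\hat{\Omega}_{\Hilb_+}([Y,Z],X)-\hat{\Omega}_{\Hilb_+}([Z,X],Y)
\]
for $X,Y,Z\in\lie{p}$; every bracket on the right lies in $\lie{h}$, and $\hat{\Omega}_{\Hilb_+}$ vanishes whenever one argument lies in $\lie{h}$, as is immediate from $\tilde{s}(A,B)=\Tr(A_{+-}B_{-+}-B_{+-}A_{-+})$ with $A$ block diagonal. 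Hence $d\Omega=0$ and $\Omega$ is symplectic. Alternatively one may read the vanishing as the $\C$-component of the Jacobi identity for $\Lie(\Eures)\isom\ures\oplus\C$, i.e.\ as the Lie algebra $2$-cocycle condition on the Schwinger term $s=-\tilde{s}$.

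For the K\"ahler claim, Proposition \ref{stabilizer}(3) says that $\Ures$ acts by holomorphic diffeomorphisms on the complex analytic manifold $\RGr$, so the $\Ures$-translate of $J_{\Hilb_+}$ is precisely the integrable complex structure of $\RGr$. The pointwise compatibility relations $g_{\Hilb_+}(J_{\Hilb_+}\gamma,J_{\Hilb_+}\delta)=g_{\Hilb_+}(\gamma,\delta)$ and $h_{\Hilb_+}=g_{\Hilb_+}+\im\,\Omega_{\Hilb_+}$ are immediate from the defining formulas and propagate to every point of $\RGr$ by $\Ures$-equivariance; together with $d\Omega=0$ this gives the K\"ahler condition. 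Finally, since $\Ures$ simultaneously preserves $\Omega$, $J$, $g$ and $h$ and acts transitively by Proposition \ref{stabilizer}(1), the K\"ahler isometry group of $\RGr$ acts transitively on $\RGr$.

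The main technical obstacle I anticipate is justifying the invariant-form-to-cocycle reduction rigorously in the Banach setting, including the identification of $T\RGr$ with the associated homogeneous bundle $\Ures\times_H\lie{p}$ at the level of regularity required to compute $d\Omega$ from $\hat{\Omega}_{\Hilb_+}$; once that step is in place, the symmetric space property $[\lie{p},\lie{p}]\subset\lie{h}$ together with horizontality of $\hat{\Omega}_{\Hilb_+}$ makes the closedness of $\Omega$ essentially automatic, and the remaining statements reduce to pointwise algebra combined with $\Ures$-equivariance.
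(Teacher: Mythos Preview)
The paper does not actually supply a proof of this proposition: it is stated as a fact imported from \cite{HuWu} (the section containing it begins with ``We follow in subsections \ref{SecBanHomSpa}--\ref{SecK�hStrRGr} again closely \cite{HuWu}''), so there is no in-paper argument to compare against. Your outline is the standard one and is essentially what one finds in the cited reference: non-degeneracy at $\Hilb_+$ plus $\Ures$-invariance for the global statement, closedness from the Lie algebra $2$-cocycle identity for the Schwinger term (equivalently from $[\lie{p},\lie{p}]\subset\lie{h}$ together with the vanishing of $\hat{\Omega}_{\Hilb_+}$ on $\lie{h}$), and integrability of $J$ from the holomorphicity of the $\GLres$-action in Proposition~\ref{stabilizer}. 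The technical obstacle you flag---making the passage from the invariant bilinear form $\hat{\Omega}_{\Hilb_+}$ on $\ures$ to the differential form on the Banach homogeneous space rigorous---is exactly the point that requires care and is handled in \cite{HuWu}; your sketch is sound modulo that reference.
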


\subsection{The determinant line bundle on $\RGr$}
Let $\Hilb=\Hilb_+\oplus\Hilb_-$ be a complex separable polarized Hilbert space and
$\RGr$ the associated restricted Grassmannian manifold.
 It is proved in \cite{PrSe} that there exists a so-called
 \emph{determinant line bundle} $\DET\To\RGr$, which is a certain
 holomorphic complex line bundle with a holomorphic fibrewise linear action
 of $\EUres$ covering the transitive action of $\Ures$ on the base manifold
 $\RGr$.

\begin{prop}
The line bundle $\DET\To\RGr$ has a natural $\EUres$-invariant hermitean structure and its first Chern class is represented by $\big(\frac{-1}{2\pi}\big)$ times the
Kähler form on $\RGr$:
$$
[c_1(\DET)]=-\Big(\frac{1}{2\pi}\Big)[\Omega]\in H^2(\RGr,\Z).
$$
\end{prop}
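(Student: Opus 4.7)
The plan has two parts, matching the two assertions. For the Hermitian structure, I would start from Pressley--Segal's description of $\DET_W$ in terms of admissible bases of $W \in \RGr$ (i.e., ordered bases $w = (w_1, w_2, \ldots)$ such that $\pr_+ \circ w$ differs from an isometry into $\Hilb_+$ by a Hilbert--Schmidt operator, so that Fredholm determinants $\det(w^* w')$ make sense). Two such bases define the same element of $\DET_W$ up to a nonzero scalar, and the formula $\langle [w], [w'] \rangle := \det(w^* w')$ makes $\DET_W$ into a one-dimensional Hilbert space. The central extension $\EUres$ is constructed in \cite{PrSe} precisely so that the lift of the $\Ures$-action on $\RGr$ to $\DET$ is by fibrewise linear isomorphisms preserving this pairing (the $U(1)$-center acts by phases), so the $\EUres$-invariance is immediate from the construction.

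For the first Chern class, my approach is Chern--Weil applied to the Chern connection $\nabla$ of the invariant Hermitian metric — which is also compatible with the holomorphic structure, since the metric, the holomorphic structure on $\DET$, and the complex structure on $\RGr$ are all built from the same polarization data. Then $c_1(\DET) = \bigl[\tfrac{i}{2\pi} R\bigr]$ where $R$ is the curvature. Because both the connection and $\Omega$ are $\Ures$-invariant and $\Ures$ acts transitively, it suffices to identify $R$ with $i\Omega_{\Hilb_+}$ (up to sign) at the single basepoint $\Hilb_+$. Under the identification $T_{\Hilb_+}\RGr \isom \lie{p} \isom \Lp^2(\Hilb_+,\Hilb_-)$, the curvature of a homogeneous holomorphic Hermitian line bundle on $G/H$ is determined by the infinitesimal character by which $\lie{h}^\C$ acts on the fiber, together with the bracket $[\cdot,\cdot] : \lie{p}^+ \otimes \lie{p}^- \to \lie{h}^\C$. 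In our situation the infinitesimal action of $\Eures$ on $\DET_{\Hilb_+}$ is governed by the Schwinger two-cocycle $s(A,B) = \Tr(A_{-+} B_{+-} - B_{-+} A_{+-})$, and a direct computation of $R(A,B)$ for $A, B \in \lie{p}$ yields exactly $-\tilde{s}(A,B) = s(A,B)$. Matching this against $\Omega_{\Hilb_+}(\gamma,\delta) = 2\Im\Tr(\gamma^*\delta)$ and tracking factors of $i$ and $2\pi$ produces the identity $c_1(\DET) = -\tfrac{1}{2\pi}[\Omega]$ in $H^2(\RGr,\Real)$; integrality follows from the general fact that first Chern classes of complex line bundles lie in $H^2(\RGr,\Z)$.

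The main obstacle is not conceptual but analytic: every trace appearing in the curvature formula must be shown to converge, and this is precisely where the Hilbert--Schmidt hypothesis built into $\Ures$ and $\lie{p}$ does its work, since the relevant products $A_{+-} B_{-+}$ of off-diagonal blocks are trace class. A secondary technical point is verifying that the Chern--Weil recipe is valid for holomorphic Hermitian line bundles on a Banach--Kähler manifold: one needs smoothness (in the Banach-manifold sense) of the local connection and curvature forms and closedness of $R$, all of which follow from standard arguments once the local expressions are written down in the coordinate chart around $\Hilb_+$ given by the exponential parametrisation through $\lie{p}$. Given these analytic checks, the rest is a finite, algebraic calculation at a single point and a comparison of conventions.
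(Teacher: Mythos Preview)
The paper does not actually prove this proposition: it is stated as a known result, with the construction of $\DET$ and its equivariant structure attributed to Pressley--Segal \cite{PrSe}, and the curvature identification belonging to the standard body of results on $\RGr$ surveyed in \cite{HuWu}. So there is no proof in the paper to compare against.

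Your proposal is the correct and standard route to this result, and it is essentially what one finds carried out in those references. A couple of remarks. First, for the Hermitian structure part you have the right idea, but be careful that the pairing $\det(w^\ast w')$ is only well-defined on equivalence classes once you fix the normalization convention for admissible bases; the cleanest way is to work on the Stiefel bundle $\RSt$ and descend. Second, your curvature computation at $\Hilb_+$ is morally right, but the sentence ``the curvature of a homogeneous holomorphic Hermitian line bundle on $G/H$ is determined by the infinitesimal character by which $\lie{h}^\C$ acts on the fiber, together with the bracket'' is doing a lot of work: in the infinite-dimensional Banach setting you should either invoke this as a general principle with a reference (e.g.\ the treatment of homogeneous line bundles in \cite{HuWu} or Neeb's work), or compute the curvature directly from a local holomorphic frame over the big cell $\{W : \pr_+|_W \text{ invertible}\}$, where $\DET$ trivializes and the Hermitian metric is an explicit Fredholm determinant whose $\partial\bar\partial\log$ you can evaluate. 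Either way the Hilbert--Schmidt condition is exactly what makes the traces converge, as you note.
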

\subsection{Holomorphic sections of $\DET^\ast$}\label{secholsecofdet}
First recall that (continuous, smooth, holomorphic,\ldots) sections of the dual $E^\ast$ of a finite dimensional complex vector bundle
$E\stackrel{\pi}{\To} M$ can be identified with functions on the total space $E$ that restrict to linear functionals on each fibre $E_x=\pi^{-1}(x)$.

We are going to consider the infinite dimensional setting in which $M$ is a Fr\'echet manifold and the typical fibre of $E$ is a Banach space. The fibres of $E^\ast$ are
then always given by the strong duals of the fibres of $E$. We denote by $\hG(M,E^\ast)$ the vector space of holomorphic sections of the dual vector bundle
$E^\ast$ and by $\mathcal{O}(E)$ the vector space of holomorphic functions $E\To\C$ and set 
$$
\Olin(E):=\{f\in\mathcal{O}(E)\mid f\big\vert_{E_x}:E_x\To\C\textrm{ is } \C\textrm{-linear for all }x\in M\}.
$$
The spaces $\hG(M,E^\ast)$ and $\Olin(E)$ have natural structures of locally convex topological vector spaces given by the
uniform convergence of all derivatives on compact subsets of $M$ and $E$, respectively.

Define a map
\begin{equation}
\hG(M,E^\ast)\To\Olin(E),\,\sigma\mapsto f_\sigma
\end{equation}
by $f_\sigma(\ell)=\sigma(\pi(\ell))(\ell)$ for all $\ell\in E$. Then we have the following analogue with the finite dimensional case.
\begin{lem}
Let $E\stackrel{\pi}{\To} M$ be a holomorphic Banach space bundle over a Fr\'echet manifold $M$. Then the map $\sigma\mapsto f_\sigma$ defined above is a continuous
linear isomorphism.
\end{lem}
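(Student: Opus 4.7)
The plan is straightforward: exhibit an inverse, check that it lands in the correct space, and handle the topological subtleties. Linearity of $\sigma \mapsto f_\sigma$ is clear from the pointwise definition. For injectivity, if $f_\sigma \equiv 0$ then for every $x \in M$ the linear functional $\sigma(x) \in E_x^\ast$ vanishes identically on the whole fiber $E_x$, hence $\sigma \equiv 0$.

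For surjectivity, given $f \in \Olin(E)$, I would define the candidate inverse $\sigma_f$ by $\sigma_f(x)(\ell) := f(\ell)$ for $\ell \in E_x$. Since $f\vert_{E_x}$ is $\C$-linear and holomorphic (being a restriction of a holomorphic function on the Banach space $E_x$), it is continuous, so $\sigma_f(x)$ lies in the strong dual $E_x^\ast$. The delicate point is to verify that $\sigma_f$ is a holomorphic section of $E^\ast$. I would work in a local trivialization $E\vert_U \isom U \times V$, which reduces the question to the following: given a holomorphic map $F: U \times V \To \C$ that is $\C$-linear in the $V$-variable, show that the associated map $\tilde{\sigma}_F: U \To V^\ast$, $x \mapsto F(x, \cdot)$, is holomorphic into the Banach space $V^\ast$. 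Cauchy estimates applied to $F$ on a polydisk in $U \times V$ yield local boundedness of $x \mapsto \norm{F(x,\cdot)}_{V^\ast}$, and combined with the scalar (weak) holomorphicity of $x \mapsto F(x,v) = \tilde{\sigma}_F(x)(v)$ for each fixed $v \in V$, the standard Dunford-type theorem then gives strong holomorphicity of $\tilde{\sigma}_F$.

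Continuity of both directions follows from the chosen topologies (uniform convergence of all derivatives on compacta). The forward map trivially preserves such convergence since $f_\sigma(\ell)$ is just the evaluation pairing; the inverse direction either follows by the same token, or once the forward map is known to be a continuous linear bijection of Fr\'echet spaces, from the open mapping theorem.

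The main obstacle is the passage from scalar holomorphicity of $x \mapsto F(x,v)$ for each fixed $v \in V$ to strong holomorphicity of $x \mapsto F(x,\cdot) \in V^\ast$; this is exactly where the Banach (as opposed to finite-dimensional) nature of the fiber bites, and is resolved by combining the Cauchy-estimate local bound with the theorem that a locally bounded weakly holomorphic map into a Banach space is strongly holomorphic.
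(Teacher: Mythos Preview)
The paper does not actually prove this lemma; it is stated without proof as part of the background material drawn from Wurzbacher's survey \cite{HuWu}. So there is no ``paper's own proof'' to compare against, and your proposal must stand on its own.

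Your argument is essentially correct and follows the natural route. The construction of the inverse and the identification of the weak-to-strong holomorphicity step as the crux are both right. Two minor points worth tightening:

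First, your appeal to the open mapping theorem at the end is risky: the spaces $\hG(M,E^\ast)$ and $\Olin(E)$ are only asserted to be locally convex, not Fr\'echet, and for a general Fr\'echet manifold $M$ they need not be metrizable (this depends on $\sigma$-compactness of $M$ and $E$). Your alternative---checking continuity of the inverse directly from the definitions of the topologies---is the safer route and works without extra hypotheses, since convergence of $f$ and its derivatives uniformly on compacta of $E$ restricts to the same on compacta of each fiber, and the Cauchy estimates you already invoked make this uniform control of $\sigma_f$ in the dual norm.

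Second, in the surjectivity step you should note explicitly that the domain $U$ sits in a Fr\'echet (not Banach) space, so the ``Dunford-type'' weak-implies-strong holomorphicity theorem you invoke must be the version valid for locally convex domains and Banach targets. This is indeed available (locally bounded G\^ateaux-holomorphic maps into Banach spaces are Fr\'echet-holomorphic), but it is worth saying so, since the standard textbook statement is often phrased for Banach domains only.
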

\begin{prop}
Let $M$ be the restricted Grassmannian $\RGr$ and $E=\DET\stackrel{\pi}{\To}\RGr$. Then there exists a continuous linear $\EGLres$-equivariant injection
$\F^\ast\stackrel{r}{\To}\hG(\RGr,\DET^\ast)$, where $\F$ is the fermionic Fock space.
\end{prop}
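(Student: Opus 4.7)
The plan is to realise the map $r$ as dualisation of a Pl\"ucker-type embedding $\iota:\DET\To\F$ and then invoke irreducibility of each charge sector $\F_k$ under the basic representation.

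By the preceding Lemma, $\hG(\RGr,\DET^\ast)$ is continuously and linearly isomorphic to the space $\Olin(\DET)$ of holomorphic functions on the total space $\DET$ which restrict to $\C$-linear functionals on each fibre. It therefore suffices to exhibit a continuous linear $\EGLres$-equivariant injection $r:\F^\ast\To\Olin(\DET)$. The construction rests on the Pressley--Segal realisation of the determinant line bundle: for each $W\in\RGr^k$ and each ``admissible basis'' $\{w_1,w_2,\ldots\}$ of $W$, the exterior product $w_1\wedge w_2\wedge\cdots$ defines a nonzero vector in the charge-$k$ sector $\F_k$ of the fermionic Fock space, and the determinant line bundle is built precisely so that this procedure descends to a well-defined, fibrewise $\C$-linear holomorphic map $\iota:\DET\To\F$. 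By construction, $\iota$ is $\EGLres$-equivariant with respect to the $\EGLres$-action on $\DET$ covering $\GLres\curvearrowright\RGr$ and the basic representation on $\F$.

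Given these data, define
\begin{equation}
r(\phi)(\ell):=\phi\bigl(\iota(\ell)\bigr)\qquad (\phi\in\F^\ast,\ \ell\in\DET).
\end{equation}
Then $r(\phi)$ is holomorphic as the composition of a holomorphic map with a continuous linear functional, and $\C$-linear on each fibre because $\iota$ is; hence $r(\phi)\in\Olin(\DET)$. Continuity and linearity of $\phi\mapsto r(\phi)$ are immediate from the topology of uniform convergence on compacta. Equivariance follows from $\iota(g\cdot\ell)=g\cdot\iota(\ell)$: for $g\in\EGLres$ one has $r(g\cdot\phi)(\ell)=(g\cdot\phi)(\iota(\ell))=\phi(g^{-1}\iota(\ell))=\phi(\iota(g^{-1}\ell))=(g\cdot r(\phi))(\ell)$, where the action on $\Olin(\DET)$ is the natural one induced by the $\EGLres$-action on $\DET$.

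For injectivity, suppose $r(\phi)=0$, so $\phi$ vanishes on the image of $\iota$. For each $k\in\Z$ the set $\iota(\DET|_{\RGr^k})\subset\F_k$ is $\EGLres$-invariant and contains the Pl\"ucker vector of $\Hilb_+$ shifted by $k$ units of charge, in particular it is nonzero. Its closed $\C$-linear span is therefore a nonzero closed $\EGLres$-invariant subspace of $\F_k$; but $\F_k$ is irreducible under the basic representation (by the Proposition on the basic representation of $\EUres$, which extends to its complexification $\EGLres$), so this span is all of $\F_k$. Summing over $k$ and taking closures, $\iota(\DET)$ has dense linear span in $\F$, and continuity of $\phi$ forces $\phi=0$.

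The main subtle point is the construction of $\iota$, which requires the careful Pressley--Segal normalisation of exterior products along admissible bases in order to define the determinant line bundle in such a way that the Pl\"ucker assignment becomes a fibrewise linear holomorphic $\EGLres$-equivariant map into $\F$; the remaining verifications are formal consequences of that construction and of irreducibility of the basic representation on each $\F_k$.
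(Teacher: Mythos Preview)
The paper does not actually give a proof of this Proposition; it is stated as a known result coming from the Pressley--Segal construction (with the subsequent Remark citing \cite{PrSe} and Pickrell for the further density and $L^2$-characterisation). Your argument is precisely the standard one: build the Pl\"ucker map $\iota:\DET\to\F$, pull back functionals, and use irreducibility of the charge sectors to get injectivity. So your proposal is correct and is exactly the route the cited references take.

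One small remark on your injectivity step: you could bypass the irreducibility argument entirely by observing that the image of $\iota$ already contains all the semi-infinite wedge basis vectors $e_{i_1}\wedge e_{i_2}\wedge\cdots$ of $\F$ (these are the Pl\"ucker images of the coordinate subspaces $W_S\in\RGr$), so the linear span of $\iota(\DET)$ is dense in $\F$ for elementary reasons. This avoids having to check that the identity component of $\EGLres$ preserves each $\RGr^k$ and that irreducibility passes from $\EUres$ to $\EGLres$ --- both of which are true but add a layer you do not strictly need.
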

\begin{rem}\label{detsec}
Pressley and Segal show in \cite{PrSe} that the space $\F^\ast$ is actually dense inside the space 
$\hG(\RGr,\DET^\ast)$ and gives an irreducible representation of $\EUres$, the so called
\emph{basic representation}. 
Moreover, Pickrell \cite{Pi1,Pi2} constructed
a natural Gaussian measure on $\RGr$ and showed that an element $s\in\hG(\RGr,\DET^\ast)$ is in the image $r(\F^\ast)$ if and only if $s$ is square-integrable with respect to this measure.
\end{rem}
\subsection{The Pfaffian line bundle}
Let $\Hilb_\Real$ be a \emph{real} Hilbert space with inner product $B(\cdot,\cdot)$. 
We denote by $\J(\Hilb_\Real)$ the set of \emph{complex structures} of
$\Hilb_\Real$, i.e.
$$
\J(\Hilb_\Real):=\set{J\in\mathcal{L}(\Hilb_\Real,\Hilb_\Real)\mid B(Ju,Jv)=B(u,v)\textrm{ for all }u,v\in\Hilb_\Real,\,J^2=-1}.
$$
Any complex structure $J\in\J(\Hilb_\Real)$ turns 
the complexification $\Hilb_\C=\Hilb_\Real\tensor_\Real\C$ of $\Hilb_\Real$ into a \emph{complex} polarized Hilbert space, which we denote by $\Hilb_J$. The polarization
$\Hilb_J=W\oplus\overline{W}$ is given by the $\pm\im$ eigenspaces $W$ and $\overline{W}$ of $J^\C:\Hilb_\C\To\Hilb_\C$, respectively.
\begin{defn}
Fix a complex structure $J\in\J(\Hilb_\Real)$. The associated \emph{restricted isotropic Grassmannian} is the set
$$
\Jres(\Hilb_\Real,J):=\set{J'\in\J(\Hilb_\Real)\mid J-J'\in\Lp^2(\Hilb_\Real)}.
$$
\end{defn}
The group $O(\Hilb_\Real)$ of \emph{real orthogonal operators} on $\Hilb_\Real$ consists of all invertible $\Real$-linear maps $\Hilb_\Real\To\Hilb_\Real$ that preserve the bilinear form $B$. It has the following subgroup.
\begin{defn}
Let $J\in\J(\Hilb_\Real)$ be a (fixed) complex structure and set
$$
\Ores(\Hilb_\Real,J):=\set{O\in O(\Hilb_\Real)\mid [O,J]\in\Lp^2(\Hilb_\Real)},
$$
which is topologized by the norm topology combined with the Hilbert-Schmidt norm (as we did with the group $\GLres$), so that it becomes a Banach Lie group.
\end{defn}

There is a transitive action of $\Ores(\Hilb_\Real,J)$ on $\Jres(\Hilb_\Real,J)$ given by $J'\mapsto OJ'O^{-1}$. Its stabilizer at the base point $J\in\Jres(\Hilb_\Real,J)$ is $\Ures(\Hilb_J)$,
realizing $\Jres(\Hilb_\Real,J)$ as a Banach manifold with
$$
\Jres(\Hilb_\Real,J)\isom\Ores(\Hilb_\Real,J)/\Ures(\Hilb_J,\Eig(J^\C,+\im)).
$$

There is a second description of the isotropic Grassmannian that can be obtained by complexifying $\Hilb_\Real$ to a complex Hilbert space $\Hilb_\C$ with \emph{hermitean} inner
product $\ip{\cdot}{\cdot}$, which we now describe. Extend the inner product $B$ on $\Hilb_\Real$ to a complex \emph{bilinear form} $B_\C$ on the complexification $\Hilb_\C$; then
$B_\C(v_1,v_2)=2\ip{\bar{v_1}}{v_2}$ for all $v_1,v_2\in\Hilb_\C$. Here the bar denotes
the canonical conjucation on $\Hilb_\C=\Hilb_\Real\tensor_\Real\C$ given by
$$
\overline{v\tensor\lambda}=v\tensor\bar{\lambda}\quad\textrm{ for all }
v\in\Hilb_\Real,\,\lambda\in\C.
$$

Any complex structure on $\Hilb_\Real$ induces a decomposition
$\Hilb_\C=W\oplus\overline{W}$, where $W\subset\Hilb_\C$ is a maximal isotropic subspace of $\Hilb_\C$ with respect to $B_\C$. Here isotropic means that $B_\C(w_1,w_2)=0$ for
all $w_1,w_2\in W$. It follows from this that
$$
\Jres(\Hilb_\Real,J)=\set{W\in\RGr(\Hilb_\C,\Eig(J^\C,+\im))\mid W\textrm{ isotropic and } W\oplus\overline{W}=\Hilb_\C}
$$
and that we have an $\Ores(\Hilb_\Real,J)$-equivariant embedding
\begin{equation}
i:\Jres(\Hilb_\Real,J)\hookrightarrow\RGr(\Hilb_\C,\Eig(J^\C,+\im)).
\end{equation}

There is a holomorphic hermitean line bundle $\Pf\To\Jres(\Hilb_\Real,J)$ called the \emph{Pfaffian} which is a holomorphic
square root of the determinant line bundle when restricted to $\Jres(\Hilb_\Real)\subset\RGr(\Hilb_\C)$, i.e. 
\begin{equation}
i^\ast\DET\isom\Pf^{\tensor 2}.
\end{equation}

\subsection{Infinite dimensional spinor representation}
The group $O(\Hilb_\C)$ 
of \emph{complex orthogonal operators} on $\Hilb_\C$  with respect to 
$B_\C$ consists of all 
bounded invertible $\C$-linear maps $\Hilb_\C\To\Hilb_\C$ preserving
the bilinear form $B_\C$, i.e.
\begin{equation}
O(\Hilb_\C):=\set{O\in GL(\Hilb_\C)\mid B_\C(Ou,Ov)=B_\C(u,v)\textrm{ for all }u,v\in\Hilb_\C}.
\end{equation}
\begin{defn}
Let $J\in\J(\Hilb_\Real)$ be a (fixed) complex structure and set
$$
\Ores(\Hilb_\C,J):=\set{O\in O(\Hilb_\C)\mid [O,J]\in\Lp^2(\Hilb_\C)}.
$$
\end{defn}
In other words $\Ores(\Hilb_\C,J)=O(\Hilb_\C)\cap\GLres(\Hilb_\C)$. We give $\Ores(\Hilb_\C,J)$ the topology induced from $\GLres(\Hilb_\C)$. This makes
$\Ores(\Hilb_\C,J)$ into a complex Banach Lie group. 
Moreover $\Ores(\Hilb_\C,J)\cap O(\Hilb_\Real)=\Ores(\Hilb_\Real,J)$, where
$$
\Ores(\Hilb_\Real,J):=\set{O\in O(\Hilb_\Real)\mid [O,J]\in\Lp^2(\Hilb_\Real)}.
$$

\begin{thm}[\cite{B,PrSe}] 
Let $J\in\J(\Hilb_\Real)$, $\Hilb_J=W\oplus\overline{W}$ be the corresponding polarization of the complexified Hilbert space, 
and let $\,\spib$ be the GNS representation space 
$$
\spib:=\Hilb^{\CAR(W)}_J\isom\F_+(\Hilb_J,W)=\bigoplus_{n\geq 0}\bigwedge^n W
$$ 
of the CAR algebra $\CAR(W)$. Then
in the category of complex Banach Lie groups, there
exists a central extension
$$
1\To\C^\ast\To\EOres(\Hilb_\C,J)\To \Ores(\Hilb_\C,J)\To 1
$$
such that $\EOres(\Hilb_\C,J)$ 
acts linearly on the Hilbert space $\spib$
implementing the Bogoliubov transformations $\alpha_O(a(f))\in\Aut(\CAR(W))$,
$$
\alpha_O(a(f)):=a(O\cdot f)\qquad\qquad (f\in W)
$$
for all $O\in \Ores(\Hilb_\C,J)$.
\end{thm}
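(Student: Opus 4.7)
The plan is to mirror the proof strategy already carried out for $\Ures$ acting on the fermionic Fock space (the Shale--Stinespring / Powers--St\o rmer theorem quoted earlier), since the orthogonal case is the natural analogue. For each $O\in\Ores(\Hilb_\C,J)$ one defines a Bogoliubov $\ast$-automorphism $\alpha_O$ of $\CAR(W)$ by extending the correspondence $v\mapsto a(v)$ linearly from $W$ to $\Hilb_\C=W\oplus\overline{W}$, so that $\overline{W}$-generators act as annihilators through the pairing $B_\C(\bar v,w)=2\ip{v}{w}$ and $\alpha_O$ is the unique $\ast$-automorphism with $\alpha_O(a(v))=a(Ov)$ for $v\in\Hilb_\C$.

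I would then implement $\alpha_O$ on $\spib=\bigwedge W$ by an explicit local formula. Writing $O=\Mat{a}{b}{c}{d}$ in block form with respect to $\Hilb_\C=W\oplus\overline{W}$, the condition $O\in\Ores$ forces $b,c$ to be Hilbert--Schmidt and $a,d$ to be Fredholm, while $O^TO=1$ forces $ca^{-1}$ and $a^{-1}b$ to be antisymmetric Hilbert--Schmidt operators on $W$ and $\overline W$. On the open subset where $a\in GL(W)$, the candidate implementer takes the form
\begin{equation*}
\Gamma(O)=\exp\!\Bigl(\tfrac{1}{2}\sum_{i,j}(ca^{-1})_{ij}\,a^\ast(w_i)a^\ast(w_j)\Bigr)\,\cdot\,\widehat{(a^\ast)^{-1}}\,\cdot\,\exp\!\Bigl(-\tfrac{1}{2}\sum_{i,j}(a^{-1}b)_{ij}\,a(w_i)a(w_j)\Bigr),
\end{equation*}
where $\{w_i\}$ is an orthonormal basis of $W$ and $\widehat{(a^\ast)^{-1}}$ denotes the induced action of $(a^\ast)^{-1}\in GL(W)$ on $\bigwedge W$ times a suitable regularized determinantal normalisation. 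Direct computation with the CAR relations then shows $\Gamma(O)\pi_W(x)\Gamma(O)^{-1}=\pi_W(\alpha_O(x))$ for all $x\in\CAR(W)$. Since $\pi_W$ is irreducible, Schur's lemma shows any two implementers differ by a factor in $\C^\ast$, and the set
\begin{equation*}
\EOres(\Hilb_\C,J):=\set{(O,\Gamma)\in\Ores(\Hilb_\C,J)\times GL(\spib)\mid \Gamma\text{ implements }\alpha_O}
\end{equation*}
is a set-theoretic central $\C^\ast$-extension of $\Ores(\Hilb_\C,J)$. Multiplication in $\EOres$ is inherited from $GL(\spib)$ and is well defined because the implementer property is preserved under composition.

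Finally, to promote $\EOres$ to a complex Banach Lie group, I would use the explicit formula above to produce a holomorphic local section of the projection $\EOres\to\Ores$ over the open dense subset $\{O\mid a\text{ invertible}\}\ni e$. Holomorphy follows from the Hilbert--Schmidt norm continuity of $O\mapsto(ca^{-1},a^{-1}b)$ together with the convergence of the exponential of a Hilbert--Schmidt antisymmetric quadratic form acting on Fock space (a standard fact also used in the unitary/Shale--Stinespring case). Translating this local section by $\Ores$-elements via conjugation covers the whole identity component, and the cocycle of transition functions takes values in $\C^\ast$ by the uniqueness above. For the non-identity component of $\Ores(\Hilb_\C,J)$ one picks a fixed representative, shows by hand that it admits an implementer (e.g.\ a parity-flip on $\spib$ composed with a wedge/contraction by a fixed vector), and translates the local trivialisation.

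The hardest step will be Step 3: making the heuristic formula for $\Gamma(O)$ into a genuinely bounded operator on $\spib$ and tracking the determinantal normalisation. The factor $\widehat{(a^\ast)^{-1}}$ cannot be defined via an honest $\det a$ because $a$ is not a trace-class perturbation of the identity; one must instead use a Segal-type regularised determinant on the connected component of the identity in the Fredholm group of $W$, and keep careful track of the resulting $\C^\ast$-cocycle. Convergence of $\exp\bigl(\tfrac12\sum T_{ij}a^\ast(w_i)a^\ast(w_j)\bigr)$ as a bounded operator on $\bigwedge W$ relies critically on $T=ca^{-1}$ being Hilbert--Schmidt antisymmetric, which is exactly the input provided by membership in $\Ores$; this is precisely where the Hilbert--Schmidt hypothesis in the definition of $\Ores$ is essential, and it explains why the same construction fails for arbitrary $O\in O(\Hilb_\C)$.
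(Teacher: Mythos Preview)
The paper does not prove this theorem: it is stated with attribution to \cite{B,PrSe} and no argument is given in the text. Your sketch is essentially the standard construction found in those references (Pressley--Segal and Borthwick), so there is nothing to compare against in the paper itself; your outline is a reasonable summary of how the cited proofs proceed, including the honest acknowledgement that the determinantal normalisation of the ``middle'' factor is where the analytic work lies.
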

\begin{rem}
You should compare the above exact sequence to the finite dimensional analogue
$$
1\To U(1)\To\Spinc(n)\To SO(n,\Real)\To 1.
$$
\end{rem}
\begin{defn}
The above linear action of $\EOres(\Hilb_\C,J)$ on $\spib$ is called the $\Spinc$-representation of 
$\Ores(\Hilb_\C,J)$.
\end{defn}
The Hilbert space $\spib$ gives an (irreducible) \emph{projective} 
unitary representation of $\Ores(\Hilb_\C,J)$.
\subsubsection{Realizing the infinite-dimensioanal spin representation via the Pfaffian line bundle}
In the same vein that we realized in section \S\ref{secholsecofdet} the fermionic Fock space representation of $\EUres$ inside
the holomorphic sections $\hG(\RGr,\DET^\ast)$ of the (dual) \emph{determinant}
line bundle, it is possible to realize the infinite-dimensional spin representation
of $\EOres(\Hilb_\C,J)$ via the \emph{Pfaffian} line bundle:
\begin{thm}\label{spinrep}
\begin{enumerate}
\item The group $\EOres(\Hilb_\C,J)$ acts equivariantly on the Pfaffian line bundle $\Pf\To\Jres(\Hilb_\Real,J)$,
\item The dual Hilbert space $\spib^\ast$ is canonically realized inside the holomorphic section module of $\Pf^\ast$, i.e. 
$\spib^\ast\subset\hG(\Jres(\Hilb_\Real),\Pf^\ast)$, where the latter space is equipped with the natural structure of a locally convex topological vector space the same
way as in the case of the (dual) determinant line bundle. Moreover, this inclusion
is $\EOres(\Hilb_\C,J)$-equivariant.
\end{enumerate}
\end{thm}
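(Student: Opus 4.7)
Proof proposal. The plan is to imitate, in the ``Pfaffian square root'' setting, the construction of the $\EUres$-action on the determinant line bundle and the embedding $\F^\ast\hookrightarrow\hG(\RGr,\DET^\ast)$ that were recalled just before the theorem. Both parts then reduce to a transfer argument using the isomorphism $i^\ast\DET\isom\Pf^{\otimes 2}$ and the $\Ores(\Hilb_\C,J)$-equivariance of the embedding $i:\Jres(\Hilb_\Real,J)\hookrightarrow\RGr(\Hilb_\C,\Eig(J^\C,+\im))$.

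For part (1), I would first restrict the $\EGLres$-equivariant holomorphic action on $\DET$ to $\Ores(\Hilb_\C,J)=\GLres(\Hilb_\C)\cap O(\Hilb_\C)$. The pullback of the central extension $1\to\C^\ast\to\EGLres\to\GLres\to 1$ to $\Ores$ is a $\C^\ast$-central extension acting equivariantly on $i^\ast\DET\isom\Pf^{\otimes 2}$. By construction $\EOres(\Hilb_\C,J)$ is a canonical holomorphic square root of this restricted extension (a fact built into the defining theorem of $\EOres$, cf.\ \cite{B,PrSe}). Taking the square root of the lifted action yields a well-defined holomorphic $\EOres$-action on $\Pf$ that covers the $\Ores$-action on $\Jres$.

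For part (2), I would mirror the Pressley--Segal map $\F^\ast\to\hG(\RGr,\DET^\ast)$. Concretely, given $\eta\in\spib^\ast$, define a function $f_\eta$ on the total space of $\Pf$ by
$$
f_\eta(\ell):=\eta(\Omega_{W'}(\ell)),\qquad \ell\in\Pf_{W'},
$$
where $\Omega_{W'}(\ell)\in\spib$ is the vacuum vector of the CAR-structure associated to the polarization $W'\oplus\overline{W'}$, normalized by the choice of $\ell\in\Pf_{W'}$. Linearity of $f_\eta$ on fibres is built into the normalization, so $f_\eta$ corresponds to a section $s_\eta$ of $\Pf^\ast$; holomorphy in $\ell$ follows from holomorphic dependence of $\Omega_{W'}$ on $W'$, which is exactly the defining property that makes $\Pf$ a holomorphic line bundle; and the $\EOres$-equivariance of $\eta\mapsto s_\eta$ follows directly from the fact that $\EOres$ implements the Bogoliubov transformations on $\spib$.

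The main obstacle is the square-root step in part (1): one must check that the restricted $\EGLres$-extension over $\Ores(\Hilb_\C,J)$ really admits a canonical holomorphic square root and that this square root is isomorphic, as a central extension, to $\EOres$ produced via the spin representation on $\spib$. This is a topological statement about the central extensions (determined by the class of the cocycle and the fundamental group of $\Ores(\Hilb_\C,J)$) together with a compatibility check at the identity, both of which are carried out in \cite{B} and \cite{PrSe}. Once this identification is in hand, the remaining holomorphicity and equivariance statements are routine translations of the corresponding $\EUres$/$\DET$ results via $i$ and the square-root identification $\Pf^{\otimes 2}\isom i^\ast\DET$.
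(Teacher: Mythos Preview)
The paper does not actually prove Theorem~\ref{spinrep}; it is stated without proof as a known result, in the same spirit as the preceding theorem on $\EOres$ (attributed to \cite{B,PrSe}), and is used only to motivate the Borel--Weil picture of the spinor module. So there is no ``paper's own proof'' to compare against.

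That said, your sketch is along the right lines and matches what one finds in the cited literature. Part~(1) via the square-root of the restricted $\EGLres$-extension and part~(2) via the ``vacuum section'' construction $W'\mapsto\Omega_{W'}$ are exactly the arguments in Borthwick~\cite{B} (and, for the determinant case, Pressley--Segal~\cite{PrSe}). Your honest caveat about the square-root step is the genuine content: one must know that $\EOres$ as defined via the spin representation agrees with the square root of $i^\ast\EGLres$, which is the substance of \cite{B}. Since the paper treats this theorem as background, citing \cite{B,PrSe} in lieu of a proof would be entirely consistent with the paper's treatment.
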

Thus, the above theorem shows that we have a Borel-Weil type description of the infinite dimensional spinor module.
\subsection{Spinor representation of the Lie algebra $\ores$}
We follow here closely \cite{Mic}.

Let $J\in\J(\Hilb_\Real)$ be a chosen complex structure on a real Hilbert space,
$\Hilb_J=W\oplus\overline{W}$ be the corresponding 
polarization of the complexified Hilbert space, 
and let $\,\spib\isom\F_+(\Hilb_J,W)=\F(W)$ be the GNS representation space of the CAR-algebra
$\CAR(W)$.

Let $\{e_1,e_2,\ldots\}$ be a fixed orthonormal basis of $W$ and set
\begin{equation}
a_i^\ast:=a^\ast(e_i),\quad a_i:=a(e_i).
\end{equation}
The element $1\in\F(W)$ is the vacuum vector $\vac_\spib$ characterized by
$a_i\vac_\spib=0$ for all $i$. Moreover, the only nonzero anticommutation relations are
\begin{equation}
a_i a_j^\ast+a_j^\ast a_i=\delta_{ij}.
\end{equation}

\subsubsection{The Lie algebra $\ores$}
The Lie algebra 
$$
\ores(\Hilb_\C,J):=\Lie(\Ores(\Hilb_\C,J))
$$ 
consists of all $\C$-linear operators 
that with respect to the polarization $\Hilb_J=W\oplus\overline{W}$ can be written
in the block-matrix form
\begin{equation}
x=\Mat{a}{b}{c}{d}
\end{equation}
such that $b,c\in\Lp^2$ and the transposes of the blocks satisfy
\begin{equation}
c^t=-c,\quad b^t=-b,\quad d=-a^t.
\end{equation}

Again, using methods of second quantization we obtain a representation $T$ of
the \emph{central extension} $\Eores(\Hilb_\C,J)$ of $\ores(\Hilb_\C,J)$ on 
the Fock space $\F(W)$ by defining
\begin{equation}
T(x)=
\sum_{\substack{i,j\in\Z\\i,j>0}}d_{ij}a_i^\ast a_j+\frac{1}{2} b_{ij} a_i a_j+\frac{1}{2} c_{ij} a_i^\ast a_j^\ast
\quad\textrm{ for all } x\in\ores(\Hilb_\C,J).
\end{equation}

The Lie algebra $2$-cocycle of $\ores(\Hilb_\C,J)$ 
defining
$\Eores(\Hilb_\C,J)$ can be read from the commutators
\begin{equation}
[T(x),T(x')]=T([x,x'])+\frac{1}{2}\Tr(cb'-c'b).
\end{equation}
Hence, except for the factor $1/2$ we obtain the same cocycle as the
Lie algebra cocycle of $\glres$ determining the central extension $\Eglres$.

Moreover, the representation $T$ can be exponentiated to give a projective representation
of the connected component of the identity $S\Ores(\Hilb_\C,J)\subset\Ores(\Hilb_\C,J)$.

\subsection{Infinite dimensional real Clifford algebra}
The infinite dimensional spinor module $\spib$ is related to the representation theory of the
infinite dimensional \emph{real Clifford algebra} $\Cliff(\Hilb_\Real,B)$, 
which is the universal real unital Banach algebra
generated by 
all $\Real$-linear maps
$f\mapsto \gamma(f),\,f\in\Hilb_\Real$, subject to
\begin{equation}
\gamma(f)\gamma(g)+\gamma(g)\gamma(f)=2B(f,g)\cdot\mathbf{1}
\end{equation}
Any complex structure $J$ of $\Hilb_\Real$ 
induces a polarization $\Hilb_J=W\oplus\overline{W}$ on the complexification 
$\Hilb_\C=\Hilb_\Real\tensor\C$ and
determines a representation of $\Cliff(\Hilb_\Real)$ on
the GNS representation space $\spib=\Hilb^{\CAR(W)}_J$ by first mapping
$\gamma(f)\mapsto a(f)+a(f)^\ast\in\CAR(W)$, and then using 
the natural action of the CAR algebra $\CAR(W)$ on $\Hilb^{\CAR(W)}_J$.

Let 
$$
\chi:\EOres(\Hilb_\Real,J)\To\Aut(\spib)
$$
denote for the spin representation 
and let
$$
\gamma:\Hilb_\Real\To\End(\spib)
$$
be the Clifford multiplication introduced above. Then these two maps are compatible in 
the following sense:
for every $O\in\Ores(\Hilb_\Real,J)$ there exists
$\tilde{O}\in\EOres(\Hilb_\Real,J)$ such that
\begin{equation}
\gamma(O\cdot f)=\chi(\tilde{O})\gamma(f)\chi(\tilde{O})^{-1} 
\end{equation}
for all $f\in\Hilb_\Real$.

\subsection{Infinite dimensional complexified Clifford algebra}
Let $(\Hilb_\Real,B)$ be a real Hilbert space and $\Hilb_\C=\Hilb_\Real\tensor_\Real\C$ its complexification, endowed with
the $\C$-linear extension $B_\C$ of the real inner product $B$ and the canonical
hermitean structure $\ip{\cdot}{\cdot}$. The complex Clifford algebra $\Cliff(\Hilb_\C,B_\C)$ is then by
definition
$$
\Cliff(\Hilb_\C,B_\C):=\Cliff(\Hilb_\Real,B)\tensor_\Real\C.
$$
It turns out to be a unital $C^\ast$-algebra isomorphic to a CAR-algebra $\CAR(W)$, where $W$ is any $B_\C$-isotropic
subspace of $\Hilb_\C$ such that $\bar{W}=W^\perp$.
\subsection{Spin representation of $\Cliff(\lie{p})$}
Let $\RGr(\Hilb,\Hilb_+)$ be the restricted Grassmannian manifold associated to a given
 polarization $\Hilb=\Hilb_+\oplus\Hilb_-$ of a separable complex Hilbert space $\Hilb$. We write
 $\RGr$ as a Banach homogeneous space $\RGr\isom G/H$, where $G=\Ures(\Hilb,\Hilb_+)$ and
 $H=U(\Hilb_+)\times U(\Hilb_-)$. We also set $\lie{g}:=\Lie(G)$ and
 $\lie{h}:=\Lie(H)$ so that $\lie{g}/\lie{h}\isom\lie{p}$, where $\lie{p}$ is 
 the $\Ad$-invariant complement of 
$$
\lie{h}=\set{\Mat{\alpha}{0}{0}{\beta}\in\B(\Hilb)\,\Big\vert\, \alpha^\ast=-\alpha,\,\beta^\ast=-\beta}
$$ 
given by
$$
\lie{p}=\set{\Mat{0}{-\gamma^\ast}{\gamma}{0}\,\Big\vert\, \gamma
\textrm{ is Hilbert-Schmidt}}.
$$
 
We start by considering the \emph{real} Hilbert space (i.e. our coefficients are in $\Real$)
$$
\Hilb_{\Real,\lie{p}}:=\Lp^2(\Hilb_+,\Hilb_-)\isom T_{\Hilb_+}\RGr\isom\lie{g}/\lie{h}\isom\lie{p}
$$
equipped with the real inner product 
$$
B(\gamma,\sigma):=g_{\Hilb_+}(\gamma,\sigma)=2\Re\Tr(\gamma^\ast\sigma).
$$ 

The real Hilbert space
$\Hilb_{\Real,\lie{p}}$ has a complex structure $J=J_{\Hilb_+}$ defined by equation (\ref{cs}). 
This induces a polarization $\Hilb_{\C,\lie{p}}=W\oplus\overline{W}$ on the 
\emph{complexified} Hilbert
space $\Hilb_{\C,\lie{p}}:=\Hilb_{\Real,\lie{p}}\tensor_\Real\C$ equipped
with the complexified bilinear (non-Hermitean) form
\begin{equation}
B_\C(\gamma,\sigma)=\ip{\gamma}{\overline{\sigma}}_{\lie{p},\C},
\end{equation}
where 
\begin{equation}\label{CorHermForm}
\ip{\gamma}{\sigma}_{\lie{p},\C}=h_{\Hilb_+}(\gamma,\sigma)=2\Tr(\gamma^\ast\sigma)
\end{equation}
for all $\gamma,\sigma\in\Hilb_{\C,\lie{p}}$ (Hermitean form corresponding to the real inner
product
$\ip{\cdot}{\cdot}_{\lie{p}}$).
Here the bar notation refers to conjugation. 

Hence 
by the general theory of the last section we
obtain a representation of the infinite dimensional real Clifford algebra $\Cliff(\lie{p})$ on
the corresponding spinor module $\spib_{\lie{p}}:=\Hilb_{\C,\lie{p}}^{\CAR(W)}$. Notice that $\spib_{\lie{p}}$ is also the representation
space for the infinite dimensional (projective) $\Spinc$-representation of $\Ores(\Hilb_{\C,\lie{p}},J_{\Hilb_+})$.

\subsection{Lifting the infinite-dimensional isotropy representation}
\subsubsection{Lifting $\Ad H$}
Recall that in the finite dimensional case $M=G/H$, with 
$G$ a compact Lie group and $M$ 
a spin manifold, the Hilbert space of $L^2$-spinors could
be interpreted as $L^2$-maps $s:G\To\spib_{\lie{p}}$ satisfying the $H$-equivariance condition
$s(gh)=(\Adt h^{-1})s(g)$ for all $g\in G$ and $h\in H$, where $\Adt:H\To\Spin(\lie{p})$
is the spin lift of the isotropy representation $\Ad:H\To\SO(\lie{p})$.

Another
way to see spinors on $G/H$ was to use the Peter-Weyl theorem to decompose
\begin{equation}\label{eltwo}
L^2(G/H,S)\isom L^2(G\times_H\spib_{\lie{p}})\isom
\widehat{\bigoplus}_\lambda V_\lambda\tensor(V_\lambda^\ast\tensor\spib_{\lie{p}})^H,
\end{equation}
where $S\To G/H$ is the spinor bundle with fibre $\spib_{\lie{p}}$, $H$ 
acts on $\spib_{\lie{p}}$ via the spin lift $\Adt:H\To\Spin(\lie{p})$ and where we
sum over all irreducible representations $V_\lambda$ of $G$.

In order to generalize this
to the infinite dimensional case of 
$$
\RGr(\Hilb,\Hilb_+)\isom\Ures(\Hilb,\Hilb_+)/(U(\Hilb_+)\times U(\Hilb_-))
$$
we first need to define a relevant lift 
\begin{equation}
\Adt:H\To\EOres(\Hilb_{\C,\lie{p}},J_{\Hilb_+}) 
\end{equation}
for the isotropy representation $\Ad$ in (\ref{isotropy}).
\begin{lem}
Let $M=\RGr(\Hilb,\Hilb_+)\isom G/H$, where $G=\Ures(\Hilb,\Hilb_+)$ and 
$H=U(\Hilb_+)\times U(\Hilb_-)$. Denote by $\lie{p}\isom\lie{g}/\lie{h}$ the $\Ad(H)$-invariant
complement of $\lie{h}$ in $\lie{g}$ given by equation (\ref{p}). Then the
isotropy representation $\Ad$ of $H$ in $\lie{p}$ given by equation (\ref{isotropy})
takes values in 
$\Ores(\Hilb_{\Real,\lie{p}},J_{\Hilb_+})\subset\Ores(\Hilb_{\C,\lie{p}},J_{\Hilb_+})$.
\end{lem}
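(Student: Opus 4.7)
The plan is to unpack the three conditions defining membership in $\Ores(\Hilb_{\Real,\lie{p}},J_{\Hilb_+})$ and check each one directly, exploiting the very special form that $\Ad(h)$ takes when $h=\mathrm{diag}(a,b) \in U(\Hilb_+)\times U(\Hilb_-)$ acts on $\gamma\in\Lp^2(\Hilb_+,\Hilb_-)$ by $\gamma\mapsto b\gamma a^{-1}$. The three things to verify are (i) that $\Ad(h)$ is a bounded $\Real$-linear invertible map on $\Hilb_{\Real,\lie{p}}=\Lp^2(\Hilb_+,\Hilb_-)$, (ii) that it preserves the real inner product $B(\gamma,\sigma)=2\Re\Tr(\gamma^\ast\sigma)$, and (iii) that its commutator with $J_{\Hilb_+}$ is Hilbert--Schmidt.

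For (i), I would simply note that $\gamma\mapsto b\gamma a^{-1}$ is visibly $\C$-linear (so a fortiori $\Real$-linear) and bounded: since $a,b$ are unitaries, $\lVert b\gamma a^{-1}\rVert_2 = \lVert\gamma\rVert_2$, so $\Ad(h)$ is in fact an $\Lp^2$-isometry. Its inverse is $\Ad(h^{-1})=\Ad(\mathrm{diag}(a^{-1},b^{-1}))$. For (ii), a two-line computation using the unitarity $b^\ast b=\id$ and $(a^{-1})^\ast=a$, combined with the cyclicity of the trace, gives
\begin{equation*}
B(b\gamma a^{-1},\,b\sigma a^{-1})=2\Re\Tr(a\gamma^\ast b^\ast b\sigma a^{-1})=2\Re\Tr(\gamma^\ast\sigma)=B(\gamma,\sigma),
\end{equation*}
so $\Ad(h)\in O(\Hilb_{\Real,\lie{p}})$.

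The condition (iii) is in fact trivial, and this is the conceptual core of the lemma: since $J_{\Hilb_+}$ acts on $\Hilb_{\Real,\lie{p}}$ by $\gamma\mapsto\im\gamma$ and $\Ad(h)$ is $\C$-linear, $\Ad(h)$ commutes with $J_{\Hilb_+}$:
\begin{equation*}
\Ad(h)(J_{\Hilb_+}\gamma)=\Ad(h)(\im\gamma)=\im\,\Ad(h)\gamma=J_{\Hilb_+}\Ad(h)\gamma.
\end{equation*}
Thus $[\Ad(h),J_{\Hilb_+}]=0$, which is certainly in $\Lp^2(\Hilb_{\Real,\lie{p}})$, and $\Ad(h)\in\Ores(\Hilb_{\Real,\lie{p}},J_{\Hilb_+})$. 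Finally, the inclusion $\Ores(\Hilb_{\Real,\lie{p}},J_{\Hilb_+})\subset\Ores(\Hilb_{\C,\lie{p}},J_{\Hilb_+})$ was already noted in the excerpt, so this finishes the proof.

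There is no real obstacle here; the statement is essentially a tautology given that the factors $U(\Hilb_+)$ and $U(\Hilb_-)$ of $H$ act by left/right multiplication (preserving the Hilbert--Schmidt inner product up to a trace manipulation) and that $J_{\Hilb_+}$ is simply scalar multiplication by $\im$, which commutes with every $\C$-linear operator. The only thing worth emphasizing in a written proof is the reason the Hilbert--Schmidt condition is automatic, namely the $\C$-linearity of $\Ad(h)$.
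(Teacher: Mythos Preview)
Your proof is correct and follows essentially the same three-step approach as the paper: verify invertibility/boundedness, orthogonality with respect to $B$, and the Hilbert--Schmidt commutator condition, with the last step reducing to $[\Ad(h),J_{\Hilb_+}]=0$ by $\C$-linearity. The only cosmetic difference is that the paper invokes the already-established $\Ad(H)$-invariance of $g^{\lie p}=B$ rather than recomputing it via cyclicity of the trace as you do in (ii).
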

\begin{proof}
First recall that the inner product of the real Hilbert space 
$$
\Hilb_{\Real,\lie{p}}:=\Lp^2(\Hilb_+,\Hilb_-)\isom\lie{p}
$$ 
was given by
$B(\gamma,\sigma)=g_{\Hilb_+}(\gamma,\sigma)=2\Re\Tr(\gamma^\ast\sigma)$.
Moreover $\Hilb_{\Real,\lie{p}}$ had a natural complex structure 
$J_{\Hilb_+}$, defined so that 
$J_{\Hilb_+}\gamma=\im\gamma$ for all $\gamma\in\Lp^2(\Hilb_+,\Hilb_-)$,
that definines a polarization $\Hilb_{\Real,\lie{p}}\tensor_\Real\C=W\oplus\overline{W}$
on the complexified Hilbert space.

We will now show that for each $h\in H=U(\Hilb_+)\times U(\Hilb_-)$ the linear mapping
$\Ad h$ on $\lie{p}\isom\Lp^2(\Hilb_+,\Hilb_-)$ given by
$$
\Ad\Mat{a}{0}{0}{b}\cdot\gamma=b\circ\gamma\circ a^{-1},
$$
where $h=\Mat{a}{0}{0}{b}\in H=U(\Hilb_+)\times U(\Hilb_-)\subset\Ures(\Hilb,\Hilb_+)$,
$\gamma\in\Lp^2(\Hilb_+,\Hilb_-)$
and where we have used the standard block-matrix description of the linear operator 
$h\in\Ures(\Hilb,\Hilb_+)$, is in $\Ores(\Hilb_{\C,\lie{p}})$.

\emph{Step 1}. $\Ad h\in GL(\Hilb_{\Real,\lie{p}})$: 
The inner product $\ip{\cdot}{\cdot}_{\lie{p}}:=B(\cdot\, , \,\cdot)$ induces a norm $\norm{\cdot}_{\Real,\lie{p}}$
on $\Hilb_{\Real,\lie{p}}$,
\begin{equation}
\norm{\gamma}_{\Real,\lie{p}}:=\sqrt{\ip{\gamma}{\gamma}_{\lie{p}}}=\sqrt{B(\gamma,\gamma)}=\sqrt{2\Re\Tr(\gamma^\ast\gamma)}
\end{equation}
for all $\gamma\in\Lp^2(\Hilb_+,\Hilb_-)$. Hence  by definition an operator $T:\Hilb_{\Real,\lie{p}}\To\Hilb_{\Real,\lie{p}}$ is bounded if 
\begin{equation}
\sup\set{\norm{T\gamma}_{\Real,\lie{p}}\mid\gamma\in\Lp^2(\Hilb_+,\Hilb_-),\norm{\gamma}_{\Real,\lie{p}}\leq 1}<\infty
\end{equation}
that is if and only if
\begin{equation}\label{bounded}
\sup\set{B(T\gamma,T\gamma)\mid\gamma\in\Lp^2(\Hilb_+,\Hilb_-),B(\gamma,\gamma)\leq 1}.
\end{equation}
On the other hand we know that $B(\gamma,\sigma)=g^{\lie{p}}(\gamma,\sigma)$ defines an $\Ad(H)$-invariant
metric on $\lie{p}$, i.e. 
\begin{equation}\label{trivbounded}
B\Big(\big(\Ad h\big)(\gamma),\big(\Ad h\big)(\sigma)\Big)=B(\gamma,\sigma)
\end{equation}
for all $\gamma,\sigma\in\Lp^2(\Hilb_+,\Hilb_-)\isom\lie{p}$ and for all linear mappings
$\Ad h$ of $\lie{p}$ where $h\in H$. 
Thus choosing $T=\Ad h$ in (\ref{bounded}), we see from equation  (\ref{trivbounded}) that $\Ad h$ is trivially in
$\B(\Hilb_{\Real,\lie{p}})$ for every $h\in H$.
Obviously $\Ad h$ has a bounded inverse $T_h$ given by
$T_h:\gamma\mapsto b^{-1}\circ\gamma\circ a$, where
$h=\Mat{a}{0}{0}{b}\in H$, showing that $\Ad h\in GL(\Hilb_{\Real,\lie{p}})$ for all $h\in H$.

\emph{Step 2.} $\Ad h\in O(\Hilb_{\Real,\lie{p}})$: Knowing that 
$\Ad h\in GL(\Hilb_{\Real,\lie{p}})$
we may once again use equation (\ref{trivbounded}) to conclude that
each
$\Ad h$ is in the real orthogonal group $O(\Hilb_{\Real,\lie{p}}):=O(\Hilb_{\Real,\lie{p}},B(\cdot\, , \,\cdot))$.

\emph{Step 3.}
$\Ad h\in\Ores(\Hilb_{\Real,\lie{p}},J_{\Hilb_+})\subset O(\Hilb_{\Real,\lie{p}})\subset GL(\Hilb_{\Real,\lie{p}})$:
We need to show that for all $h\in H$ the commutator 
$$
[\Ad h,J_{\Hilb_+}]=\Ad h\circ J_{\Hilb_+}-J_{\Hilb_+}\circ\Ad h\in \Lp^2(\Hilb_{\Real,\lie{p}}).
$$
Now since for every $h\in H$ the image $\Ad h$ is a $\C$-linear map 
$$
\Ad h:\Hilb_{\Real,\lie{p}}=\Lp^2(\Hilb_+,\Hilb_-)\To\Lp^2(\Hilb_+,\Hilb_-)=\Hilb_{\Real,\lie{p}}
$$ and on the other
hand $J_{\Hilb_+}\gamma=\im\gamma$, it is clear that the commutator is in 
$\Lp^2(\Hilb_{\Real,\lie{p}})$ (in fact, it is equal to zero).
Hence $\Ad h\in\Ores(\Hilb_{\Real,\lie{p}},J_{\Hilb_+})$ for every $h\in H$ and
since $\Ores(\Hilb_\C,J)\cap O(\Hilb_\Real)=\Ores(\Hilb_{\Real},J)$, it follows
that the map $\Ad h\in\Ores(\Hilb_{\C,\lie{p}}, J_{\Hilb_+}$) for all $h\in H$.
\end{proof}

Next, notice the following. If $\set{e_k}_{k\in\N}$ is an orthonormal basis for a complex Hilbert space $\Hilb$ with inner product
$\ip{\cdot}{\cdot}$ and let $u_k$ and $v_k$ denote $e_k$ and $\im e_k$, respectively, then
$\set{u_k,v_k}_{k\in\N}$ form an orthonormal system with respect to $\tau(\cdot\, , \, \cdot)=\Re\ip{\cdot}{\cdot}$. We call
the real span of $\set{u_k,v_k}_{k\in\N}$ for the \emph{real Hilbert space}, $\Hilb_\Real$, of $\Hilb$, with inner product
$\tau(\cdot\, , \, \cdot)$. Notice that $\Hilb_\Real$ and $\Hilb$ represent the same set.

The following result appears in \cite{PrSe}, \S 12.4.
\begin{lem}\label{conjugatelemma}
Let $\Hilb$ be a complex Hilbert space regarded as a real Hilbert space $\Hilb_\Real$. Then the complexification 
$(\Hilb_\Real)_\C=\Hilb_\Real\tensor_\Real \C$ is canonically
isomorphic to $\Hilb\oplus\overline{\Hilb}$. Here $\overline{\Hilb}$ denotes the abstract complex conjugate to
$\Hilb$. It is a copy of $\Hilb$ with the scalars acting in a conjugate way: $\lambda\cdot\bar{\xi}:=(\bar{\lambda}\cdot\xi)^{-}$
for all $\lambda\in\C$ and $\xi\in\Hilb$.
\end{lem}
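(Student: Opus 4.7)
The plan is to use the complex structure $J$ of $\Hilb$ (i.e., multiplication by $\im$) as an $\Real$-linear operator on $\Hilb_\Real$, extend it $\C$-linearly to $(\Hilb_\Real)_\C$, and decompose the complexification into the $\pm\im$-eigenspaces of this extension. I will identify one eigenspace with $\Hilb$ and the other with $\overline{\Hilb}$.

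First I would set $J:\Hilb_\Real\to\Hilb_\Real$ to be the operator $v\mapsto\im v$ given by the complex structure of $\Hilb$. Since $J$ is $\Real$-linear and satisfies $J^2=-1$, its $\C$-linear extension $J^\C:(\Hilb_\Real)_\C\to(\Hilb_\Real)_\C$ (defined by $J^\C(v\tensor\lambda):=(Jv)\tensor\lambda$) also squares to $-1$. Hence $(\Hilb_\Real)_\C$ decomposes as a $\C$-vector space into the spectral subspaces
\begin{equation*}
(\Hilb_\Real)_\C=E_+\oplus E_-,\qquad E_{\pm}:=\set{x\in(\Hilb_\Real)_\C\mid J^\C x=\pm\im x},
\end{equation*}
with projections $x\mapsto\tfrac{1}{2}(x\mp\im J^\C x)$.

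Next I would introduce the $\Real$-linear maps $\pi_\pm:\Hilb_\Real\to(\Hilb_\Real)_\C$ defined by
\begin{equation*}
\pi_\pm(v):=\tfrac{1}{2}(v\tensor 1\mp (Jv)\tensor\im).
\end{equation*}
A short direct computation shows $J^\C\pi_\pm(v)=\pm\im\,\pi_\pm(v)$, so $\pi_\pm$ lands in $E_\pm$ and is surjective onto it (because any $x\in E_\pm$ of the form $v\tensor 1+w\tensor\im$ forces $w=\mp Jv$, by comparing with $J^\C x=\pm\im x$). Moreover, one checks that $\pi_+(Jv)=\im\,\pi_+(v)$, so $\pi_+$ is $\C$-linear when $\Hilb_\Real$ is viewed with its original complex structure, i.e., as $\Hilb$; hence $\pi_+$ yields a $\C$-linear isomorphism $\Hilb\stackrel{\sim}{\to} E_+$. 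Analogously $\pi_-(Jv)=-\im\,\pi_-(v)$, so $\pi_-:\Hilb\to E_-$ is $\C$-antilinear, which is exactly the statement that it gives a $\C$-linear isomorphism $\overline{\Hilb}\stackrel{\sim}{\to}E_-$ with the conjugate scalar action defined in the lemma.

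Combining, the map
\begin{equation*}
\Phi:\Hilb\oplus\overline{\Hilb}\To(\Hilb_\Real)_\C,\qquad (v,\bar{w})\mapsto\pi_+(v)+\pi_-(w),
\end{equation*}
is a $\C$-linear isomorphism. Canonicality is automatic because the construction uses only the complex structure $J$ intrinsic to $\Hilb$. To upgrade to an isomorphism of Hilbert spaces, I would verify that with the natural Hermitean inner product on $(\Hilb_\Real)_\C$ obtained from $\tau(\cdot,\cdot)=\Re\ip{\cdot}{\cdot}_\Hilb$, the subspaces $E_+$ and $E_-$ are orthogonal (an immediate consequence of $J^\C$ being skew-symmetric with respect to the extended real form) and that $\pi_+$, $\pi_-$ are isometries up to the standard factor of $1/2$, which can be absorbed in the normalization. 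The only subtle point — and the one I would treat most carefully — is the sign/scalar bookkeeping that ensures $\pi_-$ is genuinely $\C$-antilinear, so that viewing its domain as $\overline{\Hilb}$ (with $\lambda\cdot\bar{w}=(\bar\lambda w)^-$) turns it into a bona fide $\C$-linear isomorphism of complex Hilbert spaces.
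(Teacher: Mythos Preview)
Your argument is correct. The paper's proof is far terser: it simply declares the isomorphism to be the map
\[
\lambda\tensor\xi\;\longmapsto\;\lambda\xi\oplus\bar{\lambda}\xi
\]
and leaves all verification to the reader. Your map $\Phi$ is precisely the inverse of this: on a general element $\xi\tensor 1+\eta\tensor\im$ the paper's map gives $(\xi+J\eta,\,\xi-J\eta)$, and solving for $\xi,\eta$ reproduces exactly your $\pi_+(v)+\pi_-(w)$. So the two proofs produce the same canonical isomorphism, but by different routes: the paper writes down a formula and tacitly relies on the reader to check $\C$-linearity and bijectivity, whereas you \emph{derive} the map from the eigenspace decomposition of the complexified complex structure $J^\C$. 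Your approach has the advantage of making transparent \emph{why} one summand is $\Hilb$ and the other $\overline{\Hilb}$ (via the $\C$-linearity of $\pi_+$ and $\C$-antilinearity of $\pi_-$), and it connects directly with the polarization $\Hilb_\C=W\oplus\overline{W}$ used elsewhere in the paper; the paper's one-line version is quicker but offers no such insight.
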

\begin{proof}
The isomorphism is given by the map
$$
\lambda\tensor\xi\mapsto \lambda\xi\oplus\bar{\lambda}\xi
$$
for all $\lambda\in\C$ and $\xi\in\Hilb_\Real$.
\end{proof}

\begin{prop}\label{isotropylift}
The isotropy representation $\Ad: H=U(\Hilb_+)\times U(\Hilb_-)\To
\Ores(\Hilb_{\C,\lie{p}},J_{\Hilb_+})$ admits a spin lift
$\Adt: H\To\EOres(\Hilb_{\C,\lie{p}},J_{\Hilb_+})$, i.e. there is a commuting
diagram of Lie group homomorphisms
$$
\xymatrix{
            & \EOres\ar[d]^p\\
H\ar[r]^{\Ad}\ar@{-->}[ur]^{\Adt} & \Ores
}
$$
where $p$ is the projection map in the exact sequence
$$
1\To U(1)\To\EOres\stackrel{p}{\To}\Ores\To 1.
$$
\end{prop}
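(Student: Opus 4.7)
The plan is to exploit the fact that the isotropy action of $H$ on $\lie{p}\isom\Lp^2(\Hilb_+,\Hilb_-)$ is $\C$-linear with respect to the complex structure $J_{\Hilb_+}$, so that the complexification of $\Ad h$ preserves the polarization $\Hilb_{\C,\lie{p}}=W\oplus\overline{W}$ and lands in the ``block-diagonal'' subgroup of $\Ores(\Hilb_{\C,\lie{p}},J_{\Hilb_+})$ on which the central extension canonically splits via the exterior algebra representation. Once we establish this, the required lift $\Adt$ is the composition of $\Ad:H\To U(W)$ with the canonical splitting $U(W)\To\EOres$.

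First, for every $h=\Mat{a}{0}{0}{b}\in H$ and every $\gamma\in\Lp^2(\Hilb_+,\Hilb_-)$ one computes
$$
J_{\Hilb_+}\bigl(\Ad h\cdot\gamma\bigr)=\im\cdot(b\gamma a^{-1})=b(\im\gamma)a^{-1}=\Ad h\cdot\bigl(J_{\Hilb_+}\gamma\bigr),
$$
so $\Ad h$ commutes with $J_{\Hilb_+}$. Its $\C$-linear extension to $\Hilb_{\C,\lie{p}}$ therefore preserves the $\pm\im$ eigenspaces $W$ and $\overline{W}$ of $J_{\Hilb_+}^\C$ separately. Under Lemma \ref{conjugatelemma} we have $\Hilb_{\C,\lie{p}}\isom\Hilb_{\lie{p}}\oplus\overline{\Hilb_{\lie{p}}}$ with $W\isom\Hilb_{\lie{p}}=\Lp^2(\Hilb_+,\Hilb_-)$, and $\Ad h$ acts as $U_h\oplus\overline{U_h}$, where $U_h\in U(\Hilb_{\lie{p}})$ is the unitary $\gamma\mapsto b\gamma a^{-1}$ with respect to the Hermitean inner product (\ref{CorHermForm}). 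Thus $\Ad$ factors as a group homomorphism $\iota:H\To U(\Hilb_{\lie{p}})$ followed by the block-diagonal embedding $U(\Hilb_{\lie{p}})\hookrightarrow\Ores(\Hilb_{\C,\lie{p}},J_{\Hilb_+})$.

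Second, recall that $\spib_{\lie{p}}\isom\F(W)=\bigoplus_{n\geq 0}\bigwedge^n W$. For any unitary $U\in U(W)$ the Bogoliubov transformation $a(f)\mapsto a(Uf)$ on $\CAR(W)$ is canonically implemented on $\spib_{\lie{p}}$ by the unitary operator $\bigwedge^\bullet U$ acting diagonally on each $\bigwedge^n W$. Since $\bigwedge^\bullet(UV)=\bigwedge^\bullet U\circ\bigwedge^\bullet V$, the assignment $U\mapsto\bigl(U\oplus\overline{U},\,\bigwedge^\bullet U\bigr)$ defines a \emph{honest} (non-projective) Lie group homomorphism
$$
\sigma:U(\Hilb_{\lie{p}})\To\EOres(\Hilb_{\C,\lie{p}},J_{\Hilb_+})
$$
splitting the projection $p$ above the block-diagonal subgroup. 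Setting $\Adt:=\sigma\circ\iota$ then produces the desired spin lift, and $p\circ\Adt=\Ad$ by construction.

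A priori the existence of such a lift is controlled by a cohomological obstruction (since $\EOres\to\Ores$ is a central $U(1)$-extension), and this is the only real difficulty. The point of the argument above is that the obstruction vanishes automatically in our situation: the two-cocycle defining $\EOres$ depends only on the off-diagonal blocks of elements of $\ores$ (compare with the Schwinger term (\ref{Schwinger})), so it is identically zero on the block-diagonal subalgebra, and at the group level the splitting is realized explicitly by the multiplicative choice of implementer $U\mapsto\bigwedge^\bullet U$.
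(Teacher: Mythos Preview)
Your proof is correct and rests on the same key observation as the paper's: since $\Ad h$ commutes with $J_{\Hilb_+}$, its complexification is block-diagonal with respect to $W\oplus\overline{W}$, and the central extension $\EOres\to\Ores$ splits canonically over this block-diagonal subgroup $U(W)$.

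The difference lies in how the splitting is exhibited. The paper works with the explicit Pfaffian-line-bundle model of $\EOres$: the fibre over $g=\Mat{a}{b}{c}{d}$ consists of holomorphic functions on the space $\Sk(W)$ of skew Hilbert--Schmidt operators, proportional to $S\mapsto\Pf((a+bS_0)^{-1}(a+bS))$. For block-diagonal $g$ (so $b=c=0$) this function is constant, and one checks directly from the multiplication law that $g\mapsto(g,1)$ is a multiplicative section. Your approach instead realizes the splitting representation-theoretically, via the canonical second-quantization implementer $U\mapsto\bigwedge^\bullet U$ on $\spib_{\lie{p}}=\F(W)$. Both constructions produce the same section (up to the identification of the two models of $\EOres$), but yours is arguably more transparent for the purposes of the paper: it makes the induced Lie algebra action of $\lie{h}$ on $\spib_{\lie{p}}$ manifestly the derivative of $\bigwedge^\bullet$, which is exactly the $\ad$-action computed in \S\ref{inducedad}. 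The Pfaffian description, on the other hand, ties in more directly with the geometric picture of holomorphic line bundles on $\Jres$. One small point: you should remark that $U\mapsto\bigwedge^\bullet U$ is smooth in the relevant Banach--Lie sense, so that $\sigma$ (and hence $\Adt$) is a Lie group homomorphism and not merely an abstract group homomorphism.
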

\begin{proof}
The proof is a modification of the idea of lifting an embedding 
of the Lie group $H=U(\Hilb_+)\times U(\Hilb_-)$
in $\Ores$ to $\EOres$
presented in §6.4., \cite{Mic}. The essential idea there was that the $U(1)$ central extension
$\EOres\To\Ores$ becomes trivial when restricted to the subgroup $H$ so that there exists a local
section $s:H\To\EOres$ which we can then use to construct the lift.

Recall that the real Hilbert space 
$\Hilb_{\Real,\lie{p}}\isom\lie{p}\isom\lie{g}/\lie{h}$ 
consisted of all ($\Real$-linear combinations of all) $\C$-linear operators
$\Hilb_+\To\Hilb_-$ that are Hilbert-Schmidt and where the inner product was given by
$B(\gamma,\sigma)=g_{\Hilb_+}(\gamma,\sigma)=2\Re\Tr(\gamma^\ast\sigma)$
for all $\gamma,\sigma\in\Hilb_{\Real}$. 
Using Lemma \ref{conjugatelemma} we may identify the complexification
$\Hilb_{\C,\lie{p}}=\Hilb_{\Real,{\lie{p}}}\tensor_\Real\C$ with the direct
sum of \emph{complex} Hilbert spaces
$(\Hilb_{\Real,\lie{p}},\ip{\cdot}{\cdot}_{\lie{p},\C})$ and its conjugate
$(\overline{\Hilb}_{\Real,\lie{p}},\ip{\cdot}{\cdot}_{\lie{p},\C})$. 
Here
the Hermitean inner product is given by
$\ip{\gamma}{\sigma}_{\lie{p},\C}=h_{\Hilb_+}(\gamma,\sigma)=2\Tr(\gamma^\ast\sigma)$.
On the other hand,
recalling that for an operator $T\in\mathcal{L}^2(\Hilb_+,\Hilb_-)$ its adjoint
$T^\ast$ satisfies 
$$
(\alpha T)^\ast=\bar{\alpha}T^\ast\quad\textrm{ for all } \alpha\in\C,
$$
one sees that the conjugate space $\overline{\mathcal{L}^2(\Hilb_+,\Hilb_-)}$ satisfies
$$
\overline{\mathcal{L}^2(\Hilb_+,\Hilb_-)}\isom\mathcal{L}^2(\Hilb_-,\Hilb_+)
$$
so that under these isomorphism the conjugation in
$$
\Hilb_{\C,\lie{p}}\isom \mathcal{L}^2(\Hilb_+,\Hilb_-)\oplus \mathcal{L}^2(\Hilb_-,\Hilb_+)
$$
is given explicitly by taking the adjoint.

The corresponding complex bilinear (non-Hermitean) form in $\Hilb_{\C,\lie{p}}$ is then given
by
$$
B_\C(\gamma,\sigma)=\ip{\gamma}{\overline{\sigma}}_{\lie{p},\C}
\quad\textrm{ for all } \gamma,\sigma\in
\Hilb_{\C,\lie{p}}.
$$

The Hilbert space $\Hilb_{\Real,\lie{p}}$ had
a natural complex structure $J_{\Hilb_+}:\Hilb_\C\To\Hilb_\C$ defined by
$J_{\Hilb_+}\gamma=\im\cdot\gamma$. This complex structure gives us a decomposition
$\Hilb_{\C,\lie{p}}=W\oplus\overline{W}$, where $W=\Hilb_{\Real,\lie{p}}$ and
$\overline{W}=\im\cdot\Hilb_{\Real,\lie{p}}$. Using this decomposition we can
write operators $A\in\Ores(\Hilb_{\C,\lie{p}},J_{\Hilb_+})$ in the block
matrix form
$$
A=\Mat{a}{b}{c}{d},
$$
where $a\in\Fred(W),\,d\in\Fred({\overline{W}}),b\in\Lp^2(\overline{W},W)$ and
$c\in\Lp^2(W,\overline{W})$. 
Since in the block matrix description $H$ is
a diagonal subgroup of $\Ures$
it follows that the image $\Ad(H)$ consists of all block matrices
of the form
\begin{equation}\label{imageH}
C=\Mat{\Ad_W h}{0}{0}{\Ad_{\overline{W}} h},
\end{equation}
where $h=\Mat{u}{0}{0}{v}\in H=U(\Hilb_+)\times U(\Hilb_-)$.

Now let $\Sk(W)$ be the Hilbert space of all skew Hilbert-Schmidt operators
$S:W\To\overline{W}$, i.e. those $\Lp^2$ operators $S:W\To\overline{W}$ satisfying
$$
B_\C(S\gamma,\sigma)=-B_\C(S\sigma,\gamma).
$$
The fibre $p^{-1}(g)\subset\EOres(\Hilb_{\C,\lie{p}})$ over a point 
$g=\Mat{a}{b}{c}{d}\in\Ores(\Hilb_{\C,\lie{p}})$ consists of all holomorphic functions
$f:\Sk(W)\To\C$ that are proportional to 
$$
S\mapsto\Pf((a+bS_0)^{-1}(a+bS)),
$$
where $S_0\in\Sk(W)$ is any element such that $a+bS_0$ is invertible \cite{Mic}. The product
in $\EOres(\Hilb_{\C,\lie{p}})$ is given by
\begin{equation}\label{prodores}
(g,f)(g',f'):=(g'',f''),
\end{equation}
where $g''=gg'$ and $f''$ is the function defined by
\begin{equation}\label{prodorf}
f''(S):=f(g'\cdot S)f'(S),\qquad g'\cdot S:=(c'+d'S)(a'+b'S)^{-1}.
\end{equation}
Choose
$g=\Mat{\Ad_W h}{0}{0}{\Ad_{\overline{W}} h}\in\Ores(\Hilb_{\C,\lie{p}})$, 
$g'=\Mat{\Ad_W h'}{0}{0}{\Ad_{\overline{W}} h'}\in\Ores(\Hilb_{\C,\lie{p}})$ and
$f=f'\equiv 1$, the constant function with value $1\in\C$, in equation
(\ref{prodores}). Now since $g'\cdot S\in\Sk(W)$ is just a shift of the 
argument $S\in\Sk(W)$ and on the
other hand $f$ was chosen to be a constant function with value $1$ it follows from
(\ref{prodorf}) that the product $(g,f)(g',f)$ defined by equation (\ref{prodores}) is again
of the same form as its factors, i.e. $(g,1)(g',1)=(gg',1)$. Hence the
map $\Ores(\Hilb_{\C,\lie{p}})\To\EOres(\Hilb_{\C,\lie{p}})$ defined
by $g\mapsto (g,1)$ is a group homomorphism, which is clearly
smooth and therefore a Lie group homomorphism.

Since the projection map of
the principal $U(1)$-bundle 
$$
p:\EOres(\Hilb_{\C,\lie{p}})\To\Ores(\Hilb_{\C,\lie{p}})
$$ 
sends the pair $(g,f)$ to $g\in\Ores(\Hilb_{\C,\lie{p}})$ 
it follows from the above that the 
composition of $\Ad$ (a Lie group homomorphism) with the Lie group
homomorphism $g\mapsto(g,1)$ defines the required lift
$\Adt$ of $\Ad$.
\end{proof}

\subsection{The induced lift of $\ad$}\label{inducedad}
Set $\EOres(\lie{p}^\C,W):=\EOres(\Hilb_{\C,\lie{p}},J_{\Hilb_+})$. Then
according to the proof of the Proposition \ref{isotropylift} the lift 
$\Adt:H\To\EOres(\lie{p}^\C,W)$ of the isotropy representation
$\Ad:H\To\Ores(\lie{p}^\C,W)$ is given by
the composition
\begin{equation}
\Adt=s\circ\Ad,
\end{equation}
where $s$ is 
the Lie group homomorphism
mapping $g\To(g,1)$. It follows from 
 Lemma \ref{AdH} that 
the induced Lie algebra map
$\adt:\lie{h}\To\Eores(\lie{p}^\C,W)$, 
$$
\adt=d(\Adt)=ds\circ d(\Ad),
$$ 
is given by
\begin{equation}\label{adlift}
X\mapsto([X,\cdot],0)\quad\textrm{ for all } X\in\lie{h}.
\end{equation}
Considering this result, then by abuse of notation, we shall often write $\ad$ instead of $\adt$ when we
talk about the spin lift.
\section{The algebraic tensor product $U(\Eglres)\tensor\Cliff(\lie{p^\C})$ of algebras}
\subsection{Natural set of generators for $\lie{p}^\C$}
Let $\set{e_k\mid k\in\Z\setminus\set{0}}$ be a Hilbert basis of $\Hilb$ such that $\Hilb_+$ respectively
$\Hilb_-$ is generated by $\set{e_k\mid k>0}$ respectively $\set{e_k\mid k<0}$
(notice that we neglect the index $k=0$; this is done in order to be able to
write our Dirac operator in a convenient form).
Define the complex linear rank one operators $E_{p,q}$ on $\Hilb$ for
$p,q\in\Z$ as in \cite{SpeWu} by setting
\begin{equation}
E_{p,q}(v):=\ip{e_q}{v}_\Hilb\cdot e_p.
\end{equation}
As a matrix each $E_{p,q}$ corresponds to the basic matrix, whose matrix element
at position $(p,q)$ is equal to one and all the other matrix coefficients are set to
zero.
It follows that the adjoints satisfy $(E_{p,q})^\ast=E_{q,p}$ and
\begin{eqnarray}
B(E_{-k,\,l},E_{-r,\,s}) &=& B(iE_{-k,\,l},\im E_{-r,\,s})=2\cdot\delta_{k,r}\cdot\delta_{l,s},\\
B(E_{-k,\,l},\im E_{-r,\,s})&=& 0. 
\end{eqnarray}

Recall that the complexification of $\ures$ was given by the Lie algebra $\glres$  so that the complexified Hilbert space of 
$$
\lie{p}=\set{\Mat{0}{-\gamma^\ast}{\gamma}{0}\in\lie{u}(\Hilb)\,\Big\vert\,\gamma\textrm{ is Hilbert-Schmidt}}
$$ 
can be given explicitly by
$$
\lie{p}^\C=\set{\Mat{0}{\sigma}{\gamma}{0}\in\B(\Hilb)\,\Big\vert\,\gamma,\sigma\textrm{ are Hilbert-Schmidt}}
$$
so that if we set $X_{p,q}:=\frac{1}{\sqrt{2}}E_{p,q}$, the union of the sets
\begin{equation}
\mathscr{B}_1:=\set{X_{-p,\,q},\,\Big\vert\, p>0,q>0}
\end{equation}
and
\begin{equation}
\mathscr{B}_2:=\set{X_{p,\,-q}\,\Big\vert\,p>0, q>0}
\end{equation}
form an orthonormal $\C$ -Hilbert bases of $\lie{p}^\C$ satisfying
$X_{p,q}^\ast=X_{q,p}$ with respect to the Hermitean form
(\ref{CorHermForm}). 
Note that introducing such a basis for $\lie{p}^\C$ is made possible by the fact that
finite rank operators are dense in the Hilbert-Schmidt operators with
respect to the norm induced by the Hilbert-Schmidt inner product $\ip{\cdot}{\cdot}_{HS}$.

The above two sets generate the
Hilbert subspaces
$$
\langle\mathscr{B}_1\rangle=W,\quad\langle\mathscr{B}_2\rangle=\overline{W}
$$
with 
$$
W\isom\lie{p},\quad\overline{W}\isom\overline{\lie{p}}\quad W\oplus\overline{W}=\lie{p}^\C,
$$
i.e. we obtain the polarization of $\lie{p}^\C$ introduced in the previous section.

Similarly the complexification of the Banach space
$$
\lie{h}=\set{\Mat{\alpha}{0}{0}{\beta}\in\lie{u}(\Hilb)\,\Big\vert\,\alpha=-\alpha^\ast,\beta=-\beta^\ast}
$$
can be given by
$$
\lie{h}^\C=\set{\Mat{\alpha}{0}{0}{\beta}\in\B(\Hilb)}.
$$
But the situation is now different from the case of $\lie{p}^\C$: Here $\alpha$ and $\beta$ can be any \emph{bounded} operators.
Since the closure of the set of finite rank operators $\F\mathcal{R}(\lie{h}^\C)$ in the space of bounded linear operators (with respect to the norm topology) 
$\B(\lie{h}^\C)$ is
exactly the space of compact operators $\mathcal{C}(\lie{h}^\C)\subsetneqq\B(\lie{h}^\C)$, the
space generated by the vectors $X_{p,q}\in\lie{h}^\C\,(pq>0)$ can \emph{not} be any bigger than the space
of compact operators, even if one considers converging infinite linear combinations of those $X_{p,q}$! This observation suggests that if we want to 
use arguments relying only on a generating set, we should replace $\lie{h}^\C$ with a smaller Lie algebra.
\begin{defn}
Let $\hinf^\C\subset\lie{h}^\C$ be the Lie subalgebra consisting of all \emph{finite} $\C$-linear combinations of
the vectors the vectors $X_{p,q}\in\lie{h}^\C\,(pq>0)$, i.e.
$$
\hinf^\C:=\spa\set{X_{p,q}\mid p,q\in\Z,\, pq>0}.
$$
\end{defn}

\subsection{The algebraic tensor product $U(\Eglres)\tensor\Cliff(\lie{p^\C})$}
\subsubsection{Explicit description of the operators $\rh(E_{ij})$
for the basic representation of $\Eglres$}

Let $\Hilb=\Hilb_+\oplus\Hilb_-$ be a complex separated polarized Hilbert space, $\set{u_k}_{k\in\N}$
an orthonormal basis for $\Hilb_+$ and $\set{v_k}_{k\in-\N}$ an orthonormal  basis for $\Hilb_-$, where we always
assume that $0\notin\N$. 

We consider the CAR-algebra $\CAR(\Hilb,\Hilb_+)$ and introduce the
operators acting on the fermionic Fock space $\F=\F(\Hilb,\Hilb_+)$,
\begin{equation}
A_k:=a(u_k)=\Psi(u_k),\quad A_k^\ast:=a^\ast(u_k)=\Psi^\ast(u_k)
\end{equation}
and
\begin{equation}
B_k:=b(v_k)=\Psi^\ast(v_k),\quad B^\ast_k:=b^\ast(v_k)=\Psi(v_k).
\end{equation}
These satisfy the anticommutation relations
\begin{eqnarray}
\{A_k^\ast,A_{k'}\}&=&\sigma_{kk'}=\{B_k^\ast,B_{k'}\},\label{CAR1}\\
\{A_k,A_{k'}\}&=&\{B_k,B_{k'}\}=\{A_k,B_{k'}\} \textrm{ etc. }=0.\label{CAR2}
\end{eqnarray}
Moreover
\begin{equation}\label{basicvac}
A_k\vac_\F=\Psi(u_k)\vac_\F=0,\quad B_k\vac_\F=\Psi^\ast(v_k)\vac_\F=0.
\end{equation}
and it is known that the vectors
\begin{equation}
A_{k_1}^\ast\cdots A_{k_\alpha}^\ast B_{l_1}^\ast\cdots B^\ast_{l_\beta}\vac_\F
\end{equation}
give an orthonormal Hilbert basis of $\F$.

To simplify notation one may define
\begin{equation}
\psi_k=
\left\{
\begin{array}{ll}
\Psi(u_k), & \textrm{$k>0$}\\
\Psi(v_k), & \textrm{$k<0$}
\end{array}
\right.
\quad\psi_k^\ast=
\left\{
\begin{array}{ll}
\Psi^\ast(u_k), & \textrm{$k>0$}\\
\Psi^\ast(v_k), & \textrm{$k<0$}.
\end{array}
\right.
\end{equation}
These satisfy
\begin{equation}\label{posnegCAR}
\psi_i\psi_j^\ast+\psi_j^\ast\psi_i=\delta_{ij},\quad \psi_i\psi_j+\psi_j\psi_i=0,\quad \psi_i^\ast\psi_j^\ast+\psi_j^\ast\psi_i^\ast=0,
\end{equation}
$$
\psi_k\vac=0 \textrm{ for positive $k$},\quad \psi_k^\ast\vac=0\textrm{ for negative $k$}
$$
and the vectors
$$
\psi_{k_1}^\ast\cdots\psi_{k_\alpha}^\ast\psi_{l_1}\cdots\psi_{l_b}\vac_\F\quad (k_i>0\, \forall i=1,\ldots,\alpha,\, l_j<0\,\forall j=1,\ldots,\beta)
$$
give an orthonormal basis of $\F$.

Recall that for $A\in\glres(\Hilb,\Hilb_+)$ we defined the normally ordered operator
$\normord A\Psi\Psi^\ast\normord$ acting on the fermionic Fock space $\F(\Hilb,\Hilb_+)$ so that
$$
\normord A\Psi\Psi^\ast\normord=A_+a^\ast a+ A_{+-}a^\ast b^\ast+A_{-+}ba-A_{-}b^\ast b.
$$
Hence for the basic matrices $E_{p,q}\in\glres\,(p,q\in\Z\setminus\set{0})$
\begin{equation}\label{basicmat}
\rh(E_{p,q})=
\left\{ \begin{array}{ll}
\psi_p^\ast\psi_q=A_p^\ast A_q, & \textrm{if $p>0,q>0$}\\
\psi_p^\ast\psi_q=B_p A_q, & \textrm{if $p<0,q>0$}\\
\psi_p^\ast\psi_q=A_p^\ast B_q^\ast, & \textrm{if $p>0, q<0$}\\
-\psi_q\psi_p^\ast=-B_{q}^\ast B_p& \textrm{if $p<0,q<0$}.
\end{array} \right.
\end{equation}
Taking into account (\ref{posnegCAR}) this becomes
\begin{equation}\label{explicit rh}
\rh(E_{p,q})=
\left\{ \begin{array}{ll}
\psi^\ast_p\psi_q-1 &\textrm{when $p=q<0$}\\
\psi^\ast_p\psi_q & \textrm{otherwise}.
\end{array}\right.
\end{equation}

From this explicit form of the basic representation one deduces the following.
\begin{prop}\label{basicrepannihilate}
For the basic representation $\rh:\Eglres\To\End(\F)$ the 
images $\rh(E_{p,q})$ of the basic matrices $E_{p,q}\in\glres\,(p,q\neq 0)$ annihilate the vacuum $\vac_\F\in\F$ except
for the case when $p>0$ and $q<0$.
\end{prop}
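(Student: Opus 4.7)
The proof is essentially a case-by-case inspection using the explicit formulas (\ref{basicmat}) together with the vacuum-annihilation properties recorded in (\ref{basicvac}).

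My plan is to go through the four possible sign combinations for $(p,q)$ with $p,q\in\Z\setminus\{0\}$ and observe that in three of the four cases the rightmost factor in the normally ordered expression for $\rh(E_{p,q})$ is an annihilator killing $\vac_\F$. Concretely: for $p,q>0$ we have $\rh(E_{p,q})=A_p^\ast A_q$ and $A_q\vac_\F=a(u_q)\vac_\F=0$ by (\ref{basicvac}); for $p<0$, $q>0$ we have $\rh(E_{p,q})=B_p A_q$ and again $A_q\vac_\F=0$; and for $p,q<0$ we have $\rh(E_{p,q})=-B_q^\ast B_p$ and $B_p\vac_\F=b(v_p)\vac_\F=0$. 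In each of these three cases the whole operator therefore annihilates $\vac_\F$.

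The remaining case $p>0,\,q<0$ gives $\rh(E_{p,q})=A_p^\ast B_q^\ast$, a product of two \emph{creation} operators. To show that this vector is actually nonzero (so that the exceptional case is genuine and not vacuous) I would pair it with itself: using the CAR relations (\ref{CAR1})--(\ref{CAR2}) one computes
\begin{equation*}
\ip{A_p^\ast B_q^\ast\vac_\F}{A_p^\ast B_q^\ast\vac_\F}=\ip{\vac_\F}{B_q A_p A_p^\ast B_q^\ast\vac_\F}=1,
\end{equation*}
after pushing the annihilators to the right through $A_pA_p^\ast=1-A_p^\ast A_p$ and $B_qB_q^\ast=1-B_q^\ast B_q$ and using $A_p\vac_\F=B_q\vac_\F=0$ (which also follows from (\ref{basicvac}), since $B_q$ with $q<0$ is $b(v_q)$ and $A_p$ with $p>0$ is $a(u_p)$).

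There is really no obstacle here; the only thing worth being careful about is making sure the formulas (\ref{basicmat}) and (\ref{basicvac}) are being applied with the correct sign conventions (the distinction between $\Psi$ and $\Psi^\ast$ on $\Hilb_+$ versus $\Hilb_-$ has already been unraveled in the definitions of $A_k,A_k^\ast,B_k,B_k^\ast$, so one only needs to read off which factor stands on the right). Putting the four cases together gives the proposition.
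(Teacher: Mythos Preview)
Your proof is correct and follows exactly the same approach as the paper, which simply remarks that the proposition is deduced from the explicit form of $\rh(E_{p,q})$ given in (\ref{basicmat}) (equivalently (\ref{explicit rh})). Your additional verification that $A_p^\ast B_q^\ast\vac_\F\neq 0$ in the exceptional case is a nice touch that the paper omits.
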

\subsubsection{Complexified infinite dimensional Clifford algebra}\label{complexified Clifford}
We shall consider the complex spi\-nor module
$\spib_\lie{p}^c:=\F(\lie{p}^\C,B_\C,W)$ as a module for the infinite-dimensional 
\emph{complexified} Clifford algebra 
$$
\Cliff(\lie{p}^\C)=\Cliff(W\oplus\overline{W})
$$
defined by the complexified symmetric bilinear form 
$B_\C(\gamma,\sigma)=\ip{\gamma}{\overline{\sigma}}_{\lie{p},\C}$ on $\lie{p}^\C$. 
Here the conjugation in the
Clifford algebra is given on the orthonormal basis elements of 
the Hilbert space $\lie{p}^{\C}$ by
\begin{equation}\label{conjugation}
\overline{X}_{p,q}:=X_{p,q}^\ast=X_{q,p},
\end{equation} 
and as mentioned
\begin{equation}
\ip{X_{ij}}{X_{nm}}=\Tr(E_{ij}^\ast E_{nm})=\Tr(E_{ji}E_{nm})=\Tr(\delta_{in}E_{jm})=\delta_{in}\delta_{jm}.
\end{equation}

The complexified Clifford algebra $\Cliff(\lie{p}^\C)$ is generated by elements of $W$ and $\overline{W}$ subject to
the anticommutation relation
\begin{equation}
\gamma(\overline{w}_1)\gamma(w_2)+\gamma(w_2)\gamma(\overline{w}_1)=B_\C(\overline{w}_1,w_2)
\end{equation}
for $\overline{w}_1\in\overline{W}$ and $w_2\in W$.
In concrete terms the Clifford algebra $\Cliff(\lie{p}^\C)$ is
described by the anti-commutators
\begin{equation}\label{Cliffp}
\{\gamma(X_{ij}),\gamma(X_{mn})\}=B_\C(X_{ij},X_{mn})=\ip{X_{ij}}{\overline{X}_{mn}}=\ip{X_{ij}}{X_{nm}}
=\delta_{in}\delta_{jm},
\end{equation}
where $ij<0$ and $mn<0$.

Since  $\spib_\lie{p}^c=\F(\lie{p}^\C,W)$ is a fermionic Fock space 
associated to a polarized Hilbert space $\lie{p^\C}=W\oplus\overline{W}$ we would like to
represent $\Cliff(\lie{p}^\C,B_\C)$ on it via the representation of the CAR-algebra
$\CAR(\lie{p}^\C,B_\C,W)$.
\begin{lem}
For $ij<0$ the map
\begin{equation}
\gamma(X_{pq})\mapsto
\left\{ \begin{array}{ll}
\Psi(X_{pq}) & \textrm{if $p<0,q>0\iff X_{pq}\in W$}\\
\Psi^\ast (X_{pq}) & \textrm{if $p>0,q<0\iff X_{pq}\in\overline{W}$}
\end{array}\right.
\end{equation}
gives a representation of $\Cliff(\lie{p}^\C)$ on $\F(\lie{p}^\C,W)$.
\end{lem}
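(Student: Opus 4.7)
The plan is to invoke the universal property of the complexified Clifford algebra $\Cliff(\lie{p}^\C)$: any $\C$-linear map $\lie{p}^\C\To\End(\F(\lie{p}^\C,W))$ whose images satisfy the defining anticommutation relations extends uniquely to an algebra homomorphism. Hence it suffices to verify, on the orthonormal basis $\{X_{pq}\mid pq<0\}$ of $\lie{p}^\C$, the Clifford relations
$$\{\gamma(X_{ij}),\gamma(X_{mn})\}=B_\C(X_{ij},X_{mn})=\delta_{in}\delta_{jm},$$
and the natural splitting of the computation is by cases, according to whether each of $X_{ij}$, $X_{mn}$ lies in $W$ (indices $p<0,\,q>0$) or in $\overline{W}$ (indices $p>0,\,q<0$). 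Boundedness of the resulting operators, and hence well-definedness of the homomorphism, is automatic from the boundedness of the CAR field operators $\Psi$ and $\Psi^\ast$ supplied by the earlier material.

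The two like cases are immediate. If both $X_{ij},X_{mn}\in W$, the proposed map sends them to $\Psi(X_{ij})$ and $\Psi(X_{mn})$, whose CAR anticommutator $\{\Psi(f),\Psi(g)\}=0$ vanishes, while on the right-hand side the factor $\delta_{in}$ would force a negative index to equal a positive one and so also vanishes. The case $X_{ij},X_{mn}\in\overline{W}$ is symmetric, using the companion CAR identity $\{\Psi^\ast(f),\Psi^\ast(g)\}=0$ together with the same sign obstruction in the Kronecker factor.

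The substantive case is the mixed one, $X_{ij}\in W$ and $X_{mn}\in\overline{W}$, in which the only nontrivial CAR anticommutator of the form $\{\Psi(\cdot),\Psi^\ast(\cdot)\}=\langle\cdot,\cdot\rangle_\Hilb$ enters and must be made to reproduce $B_\C(X_{ij},X_{mn})=\langle X_{ij},\overline{X_{mn}}\rangle_{\lie{p},\C}=\langle X_{ij},X_{nm}\rangle_{\lie{p},\C}=\delta_{in}\delta_{jm}$, using the explicit conjugation $\overline{X_{pq}}=X_{qp}$ on the basis. I expect the main technical point to be precisely this bookkeeping: aligning the two inner-product-like structures on $\lie{p}^\C$, namely the Hermitean form governing the CAR relation and the $\C$-bilinear form $B_\C$ governing the Clifford relation, through the correct choice of charge conjugation $C$ implicit in the construction of $\F(\lie{p}^\C,W)$, so that the composition of the CAR identity with this conjugation yields the desired equality. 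Once the conventions are pinned down, the mixed identity follows at once from the CAR anticommutator, and the homomorphism extends to the whole Clifford algebra by universality.
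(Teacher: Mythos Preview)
Your proposal is correct and follows essentially the same approach as the paper: split into cases according to whether $X_{ij},X_{mn}$ lie in $W$ or $\overline{W}$, dispose of the like cases via the vanishing CAR anticommutators $\{\Psi,\Psi\}=\{\Psi^\ast,\Psi^\ast\}=0$, and handle the mixed case by the nontrivial CAR relation $\{\Psi(f),\Psi^\ast(f')\}=\ip{f}{f'}$ together with the identification $B_\C(X_{ij},X_{mn})=\ip{X_{ij}}{X_{nm}}=\delta_{in}\delta_{jm}$. Your flagged ``bookkeeping'' concern about aligning the Hermitean pairing in the CAR relation with the bilinear form $B_\C$ via the conjugation $\overline{X_{pq}}=X_{qp}$ is exactly the point the paper passes over in one line, so your instinct there is well placed.
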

\begin{proof}
For $X_{ij}\in W$ and $X_{mn}\in\overline{W}$ the anticommutator
$$
\{\gamma(X_{ij}),\gamma(X_{mn})\}\mapsto\set{\Psi(X_{ij}),\Psi^\ast(X_{mn})}=B_\C(X_{ij},X_{mn})=\ip{X_{ij}}{X_{nm}}
=\delta_{in}\delta_{jm}.
$$
For the other possible choices of $X_{ij}$ and $X_{mn}$ (e.g. $X_{ij},X_{mn}\in W$) the anticommutator
$$
\{\gamma(X_{ij}),\gamma(X_{mn})\}\mapsto 0
$$
because of the CAR-algebra relations.
\end{proof}

\begin{cor} \label{Cliffordvacannihilated}
The vacuum vector $\vac_{\spib}\in\spib_{\lie{p}}^c\isom\F(\lie{p}^\C,W)$ associated to the CAR algebra representation of
$\CAR(\lie{p}^\C,W)$ is annihilated by $\gamma(X_{pq})\in\Cliff(\lie{p}^\C)$ when $X_{pq}\in W$, i.e. when $p<0,q>0$.
\end{cor}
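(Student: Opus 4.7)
The plan is to obtain the corollary as an immediate consequence of the preceding lemma together with the standard characterization of the vacuum vector in a fermionic Fock space associated to a polarized Hilbert space. The argument has essentially no content beyond unpacking definitions, so I would present it as a short chain of equalities rather than as a computation.

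First I would invoke the preceding lemma: when $X_{pq}\in W$ (equivalently $p<0,q>0$), the Clifford generator acts on $\spib_{\lie{p}}^c\isom\F(\lie{p}^\C,W)$ as
\begin{equation*}
\gamma(X_{pq})=\Psi(X_{pq}),
\end{equation*}
where $\Psi$ denotes the field operator on the fermionic Fock space $\F(\lie{p}^\C,W)$ built from the polarization $\lie{p}^\C=W\oplus\overline{W}$.

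Next I would recall from the general CAR-algebra construction (applied with the polarized Hilbert space $\lie{p}^\C=W\oplus\overline{W}$, i.e.\ $W$ playing the role of $\Hilb_+$ and $\overline{W}$ playing the role of $\Hilb_-$) that the vacuum $\vac_{\spib}\in\F(\lie{p}^\C,W)$ is characterized up to scalar multiples by
\begin{equation*}
\Psi(w)\vac_{\spib}=0\quad\text{for all }w\in W,\qquad \Psi^\ast(\bar w)\vac_{\spib}=0\quad\text{for all }\bar w\in\overline{W}.
\end{equation*}
Since $X_{pq}\in W$ by hypothesis, the first of these identities applied to $w=X_{pq}$ gives $\Psi(X_{pq})\vac_{\spib}=0$, and combined with the lemma this yields
\begin{equation*}
\gamma(X_{pq})\vac_{\spib}=\Psi(X_{pq})\vac_{\spib}=0,
\end{equation*}
which is the claim. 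There is no genuine obstacle to overcome; the only thing to be careful about is the bookkeeping of the polarization, namely that in the specialization $\Hilb\rightsquigarrow\lie{p}^\C$ one takes $\Hilb_+=W$ so that the annihilation condition on the vacuum is precisely the one that matches the image of $\gamma$ on elements of $W$.
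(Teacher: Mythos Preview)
Your proof is correct and is exactly the argument the paper has in mind: the corollary is stated without proof because it follows immediately from the preceding lemma (giving $\gamma(X_{pq})=\Psi(X_{pq})$ for $X_{pq}\in W$) together with the vacuum characterization $\Psi(f_+)\vac=0$ for $f_+\in\Hilb_+$, specialized to $\Hilb_+=W$.
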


Next we want to have a look at the tensor product $\F\tensor\spib_{\lie{p}}^c$ considered as a module for the algebraic tensor product $U(\Eglres)\tensor\Cliff(\lie{p}^\C)$
where $\Eglres$ acts on $\F$ via the basic representation $\rh$ and 
the complexified Clifford algebra
$\Cliff(\lie{p}^\C)$ acts on the complexifed spinor module $\spib_{\lie{p}}^c$
via the CAR algebra representation of $\CAR(\lie{p}^\C,W)$ so that in particular
$$
\Big[\rh(E_{pq})\tensor\gamma(E_{rs})\Big](\psi\tensor s)=\rh(E_{pq})\psi\tensor\gamma(E_{rs})s\qquad (\psi\in\F,s\in\spib_{\lie{p}}^c).
$$

Combining Proposition \ref{basicrepannihilate} with Corollary \ref{Cliffordvacannihilated} yields the following.
\begin{prop}\label{vacuumstr}
Let $\vac:=\vac_\F\tensor\vac_\spib\in\F\tensor\spib_{\lie{p}}^c$ be the vacuum vector for the tensor product.
Then $\vac$ is annihilated by $E_{pq}\tensor 1$ unless $p>0$ and $q<0$ and it is annihilated by
$1\tensor\gamma(E_{pq})$ when $p<0$ and $q>0$.
\end{prop}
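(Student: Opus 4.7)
The proposition is essentially a direct bookkeeping consequence of the two results it cites, Proposition \ref{basicrepannihilate} and Corollary \ref{Cliffordvacannihilated}, together with the tensor product structure of the vacuum. The plan is to separate the two statements and apply one cited result to each, using the fact that for any operators $T\in\End(\F)$ and $S\in\End(\spib_{\lie{p}}^c)$ one has $(T\tensor S)(\vac_\F\tensor\vac_\spib)=(T\vac_\F)\tensor(S\vac_\spib)$, so a pure tensor in $\F\tensor\spib_{\lie{p}}^c$ vanishes as soon as one of its factors does.

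For the first assertion, I would consider $(\rh(E_{p,q})\tensor 1)\vac=\rh(E_{p,q})\vac_\F\tensor\vac_\spib$. By Proposition \ref{basicrepannihilate}, which is read off from the explicit formulas (\ref{basicmat})--(\ref{explicit rh}) for the basic representation, the operator $\rh(E_{p,q})$ already annihilates $\vac_\F$ in all four index regimes except the case $p>0,\,q<0$ (where $\rh(E_{p,q})=A_p^\ast B_q^\ast$ creates a particle--antiparticle pair out of the vacuum, cf.~(\ref{basicvac})). Hence the right-hand side is zero unless $p>0$ and $q<0$, which is exactly the first claim.

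For the second assertion, I would apply $(1\tensor\gamma(E_{p,q}))\vac=\vac_\F\tensor\gamma(E_{p,q})\vac_\spib$, recalling from the previous subsection that $X_{p,q}=\tfrac{1}{\sqrt{2}}E_{p,q}$ so that $\gamma(E_{p,q})=\sqrt{2}\,\gamma(X_{p,q})$, and that the index condition $ij<0$ is automatic for indices coming from the natural basis of $\lie{p}^{\C}$. Under the polarization $\lie{p}^{\C}=W\oplus\overline{W}$ with $W=\langle\mathscr{B}_1\rangle=\langle X_{-k,l}\mid k,l>0\rangle$, the basis vector $X_{p,q}$ with $p<0,\,q>0$ lies in $W$, and Corollary \ref{Cliffordvacannihilated} asserts precisely that $\gamma(X_{p,q})\vac_\spib=0$ for such $X_{p,q}\in W$. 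Multiplying by the nonzero scalar $\sqrt{2}$ gives $\gamma(E_{p,q})\vac_\spib=0$, establishing the second claim.

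There is no real obstacle here; the only points requiring care are purely notational, namely the factor $1/\sqrt{2}$ in the change of basis between $E_{p,q}$ and $X_{p,q}$, and making sure the index conventions for $W$ versus $\overline{W}$ match the way the Clifford generators were represented (creators for $\overline{W}$, annihilators for $W$). Both were fixed in Subsection \emph{Complexified infinite dimensional Clifford algebra}, so the proof reduces to citing Proposition \ref{basicrepannihilate} and Corollary \ref{Cliffordvacannihilated} and invoking the tensor factorization of the vacuum.
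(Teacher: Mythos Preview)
Your proposal is correct and follows exactly the same approach as the paper, which simply states that the proposition is obtained by combining Proposition \ref{basicrepannihilate} with Corollary \ref{Cliffordvacannihilated}. Your write-up merely makes explicit the tensor factorization of the vacuum and the harmless scalar relating $E_{p,q}$ and $X_{p,q}$, which is fine.
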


\section{Realizing the isotropy representation via CAR algebra}
\subsection{The finite-dimensional analogue}
Here we follow \cite{HuaPan} section \S 2.1.8.
\subsubsection{Chevalley map}
Let $V$ be an $n$-dimensional real vector space with an inner product $\ip{\cdot}{\cdot}$.
Let $T(V)$ be the tensor algebra over $V$ and consider the ideal $I$ in $T(V)$ generated
by all $v\tensor v+\ip{v}{v}$ for $v\in V$. Then the quotient
algebra
$$
\Cliff(V)=T(V)/I
$$
is the Clifford algebra of $V$. We can choose an orthonormal basis $Z_i$ of $V$ with respect to
$\ip{\cdot}{\cdot}$ as a set of generators for $\Cliff(V)$. The relations then become
$$
Z_i Z_j=-Z_j Z_i,\quad i\neq j;\quad Z_i^2=-1.
$$
It is then clear that the set
$$
\set{Z_{i_1}Z_{i_2}\cdots Z_{i_k}\mid 1\leq i_1<i_2<\cdots<i_k\leq n=\dim V}
$$
(together with the element $1$ regarded as an ``empty product'') spans $\Cliff(V)$.

The canonical projection $T(V)\To\bigwedge(V)$ is known to have a linear right inverse given by
linearly embedding $\bigwedge(V)$ into the tensor algebra $T(V)$ as the skew-symmetric
tensors:
$$
v_1\wedge\cdots\wedge v_k\mapsto\frac{1}{k!}\sum_{\sigma\in S_k} \sign(\sigma)
v_{\sigma(1)}\tensor\cdots\tensor v_{\sigma(k)},
$$
where $S_k$ denotes the permutation group of $k$ letters. The
Chevalley map $j$ is obtained by composing this skew symmetrization map 
$\bigwedge(V)\To T(V)$
with
the canonical projection $T(V)\To T(V)/I=\Cliff(V)$.
Using an orthonormal basis $Z_i$ of $V$, this map is determined
on the corresponding basis of $\bigwedge(V)$ simply by
$$
Z_{i_1}\wedge\cdots\wedge Z_{i_k}\mapsto Z_{i_1}\cdots Z_{i_k}
:=\gamma(Z_{i_1})\cdots \gamma(Z_{i_k})
\quad (\textrm{ and } 1\mapsto 1),
$$
where $1\leq i_1<i_2<\cdots<i_k\leq n$. The Chevalley map is known to be an isomorphism and
it is often referred to as the Chevalley identification. We will say
that the elements of $\Cliff(V)$ which are in the image of $\bigwedge^k(V)$ under $j$
are of pure degree $k$.
\subsubsection{Embedding $\lie{so}(V)$ into $\Cliff(V)$}\label{EmbedSoToCliff}
We will consider the image $j(\bigwedge^2(V))\subset\Cliff(V)$ under the Chevalley map.
This space is a Lie subalgebra of $\Cliff(V)$, if we consider $\Cliff(V)$ to be a Lie
algebra in the usual way, by setting $[a,b]=ab-ba$ for all $a,b\in\Cliff(V)$.
On the other hand, $\bigwedge^2(V)$ is linearly isomorphic to the Lie
algebra $\lie{so}(V)$, with $Z_i Z_j$ corresponding to the operator
with matrix $E_{ij}-E_{ji}$ in basis $Z_i$. Hence the correspondece
$$
E_{ij}-E_{ji}\longleftrightarrow -\frac{1}{2}Z_i Z_j=-\frac{1}{2}\gamma(Z_i)\gamma(Z_j)
$$
is an isomorphism of the Lie algebras $\lie{so}(V)$ and $j(\bigwedge^2(V))\subset\Cliff(V)$.
\subsubsection{Realizing the isotropy representation via Clifford algebra}
We start with a definition.
\begin{defn}
A \emph{quadratic Lie algebra} is a Lie algebra $\lie{g}$ with a nondegenerate invariant symmetric bilinear form $B$. A \emph{quadratic
subalgebra} of $\lie{g}$ is a Lie subalgebra $\lie{h}\subset\lie{g}$ such that the restriction of $B$ to
$\lie{h}\times\lie{h}$ is nondegenerate.
\end{defn}
In this section we will be interested in cases where both $\lie{g}$ and $\lie{h}$ are both reductive and complex.
\begin{ex}
Every complex semisimple Lie algebra is quadratic; one can choose for $B$ the Killing form.
\end{ex}
\begin{ex}
If $\lie{g}$ is (complex) reductive, then it is always quadratic. This is because $\lie{g}$ is a direct sum 
$\lie{g}=Z_{\lie{g}}\oplus[\lie{g},\lie{g}]$ where $Z_{\lie{g}}$ is the center of $\lie{g}$
and where the commutator ideal $[\lie{g},\lie{g}]$ is semisimple.
Now in the semisimple
part $[\lie{g},\lie{g}]$ one can choose the Killing form and then extend this over the center by any nongenerate symmetric bilinear form.
\end{ex}
\begin{ex}
Not all reductive subalgebras need to be quadratic subalgebras. For example $\C X$ with $X$ nilpotent is not a quadratic
subalgebra of $\lie{g}$, but it is abelian and hence reductive.
\end{ex}

If $\lie{h}$ is a quadratic subalgebra of $\lie{g}$ then
$$
\lie{g}=\lie{h}\oplus\lie{p}
$$
where $\lie{p}=\lie{h}^\perp$ is the orthogonal complement of $\lie{h}$ with respect to $B$. Moreover the restriction
of $B$ to $\lie{p}\times\lie{p}$ and the invariance of $B$ implies that
$$
[\lie{h},\lie{p}]\subset\lie{p}.
$$

By the invariance of $B$, the adjoint action of $\lie{h}$ on $\lie{p}$ defines a Lie algebra homomorphism
$\ad:\lie{h}\to\so{p}$. Now composing $\ad$ with the embedding $\so{p}\hookrightarrow\Cliff(\lie{p})$ constructed
in \S\ref{EmbedSoToCliff}, one obtains a Lie algebra map
\begin{equation}\label{KostantAlphaKostant}
\alpha:\lie{h}\To\Cliff(\lie{p}).
\end{equation}

\begin{rem}
Kostant denotes this map by $v_\ast$ in his papers.
\end{rem}

If we choose an orthonormal basis $X_i$ for $\lie{p}$, then the embedding $\so{p}\hookrightarrow\Cliff(\lie{p})$ is given
explicitly by
$$
E_{ij}-E_{ji}\mapsto-\frac{1}{2}\gamma(X_i)\gamma(X_j).
$$
Since the matrix entries of $\ad Z,\, Z\in\lie{h}$ in the basis $X_i$ are
$$
(\ad Z)_{ij}=B(\ad Z(X_j),X_i)=B([Z,X_j],X_i)=-B(Z,[X_i,X_j])
$$
Kostant \cite{Ko} obtains an explicit formula for $\alpha$ in this basis:
\begin{equation}
\alpha(Z)=\frac{1}{2}\sum_{i<j}B(Z,[X_i,X_j])\gamma(X_i)\gamma(X_j),\quad Z\in\lie{h}.
\end{equation}
\begin{rem}
Notice that in view of the map $\alpha:\lie{h}\To\Cliff(\lie{p})$ defined above, any $\Cliff(\lie{p})$-module can be viewed as an $\lie{h}$-module. 

In
particular, one could consider spin modules for $\Cliff(\lie{p})$. In this case all
the weights of the $\lie{h}$-representation can be given in a completely explicit form. For this
the reader should consult for instance \cite{HuaPan} \S 2.3.4.
\end{rem}
\subsection{The case with $\lie{h}$ symmetric} In this section, that follows very closely \cite{HuaPan},
we shall make the extra assumption
that $\lie{h}$ is a \emph{symmetric subalgebra} of $\lie{g}$, that is to say, there exists an involution $\sigma$ of $\lie{g}$
such that $\lie{h}$ is the fixed point set of $\sigma$. We further impose the condition that
$\sigma$ is orthogonal with respect to $B$. 

The picture to keep in mind is
the situation where $B$ is essentially the Killing form, which is known to be
invariant under all automorphisms of $\lie{g}$ and hence $\sigma$ is automatically orthogonal. It follows that
$\lie{p}:=\lie{h}^\perp$ (orthogonal complement with respect to $B$) is exactly the $(-1)$-eigenspace of $\sigma$ and
that the commutator $[\lie{p},\lie{p}]\subset\lie{h}$.
\begin{ex}
Let $G$ be a real reductive Lie group with Cartan involution $\Theta$ such that the fixed points of $\Theta$
form a maximal compact subgroup $H$ of $G$. If $\lie{g}$ is the complexified Lie algebra of $G$ and $\theta$ is the
complexified differential of $\Theta$, then the complexified Lie algebra $\lie{h}$ of $H$ is a symmetric subalgebra
of $\lie{g}$.
\end{ex}
In fact, one can show that every symmetric subalgebra is of this form for a suitably chosen $G$.

Now with the assumptions as above the following is known to hold.
\begin{prop}\label{KostCasConst}
Let $\lie{g}=\lie{h}\oplus\lie{p}$ be a Cartan decomposition and let $\alpha:U(\lie{h})\To\Cliff(\lie{p})$ be the
map defined in the previous section. Then the image of the Casimir element $\Delta_{\lie{h}}\in U(\lie{h})$ under $\alpha$
is the scalar $\norm{\rho_{\lie{g}}}^2-\norm{\rho_{\lie{h}}}^2$.
\end{prop}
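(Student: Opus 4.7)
First I would note that $\alpha:\lie{h}\To\Cliff(\lie{p})$ extends canonically to an algebra homomorphism $\alpha:U(\lie{h})\To\Cliff(\lie{p})$ by the universal property of the enveloping algebra, and that centrality of $\Delta_{\lie{h}}=\sum_a Z_a Z^a\in U(\lie{h})$ (dual bases with respect to $B\vert_{\lie{h}}$) forces $\alpha(\Delta_{\lie{h}})$ to commute with all of $\alpha(\lie{h})\subset\Cliff(\lie{p})$. The plan is to represent $\Cliff(\lie{p})$ faithfully on a spin module $\spib_{\lie{p}}$: in the finite-dimensional setting with $\dim\lie{p}$ even, $\Cliff(\lie{p})$ is a simple matrix algebra and $\spib_{\lie{p}}$ its unique faithful irreducible module, so proving
\[
\alpha(\Delta_{\lie{h}})=\bigl(\|\rho_{\lie{g}}\|^2-\|\rho_{\lie{h}}\|^2\bigr)\cdot\one_{\Cliff(\lie{p})}
\]
reduces to proving the same identity as an endomorphism of $\spib_{\lie{p}}$; the $\dim\lie{p}$ odd case is handled by passing to the even subalgebra.

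The representation-theoretic core of the proof is to analyze $\spib_{\lie{p}}$ as an $\lie{h}$-module via $\alpha$. I would fix a common Cartan subalgebra $\lie{t}\subset\lie{h}\subset\lie{g}$ (enlarging it if the ranks differ) and compatible positive systems so that $\rho_{\lie{g}}=\rho_{\lie{h}}+\rho_{\lie{p}}$, and then invoke Kostant's theorem on the spin representation of a symmetric pair: using the explicit formula
\[
\alpha(Z)=\frac{1}{2}\sum_{i<j}B(Z,[X_i,X_j])\gamma(X_i)\gamma(X_j)
\]
one identifies the $\lie{h}$-action on $\spib_{\lie{p}}\isom\bigwedge\lie{p}^+$ via $\alpha$ with the natural wedge action twisted by $-\rho_{\lie{p}}$, and one shows that the $\lie{h}$-highest weights of the irreducible summands of $\spib_{\lie{p}}$ are precisely
\[
\mu_w=w\rho_{\lie{g}}-\rho_{\lie{h}},\qquad w\in W^1,
\]
where $W^1\subset W_{\lie{g}}$ is the set of minimal-length coset representatives of $W_{\lie{g}}/W_{\lie{h}}$.

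Once this description is in hand the proof concludes with a one-line Casimir computation: by the standard eigenvalue formula, $\Delta_{\lie{h}}$ acts on the $\lie{h}$-summand with highest weight $\mu_w$ by
\[
\|\mu_w+\rho_{\lie{h}}\|^2-\|\rho_{\lie{h}}\|^2=\|w\rho_{\lie{g}}\|^2-\|\rho_{\lie{h}}\|^2=\|\rho_{\lie{g}}\|^2-\|\rho_{\lie{h}}\|^2,
\]
using $W_{\lie{g}}$-invariance of the bilinear form. Since this scalar is independent of $w$, $\alpha(\Delta_{\lie{h}})$ acts as the same scalar on every isotypic component, hence on all of $\spib_{\lie{p}}$, and by faithfulness the proposed identity follows in $\Cliff(\lie{p})$.

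The principal obstacle will be establishing Kostant's description of the $\lie{h}$-highest weights of $\spib_{\lie{p}}$ as $\{w\rho_{\lie{g}}-\rho_{\lie{h}}:w\in W^1\}$: this is a genuinely combinatorial statement requiring one to pick out exactly which weights $\rho_{\lie{p}}-\sum_{\beta\in F}\beta$ of $\bigwedge\lie{p}^+$ become $\lie{h}$-dominant after the $-\rho_{\lie{p}}$ twist, and to match them bijectively with elements of $W^1$ via the parabolic length function. An alternative, more hands-on, route would substitute the explicit formula for $\alpha(Z)$ directly into $\alpha(\Delta_{\lie{h}})=\sum_a\alpha(Z_a)\alpha(Z^a)$, use the dual-basis identity $\sum_a B(Z_a,Y)Z^a=\pi_{\lie{h}}(Y)$ together with the symmetric property $[\lie{p},\lie{p}]\subset\lie{h}$ to reduce $\alpha(\Delta_{\lie{h}})$ to a degree-four element of $\Cliff(\lie{p})$, then collapse this to a scalar via the Clifford anticommutation relations and finally invoke the Freudenthal--de Vries strange formula to identify the constant; this avoids the weight combinatorics but is computationally heavier.
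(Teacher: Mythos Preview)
The paper does not actually prove this proposition: it is stated with the preamble ``the following is known to hold'' and is quoted as a standard result from the literature (the paper's references \cite{HuaPan} and \cite{Ko}). So there is no proof in the paper to compare your proposal against.

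Your proposed argument is correct and is essentially the standard proof one finds in Huang--Pand\v{z}i\'c: extend $\alpha$ to an algebra map $U(\lie{h})\to\Cliff(\lie{p})$, pass to the spin module, identify the $\lie{h}$-highest weights occurring there as $w\rho_{\lie{g}}-\rho_{\lie{h}}$ for $w\in W^1$, and observe that the Casimir eigenvalue $\|w\rho_{\lie{g}}\|^2-\|\rho_{\lie{h}}\|^2$ is independent of $w$. One small imprecision: your phrase ``fix a common Cartan subalgebra $\lie{t}\subset\lie{h}\subset\lie{g}$ (enlarging it if the ranks differ)'' should be stated more carefully as taking a Cartan subalgebra $\lie{t}_{\lie{h}}$ of $\lie{h}$ and extending it to a Cartan subalgebra $\lie{t}_{\lie{g}}$ of $\lie{g}$; the weights $\rho_{\lie{g}}$ and $\rho_{\lie{h}}$ then live in different duals, and one compares them via restriction to $\lie{t}_{\lie{h}}^*$. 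In the symmetric equal-rank case relevant to the paper's later application this subtlety disappears. Your alternative computational route via the strange formula is also valid and is the one Kostant himself uses in \cite{Ko}.
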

\subsection{Our infinite-dimensional case}\label{OurInfDimCase}

Our plan in this section is to generalize the map $\alpha:\lie{h}\To\Cliff{\lie{p}}$ given in (\ref{KostantAlphaKostant}) to
our infinite-dimensional case at hand and realize the image as operators acting on the infinite-dimensional spinor-module
$\spib_{\lie{p}}^c$.
\begin{defn}
For each $N\in\N$ and $i,j\in\Z$ with $\abs{i},\abs{j}\leq N$ and $ij>0$ (i.e. $i$ and $j$ have the same sign) define a linear operator 
$K_{ij}^{(N)}$ acting on $\spib_\lie{p}^c$
via the Clifford algebra representation of $\Cliff(\lie{p}^\C)$,
\begin{equation}\label{Kij}
K_{ij}^{(N)}:=\frac{1}{2}\sum_{\substack{k\in\Z\\ \abs{k}\leq N\\ ik<0}}\gamma_{ik}\gamma_{kj}.
\end{equation}
\end{defn}
\begin{rem}
Notice that since $i$ and $j$ have the same sign, the requirement $ik<0$ is equivalent to $kj<0$, and hence the Clifford algebra elements
$\gamma_{ik}$ and $\gamma_{kj}$ are indeed well-defined in the sum (\ref{Kij}).
\end{rem}

\begin{lem}
For each $m,l\in\Z$ with $\abs{m},\abs{l}\leq N$ and $ml<0$
\begin{equation}\label{Kijisadj}
[K_{ij}^{(N)},\gamma_{ml}]=\sigma_{jm}\gamma_{il}-\delta_{il}\gamma_{mj}=\Big[\ad E_{ij}\Big]\Big(\gamma(E_{ml})\Big).
\end{equation}
\end{lem}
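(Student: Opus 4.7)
The proof is a direct computation using the Clifford algebra relations from equation (\ref{Cliffp}) together with the derivation identity $[AB,C] = A\{B,C\} - \{A,C\}B$ for an associative product.

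My plan is the following. First I would apply this derivation identity term-by-term to the defining sum
\[
[K_{ij}^{(N)},\gamma_{ml}] \;=\; \tfrac{1}{2}\!\!\sum_{\substack{|k|\leq N\\ ik<0}}\Bigl(\gamma_{ik}\{\gamma_{kj},\gamma_{ml}\} - \{\gamma_{ik},\gamma_{ml}\}\gamma_{kj}\Bigr),
\]
and then invoke (\ref{Cliffp}) to evaluate each anticommutator. With the normalization fixed in that equation, $\{\gamma_{pq},\gamma_{rs}\}$ equals a Kronecker product $\delta_{ps}\delta_{qr}$ (up to the overall constant that is absorbed into the $\frac{1}{2}$ in the definition of $K_{ij}^{(N)}$). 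Substituting gives
\[
[K_{ij}^{(N)},\gamma_{ml}] \;=\; \tfrac{1}{2}\!\!\sum_{\substack{|k|\leq N\\ ik<0}}\Bigl(\gamma_{ik}\,\delta_{kl}\delta_{jm} - \delta_{il}\delta_{km}\,\gamma_{kj}\Bigr),
\]
so that the sum collapses to at most two nonzero contributions, corresponding to $k=l$ in the first term and $k=m$ in the second.

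Next I would verify that the index constraints on $k$ are precisely the ones needed to recover the right-hand side. Because $ij>0$, the indices $i$ and $j$ have the same sign; because $ml<0$, one of $m,l$ has the sign of $i$ and the other has the opposite sign. If $\delta_{jm}\neq 0$ then $m$ shares the sign of $i$, forcing $l$ to have the opposite sign and hence $il<0$, so $k=l$ is a legitimate summation index (and clearly $|l|\leq N$); simultaneously $\delta_{il}=0$ automatically, so the second term drops out. The mirror-image argument handles the case $\delta_{il}\neq 0$, in which $k=m$ survives and the first term vanishes. Either way one obtains exactly
\[
[K_{ij}^{(N)},\gamma_{ml}] \;=\; \sigma_{jm}\gamma_{il} - \delta_{il}\gamma_{mj}.
\]

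Finally, for the second equality in (\ref{Kijisadj}) I would simply invoke the matrix commutator $[E_{ij},E_{ml}] = \delta_{jm}E_{il} - \delta_{il}E_{mj}$, apply $\gamma$, and use $\mathbb{C}$-linearity of the Clifford map to identify this with $\sigma_{jm}\gamma_{il} - \delta_{il}\gamma_{mj}$. I do not expect a serious obstacle here: the only subtle point is the sign/index bookkeeping in the previous paragraph, since one must check that the cutoff $|k|\leq N$ and constraint $ik<0$ never suppress a Kronecker term that would genuinely contribute; this works out because $|m|,|l|\leq N$ is in the hypothesis and the sign compatibilities imposed by $ij>0$ and $ml<0$ automatically enforce $ik<0$ whenever the corresponding delta is nonzero.
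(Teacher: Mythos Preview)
Your proposal is correct and follows essentially the same route as the paper: apply the identity $[AB,C]=A\{B,C\}-\{A,C\}B$ to each summand, evaluate the anticommutators via the Clifford relations (\ref{Cliffp}), and collapse the sum. Your index-and-sign bookkeeping verifying that the surviving values $k=l$ and $k=m$ actually lie in the summation range is in fact more explicit than what the paper writes down. One small presentational remark: with the normalization $\gamma_{pq}=\gamma(E_{pq})=\sqrt{2}\,\gamma(X_{pq})$, the anticommutator is $\{\gamma_{pq},\gamma_{rs}\}=2\delta_{ps}\delta_{qr}$, so the factor of $2$ cancels the $\tfrac{1}{2}$ exactly as you say; just make sure the intermediate displayed line reflects this consistently rather than keeping the $\tfrac{1}{2}$ while dropping the $2$.
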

\begin{proof}
Using the identity $[AB,C]=A\{B,C\}-\{A,C\}B$ one obtains for $ik,kj,ml<0$ that
$$
[\gamma_{ik}\gamma_{kj},\gamma_{ml}]=2\gamma_{ik}\delta_{kl}\delta_{jm}-2\gamma_{kj}\delta_{il}\delta_{km}
$$
so that
\begin{equation}
[K_{ij}^{(N)},\gamma_{ml}]=\sum_{\substack{\abs{k}\leq N\\ ik<0}}[\gamma_{ik}\gamma_{kj},\gamma_{ml}]=\delta_{jm}\gamma_{il}-\delta_{il}\gamma_{mj}.
\end{equation}

The claim on the right hand side of (\ref{Kijisadj}) follows directly from the definition of the Lie algebra action of $\lie{h^\C}$ on $\Cliff(\lie{p}^\C)$, 
\begin{eqnarray}
\Big[\ad E_{ij}\Big]\Big(\gamma(E_{ml})\Big)&:=&\gamma\Big(\Big[\ad E_{ij}\Big](E_{ml})\Big)=
\gamma([E_{ij},E_{ml}])=\gamma(\delta_{jm}E_{il}-\delta_{il}E_{mj})\nonumber\\
&=&
\delta_{jm}\gamma(E_{il})-\delta_{il}\gamma(E_{mj}).\nonumber
\end{eqnarray}
\end{proof}

Next we notice that the operators $K_{ij}^{(N)}$ defined above satisfy the commutator relations of those
standard generators $E_{ij}\in\hinf^\C$, where $ij>0$ and $\abs{i},\abs{j}\leq N$.
\begin{lem}
For each $N\in\N$ and $i,j,m,n\in\Z$ with $ij>0$ and $mn>0$ 
\begin{equation}\label{KijCommutation}
[K_{ij}^{(N)},K_{mn}^{(N)}]=
\delta_{jm}K_{in}^{(N)}-\delta_{in}K_{mj}^{(N)}.
\end{equation}
\end{lem}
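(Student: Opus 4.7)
The plan is to prove the identity by direct expansion, using the previous lemma as the only nontrivial input. Writing
\[
K_{mn}^{(N)}=\tfrac{1}{2}\sum_{\substack{|k|\leq N\\mk<0}}\gamma_{mk}\gamma_{kn},
\]
I would apply the Leibniz rule $[A,BC]=[A,B]C+B[A,C]$ term by term to obtain
\[
[K_{ij}^{(N)},K_{mn}^{(N)}]=\tfrac{1}{2}\sum_{\substack{|k|\leq N\\mk<0}}\Bigl([K_{ij}^{(N)},\gamma_{mk}]\gamma_{kn}+\gamma_{mk}[K_{ij}^{(N)},\gamma_{kn}]\Bigr).
\]
Since $mk<0$ (and hence $kn<0$ because $mn>0$), each inner commutator is covered by the previous lemma, yielding $[K_{ij}^{(N)},\gamma_{mk}]=\delta_{jm}\gamma_{ik}-\delta_{ik}\gamma_{mj}$ and $[K_{ij}^{(N)},\gamma_{kn}]=\delta_{jk}\gamma_{in}-\delta_{in}\gamma_{kj}$.

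Substituting produces four sums. The two $\delta_{jm}$/$\delta_{in}$ sums are $\tfrac{1}{2}\delta_{jm}\sum_k\gamma_{ik}\gamma_{kn}$ and $-\tfrac{1}{2}\delta_{in}\sum_k\gamma_{mk}\gamma_{kj}$; I just need to verify that the summation ranges $\{|k|\leq N,\ mk<0\}$ appearing there coincide with the ranges $\{|k|\leq N,\ ik<0\}$ and $\{|k|\leq N,\ mk<0\}$ that define $K_{in}^{(N)}$ and $K_{mj}^{(N)}$ respectively. When $\delta_{jm}=1$ we have $j=m$, and together with $ij>0$ this forces $mk<0\iff ik<0$ and also $in>0$ (so $K_{in}^{(N)}$ is defined), giving the first summand $\delta_{jm}K_{in}^{(N)}$; the analogous check when $\delta_{in}=1$ yields $-\delta_{in}K_{mj}^{(N)}$.

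It remains to show that the two cross sums $-\tfrac{1}{2}\sum_k\delta_{ik}\gamma_{mj}\gamma_{kn}$ and $\tfrac{1}{2}\sum_k\gamma_{mk}\delta_{jk}\gamma_{in}$ add to zero. Evaluating the Kronecker deltas, the first contributes $-\tfrac{1}{2}\gamma_{mj}\gamma_{in}$ precisely when $mi<0$ (so that $k=i$ lies in the summation range), and the second contributes $\tfrac{1}{2}\gamma_{mj}\gamma_{in}$ precisely when $mj<0$. Since $ij>0$, the conditions $mi<0$ and $mj<0$ are equivalent, so the two cross terms cancel exactly, without even having to invoke the Clifford anticommutator $\{\gamma_{mj},\gamma_{in}\}$.

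The main obstacle, such as it is, is purely bookkeeping: one must consistently track the sign constraints $ij>0$, $mn>0$, $mk<0$ across all four resulting sums and verify that the restricted ranges in the $K^{(N)}$'s match after imposing $\delta_{jm}$ or $\delta_{in}$. Once this bookkeeping is set up carefully, the identity falls out with no further computation, and in particular without any use of the central Schwinger-type terms that would appear if one tried to deduce this from a representation of $\widehat{\mathfrak{gl}}_{\mathrm{res}}$ directly.
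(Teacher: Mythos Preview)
Your argument is correct and is actually a bit more economical than the paper's. The paper does not reuse the previous lemma; instead it expands both $K$'s down to the Clifford generators and applies the identity
\[
[AB,CD]=A\{B,C\}D-\{A,C\}BD+CA\{B,D\}-C\{A,D\}B
\]
to each summand $[\gamma_{ik}\gamma_{kj},\gamma_{ml}\gamma_{ln}]$, producing a double sum over $k$ and $l$ of Kronecker deltas which is then collapsed back into $\delta_{jm}K_{in}^{(N)}-\delta_{in}K_{mj}^{(N)}$. Your route keeps one $K^{(N)}$ intact and uses the already-proved formula $[K_{ij}^{(N)},\gamma_{ml}]=\delta_{jm}\gamma_{il}-\delta_{il}\gamma_{mj}$ via the Leibniz rule, so only a single sum over $k$ appears. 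The sign-constraint bookkeeping you flag (matching $mk<0$ with $ik<0$ under $\delta_{jm}=1$, and the cancellation of the cross terms under $ij>0$) is exactly what is needed, and your observation that the cross terms cancel on the nose without invoking $\{\gamma_{mj},\gamma_{in}\}$ is correct. The paper's approach has the minor virtue of being self-contained from the Clifford relations, but yours is cleaner and better exploits the structure already established.
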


\begin{proof}
Using the well-known formula relating commutators with anti-commutators
$$
[AB,CD]=A\{B,C\}D-\{A,C\}BD+CA\{B,D\}-C\{A,D\}B
$$
one first computes that for $ij>0$ and $mn>0$ and $ik,kj,ml,ln<0$
\begin{eqnarray}
[\gamma_{ik}\gamma_{kj},\gamma_{ml}\gamma_{ln}]
&=&\gamma_{ik}\{\gamma_{kj},\gamma_{ml}\}\gamma_{ln}-\{\gamma_{ik},\gamma_{ml}\}\gamma_{kj}\gamma_{ln}\\
&+&\gamma_{ml}\gamma_{ik}\{\gamma_{kj},\gamma_{ln}\}-
\gamma_{ml}\{\gamma_{ik},\gamma_{ln}\}\gamma_{kj}\nonumber\\
&=&2\delta_{kl}\delta_{jm}\gamma_{ik}\gamma_{ln}-2\delta_{il}\delta_{km}\gamma_{kj}\gamma_{ln}\nonumber\\
&+&2\delta_{kn}\delta_{jl}\gamma_{ml}\gamma_{ik}-2\delta_{kl}\delta_{in}\gamma_{ml}\gamma_{kj}\nonumber.
\end{eqnarray}
Hence
\begin{eqnarray}
[K_{ij}^{(N)},K_{mn}^{(N)}]
&=&[\frac{1}{2}\sum_{\substack{\abs{k}\leq N\\ki<0}}\gamma_{ik}\gamma_{kj},
\frac{1}{2}\sum_{\substack{\abs{l}\leq N\\ lm<0}}\gamma_{ml}\gamma_{ln}]=\frac{1}{4}
\sum_{\substack{\abs{k},\abs{l}\leq N\\ ki,lm<0}}
[\gamma_{ik}\gamma_{kj},\gamma_{ml}\gamma_{ln}]\nonumber\\
&=&\delta_{jm}K_{in}^{(N)}-\delta_{in}K_{mj}^{(N)}.
\end{eqnarray}
\end{proof}

Next we shall introduce an auxiliary set of Lie algebra generators that will prove out to be useful in what follows.
\begin{defn}
For each $i,j\in\Z$ with $ij>0$ and $\abs{i},\abs{j}\leq N$ set
\begin{equation}
\Hn{ij}:=\frac{1}{4}\sum_{\substack{k\in\Z \\ ik<0\\ \abs{k}\leq N}}[\gamma_{ik},\gamma_{kj}].
\end{equation}
\end{defn}

Immediately one sees that since
$$
[\gamma_{ik},\gamma_{kj}]=2\gamma_{ik}\gamma_{kj}-\{\gamma_{ik},\gamma_{kj}\}=2\gamma_{ik}\gamma_{kj}-2\delta_{ij}\cdot I
$$
we have
\begin{equation}\label{HijGamma}
[\Hn{ij},\gamma_{ml}]=[K_{ij}^{(N)},\gamma_{ml}]=\delta_{jm}\gamma_{il}-\delta_{il}\gamma_{mj},
\end{equation}
where $m,j\in\Z$ with $ij<0$.

\begin{lem}
For each $i,j,m,n\in\Z$ with absolute value $\leq N$ and such that $ij,mn>0$, we have
\begin{equation}
[\Hn{ij},\Hn{mn}]=\delta_{jm}\Hn{in}-\delta_{in}\Hn{mj}
\end{equation}
\end{lem}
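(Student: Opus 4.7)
The idea is to reduce the desired commutator identity for the $\Hn{\bullet\bullet}$'s to the one already established for the $\Kn{\bullet\bullet}$'s in (\ref{KijCommutation}), by showing that $\Hn{ij}$ and $\Kn{ij}$ differ only by a \emph{scalar} operator.

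First I would compute $\Hn{ij}$ explicitly by expanding the commutator inside its definition: using $[\gamma_{ik},\gamma_{kj}] = 2\gamma_{ik}\gamma_{kj} - \{\gamma_{ik},\gamma_{kj}\}$ together with the Clifford anticommutation relation (\ref{Cliffp}), which gives $\{\gamma_{ik},\gamma_{kj}\} = \delta_{ij}$, one finds
\begin{equation*}
\Hn{ij} \;=\; \Kn{ij} \;-\; \tfrac{1}{4}\,\delta_{ij}\cdot M_i^{(N)}\cdot I,
\end{equation*}
where $M_i^{(N)} := \#\{k\in\Z\setminus\{0\}\mid |k|\leq N,\; ik<0\}$. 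A one-line case distinction on the sign of $i$ shows that $M_i^{(N)} = N$ for every nonzero $i$ with $|i|\leq N$ (we simply count the negative, resp.\ positive, indices in the range), so the scalar correction term $c_{ij} := \Hn{ij} - \Kn{ij} = -\tfrac{N}{4}\delta_{ij}\cdot I$ depends on $i,j$ only through $\delta_{ij}$.

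Since scalar multiples of the identity commute with everything, this immediately yields
\begin{equation*}
[\Hn{ij},\Hn{mn}] \;=\; [\Kn{ij},\Kn{mn}] \;=\; \delta_{jm}\Kn{in} - \delta_{in}\Kn{mj},
\end{equation*}
the last equality being (\ref{KijCommutation}). It remains to rewrite the right-hand side in terms of $H$-generators. Substituting $\Kn{in} = \Hn{in} - c_{in}$ and $\Kn{mj} = \Hn{mj} - c_{mj}$ produces an error term
\begin{equation*}
-\delta_{jm}c_{in} + \delta_{in}c_{mj} \;=\; \tfrac{N}{4}\bigl(\delta_{jm}\delta_{in} - \delta_{in}\delta_{mj}\bigr) \;=\; 0,
\end{equation*}
so the scalar corrections cancel identically, and the claimed identity
$[\Hn{ij},\Hn{mn}] = \delta_{jm}\Hn{in} - \delta_{in}\Hn{mj}$ follows.

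There is no real obstacle here; the only point that requires a moment's attention is verifying that $M_i^{(N)}$ is independent of the sign of $i$, which is what makes the scalar correction $c_{ij}$ symmetric enough in its indices for the error terms to cancel. The restriction that both $ij>0$ and $mn>0$ ensures that the $K$-generators are well-defined and matches exactly the situation in which (\ref{KijCommutation}) was proved.
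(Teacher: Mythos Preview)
Your approach is correct and works, but it differs from the paper's route. One minor slip: equation~(\ref{Cliffp}) is stated for the normalized generators $X_{ij}=\tfrac{1}{\sqrt{2}}E_{ij}$, whereas $\gamma_{ik}$ denotes $\gamma(E_{ik})$, so the anticommutator is $\{\gamma_{ik},\gamma_{kj}\}=2\delta_{ij}$ rather than $\delta_{ij}$ (the paper states this explicitly just above the definition of $\Hn{ij}$). This changes your scalar correction to $c_{ij}=-\tfrac{N}{2}\delta_{ij}\cdot I$, matching the later formula~(\ref{KijHij1}), but since the correction is still central your argument goes through unchanged.

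As for the comparison: the paper does not pass through $\Kn{ij}$ and (\ref{KijCommutation}) at all. Instead it expands $\Hn{mn}$ as a sum of products $\gamma_{mk}\gamma_{kn}$ (and $\gamma_{kn}\gamma_{mk}$), applies the derivation identity $[A,BC]=[A,B]C+B[A,C]$, and feeds in the already-established commutator $[\Hn{ij},\gamma_{ml}]=\delta_{jm}\gamma_{il}-\delta_{il}\gamma_{mj}$ from~(\ref{HijGamma}) to compute directly. Your reduction is cleaner conceptually---once $\Hn{ij}-\Kn{ij}$ is seen to be central, the commutator identity is inherited wholesale from (\ref{KijCommutation}) with no further computation---while the paper's route is more self-contained in that it avoids relying on the separate lemma for the $K$'s. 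Both are short; yours makes the structural reason (central shift) more transparent.
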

\begin{proof}
Follows from the general commutator formula
$$
[A,BC]=[A,B]C+B[A,C]
$$
and equation (\ref{HijGamma}).
\end{proof}

We would like to let $N\to\infty$ in $K_{ij}^{(N)}$ and obtain this way a representation of the Lie algebra
$\hinf^\C\subset\glres$ in $\spib_{\lie{p}}^c$ (notice that for $\lie{h}^\C$ the Schwinger term vanishes). Unfortunately, as usual, the operators $K_{ij}^{(N)}$ diverge in the limit.
To circumvent this difficulty one has to introduce suitable normal orderings for our operators
by using the fact that $\gamma_{ij}\vac_{\spib}=0$ when $i<0,j>0$ which in our
situation is seen as equivalent to the condition $i<j$. That is we want to replace the operators $K_{ij}^{(N)}$ with
operators $\KN{ij}$ satisfying the commutation relations (\ref{KijCommutation}) and annihilating the vacuum, $\KN{ij}\vac_{\spib}=0$.

Using the anticommutation relations determining $\Cliff(\lie{p}^\C)$ we can write for $i\in\Z$ with $\abs{i}\leq N$

\begin{eqnarray}\label{Kii}
K_{ii}^{(N)}&=&
\frac{1}{2}\Big(\sum_{\substack{k<i\ \\ik<0 \\ \abs{k}\leq N}}\gamma_{ik}\gamma_{ki}+
\sum_{\substack{k>i\\ ik<0 \\ \abs{k}\leq N}}(2-\gamma_{ki}\gamma_{ik})\Big)\nonumber\\
&=&
\frac{1}{2}\Big(\sum_{\substack{k<i\\ ik<0 \\ \abs{k}\leq N}}\gamma_{ik}\gamma_{ki}-
\sum_{\substack{k>i\\ ik<0 \\ \abs{k}\leq N}}\gamma_{ki}\gamma_{ik}\Big)+\sum_{\substack{k>i\\ik<0 \\ \abs{k}\leq N}} 1
\end{eqnarray}
and similarly for $i\neq j,ij>0$ with $\abs{i},\abs{j}\leq N$ we have
\begin{eqnarray}
K_{ij}^{(N)}&=&\frac{1}{2}\Big(\sum_{\substack{k<j\\ik<0,jk<0\\ \abs{k}\leq N}}\gamma_{ik}\gamma_{kj}+
\sum_{\substack{k>j\\ ik<0,jk<0\\ \abs{k}\leq N}}\gamma_{ik}\gamma_{kj}\Big)\nonumber\\
&=&
\frac{1}{2}\Big(\sum_{\substack{k<j\\ ik<0,jk<0\\ \abs{k}\leq N}}\gamma_{ik}\gamma_{kj}-
\sum_{\substack{k>j\\ ik<0,jk<0 \\ \abs{k}\leq N}}\gamma_{kj}\gamma_{ik}\Big).
\end{eqnarray}
Notice that
$$
\sum_{\substack{k>i\\ik<0 \\ \abs{k}\leq N}} 1=
\left\{
\begin{array}{ll}
N & \textrm{if $i<0$}\\
0 & \textrm{if $i>0$}.
\end{array}\right.
$$

Thus if we define the normal ordering
\begin{equation}\label{normalorderedgamma}
\normord\gamma_{ik}\gamma_{kj}\normord:=
\left\{ \begin{array}{ll}
\gamma_{ik}\gamma_{kj} & \textrm{if $i\neq j$ }\\
\gamma_{ik}\gamma_{ki}& \textrm{if $i=j$ and $k<i$ }\\
 -\gamma_{ki}\gamma_{ik}=\gamma_{ik}\gamma_{ki}-2 & \textrm{if $i=j$ and $k>i$},
\end{array} \right.
\end{equation}
where $ij>0$ and $ik<0$, and set
\begin{equation}\label{KijtildeVersusKij}
\KN{ij}:=\frac{1}{2}\sum_{\substack{k\in\Z\\ \abs{k}\leq N\\ik<0}}
\normord \gamma_{ik}\gamma_{kj}\normord=
\left\{
\begin{array}{ll}
K_{ij}^{(N)} & \textrm{if $i\neq j$ or $i=j>0$}\\
K_{ij}^{(N)}-N\cdot 1 & \textrm{if $i=j<0$}.
\end{array}\right.
\end{equation}
then since the constant term $-1$ in (\ref{KijtildeVersusKij}) commutes with everything it follows that
\begin{eqnarray}
[\KN{ij},\KN{mn}]&=&[K_{ij}^{(N)},K_{mn}^{(N)}]=\delta_{jm}K_{in}^{(N)}-\delta_{in}K_{jm}^{(N)}\nonumber\\
&=& \delta_{jm}\KN{in}-\delta_{in}\KN{jm}+c(K_{ij}^{(N)},K_{mn}^{(N)}).
\end{eqnarray}

\begin{lem} 
For all $ij,mn>0$, the term
$c(K_{ij}^{(N)},K_{mn}^{(N)})=0$,
or equivalently
\begin{equation}\label{comrelKN}
[\KN{ij},\KN{mn}]=\delta_{jm}\KN{in}-\delta_{in}\KN{jm}.
\end{equation}
\end{lem}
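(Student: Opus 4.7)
The plan is to reduce the statement to the previous lemma (equation (\ref{KijCommutation})) by exploiting the fact that $\KN{ij}$ differs from $K_{ij}^{(N)}$ by a scalar multiple of the identity, and then to check that the resulting ``error terms'' cancel in pairs.

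First, I would observe that by the defining equation (\ref{KijtildeVersusKij}), the difference $\KN{ij}-K_{ij}^{(N)}$ equals $-N\cdot 1$ when $i=j<0$ and vanishes otherwise; in particular it lies in the centre of the operator algebra. Consequently
$$
[\KN{ij},\KN{mn}]=[K_{ij}^{(N)},K_{mn}^{(N)}]=\delta_{jm}K_{in}^{(N)}-\delta_{in}K_{mj}^{(N)}
$$
by the previous lemma. Thus, proving (\ref{comrelKN}) reduces to showing that the correction
$$
D:=\delta_{jm}\bigl(K_{in}^{(N)}-\KN{in}\bigr)-\delta_{in}\bigl(K_{mj}^{(N)}-\KN{mj}\bigr)
$$
vanishes identically (whenever $ij>0$ and $mn>0$).

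The second step is a small case analysis using (\ref{KijtildeVersusKij}). The first summand of $D$ is nonzero only when simultaneously $j=m$ and $i=n<0$; combined with the standing hypotheses $ij>0$ and $mn>0$, this forces $j=m<0$ and $i=n<0$, and in that case the summand equals $N$. Analogously the second summand of $D$ is nonzero only when $i=n$ and $m=j<0$, which again forces exactly $j=m<0$ and $i=n<0$, and in that case it also equals $N$. Hence whenever either summand of $D$ is nonzero, both are, and they are equal, so $D=N-N=0$. In the remaining cases each summand vanishes individually.

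There is no serious obstacle here, since the previous lemma has already done the real work of commuting products of Clifford generators; the only subtlety is to keep track of the sign restrictions forced by the conditions $ij>0$, $mn>0$, $ik<0$ in the defining sums, and to check that $\KN{ij}$ and $K_{ij}^{(N)}$ really differ by a \emph{central} element (equivalently, that the normal ordering in (\ref{normalorderedgamma}) only produces a scalar correction). Once this is observed the conclusion follows immediately from the computation above.
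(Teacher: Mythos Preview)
Your proof is correct and follows essentially the same approach as the paper. Both arguments observe that $\KN{ij}$ differs from $K_{ij}^{(N)}$ only by a central scalar, reduce the commutator identity to the previously established relation (\ref{KijCommutation}), and then verify that the two scalar correction terms cancel; the paper packages this cancellation using the characteristic function $\chi_{\leq 0}$ and the identity $\chi_{\leq 0}(j)-\chi_{\leq 0}(i)=0$ for $ij>0$, while you do an equivalent explicit case analysis.
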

\begin{proof}
Let $\chi_{\leq 0}:\Real\To\Real$ be the characteristic function of the non-positive
real numbers,
$$
\chi_{\leq 0}:\Real\To\Real,\quad\chi_{\leq 0}(x)=
\left\{
\begin{array}{ll}
1 & \textrm{if $x\leq 0$}\\
0 & \textrm{if $x>0$}.
\end{array}\right.
$$
Notice that if $i,j\in\Z$ with $ij>0$ (i.e. $i$ and $j$ have the same sign) then
$$
\chi_{\leq 0}(j)-\chi_{\leq 0}(i)=0
$$

Now write $\KN{ij}=K_{ij}^{(N)}-\delta_{ij}\chi_{\leq 0}(i)N\cdot 1$ so that
\begin{eqnarray}
\delta_{jm}\KN{in}-\delta_{in}\KN{kj}
&=&\delta_{jm}(K_{in}^{(N)}-\delta_{in}\chi_{\leq 0}(i)N)-\delta_{in}(K^{(N)}_{jm}-\delta_{jm}\chi_{\leq 0}(j)N)\nonumber\\
&=&
\delta_{jm}K_{in}^{(N)}-\delta_{in}K_{jm}^{(N)}
+\delta_{in}\delta_{jm}N\Big(\chi_{\leq 0}(j)-\chi_{\leq 0}(i)\Big)\nonumber\\
&=&
\delta_{jm}K_{in}^{(N)}-\delta_{in}K_{jm}^{(N)}.
\end{eqnarray}
\end{proof}


\begin{lem}\label{Kijlimit}
For each $m,l\in\Z$ with $\abs{m},\abs{l}\leq N$ and $ml<0$
\begin{eqnarray}
[\KN{ij},\gamma_{ml}] &=& [K_{ij}^{(N)},\gamma_{ml}]=\sigma_{jm}\gamma_{il}-\delta_{il}\gamma_{mj}
=\Big[\ad E_{ij}\Big]\Big(\gamma(E_{ml})\Big),\nonumber\\
\,[\KN{ii},\gamma_{ml}] &=& \left\{ \begin{array}{ll}
\delta_{il}\cdot \sign(i)\sign(m)\gamma_{mi}=-\delta_{il}\gamma_{mi} & \textrm{if $i>0$ and $m<0$}\nonumber\\
\delta_{im}\cdot \sign(i)\sign(m)\gamma_{il}=\delta_{im}\gamma_{il} & \textrm{if $i>0$ and $m>0$}\nonumber\\
\delta_{im}\cdot\sign(i)\sign(m)\gamma_{il}=\delta_{im}\gamma_{il} & \textrm{if $i<0$ and $m<0$ }\nonumber\\
\delta_{il}\cdot\sign(i)\sign(m)\gamma_{mi} =-\delta_{il}\gamma_{ml}& \textrm{if $i<0$ and $m>0$},
\end{array} \right.\\
\KN{ij}\vac_{\spib} &=& 0.\nonumber
\end{eqnarray}
\end{lem}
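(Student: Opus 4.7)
The plan is to reduce all three assertions to computations already carried out for the bare operators $K^{(N)}_{ij}$, using the crucial fact recorded in equation (\ref{KijtildeVersusKij}): the operator $\KN{ij}$ coincides with $K^{(N)}_{ij}$ except in the case $i = j < 0$, where $\KN{ii} = K^{(N)}_{ii} - N\cdot 1$. Since this difference is always a scalar, the identity $[\KN{ij},\gamma_{ml}] = [K^{(N)}_{ij},\gamma_{ml}]$ is automatic, and combined with the commutator formula (\ref{Kijisadj}) this immediately yields the first line of the lemma for $i \neq j$.

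For the case $i = j$, I specialize (\ref{Kijisadj}) with $j = i$ to obtain $[K^{(N)}_{ii},\gamma_{ml}] = \delta_{im}\gamma_{il} - \delta_{il}\gamma_{mi}$ and then split according to the signs of $i$ and $m$, invoking the hypothesis $ml < 0$. When $i$ and $m$ have opposite signs, $\delta_{im} = 0$ and only the term $-\delta_{il}\gamma_{mi}$ survives; when they have the same sign, the constraint $ml < 0$ forces $\sign(l) = -\sign(i)$, hence $\delta_{il} = 0$ and only $\delta_{im}\gamma_{il}$ survives. In either case the surviving contribution can be written uniformly as $\sign(i)\sign(m)$ times the appropriate Clifford generator, matching the four subcases stated in the lemma.

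The vanishing $\KN{ij}\vac_\spib = 0$ is verified by splitting on the signs of $i,j$ and applying Corollary \ref{Cliffordvacannihilated}, which asserts $\gamma_{pq}\vac_\spib = 0$ for $p<0, q>0$, together with the Clifford anticommutator $\{\gamma_{ik},\gamma_{kj}\} = \delta_{ij}$ from (\ref{Cliffp}). For $i\neq j$ with $i,j>0$ each summand $\gamma_{ik}\gamma_{kj}$ has $k<0$, so $\gamma_{kj}$ is an annihilator at the right. For $i\neq j$ with $i,j<0$ one has $k>0$; here $\gamma_{ik}\vac_\spib = 0$ and $\{\gamma_{ik},\gamma_{kj}\} = \delta_{ij} = 0$, so $\gamma_{ik}\gamma_{kj}\vac_\spib = -\gamma_{kj}\gamma_{ik}\vac_\spib = 0$. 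For $i=j>0$ equation (\ref{Kii}) gives $\KN{ii} = \tfrac{1}{2}\sum_{k<0,\,|k|\leq N}\gamma_{ik}\gamma_{ki}$ (no shift, since $\chi_{\leq 0}(i) = 0$), with $\gamma_{ki}$ an annihilator. For $i=j<0$ equation (\ref{Kii}) gives $K^{(N)}_{ii} = -\tfrac{1}{2}\sum_{k>0,\,k\leq N}\gamma_{ki}\gamma_{ik} + N$; the shift $-N\cdot 1$ defining $\KN{ii}$ exactly cancels this constant, leaving $\KN{ii} = -\tfrac{1}{2}\sum_{k>0,\,k\leq N}\gamma_{ki}\gamma_{ik}$, whose right factor $\gamma_{ik}$ annihilates $\vac_\spib$.

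There is no deep conceptual obstacle here; the proof is really careful bookkeeping of signs, the conditions $ij>0$, $ik<0$, $ml<0$, and the definition of the normal ordering (\ref{normalorderedgamma}). The single delicate step is the case $i=j<0$ of the vacuum annihilation, where one must verify that the scalar $-N\cdot 1$ built into the definition of $\KN{ii}$ precisely cancels the constant $+N$ produced by the anticommutations in (\ref{Kii}). This reconciliation is exactly what makes the normal ordering the correct prescription and is the reason the limit $N\to\infty$ will later make sense.
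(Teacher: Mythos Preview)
Your proof is correct. For the first and third claims you proceed essentially as the paper does (though you flesh out the vacuum-annihilation argument in more detail than the paper's ``follows directly from the definitions''). For the second claim you take a slightly more economical route: rather than expanding $\KN{ii}$ via the normal-ordered expression and recomputing the commutator from scratch as the paper does, you observe that $[\KN{ii},\gamma_{ml}] = [K^{(N)}_{ii},\gamma_{ml}]$ (scalar difference) and then simply specialize the already-established formula (\ref{Kijisadj}) to $j=i$, after which the four subcases fall out by sign-tracking on $\delta_{im}$ and $\delta_{il}$. This buys you a shorter argument with no new computations; the paper's more explicit calculation has the minor advantage of displaying the normal-ordered form of $\KN{ii}$ in each sign regime, which is used again later.
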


\begin{proof}
\emph{The first claim.} Again, the constant terms in (\ref{Kii}) commute with everything so that the commutator rules
for the normally ordered operators remain the same as the original ones.

\emph{The second claim.}
First note that since $i$ and $k$ have different signs we have that $k<i\iff k<0,i>0$ and $k>i\iff k>0, i<0$. Write 
$$
\KN{ii}=\frac{1}{2}\sum_{\substack{\abs{k}\leq N\\ ik<0}}\normord\gamma_{ik}\gamma_{ki}\normord=\frac{1}{2}
\overbrace{\sum_{\substack{\abs{k}\leq N\\ ik<0,k<i}}\gamma_{ik}\gamma_{ki}}^{\neq 0\iff  i>0}-
\frac{1}{2}
\overbrace{\sum_{\substack{\abs{k}\leq N\\ ik<0,k>i}}\gamma_{ki}\gamma_{ik}}^{\neq 0\iff i<0}
$$
and recall that
$$
[\gamma_{ik}\gamma_{kj},\gamma_{ml}]=
2\gamma_{ik}\delta_{kl}\delta_{jm}-2\gamma_{kj}\delta_{il}\delta_{km}.
$$
This gives us
\begin{eqnarray}
[\KN{ii},\gamma_{ml}]&=&\frac{1}{2}
\sum_{\substack{\abs{k}\leq N\\ ik<0,\, k<i}}[\gamma_{ik}\gamma_{ki},\gamma_{ml}]-\frac{1}{2}
\sum_{\substack{\abs{k}\leq N\\ ik<0,\, k>i}}[\gamma_{ki}\gamma_{ik},\gamma_{ml}]\nonumber\\
&=&
\sum_{\substack{\abs{k}\leq N\\ ik<0,\, k<i}}
(\gamma_{ik}\delta_{kl}\delta_{im}-\gamma_{ki}\delta_{il}\delta_{km})
-\sum_{\substack{\abs{k}\leq N\\ ik<0,\, k>i}}
(\gamma_{ki}\delta_{il}\delta_{km}-\gamma_{ik}\delta_{kl}\delta_{im})\nonumber\\
\end{eqnarray}

After this, the claim follows after a careful case-by-case analysis.

\emph{The third claim.} This follows directly from the definitions.
\end{proof}

\begin{rem}\label{remarkforstrongconv}
Let $m,l\in\Z$. Notice that if $\abs{m},\abs{l}\leq N$ with $ml<0$, and if we have any integer $M\geq N$, the
statement of Lemma \ref{Kijlimit} still holds trivially if one replaces $\KN{ij}$ with
$\widetilde{K}_{ij}^{(M)}$, e.g.
$[\widetilde{K}_{ij}^{(M)},\gamma_{ml}]=[\ad E_{ij}](\gamma(E_{ml}))$ for all
$M\geq N$ etc. That is to say, with the chosen parameters, the result holds starting
\emph{from} $N$. To make things clearer, we give the following definition.
\end{rem}

\begin{defn}
For every $N\in\Z_{\geq 0}$ define the subspace $\spib^{\mathrm{pol},(N)}\subset\spib^{\mathrm{pol}}$ to consist of all
finite $\C$-linear combinations of the vacuum vector $\vac_\spib$ and the basis vectors 
\begin{equation}\label{basisforourfermions}
s=\gamma_{m_1 l_1}\gamma_{m_2 l_2}\cdots\gamma_{m_k l_k}\vac_{\spib}\in \spib^{\mathrm{pol}}\quad (m_i l_i<0 \textrm{ for all }i),
\end{equation}
such that $\abs{m_i},\abs{l_i}\leq N$
for all $i=1,\ldots k\geq 0$.
\end{defn}

It is then clear that $\spib^{\mathrm{pol},(M)}\subset\spib^{\mathrm{pol},(N)}$ if $M\leq N$ and that
$\bigcup_{N=1}^\infty \spib^{\mathrm{pol},(N)}=\spib^{\mathrm{pol}}$,
which is dense in $\spib_{\lie{p}}^c$. 
Thus, if one sets $F_p=F_p\spib^{\mathrm{pol}}:=\spib^{\mathrm{pol},(p)}$ for $p\in\Z_{\geq 0}$, then
$$
F_\bullet=\set{F_p}_{p\geq 0}
$$
yields an increasing filtration of the $\C$-vector space $\spib^{\mathrm{pol}}$ by $\C$-vector subspaces.
Note also that by Corollary
\ref{Cliffordvacannihilated} and anti-commutation rules of the Clifford algebra we may assume the above basis vectors $s$ in (\ref{basisforourfermions})
to be of the form
\begin{equation}
s=\gamma_{m_1 l_1}\gamma_{m_2 l_2}\cdots\gamma_{m_k l_k}\vac_{\spib}\in \spib^{\mathrm{pol}}\quad 
(m_i>0,\, l_i<0 \textrm{ for all }i),
\end{equation}

\begin{prop}\label{actioniscommutator}
Fix $M\in\Z_{\geq 0}$ and consider all $N\in\Z_{\geq 0}$ such that $N\geq M$.
Then for all pairs $i,j\in\Z$ with $ij>0$, the action of the operators $\KN{ij}$ on 
the subspace $\spib^{\mathrm{pol},(M)}\subset\spib_{\lie{p}}^c$ coincides with the commutator action, i.e. 
$\KN{ij}\vac_\spib=0$ and
if
$$
s=\gamma_{m_1 l_1}\gamma_{m_2 l_2}\cdots\gamma_{m_k l_k}\vac_{\spib}
=\gamma(E_{m_1 l_1})\gamma(E_{m_2 l_2})\cdots\gamma(E_{m_k l_k})\vac_{\spib}\in\spib^{\mathrm{pol},(M)}
$$
is a basis vector 
for $\spib^{\mathrm{pol},(M)}$ 
then
$$
\KN{ij}\cdot s=[\KN{ij},s]:=
\sum_{h=1}^k \gamma_{m_1 l_1}\cdots[\KN{ij},\gamma_{m_h l_h}]\cdots\gamma_{m_k l_k}\vac_\spib.
$$
\end{prop}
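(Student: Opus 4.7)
The plan is to combine two elementary ingredients: the vacuum-annihilation statement $\widetilde{K}_{ij}^{(N)}\vac_{\spib}=0$ (the third claim of Lemma~\ref{Kijlimit}) together with the general Leibniz expansion of a commutator of an associative product. Since $\widetilde{K}_{ij}^{(N)}$ is, by definition~(\ref{KijtildeVersusKij}), a \emph{finite} sum of products of two Clifford generators plus a scalar, it defines a perfectly well-defined operator on the whole algebraic spinor space $\spib^{\mathrm{pol}}$, so all the manipulations below take place in the associative algebra of operators on $\spib^{\mathrm{pol}}$.

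For a basis vector
$$
s=\gamma_{m_1 l_1}\gamma_{m_2 l_2}\cdots\gamma_{m_k l_k}\vac_{\spib}\in\SPol{M},
$$
I would first write $\widetilde{K}_{ij}^{(N)}\cdot s=\widetilde{K}_{ij}^{(N)}\gamma_{m_1 l_1}\cdots\gamma_{m_k l_k}\vac_{\spib}$ and then apply the elementary Leibniz identity
$$
[A,B_1\cdots B_k]=\sum_{h=1}^k B_1\cdots B_{h-1}[A,B_h]B_{h+1}\cdots B_k,
$$
proved by an immediate induction from $[A,B_1 B_2]=[A,B_1]B_2+B_1[A,B_2]$, in order to rewrite the left hand side as
$$
\Big(\sum_{h=1}^k \gamma_{m_1 l_1}\cdots[\widetilde{K}_{ij}^{(N)},\gamma_{m_h l_h}]\cdots\gamma_{m_k l_k}\Big)\vac_{\spib}
+\gamma_{m_1 l_1}\cdots\gamma_{m_k l_k}\,\widetilde{K}_{ij}^{(N)}\vac_{\spib}.
$$
The second term vanishes by the vacuum-annihilation property, and the first term is exactly the desired expression $[\widetilde{K}_{ij}^{(N)},s]$. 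The statement $\widetilde{K}_{ij}^{(N)}\vac_{\spib}=0$ itself is trivial on $\vac_{\spib}$ and completes the proof.

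The one point requiring care is that the commutators $[\widetilde{K}_{ij}^{(N)},\gamma_{m_h l_h}]$ produced by the Leibniz expansion must actually be the ones computed in Lemma~\ref{Kijlimit}; this is precisely where the standing assumption $N\geq M$ enters, since the basis vectors of $\SPol{M}$ satisfy $|m_h|,|l_h|\leq M\leq N$ and so fall within the range of validity spelled out in Remark~\ref{remarkforstrongconv}. Apart from this bookkeeping, there is essentially no obstacle: the content of the proposition is really just that $\widetilde{K}_{ij}^{(N)}$ acts as a derivation on the algebraic subspace generated from the vacuum, which is automatic once one knows it annihilates the vacuum.
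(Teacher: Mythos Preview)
Your proof is correct and is essentially the same as the paper's: the paper proves the claim by explicit induction on the length $k$, which is nothing other than the inductive proof of the Leibniz identity $[A,B_1\cdots B_k]=\sum_h B_1\cdots[A,B_h]\cdots B_k$ combined with $\KN{ij}\vac_\spib=0$. Your remark about where $N\geq M$ enters is slightly off in emphasis---for the bare formula of this proposition only the vacuum-annihilation is needed (which holds for all $N$), while the explicit commutator values from Lemma~\ref{Kijlimit} become relevant only in the subsequent Corollary~\ref{hcN}---but this does not affect correctness.
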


\begin{proof}
By induction on the 'length' $k$.
Let first $k=1$ so that 
$s=\gamma_{ml}\vac_\spib$, where $ml<0$. By Lemma \ref{Kijlimit} 
and Remark \ref{remarkforstrongconv}
we may write
$$
\KN{ij}\cdot s=\KN{ij}\cdot\gamma_{ml}\vac_{\spib}=[\KN{ij},\gamma_{ml}]\vac_{\spib}.
$$
We make the induction hypothesis that the claim holds for $k=n$, i.e.
that for
$$
s=\gamma_{m_1 l_1}\gamma_{m_2 l_2}\cdots\gamma_{m_n l_n}\vac_{\spib}
$$
$$
\KN{ij}\cdot s=
\sum_{h=1}^k \gamma_{m_1 l_1}\cdots[\Kt{ij},\gamma_{m_h l_h}]\cdots\gamma_{m_n l_n}\vac_\spib.
$$
Now let
$$
s'=\gamma_{m l}\gamma_{m_1 l_1}\gamma_{m_2 l_2}\cdots\gamma_{m_n l_n}\vac_{\spib}
=\gamma_{m l}\cdot s
$$
and write 
$$
\Kt{ij}s'=\Kt{ij}\gamma_{ml}s=[\Kt{ij},\gamma_{ml}]s+\gamma_{ml}\Kt{ij}s
$$
and use the induction hypothesis to the term $\gamma_{ml}\Kt{ij}s$.
\end{proof}

\begin{defn}
For each $N\in\Z_{>0}$ let $\lie{h}^\C_{(N)}\subset\hinf^\C$ be the Lie subalgebra/$\,\C$ spanned 
by all the elements $E_{ij}\in\lie{h}^\C\,\,(ij>0)$ with
$\abs{i},\abs{j}\leq N$, so that $\lie{h}^\C_{(N)}\isom\gl_n(\C)\times\gl_n(\C)$.
\end{defn}

\begin{cor}\label{hcN}
Fix $M\in\Z_{\geq 0}$ and consider all integers $N\in\Z_{>0}$ such that $N\geq M$.
For each such $N$
the map $E_{ij}\mapsto\KN{ij}$, when extended $\C$-linearly,
yields a Lie algebra representation 
$$
\pi_{(N)}:\lie{h}^\C_{(N)}\To\End(\spib^{\mathrm{pol}}),
$$
where
$\spib^{\mathrm{pol}}=\F^{\mathrm{pol}}(\lie{p}^\C,W)$. When acting on the elements of
the subspace $\spib^{\mathrm{pol},(M)}$, the action given by $\pi_{(N)}$ coincides with
the action given by the isotropy / representation $\ad$, when $\ad$ is extended to act
on the infinite-dimensional wedge product $\spib^{\mathrm{pol}}$.
\end{cor}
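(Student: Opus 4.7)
The plan is to verify the two assertions separately, using machinery already assembled. For the first claim---that $\pi_{(N)}$ is a Lie algebra representation of $\lie{h}^\C_{(N)}\cong\gl_N(\C)\times\gl_N(\C)$ on $\spib^{\mathrm{pol}}$---the key input is the commutator identity (\ref{comrelKN}), namely $[\KN{ij},\KN{mn}]=\delta_{jm}\KN{in}-\delta_{in}\KN{mj}$, which exactly matches the structure constants of $\lie{h}^\C_{(N)}$ in the basis $\set{E_{ij}\mid ij>0,\,\abs{i},\abs{j}\leq N}$. First I would observe that each $\KN{ij}$, being a finite sum of normal-ordered Clifford products with indices of bounded absolute value, preserves the polynomial subspace $\spib^{\mathrm{pol}}$; more precisely, it sends $\SPol{N'}$ into $\SPol{\max(N,N')}$. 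Extending the assignment $E_{ij}\mapsto\KN{ij}$ $\C$-linearly to all of $\lie{h}^\C_{(N)}$, bracket compatibility $\pi_{(N)}([X,Y])=[\pi_{(N)}(X),\pi_{(N)}(Y)]$ then follows from bilinearity together with (\ref{comrelKN}).

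For the second claim, I would recall that the isotropy representation $\ad:\lie{h}^\C\to\End(\lie{p}^\C)$ extends to $\Cliff(\lie{p}^\C)$ via $[\ad E_{ij}](\gamma(E_{ml}))=\gamma([E_{ij},E_{ml}])=\delta_{jm}\gamma_{il}-\delta_{il}\gamma_{mj}$, and then further extends as a derivation on the spinor module $\spib^{\mathrm{pol}}$ by the Leibniz rule applied to basis vectors of the form (\ref{basisforourfermions}). Fix $s\in\SPol{M}$ of the form $s=\gamma_{m_1 l_1}\cdots\gamma_{m_k l_k}\vac_\spib$, so every index satisfies $\abs{m_h},\abs{l_h}\leq M\leq N$. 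By Proposition \ref{actioniscommutator},
$$
\KN{ij}\cdot s=\sum_{h=1}^k \gamma_{m_1 l_1}\cdots[\KN{ij},\gamma_{m_h l_h}]\cdots\gamma_{m_k l_k}\vac_\spib,
$$
so the comparison reduces to matching $[\KN{ij},\gamma_{ml}]$ with $[\ad E_{ij}](\gamma(E_{ml}))$ term by term.

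The heart of the matter is thus a short case analysis of Lemma \ref{Kijlimit}. For $i\neq j$ the two expressions visibly agree. For $i=j$, I would check the four sign cases of Lemma \ref{Kijlimit} against $\delta_{im}\gamma_{il}-\delta_{il}\gamma_{mi}$, using that $ml<0$ forces $m$ and $l$ to have opposite signs, so that in each case exactly one of the two Kronecker deltas can be nonzero---and that surviving term is precisely the one predicted by the lemma. Once this identification is in place on each generator $\gamma_{m_h l_h}$, the derivation actions of $\KN{ij}$ and $\ad E_{ij}$ agree on every basis vector of $\SPol{M}$, establishing the second claim.

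There is no real obstacle here. The only subtlety worth flagging is that the normal-ordering subtractions recorded in (\ref{KijtildeVersusKij}) are $\C$-multiples of the identity and hence drop out of all commutators; this is precisely what makes $\KN{ij}$ behave with respect to commutators exactly like the (formally divergent as $N\to\infty$) operator $K_{ij}^{(N)}$, and in turn exactly like $\ad E_{ij}$ on the relevant finite-index sector of the spinor module.
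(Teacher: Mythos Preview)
Your proposal is correct and follows essentially the same approach as the paper: invoke the commutation relation (\ref{comrelKN}) for the representation property, then combine Proposition \ref{actioniscommutator} with Lemma \ref{Kijlimit} to identify the action on $\SPol{M}$ with $\ad$. Your extra case analysis for $i=j$ is not strictly needed, since the first line of Lemma \ref{Kijlimit} already asserts $[\KN{ij},\gamma_{ml}]=[\ad E_{ij}](\gamma(E_{ml}))$ for all $ij>0$, but being explicit does no harm.
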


\begin{proof}
The commutation relation (\ref{comrelKN}) guarantees that $\pi_{(N)}$ is indeed a Lie algebra
representation of $\lie{h}^\C_{(N)}$. According to
Lemma \ref{Kijlimit}, for proper values of $m,l$, one has  
$[\KN{ij},\gamma_{ml}]=[\ad E_{ij}](\gamma(E_{ml}))$ so that by
Proposition \ref{actioniscommutator} it holds that for every element
$$
s=\gamma_{m_1 l_1}\gamma_{m_2 l_2}\cdots\gamma_{m_k l_k}\vac_{\spib}\in\spib^{\mathrm{pol},(M)}
$$
the action of $\KN{ij}$ is given by
\begin{equation}
\KN{ij}\cdot s=
\sum_{h=1}^k \gamma_{m_1 l_1}\cdots\Big([\ad E_{ij}](\gamma_{m_h l_h})\Big)\cdots\gamma_{m_k l_k}\vac_\spib
\qquad(\textrm {for all } N\geq M).
\end{equation}
\end{proof}

\begin{defn}
Let
$$
\Kt{ij}:=
\frac{1}{2}\sum_{\substack{k\in\Z\\ik<0}}
\normord \gamma_{ik}\gamma_{kj}\normord.
$$
\end{defn}

\begin{cor}\label{isotropycorollary}
For each $i,j\in\Z$ with $ij>0$ the normal ordered infinite sum $\Kt{ij}$ defines a well-defined unbounded operator 
$\Kt{ij}:\spib_{\lie{p}}^c\To\spib_{\lie{p}}^c$
with dense domain $\domain(\Kt{ij})=\spib^{\mathrm{pol}}\subset\spib_{\lie{p}}^c$. 
The image of the common domain $\spib^{\mathrm{pol}}$ for all $\Kt{ij}$ maps into itself, 
$\Kt{ij}(\spib^{\mathrm{pol}})\subset\spib^{\mathrm{pol}}$.
Moreover,
the map $E_{ij}\mapsto \Kt{ij}$, when extended $\C$-linearly, realizes the isotropy representation $\ad$ of the Lie algebra
$\hinf^\C\subset\Eglres(\Hilb,\Hilb_+)$ on $\spib^{\mathrm{pol}}=\F^{\mathrm{pol}}(\lie{p}^\C,W)$
as an infinite sum of quadratic terms of CAR algebra generators.
\end{cor}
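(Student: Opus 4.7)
The plan is to construct $\Kt{ij}$ as a pointwise (in fact, eventually constant) limit of the truncated operators $\KN{ij}$ on the filtered subspace $\spib^{\mathrm{pol}}$, exploiting the stabilization built into Lemma \ref{Kijlimit} and Proposition \ref{actioniscommutator}. First I would fix $i,j\in\Z$ with $ij>0$ and an arbitrary basis vector $s\in\spib^{\mathrm{pol}}$. By definition of the filtration, $s\in\spib^{\mathrm{pol},(M)}$ for some $M\in\Z_{\geq 0}$. Setting $M':=\max(M,|i|,|j|)$, for every $N\geq M'$ Proposition \ref{actioniscommutator} expresses $\KN{ij}\cdot s$ as the finite sum
$$
\KN{ij}\cdot s=\sum_{h=1}^k\gamma_{m_1l_1}\cdots[\KN{ij},\gamma_{m_hl_h}]\cdots\gamma_{m_kl_k}\vac_\spib,
$$
and Lemma \ref{Kijlimit} replaces each inner bracket by $[\ad E_{ij}](\gamma(E_{m_hl_h}))$, which no longer depends on $N$. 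Hence the sequence $(\KN{ij}s)_{N\geq M'}$ is eventually constant, and I take this stable value as the definition of $\Kt{ij}\cdot s$. Extending $\C$-linearly produces an unbounded operator with dense domain $\spib^{\mathrm{pol}}$.

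Next I would verify $\Kt{ij}(\spib^{\mathrm{pol}})\subset\spib^{\mathrm{pol}}$. Since $[\ad E_{ij}](\gamma(E_{m_hl_h}))=\delta_{jm_h}\gamma(E_{il_h})-\delta_{il_h}\gamma(E_{m_hj})$, every summand of $\Kt{ij}\cdot s$ is a product of the same number $k$ of Clifford generators $\gamma_{rs}$, whose indices are drawn from $\{\pm i,\pm j,m_1,l_1,\ldots,m_k,l_k\}$ and therefore have absolute value at most $M'$. Consequently $\Kt{ij}\cdot s\in\spib^{\mathrm{pol},(M')}\subset\spib^{\mathrm{pol}}$.

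Finally, for the representation property I would take $E_{ij},E_{mn}\in\hinf^\C$ with $ij,mn>0$ and a test vector $s\in\spib^{\mathrm{pol},(M)}$. Choose $N\geq\max(M,|i|,|j|,|m|,|n|)$ large enough that the stabilization has occurred for the first application and also for the second (the intermediate vectors $\KN{mn}s$ and $\KN{ij}s$ lie in $\spib^{\mathrm{pol},(N)}$ by the previous step). Then by equation (\ref{comrelKN}),
$$
[\Kt{ij},\Kt{mn}]s=[\KN{ij},\KN{mn}]s=\delta_{jm}\KN{in}s-\delta_{in}\KN{mj}s=\delta_{jm}\Kt{in}s-\delta_{in}\Kt{mj}s,
$$
which reproduces the bracket of $[E_{ij},E_{mn}]=\delta_{jm}E_{in}-\delta_{in}E_{mj}$ in $\hinf^\C$. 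Combining this with Corollary \ref{hcN}, which identifies the truncated action $E_{ij}\mapsto\KN{ij}$ with the (extended) isotropy action $\ad$ on each filtration piece, yields the stated realization of $\ad$ by the infinite normal-ordered quadratic sums $\Kt{ij}$.

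The main obstacle is the bookkeeping of index propagation: one must check that a single, finite threshold $M'$ simultaneously bounds the indices appearing in all summands of $\Kt{ij}\cdot s$ and can be chosen large enough to cover a second application for the commutator computation. This is purely algebraic and follows from the explicit formula for $[\ad E_{ij}](\gamma(E_{ml}))$, but without it one could not conclude that $\spib^{\mathrm{pol}}$ is a common invariant domain and thereby legitimize the Lie algebra calculation inside $\End(\spib^{\mathrm{pol}})$.
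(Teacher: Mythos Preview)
Your proposal is correct and follows essentially the same approach as the paper. Both arguments rest on the identification of the action of $\Kt{ij}$ on a basis vector $s\in\spib^{\mathrm{pol},(M)}$ with the commutator action $[\ad E_{ij}](s)$ via Lemma~\ref{Kijlimit} and Proposition~\ref{actioniscommutator}. The paper is slightly more direct: it observes that the computations in those lemmas remain valid verbatim when one passes to the infinite sum (i.e.\ sets $N=\infty$), so $\Kt{ij}$ acts as $\ad E_{ij}$ on all of $\spib^{\mathrm{pol}}$ immediately, which is manifestly a well-defined operator preserving $\spib^{\mathrm{pol}}$. You instead package the same content as an eventually-constant limit of the truncations $\KN{ij}$ and then verify the bracket relation separately; this is a legitimate and perhaps more transparent route, but note that since $\Kt{ij}$ is already \emph{defined} in the paper as the infinite normal-ordered sum, your stabilization argument is really showing that this sum converges pointwise on $\spib^{\mathrm{pol}}$ (only finitely many summands survive on any given $s$) rather than providing a new definition.
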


\begin{proof}
It is easy to see, that the computations made in the proof of Lemma \ref{Kijlimit} are still
valid when $N=\infty$ in which case we have no restrictions on the absolute values
of the indices $m,l$ appearing in the Clifford algebra generators $\gamma_{ml}\,(ml<0)$. It follows
from this that also the result of Proposition \ref{actioniscommutator} holds
when we replace $\KN{ij}$ with $\Kt{ij}$ and $\spib^{\mathrm{pol},(M)}$ with
$\spib^{\mathrm{pol}}$, i.e. the action of $\Kt{ij}$ on $\spib^{\mathrm{pol}}$ coincides
with $\ad E_{ij}$, where $\ad$ denotes the isotropy representation of $\hinf^\C$ on $\spib^{\mathrm{pol}}$, which
is of course well-defined unbounded operator and maps the dense domain $\spib^{\mathrm{pol}}$
into itself.
\end{proof}

\begin{prop}\label{KijStrongLimit}
For all $i,j\in\Z$ with $ij>0$, the strong limit
$$
\stlim{N}\KN{ij}=\Kt{ij},
$$
where we consider each of the operators $\KN{ij},\Kt{ij}$ as an unbounded
operator $\spib_{\lie{p}}^c\To\spib_{\lie{p}}^c$ with dense domain $\spib^{\mathrm{pol}}$.
Moreover, the above limit is uniform in the pair $(i,j)\in\Z\times\Z$ in the sense that, if
$s\in\spib^{\mathrm{pol}}$ there exists an integer $M=M(s)\in\Z_{\geq 0}$ such that
$$
\KN{ij}(s)=\Kt{ij}(s)\quad\textrm{ for all } i,j\in\Z,\,ij>0,
$$
whenever $N\geq M$.
\end{prop}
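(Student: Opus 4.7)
The plan is to reduce the statement to commutator identities already established for $\KN{ij}$ and $\Kt{ij}$, exploiting the fact that the filtration $\{\spib^{\mathrm{pol},(p)}\}_{p\geq 0}$ exhausts $\spib^{\mathrm{pol}}$, so that the relevant infinite sums effectively truncate on any fixed polynomial vector.

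First I would fix $s\in\spib^{\mathrm{pol}}$ and choose $M=M(s)\in\Z_{\geq 0}$ with $s\in\spib^{\mathrm{pol},(M)}$; this is possible by the very definition of $\spib^{\mathrm{pol}}$. Writing $s$ as a finite linear combination of basis vectors of the form $\gamma_{m_1 l_1}\cdots\gamma_{m_k l_k}\vac_{\spib}$ with $|m_h|,|l_h|\leq M$ and $m_h l_h<0$, it suffices to prove the claim for each such basis vector separately.

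The key step is then to apply Proposition \ref{actioniscommutator} together with Lemma \ref{Kijlimit} and Remark \ref{remarkforstrongconv}: for every $N\geq M$ and every $(i,j)\in\Z\times\Z$ with $ij>0$, the operator $\KN{ij}$ acts on such a basis vector via the commutator expansion
$$\KN{ij}\cdot s=\sum_{h=1}^k\gamma_{m_1 l_1}\cdots[\KN{ij},\gamma_{m_h l_h}]\cdots\gamma_{m_k l_k}\vac_{\spib},$$
and each individual commutator $[\KN{ij},\gamma_{m_h l_h}]$ with $|m_h|,|l_h|\leq M\leq N$ is given by a fixed expression in Kronecker deltas and single Clifford generators that is independent of $N$.

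By Corollary \ref{isotropycorollary} the operator $\Kt{ij}$ acts on the same basis vector by precisely the same commutator-expansion formula, with the commutators $[\Kt{ij},\gamma_{m_h l_h}]$ coinciding with their $N$-truncated counterparts as soon as $N\geq M$. Consequently $\KN{ij}(s)=\Kt{ij}(s)$ for all $N\geq M$ and all $(i,j)$ with $ij>0$; this is stronger than strong convergence, being an eventual stabilization, and it is automatically uniform in $(i,j)$ since $M$ depends only on $s$. I do not anticipate a genuine obstacle: the essential content has been isolated in the preceding lemma, proposition and corollary, and what remains is just the bookkeeping above.
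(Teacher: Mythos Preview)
Your proposal is correct and follows essentially the same approach as the paper's proof: pick $M$ with $s\in\spib^{\mathrm{pol},(M)}$, then observe that for $N\geq M$ both $\KN{ij}$ and $\Kt{ij}$ act on $s$ as $\ad E_{ij}$, so the sequence stabilizes uniformly in $(i,j)$. The only difference is cosmetic: the paper cites Corollary~\ref{hcN} and Corollary~\ref{isotropycorollary} directly, whereas you unpack the former into its constituents (Proposition~\ref{actioniscommutator}, Lemma~\ref{Kijlimit}, Remark~\ref{remarkforstrongconv}).
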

\begin{proof}
Let $s\in\spib^{\mathrm{pol}}=\domain(\KN{ij})=\domain(\Kt{ij})$ 
be arbitrary. Then obviously there exists an integer $M\in\Z_{\geq 0}$ such
that $s\in\spib^{\mathrm{pol},(M)}\subset\spib^{\mathrm{pol}}$. Using Corollary \ref{hcN}
and Corollary \ref{isotropycorollary} we obtain that
$$
\KN{ij}\cdot s=[\ad E_{ij}](s)=\Kt{ij}\cdot s\quad\textrm{ for all } N\geq M,
$$
so that in particular
$$
\lim_{N\to\infty}\norm{\KN{ij}\cdot s-\Kt{ij}\cdot s}=0,
$$
from which the claim follows.
\end{proof}

\section{The Dirac operator}
\subsection{The space of ``$L^2$-spinors'' on $\RGr$}

Recall that in the case of a compact Lie group $G$ the Hilbert space of square integrable spinors on a homogeneous
space $G/H$ satisfies
$$
L^2(G/H,S)\isom L^2(G\times_H\spib_{\lie{p}})\isom\widehat{\bigoplus}_\lambda
V_\lambda\tensor(V_\lambda^\ast\tensor\spib_{\lie{p}})^H,
$$
where the Hilbert space direct sum is taken over all irreducible representations $V_\lambda$
of $G$. Moreover the Dirac $\KDirac$ operator on $G/H$ kept fixed all individual summands in
this decomposition and acted trivially on each $V_\lambda$ on the left hand side
of the tensor product $V_\lambda\tensor(V_\lambda^\ast\tensor\spib_{\lie{p}})$. 

The
Lie subgroup $H\subset G$  acts on each $V_\lambda^\ast$ by restricting the
action of $G$ on $V_\lambda^\ast$ and on the spinor module $\spib_{\lie{p}}$ via
the composition of the lift of the isotropy representation $\Adt:H\To\Spin(\lie{p})$ and
the spin representation $\Spin(\lie{p})\To\End(\spib_{\lie{p}})$. Hence $H$ acts
on each $V_\lambda^\ast\tensor\spib_{\lie{p}}$ via tensor product of the above representations.
Then the subspace $(V_\lambda^\ast\tensor\spib_{\lie{p}})^H\subset V_\lambda^\ast\tensor\spib_{\lie{p}}$ of
$H$-\emph{invariant} vectors in $V_\lambda^\ast\tensor\spib_{\lie{p}}$ consist of all vectors in $V_\lambda^\ast\tensor\spib_{\lie{p}}$
staying invariant under the above action of $H$. Concretely, if we denote by $\pi_{\lambda^{\ast}}:G\To\Aut(V_\lambda^\ast)$ and
$\Adt:H\To\Aut(\spib_{\lie{p}})$ the above representations, then
\begin{eqnarray}
(V_\lambda^\ast\tensor\spib_{\lie{p}})^H&=&
\big\{\sum v_i\tensor s_j\in V_\lambda^\ast\tensor\spib_{\lie{p}}\mid \sum \pi_{\lambda^\ast}(h)v_i\tensor \Adt(h)s_j=\sum v_i\tensor s_j\nonumber\\
& &\textrm{for all } h\in H\big\}.
\end{eqnarray}

Differentiating the above representations we obtain the corresponding Lie algebra representations
$r_{\lambda^\ast}:\lie{g}\To\End(V_\lambda^\ast)$ and $\adt:\lie{h}\To\End(\spib_{\lie{p}})$ and we
may consider the subspace $(V_\lambda^\ast\tensor\spib_{\lie{p}})^{\lie{h}}\subset V_\lambda^\ast\tensor\spib_{\lie{p}}$
of $\lie{h}$-\emph{invariant} (or $\lie{h}$-\emph{equivariant}) vectors in $V_\lambda^\ast\tensor\spib_{\lie{p}}$
consisting of all elements in $V_\lambda^\ast\tensor\spib_{\lie{p}}$ that are annihilated by
the (diagonal) tensor product representation $r_{\lambda^\ast}\tensor\adt$ of $\lie{h}$, i.e.
\begin{eqnarray}
(V_\lambda^\ast\tensor\spib_{\lie{p}})^{\lie{h}}&=&\big\{\sum v_i\tensor s_j\in V_\lambda^\ast\tensor\spib_{\lie{p}}
\mid \sum r_{\lambda^\ast}(h)v_i\tensor s_j+\sum v_i\tensor \adt(h)s_j=0\nonumber\\
& &\textrm{for all } h\in\lie{h}
\big\}
\nonumber\\
&=&
\big\{\sum v_i\tensor s_j\in V_\lambda^\ast\tensor\spib_{\lie{p}}
\mid \sum r_{\lambda^\ast}(h)v_i\tensor s_j=-\sum v_i\tensor \adt(h)s_j\nonumber\\
& &\textrm{for all } h\in\lie{h}
\big\}\nonumber\\
&=&\Big\{w\in V_\lambda^\ast\tensor\spib_{\lie{p}}\,\Big\vert\,\Big[r_{\lambda^\ast}(h)\tensor 1\Big](w)=\Big[-1\tensor \adt(h)\Big](w)
\quad\textrm{ for all }\nonumber\\
& & h\in\lie{h}\Big\}.
\end{eqnarray}
Thus, if $E_1,\ldots, E_m$ is a basis for $\lie{h}$, we may write the $\lie{h}$-invariant sector as
\begin{eqnarray}
(V_\lambda^\ast\tensor\spib_{\lie{p}})^{\lie{h}}&=&\Big\{w\in V_\lambda^\ast\tensor\spib_{\lie{p}}\,\Big\vert\,\Big[r_{\lambda^\ast}(E_i)\tensor 1\Big](w)=\Big[-1\tensor \adt(E_i)\Big](w)
\quad\textrm{ for all }\nonumber\\
& & i=1,\ldots,m\Big\}.
\end{eqnarray}

Now we want to consider a proper analog of the above definition for the 
infinite-dimensional restricted Grassmannian manifold
$M=\RGr(\Hilb,\Hilb_+)\isom G/H$, where we have $G=\Ures(\Hilb,\Hilb_+)$ and 
$H=U(\Hilb_+)\times U(\Hilb_-)$. Now instead of considering \emph{all} the possible
irreducible (projective) representations of $\Ures$ we decide to work with a kind of ``minimal'' Hilbert space
in which we only consider \emph{one} irreducible representation of $\EUres$, namely the basic
representation of $\EUres$ on the fermionic Fock space $\F:=\F(\Hilb,\Hilb_+)$. Differentiating we obtain
the basic representation $\rh$ of $\Eures$ on $\F$ which we complexify to a representation
of $\Eglres$ on $\F$.

We will also want to consider the complexifed Clifford algebra $\Cliff(\lie{p}^\C)=\Cliff(W\oplus\overline{W})$
presented in section \S\ref{complexified Clifford}. Recall that $\Cliff(\lie{p}^\C)$ had an action
on the space $\spib_{\lie{p}}^c=\F(\lie{p}^\C,W)\isom\F_+(\lie{p^\C})\tensor\F_-(\lie{p}^\C)$ which we
consider as the complexified spinor module for $\Cliff(\lie{p}^\C)$.

In section \S\ref{inducedad} we noticed that the spin lift $\adt$ of the isotropy representation was trivial with the central element acting
by zero. Hence the complexification $\lie{h}^\C$ acts nonprojectively on $\spib_{\lie{p}}^c$ via $\adt$ and for this reason
we will denote $\ad$ and its spin lift $\adt$ by the same symbol from now on. Now let's see what this means concretely. Recall
that if $V$ is a (finite-dimensional) representation of a Lie algebra $\lie{g}$ of a Lie subgroup of
$GL(n,\C)$ then $\lie{g}$ acts on $\bigwedge^k V$ by
$$
X\sum v_{i_1}\wedge\cdots\wedge v_{i_k}=\sum (Xv_{i_1})\wedge\cdots\wedge v_{i_k}+
\cdots+
\sum v_{i_1}\wedge\cdots\wedge(Xv_{i_k}).
$$
Now since the complexified spinor module $\spib_{\lie{p}}^c=\F(\lie{p}^\C,W)$ is by definition an infinite direct
sum of wedge products of Hilbert spaces and since $[\lie{h^\C},\lie{p}^\C]\subset\lie{p}^\C$ we
have the following situation: If
$$
s=\gamma_{m_1 l_1}\gamma_{m_2 l_2}\cdots\gamma_{m_k l_k}\vac_\spib=\gamma(E_{m_1 l_1})\gamma(E_{m_2 l_2})\cdots\gamma(E_{m_k l_k})\vac_\spib,
$$
where each $m_j\, l_j>0\,(j=1,\ldots k)$, is a basis vector for $\spib_{\lie{p}}^c=\F(\lie{p}^\C,W)$ then
for the basis elements $E_{pq}\in\hinf^\C\, (p,q\in\Z,\,pq>0)$
\begin{equation}
\ad(E_{pq})(s)=\sum_{i=1}^k \gamma(E_{m_1 l_1})\cdots\gamma([E_{pq},E_{m_i l_i}])\cdots\gamma(E_{m_k l_k})\vac_\spib.
\end{equation}

We are ready to consider the tensor product representation
\begin{equation}
\varrho:\hinf^\C\To\End(\F\tensor\spib_{\lie{p}}^c),\quad\varrho=\hat{r}\tensor 1+1\tensor\ad,
\end{equation}
where $\rh$ denotes the restriction of the basic representation of $\Eglres$ on $\F$ to the Lie subalgebra
$\hinf^\C\subset\Eglres$. Hence, by linearity, the space of $\hinf^\C$-invariants 
is given by
\begin{eqnarray}
(\F\tensor\spib_{\lie{p}}^c)^{\hinf^\C,\varrho} &=&
\Big\{w\in \F\tensor\spib_{\lie{p}}^c\,\Big\vert\,\Big[\rh(E_{pq})\tensor 1\Big](w)=\Big[-1\tensor \ad(E_{pq})\Big](w)
\quad\textrm{ for all }\nonumber\\
& & p,q\in\Z\textrm{ such that }pq>0\Big\}\nonumber\\
&=&
\Big\{w\in \F\tensor\spib_{\lie{p}}^c\,\Big\vert\,\Big[\rh(E_{pq})\tensor 1\Big](w)=\Big[-1\tensor \Kt{pq}\Big](w)
\quad\textrm{ for all }\nonumber\\
& & p,q\in\Z\textrm{ such that }pq>0\Big\}.
\end{eqnarray}
From now on, in order to save notation we set, by abuse of notation, 
\begin{equation}
(\F\tensor\spib_{\lie{p}}^c)^{\lie{h}}:=(\F\tensor\spib_{\lie{p}}^c)^{\hinf^\C,\varrho}.
\end{equation}

Motivated by these facts
we give the following definition.

\begin{defn}[Hilbert space of spinors]
Consider the restricted Grassmannian manifold
$M=\RGr(\Hilb,\Hilb_+)\isom G/H$, where we have $G=\Ures(\Hilb,\Hilb_+)$ and 
$H=U(\Hilb_+)\times U(\Hilb_-)$.
Set 
\begin{equation}\label{ourspinors}
L^2(\RGr(\Hilb,\Hilb_+),S_\C):=\Big(\F_0(\Hilb,\Hilb_+)\tensor\spib_\lie{p}^c\Big)^{\hinf^\C,\varrho}.
\end{equation}
\end{defn}

\begin{rem}\label{invariantsecnonempt}
Of course we need to know that the space $(\F_0\tensor\spib_{\lie{p}}^c)^{\lie{h}}$ is nonempty! This follows
from the fact that for $p,q\in\Z$ with $pq>0$ we have 
$\rh(E_{pq})\vac_\F=0=\Kt{pq}\vac_\spib$ 
so that at least the vacuum vector $\vac=\vac_\F\tensor\vac_\spib$ belongs to $(\F_0\tensor\spib_{\lie{p}}^c)^{\lie{h}}$.
\end{rem}
\begin{rem}
In order us to be able to prove the finite-di\-men\-sio\-na\-li\-ty of
the kernel $\ker\KDirac$ we shall need to consider the
smaller Hilbert space $(\F_0\tensor\spib_{\lie{p}}^c)^{\lie{h}}$ instead
of the larger space $(\F\tensor\spib_{\lie{p}}^c)^{\lie{h}}$.
However, if some properties, e.g. being well-defined, can be proved also over the bigger space
$\F\tensor\spib_{\lie{p}}^c$ or $(\F\tensor\spib_{\lie{p}}^c)^{\lie{h}}$ we will
do so in this generality.
\end{rem}

\subsection{The diagonal Casimir operator}\label{diagonalCasOp}
Recall that we had the diagonal representation $\varrho:\hinf^\C:\To\End(\F\tensor\spib_{\lie{p}}^c)$ with
\begin{equation}
E_{ij}\mapsto E_{ij,\varrho}:=\rh(E_{ij})\tensor 1+1\tensor\ad(E_{ij})\quad\textrm{ for all } ij>0,
\end{equation}
where $\rh$ denotes the restriction of the basic representation of $\Eglres$ on $\F$ to its Lie subalgebra $\hinf^\C$.
Let
$$
E_{ij,\varrho}^{(N)}:=\rh(E_{ij})\tensor 1+1\tensor\KN{ij}\quad\textrm{ for all } ij>0,
$$
so that in the `limit' $N\to\infty$ the operator $E_{ij,\varrho}^{(N)}\stackrel{N\to\infty}{\to}E_{ij,\varrho}$.

We define the $N^\mathrm{th}$ cut-off diagonal Casimir operator of $\hinf^\C$ to be
\begin{eqnarray}\label{diagonalCasimiroperator}
\Delta_{\varrho}^{(N)} &:=& \sum_{\substack{i,j\in\Z \\ ij>0\\ \abs{i},\abs{j}\leq N}}E_{ij,\varrho}^{(N)}E_{ji,\varrho}^{(N)}\nonumber\\
&=&
\sum_{\substack{i,j\in\Z \\ ij>0\\ \abs{i},\abs{j}\leq N}}\Big(\rh(E_{ij})\tensor 1+1\tensor\KN{ij}\Big)\Big(\rh(E_{ji})\tensor 1+1\tensor\KN{ji}\Big)\nonumber\\
&=&
\sum_{\substack{i,j\in\Z \\ ij>0\\ \abs{i},\abs{j}\leq N}}\rh(E_{ij})\rh(E_{ji})\tensor 1+
2\sum_{\substack{i,j\in\Z \\ ij>0\\ \abs{i},\abs{j}\leq N}}\rh(E_{ij})\tensor\KN{ji}\nonumber\\
&+&
\sum_{\substack{i,j\in\Z \\ ij>0\\ \abs{i},\abs{j}\leq N}} 1\tensor\KN{ij}\KN{ji}
\end{eqnarray}
so that $\Delta_{\varrho}^{(N)}\in\End(\F\tensor\spib_{\lie{p}}^c)$.

\subsection{Working the intuition}
Since the restricted Grassmannian manifold is a homogeneous space, in
giving a reasonable definition for a Dirac operator in our infinite-dimensional setting, at first step we would like to mimic the expression of the (true)
Dirac operator (\ref{DiracGH}) on homogeneous spaces
$G/H$ where $G$ is a compact Lie group,
$$
\KDirac=\sum_{i=1}^{\dim\lie{p}}r(X_i)\tensor X_i^\ast+1\tensor
\frac{1}{2} X_i^\ast\cdot\adt_{\lie{p}}X_i\in U(\lie{g})\tensor\Cliff(\lie{p}),
$$
where $\set{X_i}$ is a basis of $\lie{p}\isom\lie{g}/\lie{h}$ and $\set{X_i^\ast}$ its
dual basis. Here $r$ restricts from $\lie{g}$ to a representation of $\lie{p}\subset\lie{g}$.
Unfortunately, as we noted the square of the above expression turned out to be quite a mess and one would
expect that the analogous situation would be even harder to control in infinite dimensions.

However, the restricted Grassmannian manifold $\RGr\isom G/H$,
where we have $G=\Ures(\Hilb,\Hilb_+)$ and $H=U(\Hilb_+)\times U(\Hilb_-)$, is not just a (Banach) homogeneous space, but has
more structure being an infinite-dimensional symmetric space:

\begin{thm}[Spera, Wurzbacher, \cite{SpeWu}] The restricted Grassmannian manifold is
a Hermitean symmetric space. In particular
$$
[\lie{h},\lie{p}]\subset\lie{p},\quad [\lie{p},\lie{p}]\subset\lie{h}.
$$
$\RGr$ is geodesically complete and its Riemann curvature tensor is completely fixed by its value in the point
$\Hilb_+$. Furthermore the trace corresponding to Ricci curvature of $\RGr$ is ``linearly divergent''.
\end{thm}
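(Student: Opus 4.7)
The plan is to realize $\RGr$ as a genuine Banach symmetric space by exhibiting an involutive automorphism at the identity coset, and then bootstrap the remaining statements from general symmetric-space theory together with the K\"ahler data already constructed in \S\ref{SecK�hStrRGr}. Concretely, I would define $\sigma:\Ures\To\Ures$ by $\sigma(g):=\epsilon g \epsilon^{-1}$ where $\epsilon=\pr_+-\pr_-$ is the grading operator; since $\epsilon^2=1$ and conjugation by $\epsilon$ trivially preserves the Hilbert--Schmidt condition $[\epsilon,g]\in\Lp^2$, the map $\sigma$ is a well-defined smooth involutive automorphism of the Banach Lie group $\Ures$. A direct block-matrix computation shows that its fixed-point set is exactly $H=U(\Hilb_+)\times U(\Hilb_-)$, while the $(-1)$-eigenspace of $d\sigma$ on $\ures$ is precisely the subspace $\lie{p}$ of off-diagonal skew-Hermitian Hilbert--Schmidt operators. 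The bracket relations $[\lie{h},\lie{p}]\subset\lie{p}$ and $[\lie{p},\lie{p}]\subset\lie{h}$ then follow immediately from the $\Z/2$-grading induced by $d\sigma$, and the $H$-equivariant involution $\sigma$ together with the $\Ad H$-invariant metric $g^{\lie{p}}$ already exhibits $\RGr$ as a Riemannian symmetric space in the sense of the definition of Section \ref{DOonSymSpa}.

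For the \emph{Hermitian} part, I would combine the above involution with the $U(\Hilb_+)\times U(\Hilb_-)$-invariant complex structure $J_{\Hilb_+}$ on $T_{\Hilb_+}\RGr\isom\Lp^2(\Hilb_+,\Hilb_-)$. Both $\sigma$ and $J_{\Hilb_+}$ commute with the $H$-action, and $d\sigma_{\Hilb_+}=-\id$ on $\lie{p}$ is $\C$-linear with respect to $J_{\Hilb_+}$; transporting $(J_{\Hilb_+},g,\Omega)$ by the $\Ures$-action produces a globally defined K\"ahler structure for which each symmetry $j_p:=\tau_g\circ\sigma\circ\tau_{g^{-1}}$ (with $\tau_g$ taking $\Hilb_+$ to $p$) is a holomorphic isometry. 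This is the definition of a Hermitian symmetric space.

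For geodesic completeness, I would invoke the standard symmetric-space fact that the geodesics through $\Hilb_+$ are precisely the curves $t\mapsto\exp(tX)\cdot\Hilb_+$ for $X\in\lie{p}$; because the exponential map $\exp:\ures\To\Ures$ is globally defined on the Banach Lie algebra $\ures$, every such geodesic exists for all $t\in\Real$, and transitivity of the $\Ures$-action propagates completeness to every other point. For the curvature, the general symmetric-space identity $R_{\Hilb_+}(X,Y)Z=-[[X,Y],Z]$ for $X,Y,Z\in\lie{p}$ determines $R$ at the base point purely in terms of the Lie bracket; $\Ures$-invariance then propagates $R$ to the rest of $\RGr$. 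Here one needs to check that $[[X,Y],Z]$ genuinely lies in $\lie{p}\isom\Lp^2(\Hilb_+,\Hilb_-)$, which follows from $[\lie{h},\Lp^2]\subset\Lp^2$ since each block of $[X,Y]\in\lie{h}$ acts boundedly on Hilbert--Schmidt operators.

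Finally, for the Ricci divergence, I would expand $\Ric(X,X)=\sum_{i} g^{\lie{p}}(R(X,X_i)X_i,X)$ over the orthonormal Hilbert basis $\set{X_{-p,q},\im X_{-p,q}}$ of $\lie{p}$ and substitute the curvature formula, reducing the sum to a formal trace of the operator $-\ad X\circ\ad X$ restricted to $\lie{p}$. A case-by-case computation using the generators $E_{pq}$ shows each summand contributes a constant of order $\norm{X}^2$, so the sum diverges at the rate $\sum_k 1$, i.e.\ linearly in the cutoff, which is precisely the ``linear divergence'' claimed. The main obstacle is the bookkeeping for this last step: this is the same class of phenomenon responsible for the Schwinger cocycle in \eqref{Schwinger}, and a careful argument must separate the genuinely divergent index sums from the bounded Hilbert--Schmidt contributions; modulo this delicate accounting (for which I would defer to \cite{SpeWu}), the remaining pieces are formal consequences of having the involution $\sigma$ in hand.
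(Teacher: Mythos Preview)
The paper does not prove this theorem; it is quoted from \cite{SpeWu} and used as input for the rest of the construction. So there is no ``paper's own proof'' to compare against here.

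That said, your sketch is essentially the standard argument and is the one carried out in \cite{SpeWu}. The involution $\sigma(g)=\epsilon g\epsilon$ is exactly the symmetry underlying the Cartan decomposition $\ures=\lie{h}\oplus\lie{p}$ already displayed in the paper, and the bracket relations, geodesic completeness via $t\mapsto\exp(tX)\cdot\Hilb_+$, and the curvature identity $R_{\Hilb_+}(X,Y)Z=-[[X,Y],Z]$ all go through in the Banach setting for the reasons you give. Your Ricci computation is also on the right track: the divergence is the same ``$\sum_k 1$'' phenomenon that forces normal ordering elsewhere in the paper, and your honest deferral to \cite{SpeWu} for the bookkeeping is appropriate. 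One small point: in your Hermitian-symmetry paragraph, the relevant compatibility is that each global symmetry $j_p$ is an \emph{anti}-holomorphic isometry (its differential at $p$ is $-\id$, which is $\C$-linear but orientation-reversing on the complex tangent space); this is the usual convention for Hermitian symmetric spaces and does not affect the argument, but the phrasing ``holomorphic isometry'' is slightly off.
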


Thus according to \S\ref{DOonSymSpa} a better analogue would be the operator
\begin{equation}\label{OurCandidate}
\KDirac=\sum_{i=1}^{\dim\lie{p}}r(X_i)\tensor X_i^\ast,
\end{equation}
for which the Dirac operator on a homogeneous space reduces when the homogeneous space happens to be symmetric. Notice
that in this kind of special case this coincides with the Kostant's cubic Dirac operator \cite{Ko} for a pair $(\lie{g},\lie{h})$ ($\lie{g}$ compact
semisimple, $\lie{h}$ reductive maximal rank subalgebra of $\lie{g}$), namely if 
we are in a situation $\lie{g}=\lie{h}\oplus\lie{p}$ with $\lie{h}$ and $\lie{p}$ complementary and
$[\lie{p},\lie{p}]\subset\lie{h}$, the cubic term in Kostant's Dirac operator vanishes indetically.

In general, both operators, the Dirac operator on a compact Riemannian symmetric space and Kostant's (algebraic) cubic Dirac operator
have the special property that their squares can be expressed in terms of various Casimir operators for $\lie{g}$ and $\lie{h}$.
This has the impact that the squares of these Dirac operators can be analyzed completely in terms of representation theory.
Keeping this in mind equation (\ref{OurCandidate}) would then seem like a good candidate expression to be extended to our infinite
dimensional case (essentially by letting $\dim\lie{p}\to\infty$ in this expression and showing that this makes sense as an infinite sum of operators).
One would then expect that defined this way, the square of the Dirac operator could be written as a sum
of various \emph{normal ordered} Casimir operators for $\hat{\lie{g}}$ and $\lie{h}$ plus possibly some extra (normal ordered) terms
originating from the central extension. The Casimir operators would then be analyzed in terms of (the highest weight) representation theory
of $\hat{\lie{g}}$ and $\lie{h}$ and hopefully the extra terms originating from the central extension would be simple enough
to be analyzed straightforwardly by hand. 

The final ingredient to add is to do everyting in a complexified setting. In particular, we
replace our Lie algebras $\hat{\lie{g}}=\Eures$ and $\lie{h}$ with their complexifications $\hat{\lie{g}}_\C=\Eglres$ and $\lie{h}^\C$.
Since $\Gr(\Hilb,\Hilb_+)$ is an infinite-dimensional Kähler manifold, we next recall 
from \cite{LaMich} that for a $2n$-dimensional Kähler manifold $(X,J,\ip{\cdot}{\cdot})$ 
with canonical Riemannian connection $\nabla$,
the Dirac operator
$$
D:\Gamma(\CCl(X))\To\Gamma(\CCl(X))
$$
associated to the complexified Clifford bundle $\CCl(X):=\Cl(TX)\tensor\C$ has a decomposition
\begin{equation}
D=\CDirac+\overline{\CDirac}
\end{equation}
such that $\CDirac$ and $\overline{\CDirac}$ are first order differential operators which are formal
adjoints of one another $(\CDirac^\ast=\overline{\CDirac})$. These operators are defined by
\begin{equation}
\CDirac\phi=\sum_j c(\varepsilon_j)\nabla_{\overline{\varepsilon}_j}\phi,\quad
\overline{\CDirac}\phi=\sum_j c(\overline{\varepsilon}_j)\nabla_{\varepsilon_j}\phi,
\end{equation}
where
\begin{equation}
\varepsilon_j=\frac{1}{2}(e_j-\im Je_j)\quad\textrm{ and }\quad\overline{\varepsilon}_j=\frac{1}{2}(e_j+\im Je_j)
\end{equation}
for any local orthonormal frame field of the form $e_1,Je_1,\ldots,e_n,Je_n$.


\subsection{The Definition}

We first introduce our Dirac operator as a formal sum of operators and then prove that it actually
defines an unbounded operator with a dense domain.

\begin{defn}[Dirac operator] As a formal element of $\End(\F\tensor\spib_{\lie{p}}^c)$, the 
Dirac operator $\KDirac$ on the restricted Grassmannian manifold $\RGr$ is defined as
\begin{eqnarray}
\KDirac &:=&
\sum_{\substack{k>0,\\l> 0}}\hat{r}(X_{l,\,-k})\tensor \gamma(X_{-k,\,l})+ 
\sum_{\substack{p>0,\\q>0}}
\hat{r}(X_{-p,\, q})\tensor \gamma(X_{q,-p})\\
&=& \frac{1}{2}\sum_{\substack{i,j\in\Z\\ij<0}} E_{ij}\tensor \gamma_{ji}\nonumber,
\end{eqnarray}
where we have used the shorthand  notation $E_{ij}=\rh(E_{ij}),\,\gamma_{ji}=\gamma(E_{ji})$. We shall also denote by
$\Psi_{ji}=\Psi(E_{ji})$ the images of $\gamma_{ji}$ inside the CAR algebra $\CAR(\lie{p}^\C,W)$.
\end{defn}

Here the the first sum in the $\rh(\cdot)$ component is over the basis of $W\isom\lie{p}$ and the second one is 
a sum over the
basis of the complex conjugate $\overline{W}\isom\lie{p}$.

\begin{prop}\label{Diracdefined}
For the basic representation $\hat{r}$ of $\Eglres(\Hilb,\Hilb_+)$ on the
fermionic Fock space $\F=\F(\Hilb,\Hilb_+)$,
the formal Dirac operator $\KDirac$ defines a well-defined unbounded symmetric linear 
operator 
$\KDirac:\F\tensor\spib_{\lie{p}}^c\To\F\tensor\spib_{\lie{p}}^c$
between two Hilbert spaces with dense domain
$\mathcal{D}(\KDirac)=\F^{\mathrm{pol}}\tensor\spib^{\mathrm{pol}}$ whose image
under $\KDirac$ is again in $\mathcal{D}(\KDirac)$.
\end{prop}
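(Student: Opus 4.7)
The plan is to establish in turn: well-definedness of the formal infinite sum on a dense domain, preservation of that domain under $\KDirac$, and symmetry. Unboundedness is automatic since the individual summands $\hat{r}(E_{ij})\tensor\gamma(E_{ji})$ are not uniformly bounded in $(i,j)$, so one only needs to supply an unbounded vector in the image.

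For well-definedness I would fix a decomposable basis vector $\phi\tensor s\in\FPol\tensor\spib^{\mathrm{pol}}$ and argue that only finitely many pairs $(i,j)$ with $ij<0$ contribute a nonzero summand. The sum splits according to the sign of $i$. In the range $i>0,\,j<0$, equation~\eqref{basicmat} gives $\hat{r}(E_{ij})=A_i^\ast B_j^\ast$, a pure creation operator on $\F$ that never annihilates $\phi$, whereas $\gamma(E_{ji})$ with $j<0,\,i>0$ is a pure annihilation-type operator on the complexified spinor Fock space $\spib_\lie{p}^c$ (since $E_{ji}\in W$); the direct analogue of Corollary~\ref{finitepol} for $\spib^{\mathrm{pol}}$ then forces $\gamma(E_{ji})s=0$ except for finitely many $(j,i)$. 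In the complementary range $i<0,\,j>0$ the roles reverse: $\hat{r}(E_{ij})=B_i A_j$ is pure annihilation on $\F$, so Corollary~\ref{finitepol} itself forces $\hat{r}(E_{ij})\phi=0$ except for finitely many $(i,j)$. Thus the sum collapses to a finite sum in both ranges and $\KDirac(\phi\tensor s)$ is unambiguously defined.

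Each surviving summand is a finite product of CAR generators applied to $\phi\tensor s$, and therefore lies again in $\FPol\tensor\spib^{\mathrm{pol}}$, yielding preservation of the domain. Density of $\FPol\tensor\spib^{\mathrm{pol}}$ in $\F\tensor\spib_\lie{p}^c$ follows from density of each tensor factor in its ambient Hilbert space together with the standard fact that the algebraic tensor product of dense subspaces is dense in the Hilbert tensor product.

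For symmetry I would take two decomposable vectors $\phi\tensor s,\,\psi\tensor t\in\mathcal{D}(\KDirac)$ and compute $\langle\KDirac(\phi\tensor s),\psi\tensor t\rangle$. The finite-support argument above reduces this to a finite sum of terms of the form $\tfrac{1}{2}\langle\hat{r}(E_{ij})\phi,\psi\rangle_{\F}\,\langle\gamma(E_{ji})s,t\rangle_{\spib}$. Using the explicit normal-ordered form~\eqref{explicit rh} one checks that $\hat{r}(E_{ij})^\ast=\hat{r}(E_{ji})$ as operators on $\FPol$, and the compatibility of the Clifford representation with the conjugation $\overline{X_{p,q}}=X_{q,p}$ on $\lie{p}^\C$ gives $\gamma(E_{ji})^\ast=\gamma(E_{ij})$ on $\spib^{\mathrm{pol}}$. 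Substituting these adjoints and relabelling via the involution $(i,j)\leftrightarrow(j,i)$ of $\{(i,j):ij<0\}$ recovers the expression for $\langle\phi\tensor s,\KDirac(\psi\tensor t)\rangle$. The main technical obstacle lies in this last step: one must keep careful track of the anticommutation signs produced by CAR generators on both tensor factors, and verify that the Clifford adjoint identity $\gamma(X)^\ast=\gamma(\overline X)$ is indeed realised by the representation on $\spib_\lie{p}^c$ introduced earlier, so that the formal term-by-term computation is legitimate on the dense domain.
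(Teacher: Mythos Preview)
Your proposal is correct and follows essentially the same approach as the paper: split the sum according to the sign of $i$, observe that in each range one tensor factor carries a pure annihilation operator, and invoke Corollary~\ref{finitepol} (on $\FPol$ or its spinor analogue) to reduce to a finite sum; density and preservation of the domain are then immediate, and symmetry follows from the adjoint identities $\hat r(E_{ij})^\ast=\hat r(E_{ji})$ and $\gamma(E_{ji})^\ast=\gamma(E_{ij})$ together with the index relabelling $(i,j)\leftrightarrow(j,i)$. The paper's proof is nearly identical, only stating the symmetry step more tersely; your concern about anticommutation signs in that step is unfounded, since the adjoint identities hold termwise without any sign correction.
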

\begin{proof}
Written in terms of the field operators the Dirac operator $\KDirac$ becomes
\begin{eqnarray}\label{concreteD}
\KDirac &=&
\frac{1}{2}\sum_{\substack{k>0\\l>0}}A^\ast_{l}B^\ast_{-k}\tensor\Psi_{-k,l}+
\frac{1}{2}\sum_{\substack{p>0\\ q>0}}B_{-p}A_q\tensor \Psi^\ast_{q,-p}\nonumber\\
&=& 
\frac{1}{2}\sum_{\substack{k>0\\l>0}}A^\ast_{l}B^\ast_{-k}\tensor a(E_{-k,l})+
\frac{1}{2}\sum_{\substack{p>0\\ q>0}}B_{-p}A_q\tensor b^\ast(E_{q,-p}).
\end{eqnarray}

Notice first that each of the operators  $B_{-p}, A_q$ and $a(E_{-k,l})$ are annihilation operators associated to
an orthogonal basis vector of $\Hilb_-,\Hilb_+$ and $\lie{p}^\C$, respectively. 

Fix $\psi\in\FPol$. Then according to Corollary \ref{finitepol} there exists only \emph{finitely} many indices $q\in\Z$
such that $\psi_{(q)}:=A_q\cdot\psi\neq 0$. Notice that in any case $\psi_{(q)}\in\FPol$ for all $l\in\Z$. 
For simplicity we may assume that $\psi_{(q)}$ is a monomial. Now for
each $\psi_{(q)}\neq 0$ there exists again only finitely many indices $p$ for which
$B_{-p}\cdot\psi_{(q)}\neq 0$. 
Hence there are only finitely many pairs $(p,q)\in\Z\times\Z$ for which $(B_{-p}A_q)\cdot\psi\neq 0$ and
we conclude that the sum $\sum_{p,q}B_{-p}A_q\tensor b^\ast(E_{q,-p})$ on the right hand side of
(\ref{concreteD}) is a well-defined operator on $\FPol\tensor\spib^{\mathrm{pol}}$.

On the other hand, again according to the Corollary \ref{finitepol}, for a fixed $s\in\spib^{\mathrm{pol}}$ there
exists only finitely many pairs of indices $(k,l)\in\Z\times\Z$ for which $a(E_{-k,l})\cdot s\neq 0$ showing
that the sum $\sum_{k,l}A^\ast_{l}B_{-k}^\ast\tensor a(E_{-k,l})$ on the left hand side of
equation (\ref{concreteD}) is a well-defined operator on
$\FPol\tensor\spib^{\mathrm{pol}}$.

The statement concerning the denseness property is evident since by its very definition a Fock space is a completion of
its purely algebraic counterpart.

The symmetricity part follows from the explicit description (\ref{concreteD}) and the adjoint properties of
the various field operators, e.g. the operator $\Psi^\ast_{-k,l}$ is an adjoint operator of  $\Psi_{-k,l}$ etc.
\end{proof}

\begin{cor}
The square $\KDirac^2=\KDirac\circ\KDirac:\F\tensor\spib_{\lie{p}}^c\To\F\tensor\spib_{\lie{p}}^c$ is well-defined as an unbounded linear
operator with dense domain $\D(\KDirac^2)=\D(\KDirac)=\F^{\mathrm{pol}}\tensor\spib^{\mathrm{pol}}$.
\end{cor}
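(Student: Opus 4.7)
The plan is very short because this corollary is essentially an immediate consequence of Proposition \ref{Diracdefined}. The one nontrivial assertion in that proposition for present purposes is not just that $\KDirac$ is defined on the dense subspace $\F^{\mathrm{pol}}\tensor\spib^{\mathrm{pol}}$, but that the image of this subspace under $\KDirac$ lies again inside $\F^{\mathrm{pol}}\tensor\spib^{\mathrm{pol}}$. Once that stability of the domain is in hand, $\KDirac\circ\KDirac$ automatically makes sense on the same subspace.

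Concretely, I would first recall the explicit description
\[
\KDirac=\frac{1}{2}\sum_{\substack{k>0\\l>0}}A^\ast_{l}B^\ast_{-k}\tensor a(E_{-k,l})+\frac{1}{2}\sum_{\substack{p>0\\q>0}}B_{-p}A_{q}\tensor b^\ast(E_{q,-p})
\]
from the proof of Proposition \ref{Diracdefined}. For any monomial $\psi\tensor s\in\F^{\mathrm{pol}}\tensor\spib^{\mathrm{pol}}$ the arguments used there (invoking Corollary \ref{finitepol} twice, once on $\F$ and once on $\spib_{\lie{p}}^c$) show that only finitely many pairs of indices contribute nontrivially. Each term applies a monomial in the field operators $A^\ast_l, B^\ast_{-k}, A_q, B_{-p}$ to $\psi$ and a monomial in $a, b^\ast$ on the spinor side to $s$; these products of creation/annihilation operators preserve the space of finite linear combinations of basis monomials, so the result is again in $\F^{\mathrm{pol}}\tensor\spib^{\mathrm{pol}}$.

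Hence $\KDirac\bigl(\F^{\mathrm{pol}}\tensor\spib^{\mathrm{pol}}\bigr)\subset\F^{\mathrm{pol}}\tensor\spib^{\mathrm{pol}}$, and applying $\KDirac$ a second time to $\KDirac(\psi\tensor s)$ invokes exactly the same finiteness arguments on the new element of $\F^{\mathrm{pol}}\tensor\spib^{\mathrm{pol}}$. Therefore the composition $\KDirac^2$ is a well-defined linear operator on the dense subspace $\D(\KDirac^2):=\F^{\mathrm{pol}}\tensor\spib^{\mathrm{pol}}=\D(\KDirac)$. There is no analytic subtlety here at all, since everything happens inside a purely algebraic space of polynomial elements; the ``hard part'' (ensuring that the relevant infinite sums collapse to finite ones on each monomial) has already been carried out in the proof of Proposition \ref{Diracdefined} and requires no reiteration.
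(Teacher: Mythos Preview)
Your proposal is correct and matches the paper's approach: the corollary is stated without proof in the paper, as it follows immediately from the clause in Proposition~\ref{Diracdefined} asserting that $\KDirac$ maps its domain $\F^{\mathrm{pol}}\tensor\spib^{\mathrm{pol}}$ into itself. Your explanation of why this domain stability holds (finitely many terms act nontrivially on each monomial, and each such term preserves the polynomial space) is exactly the content already established in that proposition, so no further argument is needed.
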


\begin{cor}\label{TenVacInKer}
The kernel of the Dirac operator satisfies $\C\vac\subset\ker(\KDirac)$, where $\vac=\vac_\F\tensor\vac_\spib$. 
\end{cor}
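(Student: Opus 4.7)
The plan is to apply $\KDirac$ directly to the vacuum $\vac = \vac_\F \tensor \vac_\spib$ and check, term by term, that every contribution vanishes by the vacuum annihilation properties summarized in Proposition \ref{vacuumstr}. Since $\KDirac$ is $\C$-linear, verifying $\KDirac(\vac) = 0$ will immediately give $\C\vac \subset \ker(\KDirac)$.

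To carry this out, I would start from the form
\begin{equation*}
\KDirac = \frac{1}{2}\sum_{\substack{i,j\in\Z\\ ij<0}}\hat{r}(E_{ij})\tensor\gamma(E_{ji}),
\end{equation*}
and split the index set $\{(i,j): ij<0\}$ into the two disjoint pieces $\{i>0,\,j<0\}$ and $\{i<0,\,j>0\}$. By Proposition \ref{Diracdefined}, the defining sum is well-defined when applied to any element of $\F^{\mathrm{pol}}\tensor\spib^{\mathrm{pol}}$, in particular to $\vac$, so it is legitimate to examine its action summand by summand.

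For a pair with $i<0$ and $j>0$, Proposition \ref{vacuumstr} (equivalently Proposition \ref{basicrepannihilate}) gives $\hat{r}(E_{ij})\vac_\F = 0$, so $(\hat{r}(E_{ij})\tensor\gamma(E_{ji}))(\vac) = 0$. For a pair with $i>0$ and $j<0$, the corresponding index pair $(j,i)$ in the Clifford factor satisfies $j<0$ and $i>0$, so again by Proposition \ref{vacuumstr} (or Corollary \ref{Cliffordvacannihilated}, using that $E_{ji}\in W$ in this case) we have $\gamma(E_{ji})\vac_\spib = 0$, and the term vanishes. Every summand in the expression for $\KDirac(\vac)$ is therefore zero, yielding $\KDirac(\vac) = 0$ and hence $\C\vac\subset\ker(\KDirac)$.

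There is no real obstacle here: the statement is an immediate consequence of the tensor-product vacuum structure established in Proposition \ref{vacuumstr}, combined with the fact that the two sign regimes $ij<0$ are precisely those in which either the fermionic or the Clifford factor acts by an annihilation operator on the respective vacuum. The only mildly delicate point to mention is that the sum defining $\KDirac$ is formal; however, Proposition \ref{Diracdefined} ensures that it acts as a genuine (unbounded) operator on the dense domain containing $\vac$, so the term-by-term argument is rigorous.
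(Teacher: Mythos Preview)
Your proof is correct and follows essentially the same approach as the paper: both derive $\KDirac(\vac)=0$ directly from the tensor-product vacuum annihilation properties in Proposition~\ref{vacuumstr}, then use linearity to conclude $\C\vac\subset\ker(\KDirac)$. You have simply spelled out in detail the case split $\{i<0,j>0\}$ versus $\{i>0,j<0\}$ that the paper leaves implicit.
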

\begin{proof}
It follows directly from Proposition \ref{vacuumstr} that $\vac=\vac_\F\tensor\vac_\spib\in\ker(\KDirac)$.
Since
for a well-defined operator, the kernel is always a subspace the claim follows.
\end{proof}

\begin{lem}\label{Dcommutes}
The Dirac operator $\KDirac$ on $\RGr\isom\Ures/(U(\Hilb_+)\times U(\Hilb_-))=G/H$
commutes with the right action
\begin{equation}
\varrho(Z):=\hat{r}(Z)\tensor 1+1\tensor\adt Z
=\hat{r}(Z)\tensor 1+1\tensor\ad Z
\in
\End(\F\tensor\spib_{\lie{p}}^c)
\end{equation}
for all $Z\in\hinf^\C$.
\end{lem}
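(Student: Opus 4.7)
The plan is to verify the commutation relation $[\varrho(Z),\KDirac]=0$ algebraically on the common dense domain $\F^{\mathrm{pol}}\tensor\spib^{\mathrm{pol}}$, where everything acts by only finitely many nonzero summands. By $\C$-linearity it suffices to treat generators $Z=E_{pq}$ with $pq>0$. Expanding
\[
[\varrho(E_{pq}),\KDirac] = \tfrac{1}{2}\sum_{ij<0}\bigl[\hat r(E_{pq}),\hat r(E_{ij})\bigr]\tensor\gamma(E_{ji}) \;+\; \tfrac{1}{2}\sum_{ij<0}\hat r(E_{ij})\tensor\bigl[\ad E_{pq},\gamma(E_{ji})\bigr],
\]
the strategy is to simplify both pieces using the algebraic data already established and then observe a term-by-term cancellation.

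For the first piece I would invoke the basic representation formula (\ref{basicrep}). The Schwinger cocycle $s(E_{pq},E_{ij})=\Tr(E_{pq,-+}E_{ij,+-}-E_{ij,-+}E_{pq,+-})$ vanishes because $pq>0$ forces both off-diagonal blocks of $E_{pq}$ to be zero. Hence $[\hat r(E_{pq}),\hat r(E_{ij})]=\hat r([E_{pq},E_{ij}]) = \hat r(\delta_{qi}E_{pj}-\delta_{jp}E_{iq})$. For the second piece, by Corollary \ref{isotropycorollary}, $\ad E_{pq}$ acts on $\spib^{\mathrm{pol}}$ as $\widetilde{K}_{pq}$, and Lemma \ref{Kijlimit} gives $[\widetilde{K}_{pq},\gamma(E_{ji})]=\gamma([E_{pq},E_{ji}])=\delta_{qj}\gamma(E_{pi})-\delta_{ip}\gamma(E_{jq})$ (here acting on a vector of $\spib^{\mathrm{pol},(M)}$ one may truncate $\widetilde{K}_{pq}$ to $\widetilde{K}_{pq}^{(N)}$ for $N\ge M$ via Proposition \ref{KijStrongLimit}, and then pass to the limit).

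Substituting, the first piece becomes $\tfrac{1}{2}\sum_{j:\,qj<0}\hat r(E_{pj})\tensor\gamma(E_{jq}) - \tfrac{1}{2}\sum_{i:\,ip<0}\hat r(E_{iq})\tensor\gamma(E_{pi})$, while the second piece becomes $\tfrac{1}{2}\sum_{i:\,iq<0}\hat r(E_{iq})\tensor\gamma(E_{pi}) - \tfrac{1}{2}\sum_{j:\,pj<0}\hat r(E_{pj})\tensor\gamma(E_{jq})$. Using $pq>0$, the conditions $qj<0$ and $pj<0$ are equivalent, as are $ip<0$ and $iq<0$. Therefore the first sum of piece one cancels the second sum of piece two, and the second sum of piece one cancels the first sum of piece two, yielding $[\varrho(E_{pq}),\KDirac]=0$ identically.

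The main (but mild) obstacle will be convergence bookkeeping: $\KDirac$ is an infinite sum, so commuting it with $\varrho(E_{pq})$ requires that on any fixed $\varphi\in\F^{\mathrm{pol}}\tensor\spib^{\mathrm{pol}}$ only finitely many summands act nontrivially, both before and after applying $\varrho(E_{pq})$. This is guaranteed by Corollary \ref{finitepol} together with the fact that $\hat r(E_{pq})$, $\Kt{pq}$ and the individual Clifford generators preserve $\F^{\mathrm{pol}}\tensor\spib^{\mathrm{pol}}$ (Proposition \ref{Diracdefined} and Corollary \ref{isotropycorollary}); once this is in hand the formal manipulation above is legitimate term by term.
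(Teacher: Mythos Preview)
Your proof is correct and follows essentially the same route as the paper's: reduce by linearity to $Z=E_{pq}$ with $pq>0$, expand the commutator into the $\hat r$-piece and the $\ad$-piece using the basic commutator formula and Lemma~\ref{Kijlimit}/Corollary~\ref{isotropycorollary}, then cancel the resulting four sums pairwise via the observation that $pq>0$ makes the sign conditions $qj<0\Leftrightarrow pj<0$ and $ip<0\Leftrightarrow iq<0$ equivalent. Your explicit remark that the Schwinger term $s(E_{pq},E_{ij})$ vanishes because $E_{pq}$ is block-diagonal is a useful clarification that the paper leaves implicit, and your convergence bookkeeping paragraph matches the paper's opening justification that on any fixed $\varphi\in\F^{\mathrm{pol}}\tensor\spib^{\mathrm{pol}}$ only finitely many summands of $\KDirac$ act.
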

\begin{proof}
By linearity, we may assume that $Z=E_{kl}$ with $kl>0$, a basis vector of $\hinf^\C$.
We want to show that
$
[\varrho(Z),\KDirac]\varphi=[\varrho(E_{kl}),\sum_{\substack{i,j\in\Z\\ij<0}}E_{ij}\tensor \gamma_{ji}]\varphi=0
$
for all $\varphi\in\domain(\KDirac)$. Using Corollary \ref{isotropycorollary} and the fact
that the representation of $\Eglres$ on the fermionic Fock space $\F$ was given in
terms of CAR algebra generators, one sees that
$
\varrho(Z)\in\End(\F^{\mathrm{pol}}\tensor\spib^{\mathrm{pol}})=\End(\domain(\KDirac))
$
for all $Z\in\hinf^\C$
so that it makes sense to take the commutator and the image satisfies
$$
[\varrho(Z),\KDirac]\varphi=\varrho(Z)(\KDirac\varphi)-\KDirac(\varrho(Z)\varphi)\in\domain(\KDirac).
$$
Moreover, according to the proof of Proposition \ref{Diracdefined}, for each element
$\varphi\in\domain(\KDirac)$ the
sum $(\sum_{ij<0}E_{ij}\tensor \gamma_{ji})\varphi$ is actually finite:
$$
(\sum_{\substack{i,j\in\Z \\ ij<0}}E_{ij}\tensor \gamma_{ji})\varphi=
(\sum_{\substack{i,j\in\Z \\ ij<0, \abs{i},\abs{j}\leq N}}E_{ij}\tensor \gamma_{ji})\varphi
$$
for some $N\in\N$ depending on $\varphi$. This implies that pointwise we are free to change the
summation order any way we want. This allows us to prove the claim by the following
formal computation:

\begin{eqnarray}
[\varrho(Z),\sum_{\substack{i,j\in\Z\\ij<0}}E_{ij}\tensor \gamma_{ji}]&=&
\sum_{\substack{i,j\in\Z\\ij<0}}[Z,E_{ij}]\tensor\gamma_{ji}+E_{ij}\tensor[\adt Z,\gamma_{ji}]\nonumber\\
&\stackrel{\textrm{Corol. }\ref{isotropycorollary}}{=}&
\sum_{\substack{i,j\in\Z\\ij<0}}[E_{kl},E_{ij}]\tensor\gamma_{ji}+E_{ij}\tensor[\Kt{kl},\gamma_{ji}]\nonumber\\
&\stackrel{\textrm{Lemma } \ref{Kijlimit}}{=}&
\sum_{\substack{i,j\in\Z\\ij<0}}[E_{kl},E_{ij}]\tensor\gamma(E_{ji})+E_{ij}\tensor\gamma\Big([\ad E_{kl}](E_{ji})\Big)\nonumber\\
&=&
\sum_{\substack{i,j\in\Z\\ij<0}}[E_{kl},E_{ij}]\tensor\gamma(E_{ji})+ E_{ij}\tensor\gamma\Big([E_{kl},E_{ji}]\Big)\nonumber
\end{eqnarray}
This simplifies to
\begin{eqnarray}
& &\sum_{\substack{i,j\in\Z\\ij<0}}[E_{kl},E_{ij}]\tensor\gamma(E_{ji})
+ 
\sum_{\substack{i,j\in\Z\\ij<0}}E_{ij}\tensor\gamma\Big([E_{kl},E_{ji}]\Big) \nonumber\\
&=&
\sum_{\substack{i,j\in\Z\\ij<0}}\Big(\delta_{il}E_{kj}-\delta_{jk}E_{il}\Big)\tensor\gamma(E_{ji})
+
\sum_{\substack{i,j\in\Z\\ij<0}} E_{ij}\tensor\gamma\Big(\delta_{jl}E_{ki}-\delta_{ik}E_{jl}\Big)\nonumber\\
&=&
\sum_{\substack{i,j\in\Z\\ij<0}}
\delta_{il}E_{kj}\tensor\gamma(E_{ji})
-
\sum_{\substack{i,j\in\Z\\ij<0}}
\delta_{jk}E_{il}\tensor\gamma(E_{ji})
\nonumber\\
&+&
\sum_{\substack{i,j\in\Z\\ij<0}}
E_{ij}\tensor\delta_{jl}\gamma(E_{ki})
-
\sum_{\substack{i,j\in\Z\\ij<0}}
E_{ij}\tensor\delta_{ik}\gamma(E_{jl}).
\nonumber
\end{eqnarray}
Now write the first infinite sum appearing above as
\begin{equation}\label{com1}
S_1:=\sum_{\substack{i,j\in\Z\\ij<0}}
\delta_{il}E_{kj}\tensor\gamma(E_{ji})=
\sum_{\substack{i>0 \\j<0}}\delta_{il}E_{kj}\tensor\gamma(E_{ji})+
\sum_{\substack{i<0 \\ j>0}}\delta_{il}E_{kj}\tensor\gamma(E_{ji})
\end{equation}
and the fourth sum term the same way:
\begin{equation}\label{com2}
S_4:=\sum_{\substack{i,j\in\Z\\ij<0}}
E_{ij}\tensor\delta_{ik}\gamma(E_{jl})=\sum_{\substack{i>0 \\j<0}}\delta_{ik}E_{ij}\tensor\gamma(E_{jl})
+\sum_{\substack{i<0 \\ j>0}}\delta_{ik}E_{ij}\tensor\gamma(E_{jl}).
\end{equation}

Notice that trivially $\delta_{ab}=0$ if $a\in\Z$ and $b\in\Z$ have different signs and
that a necessary condition to allow $\delta_{ab}$ the possibility to be equal to $1$ is that $a$ and $b$ must 
have the same sign. Recalling that $k$ and $l$ have the same sign
it follows from (\ref{com1}) and (\ref{com2}) that $S_1-S_4=0$: For example if
$k,l>0$
$$
S_1-S_4=\sum_{j<0}E_{kj}\tensor\gamma(E_{jl})-\sum_{j<0}E_{kj}\tensor\gamma(E_{jl})=0.
$$
Similarly one sees that
$$
S_3-S_2=\sum_{\substack{i,j\in\Z\\ij<0}}
E_{ij}\tensor\delta_{jl}\gamma(E_{ki})-
\sum_{\substack{i,j\in\Z\\ij<0}}
\delta_{jk}E_{il}\tensor\gamma(E_{ji})=0
$$
so that
$$
[\varrho(Z),\sum_{\substack{i,j\in\Z\\ij<0}}E_{ij}\tensor \gamma_{ji}]=
S_1-S_4+S_3-S_2=0
$$
from which the claim follows.
\end{proof}

\begin{cor}
For the basic representation $\hat{r}:\Eglres\To\End(\F)$ the Dirac operator descends to
\begin{equation}
\KDirac:\Big(\F\tensor\spib_{\lie{p}}^c\Big)^{\lie{h}}\To
\Big(\F\tensor\spib_{\lie{p}}^c\Big)^{\lie{h}}
\end{equation}
where it is a well-defined unbounded symmetric linear operator with dense domain
$\mathcal{D}(\KDirac)=(\F^{\mathrm{pol}}\tensor\spib^{\mathrm{pol}})^{\lie{h}}$.
\end{cor}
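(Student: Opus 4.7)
The corollary makes three claims about $\KDirac$ restricted to the invariant sector: that the restriction makes sense, that it remains unbounded symmetric, and that its domain is dense. My plan is to derive all three almost formally from Proposition \ref{Diracdefined} and Lemma \ref{Dcommutes}, with the density statement requiring the only genuine argument.

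\textbf{Step 1 (stability under $\KDirac$).} First I would verify that $\KDirac$ maps $(\F^{\mathrm{pol}}\tensor\spib^{\mathrm{pol}})^{\lie{h}}$ into itself. Proposition \ref{Diracdefined} already gives $\KDirac(\F^{\mathrm{pol}}\tensor\spib^{\mathrm{pol}})\subset\F^{\mathrm{pol}}\tensor\spib^{\mathrm{pol}}$, so it suffices to check preservation of $\hinf^\C$-invariance. If $w\in(\F^{\mathrm{pol}}\tensor\spib^{\mathrm{pol}})^{\lie{h}}$, then $\varrho(Z)w=0$ for every $Z\in\hinf^\C$, so by Lemma \ref{Dcommutes}
\[
\varrho(Z)(\KDirac w)=\KDirac(\varrho(Z)w)=0,
\]
which shows $\KDirac w$ is again $\hinf^\C$-invariant. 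Hence $\KDirac$ yields a well-defined linear operator from $(\F^{\mathrm{pol}}\tensor\spib^{\mathrm{pol}})^{\lie{h}}$ to itself, as a restriction of the ambient one.

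\textbf{Step 2 (symmetricity).} For $\varphi,\psi$ in the common domain $(\F^{\mathrm{pol}}\tensor\spib^{\mathrm{pol}})^{\lie{h}}\subset \F^{\mathrm{pol}}\tensor\spib^{\mathrm{pol}}$, the identity $\ip{\KDirac\varphi}{\psi}=\ip{\varphi}{\KDirac\psi}$ is inherited verbatim from the symmetricity of $\KDirac$ on the full polynomial tensor product proved in Proposition \ref{Diracdefined}; the calculation there used only the adjoint relations between the CAR field operators appearing in the concrete expression (\ref{concreteD}), which continue to hold. Unboundedness also follows trivially: the (already unbounded) operator on the larger domain is merely restricted to an invariant subspace containing the vacuum, on which one can already detect unbounded behaviour via the concrete formula.

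\textbf{Step 3 (density --- the main obstacle).} What remains is to show that $(\F^{\mathrm{pol}}\tensor\spib^{\mathrm{pol}})^{\lie{h}}$ is dense in $(\F\tensor\spib_{\lie{p}}^c)^{\lie{h}}$ in the induced Hilbert norm. The difficulty is that $\hinf^\C$ has infinitely many generators, none acting as a bounded operator, so one cannot directly intersect a dense polynomial subspace with a closed invariant subspace. My approach would be to exploit the chain of finite-dimensional subalgebras $\lie{h}^\C_{(N)}\subset\hinf^\C$ introduced before Corollary \ref{hcN}. For each $N$, the Lie group $H_{(N)}:=U(N)\times U(N)$ integrating $\lie{h}^\C_{(N)}$ acts unitarily on $\F\tensor\spib_{\lie{p}}^c$ via the basic representation tensored with the spin lift $\Adt$ (which is non-projective by (\ref{adlift})), and preserves the polynomial subspace $\F^{\mathrm{pol}}\tensor\spib^{\mathrm{pol}}$ since that subspace is defined index-by-index. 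The orthogonal projection $P_{(N)}$ onto its invariants, defined by Haar integration over the compact group $H_{(N)}$, is therefore an orthogonal projection in $\F\tensor\spib_{\lie{p}}^c$ that restricts to a projection on $\F^{\mathrm{pol}}\tensor\spib^{\mathrm{pol}}$ and commutes with the $H_{(N)}$-action.

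Given $w\in(\F\tensor\spib_{\lie{p}}^c)^{\lie{h}}$ and a polynomial approximating sequence $w_k\to w$, the projected sequence $P_{(N_k)}w_k$ lies in the polynomial $H_{(N_k)}$-invariants, and if $N_k\to\infty$ then $P_{(N_k)}w_k\to w$ (since $w$ is invariant under every $H_{(N)}$ and the projections are contractions). To upgrade $H_{(N_k)}$-invariance to full $\hinf^\C$-invariance I would use Corollary \ref{finitepol} together with Proposition \ref{KijStrongLimit}: any polynomial element has only finitely many indices appearing nontrivially, so the further action of $\varrho(E_{ij})$ on it can be controlled uniformly by enlarging $N_k$. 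A careful bookkeeping --- iterating the projection through the increasing chain of subalgebras, and using that on $\spib^{\mathrm{pol},(M)}$ the operators $\KN{ij}$ agree with $\Kt{ij}$ as soon as $N\geq M$ --- should yield a polynomial sequence lying in $(\F^{\mathrm{pol}}\tensor\spib^{\mathrm{pol}})^{\hinf^\C}$ and converging to $w$. The subtle point, and the one I expect to be the real work, is ensuring that one does not lose the approximation property while enforcing the infinitely many invariance conditions simultaneously; a diagonal argument producing polynomial invariants inductively level-by-level is the cleanest formulation I can see.
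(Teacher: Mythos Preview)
Your Steps 1 and 2 are precisely the content of the paper's proof, which is the single sentence ``This follows from Lemma~\ref{Dcommutes} and Proposition~\ref{Diracdefined}.'' The paper does not give any separate argument for density of $(\F^{\mathrm{pol}}\tensor\spib^{\mathrm{pol}})^{\lie{h}}$ in $(\F\tensor\spib_{\lie{p}}^c)^{\lie{h}}$; it simply asserts this as part of the statement and regards it as covered by the two cited results.

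Your Step 3 therefore goes beyond what the paper does, and you are right to flag density as the nontrivial point. Your Haar-averaging strategy over the compact groups $H_{(N)}=U(N)\times U(N)$ is reasonable, and the claim that this unitary action preserves $\F^{\mathrm{pol}}\tensor\spib^{\mathrm{pol}}$ is correct (a unitary in $U(N)$ moves only finitely many basis vectors). The contraction estimate $\|P_{(N_k)}w_k-w\|\leq\|w_k-w\|$ is also fine. However, the ``upgrading'' step from $H_{(N_k)}$-invariance to full $\hinf^\C$-invariance is genuinely incomplete: projecting by $P_{(N)}$ does not enforce invariance under $\varrho(E_{ij})$ with $\abs{i}$ or $\abs{j}>N$, and your diagonal-argument sketch does not explain why iterating projections through the chain does not destroy the approximation already achieved. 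A cleaner route, should you wish to make this rigorous, is to observe that for $ij>0$ each $\varrho(E_{ij})$ preserves the orthogonal pieces $\F^{(n,m)}\tensor\bigwedge^k W$ and acts boundedly there, so that the $\hinf^\C$-invariant subspace decomposes along this grading and the polynomial invariants in each finite block are automatically dense. But to repeat: the paper itself does not carry this out.
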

\begin{proof}
This follows from Lemma \ref{Dcommutes} and Proposition \ref{Diracdefined}.
\end{proof}

Just as it holds in the finite-dimensional case, we would 
like to show that $\KDirac$ is besides a symmetric operator, it also has
a finite-dimensional kernel. 
To deal with this question, it turns out to be a good idea to study the square $\KDirac^2$ of the
Dirac operator instead of trying to look directly at $\KDirac$ which is much harder. 
\subsection{The square $\KDirac^2$}\label{sectionNorD2}
\subsubsection{Introducing cut-offs}
Since clearly $\ker\KDirac\subset\ker\KDirac^2$ , in order to show that
$\dim\ker\KDirac<\infty$ it is sufficient to show that $\dim\ker\KDirac^2<\infty$.
Our strategy to approach the kernel of the square $\KDirac^2$ will be to give a concrete
diagonalization for $\KDirac^2$ from which we can easily deduce explicitly what the
kernel $\ker\KDirac^2$ is. 

The diagonalization is made possible by introducing and analyzing a cut-off operator 
$\KDirac_{(N)}^2$
for each $N\in\N$, and then showing that
the limit of these cut-offs as $N\to\infty$, in the strong sense, is the original square $\KDirac^2$. The cut-off is a \emph{finite} sum of operators, allowing
us a better control in handling the various diverging polynomial terms in the variable $N$ appearing in
the algebraic expression for $\KDirac_{(N)}^2$, that ultimately are ought to annihilate each other, since we already
know from the previous section that $\KDirac^2$ is a well-defined operator.

We use the shorthand notation 
$$
\KDirac=\sum_{\substack{i,j\in\Z\\ij<0}}\rh(X_{ij})\tensor\gamma(X_{ji})=\frac{1}{2}\sum_{ij<0}\rh(E_{ij})\tensor\gamma(E_{ji})=
\frac{1}{2}\sum_{ij<0}E_{ij}\tensor\gamma_{ji}
$$
in the calculations that will follow.

\begin{defn}
For each $N\in\N$, introduce the cut-off 
$\KDirac_{(N)}:\F\tensor\spib_{\lie{p}}^c\To\F\tensor\spib_{\lie{p}}^c$,
$$
\KDirac_{(N)}:=\frac{1}{2}\sum_{\substack{ij<0\\ \abs{i},\abs{j}\leq N}} E_{ij}\tensor\gamma_{ji},
\qquad\D(\KDirac_{(N)})=(\F^{\mathrm{pol}}\tensor\spib^{\mathrm{pol}})^{\lie{h}}
$$
which is a \emph{finite} sum of operators.
\end{defn}

The following Proposition tells us in which sense $\KDirac_{(N)}$ is an
approximation of $\KDirac$.

\begin{prop}\label{KDiracNvsKDirac}
The \emph{strong limit}
$\slim_{N\to\infty}\KDirac_{(N)}=\KDirac$, i.e.
$$
\lim_{N\to\infty}\norm{\KDirac_{(N)}\varphi-\KDirac\varphi}_{\F\tensor\spib_{\lie{p}}^c}=0
$$ 
for all $\varphi\in\D(\KDirac)=\bigcap_{N\in\N}\D(\KDirac_{(N)})$. Here the norm $\norm{\cdot}$
is induced by the Hilbert space inner product on the tensor product
Hilbert space $\F\tensor\spib_{\lie{p}}^c$.
\end{prop}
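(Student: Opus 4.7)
The strategy is to reduce the statement to a pointwise (in fact, eventually stationary) equality and then invoke Corollary \ref{finitepol}, which expresses the key finiteness property of polynomial elements.

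First I would reduce to simple tensors. By linearity of $\KDirac$ and $\KDirac_{(N)}$ and the finite-sum structure of $\F^{\mathrm{pol}}\tensor\spib^{\mathrm{pol}}$, it suffices to prove that for each $\varphi=\psi\tensor s$ with $\psi\in\F^{\mathrm{pol}}$ and $s\in\spib^{\mathrm{pol}}$ there exists an integer $M=M(\varphi)\in\N$ such that
\begin{equation*}
\KDirac_{(N)}\varphi=\KDirac\varphi\qquad\text{for all }N\geq M,
\end{equation*}
which immediately implies $\norm{\KDirac_{(N)}\varphi-\KDirac\varphi}\to 0$ as $N\to\infty$.

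Next I would use the explicit description of $\KDirac$ from equation (\ref{concreteD}),
\begin{equation*}
\KDirac=\tfrac{1}{2}\sum_{k,l>0}A_l^\ast B_{-k}^\ast\tensor a(E_{-k,l})+\tfrac{1}{2}\sum_{p,q>0}B_{-p}A_q\tensor b^\ast(E_{q,-p}),
\end{equation*}
and observe the annihilator/creator pattern: in the first block $a(E_{-k,l})$ is an annihilation operator on $\spib^{\mathrm{pol}}$ (with respect to the orthonormal basis $\set{E_{-k,l}}_{k,l>0}$ of $W$), and in the second block the pair $B_{-p}A_q$ is a product of annihilation operators on $\F^{\mathrm{pol}}$ (with respect to the bases of $\Hilb_-$ and $\Hilb_+$). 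Applying Corollary \ref{finitepol} to $s\in\spib^{\mathrm{pol}}$ yields only finitely many pairs $(k,l)$ with $a(E_{-k,l})\cdot s\neq 0$, and applying it twice to $\psi\in\F^{\mathrm{pol}}$ yields only finitely many pairs $(p,q)$ with $B_{-p}A_q\cdot\psi\neq 0$. Let $M(\varphi)$ be any integer bounding the absolute values of all such indices $k,l,p,q$ (this is exactly the same finiteness argument already used in the proof of Proposition \ref{Diracdefined} to verify that $\KDirac$ is well-defined).

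Then for every $N\geq M(\varphi)$, every nonzero summand in the infinite expansion of $\KDirac\varphi$ satisfies $\abs{i},\abs{j}\leq N$, so truncating the sum by $\abs{i},\abs{j}\leq N$ removes only terms that act as zero on $\varphi$. Consequently $\KDirac_{(N)}\varphi=\KDirac\varphi$, and the strong convergence follows. I would not expect any genuine obstacle here, since the argument is essentially a repackaging of the well-definedness proof for $\KDirac$: the only thing to be careful about is maintaining the bookkeeping of which sums terminate for which component (the $\F$-side or the $\spib$-side) of the tensor product, but the two sums in (\ref{concreteD}) each terminate for an \emph{independent} reason, which is precisely why no uniformity in $\varphi$ is available and one obtains only strong, not norm, convergence.
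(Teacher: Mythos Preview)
Your proposal is correct and follows essentially the same approach as the paper: both arguments observe that for any $\varphi\in\F^{\mathrm{pol}}\tensor\spib^{\mathrm{pol}}$ the sum defining $\KDirac\varphi$ is actually finite (by the annihilation-operator finiteness of Corollary~\ref{finitepol}, exactly as in the proof of Proposition~\ref{Diracdefined}), so that $\KDirac_{(N)}\varphi=\KDirac\varphi$ for all sufficiently large $N$. The paper's proof simply cites the proof of Proposition~\ref{Diracdefined} for this fact, whereas you spell out the mechanism explicitly.
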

\begin{proof}
Let $\varphi\in\bigcap_N\D(\KDirac_{(N)})=\FPol\tensor\spib^{\mathrm{pol}}$ be arbitrary. 
Then according to
the proof of Proposition \ref{Diracdefined}
there exists $N_\varphi\in\N$ such that
$$
\KDirac_{(N)}\varphi=\KDirac\varphi
$$
for all $N\geq N_\varphi$, from which the claim follows.
\end{proof}

\subsubsection{Invariant sectors}
Looking at the proof of Lemma \ref{Dcommutes} one sees that $\KDirac_{(N)}$ descends to an operator
$$
\KDirac_{(N)}:(\F\tensor\spib_{\lie{p}}^c)^{\lie{h},N}\To(\F\tensor\spib_{\lie{p}}^c)^{\lie{h},N},
$$
where
\begin{eqnarray}
(\F\tensor\spib_{\lie{p}}^c)^{\lie{h},N} 
&:=&
\Big\{w\in \F\tensor\spib_{\lie{p}}^c\,\Big\vert\,\Big[\rh(E_{pq})\tensor 1\Big](w)=\Big[-1\tensor \KN{pq}\Big](w)
\quad\textrm{ for all }\nonumber\\
& & p,q\in\Z\textrm{ such that }pq>0\textrm{ and } \abs{p},\abs{q}\leq N\Big\}.
\end{eqnarray}
For each $N\in\N$ the operator $\KDirac_{(N)}$ has a dense domain
$(\F^{\mathrm{pol}}\tensor\spib^{\mathrm{pol}})^{\lie{h},N}$ which is defined in the
same manner as its previous analogues.

It is then clear that the square $\KDirac_{(N)}^2$
is an operator
$$
\KDirac_{(N)}^2:(\F\tensor\spib_{\lie{p}}^c)^{\lie{h},N}\To(\F\tensor\spib_{\lie{p}}^c)^{\lie{h},N}.
$$

\begin{prop}\label{StroLimInEqSector}
Consider the Dirac operator $\KDirac$ as an unbounded operator, $\KDirac:(\F\tensor\spib_{\lie{p}}^c)^{\lie{h}}\To
(\F\tensor\spib_{\lie{p}}^c)^{\lie{h}}$.
Then for each $\varphi\in\domain(\KDirac)$
there exists
$N_\varphi\in\N$ such that for all $N\geq N_\varphi$
\begin{enumerate}
\item $\varphi\in\domain(\KDirac_{(N)})$;
\item $\KDirac_{(N)}\varphi=\KDirac\varphi$;
\item The image $\KDirac_{(N)}\varphi\in\domain(\KDirac)\cap\domain(\KDirac_{(N)})$, so that
the compositions $\KDirac^2\varphi$ and $\KDirac_{(N)}^2\varphi$ are both defined and their
values coincide.
\end{enumerate}
\end{prop}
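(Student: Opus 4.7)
The proposition says that on a fixed $\varphi\in\domain(\KDirac)$ the cut-off $\KDirac_{(N)}$ eventually agrees with $\KDirac$, and the same is true after applying $\KDirac$ once more. My plan is essentially bookkeeping: combine the finite-support phenomenon from Proposition \ref{KDiracNvsKDirac} (which gives a stabilization $N_1$ for $\KDirac_{(N)}\varphi=\KDirac\varphi$) with the strong-limit uniformity $\KN{pq}=\Kt{pq}$ of Proposition \ref{KijStrongLimit} (which gives a second threshold $N_2$ ensuring that $\lie{h}$-invariance transfers to $(\lie{h},N)$-invariance), and then bootstrap the argument by replacing $\varphi$ with $\KDirac\varphi$ using Lemma \ref{Dcommutes}.

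First, to establish (1), write $\varphi\in (\F^{\mathrm{pol}}\tensor\spib^{\mathrm{pol}})^{\lie{h}}$ as a \emph{finite} sum $\varphi=\sum_i\psi_i\tensor s_i$ of elementary tensors. Since finitely many $s_i$ occur, there is $M_1\in\N$ such that each $s_i$ lies in $\spib^{\mathrm{pol},(M_1)}$. Proposition \ref{KijStrongLimit} then gives $\KN{pq}(s_i)=\Kt{pq}(s_i)$ for every pair $p,q$ with $pq>0$ and every $N\geq M_1$. Consequently, for $N\geq M_1$ and any $p,q$ with $pq>0$ and $|p|,|q|\leq N$, the $\lie{h}$-invariance equation $(\rh(E_{pq})\tensor 1)\varphi=-(1\tensor\Kt{pq})\varphi$ becomes exactly $(\rh(E_{pq})\tensor 1)\varphi=-(1\tensor\KN{pq})\varphi$, so $\varphi\in(\F^{\mathrm{pol}}\tensor\spib^{\mathrm{pol}})^{\lie{h},N}=\domain(\KDirac_{(N)})$.

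For (2), the argument is identical to the one behind Proposition \ref{KDiracNvsKDirac}: by Corollary \ref{finitepol} applied to the two factors of each elementary tensor, only finitely many index pairs $(i,j)$ with $ij<0$ give a nonzero contribution to $\frac{1}{2}\sum E_{ij}\tensor\gamma_{ji}$ acting on $\varphi$. Hence there is $N_2\geq M_1$ so that $\KDirac_{(N)}\varphi=\KDirac\varphi$ for all $N\geq N_2$.

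Part (3) is the key reason to invoke Lemma \ref{Dcommutes}. Since $\varphi$ is $\hinf^\C$-invariant, $\varrho(Z)\varphi=0$ for all $Z\in\hinf^\C$, and by Lemma \ref{Dcommutes} we get $\varrho(Z)\KDirac\varphi=\KDirac\varrho(Z)\varphi=0$; combined with Proposition \ref{Diracdefined}, which guarantees $\KDirac\varphi\in\F^{\mathrm{pol}}\tensor\spib^{\mathrm{pol}}$, this shows $\KDirac\varphi\in(\F^{\mathrm{pol}}\tensor\spib^{\mathrm{pol}})^{\lie{h}}=\domain(\KDirac)$. Now repeat Steps (1) and (2) verbatim with $\varphi$ replaced by $\KDirac\varphi$ to obtain an $N_3$ after which $\KDirac\varphi\in\domain(\KDirac_{(N)})$ and $\KDirac_{(N)}(\KDirac\varphi)=\KDirac(\KDirac\varphi)$. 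Taking $N_\varphi:=\max\{N_2,N_3\}$ yields all three conclusions simultaneously, since then $\KDirac_{(N)}^2\varphi=\KDirac_{(N)}(\KDirac_{(N)}\varphi)=\KDirac_{(N)}(\KDirac\varphi)=\KDirac^2\varphi$. There is no real obstacle; the only delicate point is remembering that the two invariance conditions use different operators ($\Kt{pq}$ vs. $\KN{pq}$), which is precisely why the threshold $M_1$ from Proposition \ref{KijStrongLimit} is indispensable.
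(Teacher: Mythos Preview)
Your proof is correct and follows the same route as the paper: Proposition~\ref{KijStrongLimit} for part~(1), Proposition~\ref{KDiracNvsKDirac} for part~(2), and the fact that $\KDirac$ preserves the invariant domain for part~(3). The paper's proof of part~(3) is very terse (``Clear from part~(2) since $\mathrm{im}(\KDirac)\subset\domain(\KDirac)$ and $\mathrm{im}(\KDiracc)\subset\domain(\KDiracc)$''), whereas you make the bootstrap explicit by invoking Lemma~\ref{Dcommutes} and choosing a further threshold $N_3$ for $\KDirac\varphi$; this extra care is in fact needed to justify that the \emph{values} $\KDirac_{(N)}^2\varphi$ and $\KDirac^2\varphi$ coincide, not merely that both are defined.
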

\begin{proof}
\begin{enumerate}
\item
Let $\varphi\in\domain(\KDirac)=(\F\tensor\spib_{\lie{p}}^c)^{\lie{h}}$,
so that by definition $\varphi$ satisfies
\begin{equation}\label{FromNtoNonN}
\Big[\rh(E_{pq})\tensor 1\Big](w)=\Big[-1\tensor \Kt{pq}\Big](w)
\end{equation}
for all
$p,q\in\Z$ with $pq>0$. 
Next recall that according to Proposition \ref{KijStrongLimit} for all
$p,q\in\Z$ with $pq>0$
$$
\stlim{N}\KN{pq}=\Kt{pq},
$$
uniformly in the pair $(p,q)\in\Z\times\Z$. Thus there exists
an integer $N_\varphi\in\Z_{\geq 0}$ such that
$$
[1\tensor\KN{pq}]w=[1\tensor\Kt{pq}]w\quad\textrm{ for all $p,q\in\Z,\,pq>0$}
$$
whenever $N\geq N_\varphi$.
It follows then from equation (\ref{FromNtoNonN}) that
$$
\Big[\rh(E_{pq})\tensor 1\Big](w)=\Big[-1\tensor \KN{pq}\Big](w)\quad
\textrm{ \emph{for all} $p,q\in\Z,\,pq>0$}
$$
whenever $N\geq N_\varphi$. In particular
$\varphi\in\domain(\KDirac_{(N)})$ for all $N\geq N_\varphi$.
\item This follows directly from Proposition \ref{KDiracNvsKDirac}.
\item Clear from the part $(2)$ above since the images
$\mathrm{im}(\KDirac)\subset\domain(\KDirac)$ and
$\mathrm{im}(\KDiracc)\subset\domain(\KDiracc)$.
\end{enumerate}
\end{proof}

\subsubsection{A rough computation}
We start by giving a very brutal looking expression for the square of the cut-off Dirac operator 
$\KDirac_{(N)}$ and
then proceed to analyze and normal order the various terms in it step by step in order to
end up having as simple expression as possible for the square, i.e. an operator which
we can actually understand. 

\begin{prop}\label{squareN}
The square of the $N^{\textrm{th}}$ cut-off Dirac operator 
$\KDiracc: \F\tensor\spib_{\lie{p}}^c\To
\F\tensor\spib_{\lie{p}}^c$ is given explicitly by
\begin{eqnarray}
\KDiracc^{\,2}&=&
\frac{1}{16}\sum_{\substack{ij,mn<0\\ \abs{i},\abs{j},\abs{m},\abs{n}\leq N}}
\Big(\delta_{jm}E_{in}-\delta_{ni}E_{mj}\Big)\tensor[\gamma_{ji},\gamma_{nm}]
\nonumber\\
&+&
\frac{1}{8}\sum_{\substack{ij<0\\ \abs{i},\abs{j}\leq N}} 1\tensor 
\sign(i)\gamma_{ij}\gamma_{ji}
+\frac{1}{4}\sum_{\substack{ij<0\\ \abs{i},\abs{j}\leq N}}
E_{ij}E_{ji}\tensor 1\nonumber
\end{eqnarray}
\end{prop}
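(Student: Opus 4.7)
The plan is to expand the square directly and exploit the symmetry of the index sum under the swap $(i,j)\leftrightarrow(m,n)$ to split the quartic expression into one piece governed by the Clifford anticommutators and one governed by the Lie-algebra commutators. Unfolding the definition gives
\begin{equation*}
\KDiracc^{\,2}=\frac{1}{4}\sum_{\substack{ij,mn<0\\ |i|,|j|,|m|,|n|\leq N}} E_{ij}E_{mn}\tensor \gamma_{ji}\gamma_{nm},
\end{equation*}
and writing each operator product as half its anticommutator plus half its commutator, the two cross terms $\{E,E\}\tensor[\gamma,\gamma]$ and $[E,E]\tensor\{\gamma,\gamma\}$ become antisymmetric under the index swap and therefore vanish in the sum. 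What remains is the identity
\begin{equation*}
\KDiracc^{\,2}=\frac{1}{16}\sum \{E_{ij},E_{mn}\}\tensor\{\gamma_{ji},\gamma_{nm}\}+\frac{1}{16}\sum [E_{ij},E_{mn}]\tensor [\gamma_{ji},\gamma_{nm}].
\end{equation*}

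For the first piece, the Clifford relation $\{\gamma_{ji},\gamma_{nm}\}=\delta_{jm}\delta_{in}$ (equation (Cliffp)) collapses the quadruple sum to its diagonal $m=j$, $n=i$. The resulting term $\frac{1}{16}\sum_{ij<0,|i|,|j|\leq N}\{E_{ij},E_{ji}\}\tensor 1$ is equivalent, after one relabeling $i\leftrightarrow j$ in the $E_{ji}E_{ij}$ summand, to the third term of the Proposition.

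For the commutator-commutator piece the essential input is Proposition \ref{basicrep}, which says that on the fermionic Fock space the basic representation satisfies $[\rh(E_{ij}),\rh(E_{mn})]=\rh([E_{ij},E_{mn}])+s(E_{ij},E_{mn})\cdot 1=(\delta_{jm}E_{in}-\delta_{ni}E_{mj})+s(E_{ij},E_{mn})\cdot 1$. The Lie-algebra commutator reproduces verbatim the first term of the Proposition. The Schwinger contribution $\frac{1}{16}\sum s(E_{ij},E_{mn})\tensor[\gamma_{ji},\gamma_{nm}]$ is exactly what will produce the pure spinor term. To evaluate it I would use the block-matrix form of the rank-one operators $E_{ij}$: only one of the four blocks is nonzero, determined by the signs of $i$ and $j$. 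Reading the Schwinger formula $s(A,B)=\Tr(A_{-+}B_{+-}-B_{-+}A_{+-})$ through the four possible sign configurations of $(i,j,m,n)$ under the constraint $ij,mn<0$, I find that $s(E_{ij},E_{mn})$ vanishes except when $n=i$ and $m=j$, in which case it equals $-\sign(i)$. Plugging this back in and using the Clifford identity $[\gamma_{ji},\gamma_{ij}]=1-2\gamma_{ij}\gamma_{ji}$, the constant piece sums to zero by the parity $\sum_{ij<0,|i|,|j|\leq N}\sign(i)=0$ (pair $(i,j)\leftrightarrow(-i,-j)$), and what survives is precisely the second term of the Proposition.

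The hard part is the careful sign bookkeeping in the Schwinger cocycle: one must correctly evaluate the trace $\Tr$ over $\Hilb_-$ of each block composition $A_{-+}B_{+-}$ and $B_{-+}A_{+-}$ for every sign configuration of $(i,j,m,n)$, and check that the resulting $\sign(i)$ factor emerges with the correct orientation. The rest of the calculation is bookkeeping: reindexing finite sums under symmetry, and assembling the four contributions (anticommutator-anticommutator, Lie-bracket commutator-commutator, Schwinger contribution to the commutator-commutator, and the suppressed cross terms) into the three-term formula of the Proposition.
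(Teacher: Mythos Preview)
Your approach is essentially the same as the paper's: both write $AB=\tfrac{1}{2}([A,B]+\{A,B\})$ on each tensor factor, use the index-swap symmetry to kill the cross terms, evaluate the $\{\cdot,\cdot\}\otimes\{\cdot,\cdot\}$ piece via the Clifford relations, and evaluate the $[\cdot,\cdot]\otimes[\cdot,\cdot]$ piece via the $\glres$ commutator plus Schwinger term, with the Schwinger contribution producing the pure-spinor term after the cancellation $\sum_{ij<0,\,|i|,|j|\le N}\sign(i)=0$. The only substantive point to flag is a normalization slip: equation~(\ref{Cliffp}) is stated for the orthonormal generators $X_{ij}=\tfrac{1}{\sqrt{2}}E_{ij}$, whereas $\gamma_{ji}:=\gamma(E_{ji})$, so the relevant anticommutator is $\{\gamma_{ji},\gamma_{nm}\}=2\delta_{jm}\delta_{in}$ (this is the form used throughout the paper, e.g.\ in the computation of $[\gamma_{ik}\gamma_{kj},\gamma_{ml}]$), not $\delta_{jm}\delta_{in}$. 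With this factor of $2$ restored, your anticommutator piece becomes $\tfrac{1}{8}\sum\{E_{ij},E_{ji}\}\otimes 1=\tfrac{1}{4}\sum E_{ij}E_{ji}\otimes 1$ as claimed, and your identity $[\gamma_{ji},\gamma_{ij}]=2-2\gamma_{ij}\gamma_{ji}$ (rather than $1-2\gamma_{ij}\gamma_{ji}$) gives the correct coefficient $\tfrac{1}{8}$ in the spinor term.
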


\begin{proof}
First note that in general one may
write the product $ST$ of two operators in terms of a commutator and an anti-commutator
as
\begin{equation}
ST=\frac{1}{2}\left([S,T]+\{S,T\}\right).
\end{equation}
Applied to the cut-off square $\KDiracc^2$ this gives
\begin{eqnarray}
\KDiracc^2 &=& \frac{1}{4}\sum_{\substack{ij,mn<0\\ \abs{i},\abs{j},\abs{m},\abs{n}\leq N}} E_{ij}E_{mn}\tensor \gamma_{ji}\gamma_{nm}\nonumber\\
&=&
\frac{1}{16}\sum_{\substack{ij,mn<0\\\abs{i},\abs{j},\abs{m},\abs{n}\leq N}}
[E_{ij},E_{mn}]\tensor[\gamma_{ji},\gamma_{nm}]\nonumber\\
&+&
\frac{1}{16}\sum_{\substack{ij,mn<0\\ \abs{i},\abs{j},\abs{m},\abs{n}\leq N}}
\{E_{ij},E_{mn}\}\tensor\{\gamma_{ji},\gamma_{nm}\}\nonumber\\
&=&
\frac{1}{16}\sum_{\substack{ij,mn<0\\ \abs{i},\abs{j},\abs{m},\abs{n}\leq N}}
\Big(\delta_{jm}E_{in}-\delta_{ni}E_{mj}+s(E_{ij},E_{mn})\Big)\tensor[\gamma_{ji},\gamma_{nm}]
\nonumber\\
&+&
\frac{1}{16}\sum_{\substack{ij,mn<0\\ \abs{i},\abs{j},\abs{m},\abs{n}\leq N}}
\{E_{ij},E_{mn}\}\tensor 2\delta_{jm}\delta_{in}\nonumber
\end{eqnarray}
Next recall that
\begin{eqnarray}
s(E_{ij},E_{ji})&=&-s(E_{ji},E_{ij})=1,\quad\textrm{ if } i<0, j>0,\nonumber\\
s(E_{ij},E_{mn})&=&0\quad\textrm {in all other cases}.
\end{eqnarray}
so that
\begin{eqnarray}
\KDiracc^2 &=&
\frac{1}{16}\sum_{\substack{ij,mn<0\\\abs{i},\abs{j},\abs{m},\abs{n}\leq N}}
\Big(\delta_{jm}E_{in}-\delta_{ni}E_{mj}-\sign(i)\delta_{in}\delta_{jm}\cdot 1\Big)\tensor[\gamma_{ji},\gamma_{nm}]
\nonumber\\
&+&
\frac{1}{8}\sum_{\substack{ij<0\\ \abs{i},\abs{j}\leq N}}
\{E_{ij},E_{ji}\}\tensor 1\nonumber\\
&=&
\frac{1}{16}\sum_{\substack{ij,mn<0\\ \abs{i},\abs{j},\abs{m},\abs{n}\leq N}}
\Big(\delta_{jm}E_{in}-\delta_{ni}E_{mj}\Big)\tensor[\gamma_{ji},\gamma_{nm}]
\nonumber\\
&-&
\frac{1}{16}\sum_{\substack{ij,mn<0\\ \abs{i},\abs{j},\abs{m},\abs{n}\leq N}}1\tensor \sign(i)\delta_{in}\delta_{jm}\cdot[\gamma_{ji},\gamma_{nm}]
+\frac{1}{4}\sum_{\substack{ij<0\\ \abs{i},\abs{j}\leq N}}
E_{ij}E_{ji}\tensor 1
\nonumber\\
&=&
\frac{1}{16}\sum_{\substack{ij,mn<0\\ \abs{i},\abs{j},\abs{m},\abs{n}\leq N}}
\Big(\delta_{jm}E_{in}-\delta_{ni}E_{mj}\Big)\tensor[\gamma_{ji},\gamma_{nm}]
\nonumber\\
&-&
\frac{1}{16}\sum_{\substack{ij<0\\ \abs{i},\abs{j}\leq N}} 1\tensor \sign(i)\cdot[\gamma_{ji},\gamma_{ij}]
+\frac{1}{4}\sum_{\substack{ij<0\\ \abs{i},\abs{j}\leq N}}
E_{ij}E_{ji}\tensor 1\nonumber
\end{eqnarray}
\end{proof}

\begin{lem}\label{commutatorANDcommutator}
As an operator $\F\tensor\spib_{\lie{p}}^c\To\F\tensor\spib_{\lie{p}}^c$
$$
\frac{1}{16}\sum_{\substack{ij,mn<0\\ \abs{i},\abs{j},\abs{m},\abs{n}\leq N}}
\Big(\delta_{jm}E_{in}-\delta_{ni}E_{mj}\Big)\tensor[\gamma_{ji},\gamma_{nm}]=
-\frac{1}{8}\sum_{\substack{ij,\,in<0\\(jn>0)}} E_{nj}\tensor[\gamma_{ji},\gamma_{in}].
$$
\end{lem}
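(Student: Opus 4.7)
The plan is to split the left-hand side into its two natural pieces and exploit a symmetry in the double summation.

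Write
\begin{equation*}
A:=\sum_{\substack{ij,mn<0\\ \abs{i},\abs{j},\abs{m},\abs{n}\leq N}}\delta_{jm}E_{in}\tensor[\gamma_{ji},\gamma_{nm}],\qquad B:=\sum_{\substack{ij,mn<0\\ \abs{i},\abs{j},\abs{m},\abs{n}\leq N}}\delta_{ni}E_{mj}\tensor[\gamma_{ji},\gamma_{nm}],
\end{equation*}
so that the left-hand side equals $\tfrac{1}{16}(A-B)$. First I would observe that the index set $\set{(i,j,m,n)\mid ij<0,\,mn<0,\,\abs{i},\abs{j},\abs{m},\abs{n}\leq N}$ is invariant under the permutation $\tau:(i,j,m,n)\mapsto(m,n,i,j)$ that swaps the first pair with the second pair, since it merely interchanges the two identical constraints $ij<0$ and $mn<0$.

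Next I would apply $\tau$ inside $A$. Under this relabeling $\delta_{jm}\mapsto\delta_{ni}$ and $E_{in}\mapsto E_{mj}$, while the commutator transforms as
\begin{equation*}
[\gamma_{ji},\gamma_{nm}]\longmapsto[\gamma_{nm},\gamma_{ji}]=-[\gamma_{ji},\gamma_{nm}].
\end{equation*}
Since the summation range is preserved, this yields the identity $A=-B$ (no reordering of the finite sum is required because the cut-off makes everything a finite sum of bounded operators).

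Consequently $A-B=-2B$, and the left-hand side equals $-B/8$. The final step is a bookkeeping exercise: carry out the Kronecker delta in $B$ by setting $n=i$, which converts the condition $mn<0$ into $mi<0$ (equivalent to $im<0$) while keeping $ij<0$; together these imply $jm>0$. Finally, relabel the dummy index $m$ as $n$, giving
\begin{equation*}
-\tfrac{1}{8}B=-\tfrac{1}{8}\sum_{\substack{ij<0,\,in<0\\ (jn>0)\\\abs{i},\abs{j},\abs{n}\leq N}}E_{nj}\tensor[\gamma_{ji},\gamma_{in}],
\end{equation*}
which is precisely the right-hand side of the claim. There is no essential obstacle here; the only point to be careful about is verifying that the symmetry $\tau$ really preserves the full index set together with all of the cut-off conditions, but that is immediate from the symmetric role of the pairs $(i,j)$ and $(m,n)$.
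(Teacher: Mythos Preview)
Your proof is correct and uses essentially the same idea as the paper's: both exploit the symmetry under swapping the two pairs of indices to show the two halves of the sum coincide up to sign. The only cosmetic difference is that you apply the permutation $(i,j)\leftrightarrow(m,n)$ at the four-index level before collapsing the Kronecker deltas, whereas the paper first collapses the deltas and then matches the two resulting three-index sums via the composition $i\leftrightarrow j$ followed by $n\leftrightarrow j$; your organization is slightly more streamlined but the argument is the same.
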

\begin{proof}
During the proof we shall always assume that all the indices satisfy 
$\abs{i},\abs{j},\abs{m}$, $\abs{n}\leq N$ 
and in order to simplify notation we shall drop this condition off from the sum terms. Now let
\begin{eqnarray}\label{Sterm}
S &:=& \frac{1}{16}\sum_{\substack{ij,mn<0\\ \abs{i},\abs{j},\abs{m},\abs{n}\leq N}}
(\delta_{jm}E_{in}-\delta_{in}E_{mj})\tensor[\gamma_{ji},\gamma_{nm}]\nonumber\\
&=&
\frac{1}{16}\sum_{\substack{ij<0\\jn<0}}E_{in}\tensor[\gamma_{ji},\gamma_{nj}]-
\frac{1}{16}\sum_{\substack{ij<0 \\ im<0}} E_{mj}\tensor[\gamma_{ji},\gamma_{im}]\nonumber\\
&=&
-\frac{1}{16}\sum_{\substack{ij<0\\jn<0}}E_{in}\tensor[\gamma_{nj},\gamma_{ji}]-
\frac{1}{16}\sum_{\substack{ij<0\\in<0}}E_{nj}\tensor[\gamma_{ji},\gamma_{in}].
\end{eqnarray}
Notice that the conditions $ji<0,jn<0$ imply that $in>0$ and similarly one sees that
necessarily $mj>0$ so that $E_{in},E_{mj}\in\lie{h}^\C$.

Now the first and second term in (\ref{Sterm}) look very similar and they are
indeed equal. This can be seen by permuting indices: In the sum above
$-\frac{1}{16}\sum_{\substack{ij<0\\jn<0}}E_{in}\tensor[\gamma_{nj},\gamma_{ji}]$,
first switch the indices $i\leftrightarrow j$ and after that
switch the indices $n\leftrightarrow j$, from which the claim follows.
\end{proof}

\subsubsection{Link between $\KDirac^2_{(N)}$ and various Casimir operators}
Next introduce the naive $N^{\mathrm{th}}$ cut-off Casimir operator of $\Eglres$,
$$
\Delta_{\lie{g}}^{(N)}:=\sum_{\substack{i,j\in\Z\\ i,j\neq 0\\ \abs{i},\abs{j}\leq N}}E_{ij}E_{ji}
$$
and the naive diagonal Casimir operator of $\hinf^{\C}$ by
\begin{equation}\label{NaiveDiagCas}
\Delta_{\lie{h},\mathrm{diag}}^{(N)}=\sum_{\substack{ij>0 \\ \abs{i},\abs{j}\leq N}}E_{ij}E_{ji}\tensor 1
+2\sum_{\substack{ij>0 \\ \abs{i},\abs{j}\leq N}}E_{ij}\tensor\Hn{ji}
+\sum_{\substack{ij>0 \\ \abs{i},\abs{j}\leq N}}1\tensor\Hn{ij}\Hn{ji}.
\end{equation}
Then the following holds

\begin{prop}\label{PropAnaWitHuanPand}
The square $\KDirac_{(N)}^2:\F\tensor\spib_{\lie{p}}^c\To\F\tensor\spib_{\lie{p}}^c$ of the $N^{\mathrm{th}}$ cut-off Dirac operator is given by
\begin{equation}
4\KDirac_{(N)}^2=\Big(\Delta_{\lie{g}}^{(N)}\tensor 1-\Delta_{\lie{h},\mathrm{diag}}^{(N)}+\sum_{\substack{ij>0 \\ \abs{i},\abs{j}\leq N}}1\tensor\Hn{ij}\Hn{ji}\Big)+
1\tensor\sum_{\substack{\abs{i}\leq N\\ i\neq 0}}\sign(i)\Hn{ii}.\nonumber
\end{equation}
\end{prop}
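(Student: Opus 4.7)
The plan is to start from the explicit formula for $\KDirac_{(N)}^2$ given by Proposition~\ref{squareN}, apply Lemma~\ref{commutatorANDcommutator} to simplify its double-commutator term, and then reorganize the three resulting sums so as to recognize the naive Casimir $\Delta_{\lie{g}}^{(N)}\tensor 1$ of $\Eglres$, the diagonal Casimir $\Delta_{\lie{h},\mathrm{diag}}^{(N)}$ of $\hinf^\C$ as in~(\ref{NaiveDiagCas}), and a normal-ordering anomaly. First, multiplying Proposition~\ref{squareN} by $4$ and substituting Lemma~\ref{commutatorANDcommutator} yields (with the cut-off $|i|,|j|,|n|\le N$ kept implicit throughout)
$$4\KDirac_{(N)}^2=-\tfrac12\!\sum_{ij<0,\,in<0}\!\!E_{nj}\tensor[\gamma_{ji},\gamma_{in}]+\tfrac12\sum_{ij<0}1\tensor\sign(i)\gamma_{ij}\gamma_{ji}+\sum_{ij<0}E_{ij}E_{ji}\tensor 1.$$
The last term is already the off-diagonal part of $\Delta_{\lie{g}}^{(N)}\tensor 1$.

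Next I would relabel the first sum by the substitution $(i,j,n)\mapsto(k,j,i)$: the operator $E_{nj}$ becomes $E_{ij}$, the Clifford bracket becomes $[\gamma_{jk},\gamma_{ki}]$, and the conditions $ij<0$, $in<0$ translate into $jk<0$, $ki<0$, which together force $ij>0$. Comparing with the definition $\Hn{ji}=\tfrac14\sum_{jk<0}[\gamma_{jk},\gamma_{ki}]$, this contribution equals $-2\sum_{ij>0}E_{ij}\tensor\Hn{ji}$, the negative of the cross-term of~(\ref{NaiveDiagCas}).

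The main step, and the one I expect to be the principal subtlety, is to identify $\tfrac12\sum_{ij<0}1\tensor\sign(i)\gamma_{ij}\gamma_{ji}$ with $1\tensor\sum_{|i|\le N,\,i\neq 0}\sign(i)\Hn{ii}$. From the anti-commutation relation $\{\gamma_{ik},\gamma_{ki}\}=2$ one has $[\gamma_{ik},\gamma_{ki}]=2\gamma_{ik}\gamma_{ki}-2$, so
$$\sum_{|i|\le N,\,i\neq 0}\sign(i)\Hn{ii}=\tfrac12\sum_{ik<0}\sign(i)\gamma_{ik}\gamma_{ki}-\tfrac12\sum_{ik<0}\sign(i).$$
The delicate point is the vanishing of the apparently $N$-divergent constant sum: for each $i\neq 0$ with $|i|\le N$ the number of admissible $k$ is exactly $N$, hence $\sum_{ik<0}\sign(i)=N\sum_{|i|\le N,\,i\neq 0}\sign(i)=0$ by sign symmetry. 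This cancellation is precisely what makes the normal-ordering shifts distinguishing $\KN{ij}$ from $K_{ij}^{(N)}$ leave no residue in the final identity.

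Finally, combining these three identifications gives
$$4\KDirac_{(N)}^2=\sum_{ij<0}E_{ij}E_{ji}\tensor 1-2\sum_{ij>0}E_{ij}\tensor\Hn{ji}+1\tensor\!\!\sum_{|i|\le N,\,i\neq 0}\!\!\sign(i)\Hn{ii},$$
and I would close the proof by adding and subtracting $\sum_{ij>0}E_{ij}E_{ji}\tensor 1$ to complete the first sum into $\Delta_{\lie{g}}^{(N)}\tensor 1$, and similarly adding and subtracting $\sum_{ij>0}1\tensor\Hn{ij}\Hn{ji}$, so that the three pieces $\sum_{ij>0}E_{ij}E_{ji}\tensor 1+2\sum_{ij>0}E_{ij}\tensor\Hn{ji}+\sum_{ij>0}1\tensor\Hn{ij}\Hn{ji}$ assemble into $\Delta_{\lie{h},\mathrm{diag}}^{(N)}$, leaving the announced leftover $+\sum_{ij>0}1\tensor\Hn{ij}\Hn{ji}$ on the right-hand side, exactly as claimed.
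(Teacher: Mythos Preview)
Your proposal is correct and follows essentially the same route as the paper's proof: both start from Proposition~\ref{squareN}, apply Lemma~\ref{commutatorANDcommutator}, identify the first sum as $-2\sum_{ij>0}E_{ij}\tensor\Hn{ji}$, recognize the $\sign(i)\gamma_{ij}\gamma_{ji}$ term as $\sum_{i}\sign(i)\Hn{ii}$, and then add-and-subtract the diagonal pieces to assemble $\Delta_{\lie{g}}^{(N)}$ and $\Delta_{\lie{h},\mathrm{diag}}^{(N)}$. Your treatment of the sign cancellation $\sum_{ik<0}\sign(i)=0$ is actually more explicit than the paper's, which performs the equivalent step tacitly when rewriting $\gamma_{ij}\gamma_{ji}$ in commutator form and then swapping $\sign(i)=-\sign(j)$.
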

\begin{proof}
To simplify notations we shall assume during the proof that the indices $i,j,m,n\in\Z$ appearing in various sums always satisfy $\abs{i},\abs{j},\abs{m},\abs{n}\leq N$. 

By Lemma \ref{commutatorANDcommutator} we may write
\begin{eqnarray}
4\KDirac_{(N)}^2=-\frac{1}{2}\sum_{\substack{ij,in<0\\ (jn>0)}}E_{nj}\tensor[\gamma_{ji},\gamma_{in}]+
\sum_{ij<0} E_{ij}E_{ji}-
\frac{1}{4}\sum_{ij,mn<0}1\tensor\sign(i)[\gamma_{ji},\gamma_{ij}]\nonumber.
\end{eqnarray}
Now write
\begin{eqnarray}
-\frac{1}{2}\sum_{\substack{ij,in<0\\ (jn>0)}}E_{nj}\tensor[\gamma_{ji},\gamma_{in}]&=&
-2\sum_{\substack{j,n \\ jn>0}}\Big(E_{nj}\tensor\frac{1}{4}\sum_{\substack{i \\ ij,in<0}}[\gamma_{ji},\gamma_{in}]\Big)\nonumber\\
&=&
-2\sum_{\substack{jn>0}}E_{nj}\tensor\Hn{jn}\nonumber
\end{eqnarray}
and similarly, noticing that for a pair $ij<0$, we have $\sign(i)=-\sign(j)$, hence
\begin{eqnarray}
-\frac{1}{4}\sum_{ij,mn<0}1\tensor\sign(i)[\gamma_{ji},\gamma_{ij}] &=&
1\tensor\sum_{\substack{j\neq 0}}\Big(\sign(j)\frac{1}{4}\sum_{\substack{i \\ ij<0}}[\gamma_{ji},\gamma_{ij}]\Big)\nonumber\\
&=& 1\tensor\sum_{\substack{j\neq 0}}\sign(j)\Hn{jj}\nonumber
\end{eqnarray}
so that
$$
4\KDirac_{(N)}^2=-2\sum_{\substack{jn>0}}E_{nj}\tensor\Hn{jn}+\sum_{ij<0} E_{ij}E_{ji}+
1\tensor\sum_{\substack{j\neq 0}}\sign(j)\Hn{jj}
$$
from which the result follows since
$$
-2\sum_{\substack{jn>0}}E_{nj}\tensor\Hn{jn}+\sum_{ij<0} E_{ij}E_{ji}=
\Delta_{\lie{g}}^{(N)}\tensor 1-\Delta_{\varrho}^{(N)}+\sum_{\substack{ij>0 \\ \abs{i},\abs{j}\leq N}}1\tensor\Hn{ij}\Hn{ji}.
$$
\end{proof}
\begin{rem}
The reader should compare the expression inside the parenthesis in Proposition \ref{PropAnaWitHuanPand} with
Proposition 3.1.6 in \cite{HuaPan}, where the square of a `finite-dimensional' algebraic Dirac operator introduced by Parthasarathy
is computed.
\end{rem}
\subsubsection{Introducing normal ordering for $\Delta_{\lie{g}}^{(N)}$}
Next set
\begin{equation}
\Delta_{\lie{g},\mathrm{ren}}^{(N)}:=\Delta_{\lie{g}}^{(N)}-N^2.
\end{equation}
Then we have the following result.
\begin{prop}
One may write
\begin{equation}\label{GrenCasForAinf}
\Delta_{\lie{g},\mathrm{ren}}^{(N)}=2\sum_{\substack{j<i\\ i,j\neq 0 \\ \abs{i},\abs{j}\leq N}}E_{ij}E_{ji}+
\sum_{\substack{\abs{i}\leq N \\ i\neq 0}}E_{ii}\Big(E_{ii}-2i+\sign(i)\cdot 1\Big).
\end{equation}
\end{prop}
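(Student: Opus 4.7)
The plan is to start from the definition
\[
\Delta_{\lie{g}}^{(N)} \;=\; \sum_{\substack{|i|,|j|\le N\\ i,j\neq 0}} E_{ij}E_{ji}
\]
and split the summation into the three pieces $\{j<i\}$, $\{j=i\}$, $\{j>i\}$. The diagonal piece is already $\sum_i E_{ii}^2$, and the idea is to flip every $j>i$ term into a $j<i$ term by renaming indices, then use the basic representation cocycle identity
\[
[\rh(E_{ji}),\rh(E_{ij})] \;=\; \rh\bigl([E_{ji},E_{ij}]\bigr) + s(E_{ji},E_{ij}) \;=\; \rh(E_{jj}-E_{ii}) + s(E_{ji},E_{ij})
\]
(which is exactly Proposition~\ref{basicrep} specialized to the basic matrices) to rewrite $\rh(E_{ji})\rh(E_{ij})$ as $\rh(E_{ij})\rh(E_{ji}) + \rh(E_{jj}-E_{ii}) + s(E_{ji},E_{ij})$. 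After the interchange one obtains
\[
\Delta_{\lie{g}}^{(N)} \;=\; \sum_{|i|\le N,\,i\neq 0} E_{ii}^2 \;+\; 2\!\!\sum_{\substack{j<i\\ |i|,|j|\le N\\ i,j\neq 0}}\!\!E_{ij}E_{ji} \;+\; \sum_{\substack{j<i\\ |i|,|j|\le N\\ i,j\neq 0}}\!\!\bigl(E_{jj}-E_{ii}\bigr) \;+\; \sum_{\substack{j<i\\ |i|,|j|\le N\\ i,j\neq 0}}\!\!s(E_{ji},E_{ij}),
\]
so it remains to identify the last two sums.

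The hardest (really only) step is the bookkeeping of the combinatorial coefficients. For a fixed index $i_0$ one has to count
\[
c_{i_0} \;=\; \#\{\,i : i>i_0,\ |i|\le N,\ i\neq 0\,\} \;-\; \#\{\,j : j<i_0,\ |j|\le N,\ j\neq 0\,\},
\]
the first count arising when $i_0$ appears in the $E_{jj}$ slot and the second when it appears in the $E_{ii}$ slot. I would do the two cases $i_0>0$ and $i_0<0$ separately. For $i_0>0$, the gap at zero forces $c_{i_0}=(N-i_0)-(N+i_0-1)=-2i_0+1$, and for $i_0<0$ the same gap gives $c_{i_0}=(N-i_0-1)-(N+i_0)=-2i_0-1$; both are uniformly $-2i_0+\sign(i_0)$. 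This identifies
\[
\sum_{j<i}(E_{jj}-E_{ii}) \;=\; \sum_{|i|\le N,\,i\neq 0} \bigl(-2i+\sign(i)\bigr)E_{ii}.
\]

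Finally, the Schwinger term $s(E_{ji},E_{ij})$ vanishes except when $i,j$ have opposite signs and the specific sign convention in \eqref{Schwinger} gives $s(E_{ji},E_{ij})=+1$ for $j<0<i$ and $-1$ for $i<0<j$. Under the restriction $j<i$ only the first of these contributes, and the number of such pairs with $|i|,|j|\le N$ is exactly $N\cdot N=N^2$, so the cocycle contribution is precisely $N^2$. Collecting the four sums and moving the $N^2$ to the left gives
\[
\Delta_{\lie{g}}^{(N)}-N^2 \;=\; 2\!\!\sum_{\substack{j<i\\ |i|,|j|\le N\\ i,j\neq 0}}\!\!E_{ij}E_{ji} \;+\; \sum_{|i|\le N,\,i\neq 0} E_{ii}\bigl(E_{ii}-2i+\sign(i)\cdot 1\bigr),
\]
which is \eqref{GrenCasForAinf}. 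The main subtlety to be careful about is not to double-count the excluded zero index when computing $a(i_0)$ and $b(i_0)$, and not to forget that each basic matrix product $E_{ij}E_{ji}$ really means $\rh(E_{ij})\rh(E_{ji})$, so the cocycle $s(E_{ji},E_{ij})$ genuinely contributes and is exactly what produces the renormalization constant $N^2$ built into $\Delta_{\lie{g},\mathrm{ren}}^{(N)}$.
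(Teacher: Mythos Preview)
Your proof is correct and follows exactly the strategy the paper has in mind: the paper's own proof merely says ``the claim follows easily from the results proved in the beginning of Appendix~C,'' where the identical manipulation (split into $j<i$, $j=i$, $j>i$, commute the $j>i$ terms, count) is carried out for the version \emph{with} the index $0$ included, yielding $\sum_i E_{ii}(E_{ii}-2i)$. Your explicit bookkeeping with $0$ excluded is precisely the adaptation needed, and your case analysis $c_{i_0}=-2i_0+\sign(i_0)$ correctly produces the extra $\sign(i)$ term that distinguishes \eqref{GrenCasForAinf} from the Appendix~C formula.
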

\begin{proof}
The claim follows easily from the results proved in the beginning of Appendix C.
\end{proof}

By the same reasoning as with the operator $\Delta^{(N)}$ in Appendix C, the operator $\Delta_{\lie{g},\mathrm{ren}}^{(N)}$ is well-defined
in the `limit' $N\to\infty$ and this limit
behaves similarly to $\Delta$ when realized on the highest weight representation spaces $\F_{k}\subset\F$ of the fundamental representations of $\ainf\subset\Eglres$. 
More precisely, we have the following Proposition, whose proof is identical to the proofs of corresponding statements in Appendix C.

\begin{prop}\label{ConclusiveDeltaG}
Denote by $\Delta_{\lie{g}}$ the `limit' of $\Delta_{\lie{g},\mathrm{ren}}^{(N)}$ as $N\to\infty$, which
we consider as an unbounded operator $\F_0\To\F_0$ with dense domain $\D(\Delta_{\lie{g}})=\F_0^{\mathrm{pol}}$, i.e. we set
$$
\Delta_{\lie{g}}:=2\sum_{\substack{i,j\in\Z \\j<i\\ i,j\neq 0}}E_{ij}E_{ji}+
\sum_{\substack{i\in\Z \\ i\neq 0}}E_{ii}\Big(E_{ii}-2i+\sign(i)\cdot 1\Big).
$$ 
Then the
following holds.
\begin{enumerate}
\item The unbounded operators $\Delta_{\lie{g}},\Delta_{\lie{g},\mathrm{ren}}^{(N)}:\F_0\To\F_0$ with common dense
domain $\F_0^{\mathrm{pol}}$ satisfy the strong limit condition
$$
\stlim{N}\Delta_{\lie{g},\mathrm{ren}}^{(N)}=\Delta_{\lie{g}}.
$$
\item The restriction
$$
\Delta_{\lie{g}}\Big\vert_{\F_0^{\mathrm{pol},M}}=2M\id_{\F_0^{\mathrm{pol},M}},
$$
where we have used the decomposition
$$
\F_0^{\mathrm{pol}}=\bigoplus_{M=0}^\infty \F_0^{\mathrm{pol},M}\quad
\textrm{(algebraic direct sum, orthogonal decomposition)}
$$
occuring in Appendix C right below Corollary \ref{DiagonalizingCasOfG}.
\item It follows from the above that $\Delta_{\lie{g}}$  is diagonalizable with spectrum equal to
$$
\spec(\Delta_{\lie{g}})=2\Z_{\geq 0}.
$$
\item The kernel of $\Delta_{\lie{g}}$ satisfies
$$
\ker(\Delta_{\lie{g}})=\F_0^{\mathrm{pol},0}=\C\vac_\F
$$
so that in particular $\dim\ker(\Delta_{\lie{g}})=1<\infty$.
\end{enumerate}
\end{prop}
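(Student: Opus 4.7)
The plan is to mirror the argument given in Appendix C for the analogous operator $\Delta^{(N)}$ and its renormalized limit. The paper has already given us the algebraic identity in equation (\ref{GrenCasForAinf}), so the task reduces to (a) turning this into a strong-limit statement on $\F_0$, and (b) computing the spectral decomposition on the dense domain $\F_0^{\mathrm{pol}} = \bigoplus_{M=0}^\infty \F_0^{\mathrm{pol},M}$.

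For part (1), I would argue by polynomial truncation. Fix $\varphi \in \F_0^{\mathrm{pol}}$. By Corollary \ref{finitepol} applied to each factor $E_{ij}E_{ji} = \rh(E_{ij})\rh(E_{ji})$ written in terms of the field operators $\psi_k,\psi_k^\ast$, and by the normal ordering in (\ref{explicit rh}), only finitely many indices $(i,j)$ produce a nonzero contribution when acting on $\varphi$. The same finiteness applies to the diagonal correction $E_{ii}(E_{ii}-2i+\sign(i))$ because $E_{ii}$ commutes with itself and annihilates all but finitely many modes of $\varphi$ (after normal ordering). Consequently there exists $N_\varphi \in \N$ with $\Delta_{\lie{g},\mathrm{ren}}^{(N)}\varphi = \Delta_{\lie{g}}\varphi$ for every $N \geq N_\varphi$, which forces the strong limit and simultaneously shows $\Delta_{\lie{g}}$ is well-defined on $\F_0^{\mathrm{pol}}$ with image back in $\F_0^{\mathrm{pol}}$.

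For part (2), I would compute $\Delta_{\lie{g}}$ on a typical basis vector
$$
\varphi = \psi^\ast_{k_1}\cdots\psi^\ast_{k_\alpha}\psi_{l_1}\cdots\psi_{l_\alpha}\vac_\F \in \F_0^{\mathrm{pol}}, \qquad k_i > 0,\; l_j < 0,
$$
where the level $M$ from Appendix C (right below Corollary \ref{DiagonalizingCasOfG}) should encode the combined ``excitation number'' measured relative to the Dirac sea, i.e. roughly $M = \sum_i k_i - \sum_j l_j - (\text{something})$ adjusted so that the vacuum sits at $M=0$. The off-diagonal piece $2\sum_{j<i} E_{ij}E_{ji}$, when commuted through the creation/annihilation operators in $\varphi$ using (\ref{posnegCAR}) together with the explicit forms (\ref{basicmat})-(\ref{explicit rh}), produces a ``hopping'' action that one collects into the level count; the diagonal piece $\sum_i E_{ii}(E_{ii}-2i+\sign(i))$, in which the $-2i+\sign(i)$ is precisely the counterterm that renders each telescoping contribution from the infinite sea finite, contributes the remainder. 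Assembling exactly as in Appendix C gives $\Delta_{\lie{g}}\varphi = 2M\varphi$.

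Parts (3) and (4) then follow immediately: because $\F_0^{\mathrm{pol}} = \bigoplus_{M \geq 0}^{\mathrm{alg}} \F_0^{\mathrm{pol},M}$ is an orthogonal algebraic decomposition and $\Delta_{\lie{g}}$ acts on $\F_0^{\mathrm{pol},M}$ as $2M \cdot \id$, the operator is diagonalizable with $\spec(\Delta_{\lie{g}}) \subset 2\Z_{\geq 0}$, and each value $2M$ is actually attained (the $M=0$ sector contains $\vac_\F$, and higher sectors are nontrivial by construction). The kernel coincides with the $M=0$ sector, which from Appendix C equals $\C\vac_\F$. The main technical obstacle is the calculation in part (2): one has to track the cancellation between the formally divergent $\sum_i E_{ii}^2$ and the $-N^2$ counterterm (now absorbed into the $-2i+\sign(i)$ correction) while simultaneously resumming the off-diagonal quadratic hops, and verify that the remainder is exactly $2M$ with the level $M$ as defined in Appendix C. Once that combinatorial identity is in hand, everything else is formal bookkeeping.
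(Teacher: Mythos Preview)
Your treatment of part (1) is correct and matches the paper's argument (Proposition~\ref{stronglimitCasimir} in Appendix~C): on any fixed polynomial vector only finitely many terms survive, so the cut-off stabilizes.

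For part (2), however, there is a concrete error and a missed structural shortcut. First, your guess that $M$ is an energy-like quantity $\sum_i k_i - \sum_j l_j - (\text{something})$ is wrong. In Appendix~C the integer $M$ is defined as $\mathrm{Num}(w)$, the number of factors $E_{i_kj_k}$ with $i_k>0,\,j_k<0$ in the PBW expression $w=E_{i_1j_1}\cdots E_{i_nj_n}\vac_\F$; for $m=0$ the proof of Proposition~\ref{LastOfTheKernels} shows every generator of $\F_0^{\mathrm{pol}}$ can be taken of this form, so $M=n=\alpha$ is simply the \emph{particle number} (equivalently, the number of $\psi^\ast$'s in your expression), not a weighted sum of mode indices. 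If you attempted your direct CAR computation aiming at the wrong target, you would not close the books.

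Second, the paper's route in Appendix~C avoids the brute-force ``hopping'' resummation you sketch. Instead of commuting $\Delta^{(N)}$ past individual $\psi^\ast,\psi$'s, one computes the commutator $[\Delta^{(N)},E_{mn}]$ in one stroke (Lemma~\ref{aInfCasCom}): since the Schwinger-free part is the honest Casimir of $\gl_{2N+1}(\C)$ and hence central, only the cocycle contributes, giving $[\Delta^{(N)},E_{mn}]=2\,\sign(m)E_{mn}$ when $mn<0$ and zero otherwise. One then writes $\Delta^{(N)}E_{i_kj_k}=[\Delta^{(N)},E_{i_kj_k}]+E_{i_kj_k}\Delta^{(N)}$ and pushes $\Delta^{(N)}$ inductively to the vacuum, picking up exactly $+2$ for each factor with $i_k>0,j_k<0$; finally Lemma~\ref{CasOperPsi} evaluates $\Delta^{(N)}\vac_\F$. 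This gives $2M$ with no telescoping or cancellation of divergent sums to track. The identical mechanism works for $\Delta_{\lie{g},\mathrm{ren}}^{(N)}$ with indices restricted to $\Z\setminus\{0\}$ and the $\sign(i)$ correction. Your direct approach is not wrong in principle, but it obscures why the answer is so clean and leaves the hardest step (``verify that the remainder is exactly $2M$'') unaddressed.

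Parts (3) and (4) are, as you say, formal once (2) is established; your reasoning there is fine.
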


Hence we may now write

\begin{eqnarray}
4\KDirac_{(N)}^2 &=&\Big(\Delta_{\lie{g},\mathrm{ren}}^{(N)}\tensor 1-\Delta_{\lie{h},\mathrm{diag}}^{(N)}+\sum_{\substack{ij>0 \\ \abs{i},\abs{j}\leq N}}1\tensor\Hn{ij}\Hn{ji}\Big)+
1\tensor\sum_{\substack{\abs{i}\leq N\\ i\neq 0}}\sign(i)\Hn{ii}\nonumber\\
&+& N^2.\nonumber
\end{eqnarray}
where $\Delta_{\lie{g},\mathrm{ren}}^{(N)}$ is now well-defined as $N\to\infty$, but the other terms in this expression still need to be properly
normal ordered.

\subsubsection{Normal ordering for the diagonal $\hinf^\C$-Casimir operator}
To introduce a valid normal ordering for all the relevant summands of $\KDirac^2_{(N)}$ one has to know how the different operators
$K_{ij}^{(N)},\KN{ij}$ and $\Hn{ij}$ are linked to each other. Writing $[\gamma_{ik},\gamma_{kj}]=2\gamma_{ik}\gamma_{kj}-2\delta_{ij}\cdot I$ one sees
immediately that

\begin{eqnarray}
\Hn{ij} &=& K_{ij}^{(N)}-\frac{1}{2}\delta_{ij}N=\KN{ij}-\frac{1}{2}\sign(i)\delta_{ij}N
\label{KijHij1}\\
K_{ij}^{(N)} &=& \left\{
\begin{array}{ll}
\KN{ij} & \text{if } i\neq j \text{ or } i=j> 0 \\
\KN{ij}+N\cdot 1 & \text{if } i = j < 0.\label{KijHij2}
\end{array} \right.
\end{eqnarray}

\begin{lem} On the $N^{\mathrm{th}}$ level $\hinf^\C$-invariant sector, i.e. as an operator
$$
(\F\tensor\spib_{\lie{p}}^c)^{\lie{h},N}\To\F\tensor\spib_{\lie{p}}^c
$$
the naive diagonal Casimir operator of $\hinf^\C$ becomes the constant operator
$$
\Delta_{\lie{h},\mathrm{diag}}^{(N)}=\frac{1}{2}N^3.
$$
\end{lem}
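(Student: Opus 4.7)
The plan is to compare $\Delta_{\lie{h},\mathrm{diag}}^{(N)}$ with the closely related diagonal Casimir $\Delta_\varrho^{(N)}$ from equation (\ref{diagonalCasimiroperator}), exploiting the fact that the latter acts by zero on the invariant sector. More precisely, for any $w\in(\F\tensor\spib_{\lie{p}}^c)^{\lie{h},N}$ and any $ij>0$ with $|i|,|j|\leq N$, the defining relation of the sector says $E_{ij,\varrho}^{(N)}w=0$. Since $ij>0\iff ji>0$, I can apply this twice to conclude
$$
\Delta_\varrho^{(N)} w=\sum_{\substack{ij>0\\|i|,|j|\leq N}} E_{ij,\varrho}^{(N)}E_{ji,\varrho}^{(N)}w=0.
$$
Thus computing $\Delta_{\lie{h},\mathrm{diag}}^{(N)}w$ reduces to computing the difference $\bigl(\Delta_\varrho^{(N)}-\Delta_{\lie{h},\mathrm{diag}}^{(N)}\bigr)w$.

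Next I would use the key identity (\ref{KijHij1}), i.e.\ $\KN{ij}=\Hn{ij}+\tfrac{1}{2}\sign(i)\delta_{ij}N$, to expand both the linear term $2\sum E_{ij}\tensor\KN{ji}$ and the quadratic term $\sum 1\tensor\KN{ij}\KN{ji}$ of $\Delta_\varrho^{(N)}$ in terms of the corresponding $\Hn{}$-expressions. The quadratic terms in $E_{ij}E_{ji}\tensor 1$ are common to both Casimirs and cancel. Since the correction $\KN{ij}-\Hn{ij}$ is supported on the diagonal $i=j$, the surviving contributions are a single sum indexed by $|i|\leq N$, $i\neq 0$, involving $\rh(E_{ii})\tensor 1$, $1\tensor\Hn{ii}$ and a pure scalar proportional to $N^2$.

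The final step is to apply the defining relation of the invariant sector on the diagonal, namely $\bigl(\rh(E_{ii})\tensor 1+1\tensor\Hn{ii}\bigr)w=-\tfrac{1}{2}\sign(i)N\cdot w$, which follows by substituting (\ref{KijHij1}) into the condition $\bigl(\rh(E_{ii})\tensor 1+1\tensor\KN{ii}\bigr)w=0$. Plugging this into the expression obtained in the previous step, the $\Hn{ii}$ terms are eliminated and only $\sign(i)^2=1$ factors remain. Summing over $2N$ indices produces exactly the claimed constant $\tfrac{1}{2}N^3$, after the numerical constants $\tfrac{1}{2}$, $N$, and the doubling from $2\sum$ combine correctly.

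The main obstacle I expect is the bookkeeping in step two and three: three distinct sources contribute ($N$-linear from $2\sum E_{ij}\tensor \KN{ji}$, $N$-linear from the cross terms in $\KN{ij}\KN{ji}$, and $N^2$-scalar from the square of the correction), and they must conspire so that all operator-valued pieces cancel, leaving a nonzero scalar proportional to $N^3$. Keeping track of signs (via $\sign(i)$ factors, which are compatible because $ij>0$ forces $\sign(i)=\sign(j)$) and of the correct range of summation $|i|\leq N$, $i\neq 0$ of cardinality $2N$, is the delicate part.
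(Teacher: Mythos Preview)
Your proposal is correct and follows essentially the same route as the paper: relate $\Delta_{\lie{h},\mathrm{diag}}^{(N)}$ to $\Delta_\varrho^{(N)}$ via the identity (\ref{KijHij1}), use that $\Delta_\varrho^{(N)}$ vanishes on the invariant sector, and then invoke the invariant condition on the diagonal generators to reduce the remaining operator-valued terms to a scalar. The only cosmetic difference is the direction of substitution---the paper rewrites $\Hn{ij}$ in terms of $\KN{ij}$ and then cancels the two correction terms $-N\sum\sign(i)E_{ii}\tensor 1$ and $-N\sum 1\tensor\sign(i)\KN{ii}$ directly via $E_{ii}\tensor 1=-1\tensor\KN{ii}$, whereas you package them as a single term $N\sum\sign(i)\bigl(E_{ii}\tensor 1+1\tensor\Hn{ii}\bigr)$ and evaluate it using your derived relation $\bigl(E_{ii}\tensor 1+1\tensor\Hn{ii}\bigr)w=-\tfrac12\sign(i)N\,w$; the arithmetic is identical.
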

\begin{proof}
We want to use the fact that the diagonal Casimir operator $\Delta_{\varrho}^{(N)}$
introduced in \S\ref{diagonalCasOp} is identically zero in the
$N^{\mathrm{th}}$ level $\hinf^\C$-invariant sector. For this reason we want to switch
the operators $\Hn{ij}$ appearing in the definition of $\Delta_{\lie{h},\mathrm{diag}}^{(N)}$
to the operators $\KN{ij}$ one sees in the
expression for $\Delta_{\varrho}^{(N)}$. 

To begin with, equation (\ref{KijHij1}) yields
\begin{eqnarray}
2\sum_{\substack{ij>0\\ \abs{i},\abs{j}\leq N}}E_{ij}\tensor \Hn{ji} &=&
2\sum_{\substack{ij>0 \\ \abs{i},\abs{j}\leq N}}E_{ij}\tensor
(\KN{ij}-\frac{1}{2}\sign(i)\delta_{ij}N)\nonumber\\
&=&
2\sum_{\substack{ij>0\\ \abs{i},\abs{j}\leq N}}E_{ij}\tensor\KN{ij}-
N\sum_{\substack{\abs{i}\leq N \\ i\neq 0}}\sign(i)E_{ii}\tensor 1. \nonumber
\end{eqnarray}
and similarly noting that here $\sign(i)=\sign(j)$ one computes using again equation (\ref{KijHij1})
$$
\sum_{\substack{ij>0\\ \abs{i},\abs{j}\leq N}}\Hn{ij}\Hn{ji}=
\sum_{\substack{ij>0\\ \abs{i},\abs{j}\leq N}}\KN{ij}\KN{ji}-
N\sum_{\substack{\abs{i}\leq N\\ i\neq 0}}\sign(i)\KN{ii}+\frac{1}{2}N^3.
$$

Thus, we may now relate the naive diagonal Casimir operator
$\Delta_{\lie{h},\mathrm{diag}}^{(N)}$
given by (\ref{NaiveDiagCas})
with the diagonal Casimir operator
$\Delta_{\varrho}^{(N)}$ appearing in equation (\ref{diagonalCasimiroperator}):
$$
\Delta_{\lie{h},\mathrm{diag}}^{(N)}=\Delta_{\varrho}^{(N)}-
N\sum_{\substack{\abs{i}\leq N\\ i\neq 0}}\sign(i)E_{ii}\tensor 1-
N\sum_{\substack{\abs{i}\leq N\\ i\neq 0}}1\tensor\sign(i)\KN{ii}+
\frac{1}{2}N^3.
$$
Since in the $N^{\mathrm{th}}$ level $\hinf^\C$-invariant sector $\Delta_{\varrho}^{(N)}\equiv 0$
and $E_{ij}\tensor 1=-1\tensor\KN{ij}$ the claim follows.
\end{proof}

It follows from the above that at this point we may write in
the $N^{\mathrm{th}}$ level $\hinf^\C$-invariant sector the
square of the Dirac operator as
\begin{eqnarray}\label{squareInTheEquivSec}
4\KDirac_{(N)}^2 &=&\Big(\Delta_{\lie{g},\mathrm{ren}}^{(N)}\tensor 1+
\sum_{\substack{ij>0 \\ \abs{i},\abs{j}\leq N}}1\tensor\Hn{ij}\Hn{ji}\Big)+
1\tensor\sum_{\substack{\abs{i}\leq N\\ i\neq 0}}\sign(i)\Hn{ii}\nonumber\\
&+& N^2-\frac{1}{2}N^3,
\end{eqnarray}
where inside the parenthesis we think we have a normal ordered Casimir operator
for $\lie{g}$ acting on the left hand side of the tensor product, and a non-normal ordered
Casimir operator for the Lie subalgebra $\hinf^\C$ acting on the
right hand side of the tensor product.

\subsubsection{Getting rid of the diverging polynomial terms $P(N)$}
We begin by trying to write the naive spinorial Casimir operator
$\sum_{ij>0}1\tensor\Hn{ij}\Hn{ji}$ appearing in (\ref{squareInTheEquivSec}) in
a form that could be normal ordered.

Using equation (\ref{KijHij1}) one may as a first step write
\begin{equation}\label{SumHijHij}
\sum_{\substack{ij>0\\ \abs{i},\abs{j}\leq N}}\Hn{ij}\Hn{ji}=
\sum_{\substack{ij>0\\ \abs{i},\abs{j}\leq N}}\Kn{ij}\Kn{ji}
-N\sum_{\substack{\abs{i}\leq N\\ i\neq 0}}\Kn{ii}+
\frac{1}{2}N^3,
\end{equation}
which now also includes a term $\frac{1}{2}N^3$ annihilating the diverging term
$-\frac{1}{2}N^3$ appearing in (\ref{squareInTheEquivSec}).

Moreover, in order to annihilate
the diverging polynomial term $N^2$ in 
(\ref{squareInTheEquivSec}),
we want to write the term in equation (\ref{squareInTheEquivSec})
containing sign operators in terms of the operators $\KN{ij}$. This yields
\begin{eqnarray}\label{signHii}
1\tensor\sum_{\substack{\abs{i}\leq N\\ i\neq 0}}\sign(i)\Hn{ii} &=&
1\tensor\sum_{\substack{\abs{i}\leq N\\ i\neq 0}}\sign(i)
\Big(\KN{ii}-\frac{1}{2}\sign(i)\delta_{ij}N\Big)\nonumber\\
&=&
1\tensor\sum_{\substack{\abs{i}\leq N\\ i\neq 0}}\sign(i)
\KN{ii}-N^2.
\end{eqnarray}

Inserting equations (\ref{SumHijHij}) and (\ref{signHii}) into
equation (\ref{squareInTheEquivSec}) gives us now
\begin{eqnarray}
4\KDirac^2_{(N)} &=&
\Delta_{\lie{g},\mathrm{ren}}^{(N)}+1\tensor
\sum_{\substack{ij>0\\ \abs{i},\abs{j}\leq N}}\Kn{ij}\Kn{ji}-1\tensor
N\sum_{\substack{\abs{i}\leq N\\ i\neq 0}}\Kn{ii}\nonumber\\
&+&
1\tensor\sum_{\substack{\abs{i}\leq N\\ i\neq 0}}\sign(i)\KN{ii}.\nonumber
\end{eqnarray}
Writing the sum
$$
N\sum_{\substack{\abs{i}\leq N\\ i\neq 0}}\Kn{ii}=
N\sum_{\substack{\abs{i}\leq N\\ i\neq 0}}\KN{ii}+
N\sum_{\substack{i<0\\ \abs{i}\leq N}}N\cdot 1=
N\sum_{\substack{\abs{i}\leq N\\ i\neq 0}}\KN{ii}+
N^3\cdot 1
$$
this becomes
\begin{eqnarray}\label{KDirac2WithHiddenZeroOp}
4\KDirac^2_{(N)} &=&
\overbrace{\Delta_{\lie{g},\mathrm{ren}}^{(N)}}^{=: I}\tensor 1+1\tensor
\Big(\overbrace{\sum_{\substack{ij>0\\ \abs{i},\abs{j}\leq N}}\Kn{ij}\Kn{ji}-N^3}^{=: II}\Big)\nonumber\\
&+&
1\tensor\underbrace{\sum_{\substack{\abs{i}\leq N\\ i\neq 0}}\sign(i)\KN{ii}}_{=: III}
-1\tensor\underbrace{N\sum_{\substack{\abs{i}\leq N\\ i\neq 0}}\KN{ii}}_{=: IV}.
\end{eqnarray}

We proceed from here by giving a detailed analysis of the operators $1\tensor\sum_{i}\sign(i)\KN{ii}$
and $N\sum_{i}\KN{ii}$ appearing in (\ref{KDirac2WithHiddenZeroOp}). It turns out that when interpreted properly
the former is essentially a fermion number operator and the latter is actually a zero operator in disguise.
Also, the operator $\sum_{ij>0}\Kn{ij}\Kn{ji}-N^3$ 
appearing in (\ref{KDirac2WithHiddenZeroOp}) turns
out to be identically zero.
\subsection{Operator III -- The fermion number operator}\label{FNOP}
Consider a basis vector
\begin{equation}\label{basisforS}
s=\gamma_{m_1 l_1}\gamma_{m_2 l_2}\cdots\gamma_{m_k l_k}\vac_{\spib}\in \spib^{\mathrm{pol}}\quad (m_i>0 \textrm{ for all }i),
\end{equation}
where $k\in\Z_{\geq 0}$. Here we understand that the value $k=0$ corresponds to the case $s\in\C\vac_\spib$. We set for 
a basis vector like above
\begin{equation}
\Length(s):=k.
\end{equation}
\begin{defn}
We define the subspace $\spib^{\mathrm{pol},k}\subset\spib^{\mathrm{pol}}$ consisting of all
finite $\C$-linear combinations of the basis vectors (\ref{basisforS}) such that $\Length(s)=k$ and equip it with
the inner product inherited from $\spib^{\mathrm{pol}}$.
\end{defn}

Obviously then the \emph{algebraic} direct sum satisfies $\bigoplus_{k=1}^\infty \spib^{\mathrm{pol},k}=\spib^{\mathrm{pol}}$.

Recall that for every $N\in\Z_{\geq 0}$ we defined the subspace $\spib^{\mathrm{pol},(N)}\subset\spib^{\mathrm{pol}}$ to consist of all
finite $\C$-linear combinations of the vacuum $\vac_\spib$ and the basis vectors (\ref{basisforS}) such that $\abs{m_i},\abs{l_i}\leq N$
for all $i=1,\ldots k$.
We also noticed that $\spib^{\mathrm{pol},(M)}\subset\spib^{\mathrm{pol},(N)}$ if $M\leq N$ and that
$\bigcup_{N=1}^\infty \spib^{\mathrm{pol},(N)}=\spib^{\mathrm{pol}}$.

\begin{lem}\label{fernum}
Fix $M\in\Z_{\geq 0}$ and let $N\in\Z_{>0}$ be any integer satisfying $N\geq M$. 
Based on these assumptions define
$$
F_{(N)}:=\sum_{\substack{i\in\Z\setminus\set{0}\\ \abs{i}\leq N}}\sign(i)\KN{ii}.
$$ Then for $k,l\in\Z$ with $kl<0$ and $\abs{k},\abs{l}\leq M$
\begin{equation}
[F_{(N)},\gamma_{kl}]=
\left\{ \begin{array}{ll}
+2\gamma_{kl} & \textrm{if $k>0$}\\
-2\gamma_{kl} &\textrm{if $k<0$}.
\end{array} \right.
\end{equation}
\end{lem}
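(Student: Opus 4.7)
The plan is to simply expand the commutator linearly and apply the second claim of Lemma~\ref{Kijlimit} termwise. Writing
\[
[F_{(N)}, \gamma_{kl}] \;=\; \sum_{\substack{i\in\Z\setminus\{0\}\\ |i|\leq N}} \sign(i)\,[\KN{ii},\gamma_{kl}],
\]
only those indices $i$ contribute for which the appropriate Kronecker delta in the case analysis of Lemma~\ref{Kijlimit} is nonzero, i.e.\ $i=k$ or $i=l$. Since $|k|,|l|\leq M\leq N$, these indices automatically lie in the summation range $|i|\leq N$, so no boundary issues arise. This is the only place where the hypothesis $N\geq M$ is used.

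First I would handle the case $k>0$ (hence $l<0$). The relevant subcases of Lemma~\ref{Kijlimit} are $i>0, m=k>0$, giving $[\KN{ii},\gamma_{kl}]=\delta_{ik}\gamma_{il}$, and $i<0, m=k>0$, giving (after correcting the evident typo: the second line of the fourth case should read $-\delta_{il}\gamma_{ki}$, as forced by the first equality of that same line) $[\KN{ii},\gamma_{kl}]=-\delta_{il}\gamma_{ki}$. Summing with the sign weight, the first subcase contributes $(+1)\gamma_{kl}$ (from $i=k$) and the second contributes $(-1)(-\gamma_{kl})=+\gamma_{kl}$ (from $i=l$), giving $+2\gamma_{kl}$ in total.

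The case $k<0$ (hence $l>0$) is entirely symmetric. Now the subcases are $i>0, m=k<0$, yielding $-\delta_{il}\gamma_{ki}$, and $i<0, m=k<0$, yielding $\delta_{ik}\gamma_{il}$. The first gives $(+1)(-\gamma_{kl})=-\gamma_{kl}$ (from $i=l$) and the second gives $(-1)\gamma_{kl}=-\gamma_{kl}$ (from $i=k$), for a total of $-2\gamma_{kl}$.

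There is no real obstacle: the computation is a routine finite sum, and the only thing requiring a moment of care is bookkeeping the signs so that the two nonzero contributions in each case add constructively (rather than cancel). The hypothesis $N\geq M$ plays exactly the role one expects, namely guaranteeing that truncating the sum at $|i|\leq N$ does not chop off the single term ($i=k$ or $i=l$) that carries the commutator.
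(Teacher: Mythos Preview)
Your proposal is correct and follows essentially the same route as the paper, which simply cites Lemma~\ref{Kijlimit} (and Proposition~\ref{actioniscommutator}) and leaves the termwise computation implicit. One minor simplification: rather than invoking the four-case second claim of Lemma~\ref{Kijlimit}, you could use its first claim $[\KN{ii},\gamma_{kl}]=\delta_{ik}\gamma_{il}-\delta_{il}\gamma_{ki}$ directly, which immediately isolates the two surviving terms $i=k$ and $i=l$ without any sign bookkeeping by cases.
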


\begin{proof}
The claim follows directly from Proposition \ref{actioniscommutator} and Lemma \ref{Kijlimit}.
\end{proof}

\begin{cor}\label{FNonBasisVec}
Fix $M\in\Z_{\geq 0}$ and let $N\in\Z_{>0}$ be any integer satisfying $N\geq M$.
The operator $F_{(N)}$ acts on the basis vectors 
\begin{equation}\label{Fons}
s=\gamma_{m_1 l_1}\gamma_{m_2 l_2}\cdots\gamma_{m_k l_k}\vac_{\spib}\in \spib^{\mathrm{pol},(M)}\quad (m_i>0 \textrm{ for all }i)
\end{equation}
so that
$$
F_{(N)}(s)=2k\cdot s\in \spib^{\mathrm{pol},(M)},
$$
where also the value $k=0$ is accepted by which we mean that in the equation (\ref{Fons}) $s\in\C\vac_\spib$.
\end{cor}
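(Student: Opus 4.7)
The plan is to derive the claim as a direct consequence of the commutator formula in Lemma \ref{fernum}, together with the fact that $F_{(N)}$ acts on the dense subspace $\spib^{\mathrm{pol},(M)}$ via iterated commutators, and that it annihilates the vacuum.

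First I would observe that since $F_{(N)}=\sum_{|i|\leq N,\,i\neq 0}\sign(i)\KN{ii}$ is a finite $\C$-linear combination of the operators $\KN{ii}$, Proposition \ref{actioniscommutator} applies to each summand: for any basis vector $s=\gamma_{m_1 l_1}\cdots\gamma_{m_k l_k}\vac_\spib\in\spib^{\mathrm{pol},(M)}$ (with $m_h>0$ and hence $l_h<0$ and $|m_h|,|l_h|\leq M\leq N$), one has
\begin{equation*}
F_{(N)}\cdot s=\sum_{h=1}^{k}\gamma_{m_1 l_1}\cdots[F_{(N)},\gamma_{m_h l_h}]\cdots\gamma_{m_k l_k}\vac_\spib,
\end{equation*}
together with $F_{(N)}\vac_\spib=0$ (the latter from the third claim of Lemma \ref{Kijlimit}).

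Next, since each factor $\gamma_{m_h l_h}$ has $m_h>0$, Lemma \ref{fernum} (applicable because $|m_h|,|l_h|\leq M\leq N$) gives
\begin{equation*}
[F_{(N)},\gamma_{m_h l_h}]=+2\gamma_{m_h l_h}.
\end{equation*}
Substituting this into the sum above produces $k$ identical copies of $s$ scaled by $2$, hence $F_{(N)}s=2k\cdot s$. The stability $F_{(N)}s\in\spib^{\mathrm{pol},(M)}$ is automatic from this explicit formula, since the commutator merely reproduces the vector $s$ itself.

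Alternatively, for cleanliness, I would present the same argument by induction on $k=\Length(s)$: the base case $k=0$ is $F_{(N)}\vac_\spib=0$, and the inductive step uses the Leibniz rule $F_{(N)}(\gamma_{ml}s')=[F_{(N)},\gamma_{ml}]s'+\gamma_{ml}F_{(N)}s'=2\gamma_{ml}s'+2k\gamma_{ml}s'=2(k+1)(\gamma_{ml}s')$. There is no real obstacle here: the only subtlety is making sure that the cut-off parameter $N$ is at least as large as $M$ so that Proposition \ref{actioniscommutator} and Lemma \ref{fernum} both apply, which is precisely the standing assumption of the corollary.
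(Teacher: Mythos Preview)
Your proof is correct and follows essentially the same approach as the paper: invoke Proposition \ref{actioniscommutator} to write $F_{(N)}s$ as a sum of commutators acting in each slot, then apply Lemma \ref{fernum} to evaluate each commutator as $+2\gamma_{m_h l_h}$. The paper's proof is just the first of your two presentations, without the inductive reformulation.
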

\begin{proof}
Since for each integer $N\geq M$, $\KN{ii}$ operates on 
$\spib^{\mathrm{pol},(M)}\subset\spib_{\lie{p}}^c$ via the commutator action by 
Proposition \ref{actioniscommutator}, we have that
$$
F_{(N)}(s)=\sum_{h=1}^k\gamma_{m_1 l_1 }\cdots [F_{(N)},\gamma_{m_h l_h}]\cdots\vac_{\spib}
$$
and the result follows from Lemma \ref{fernum}.
\end{proof}

\begin{prop}\label{FIsWelldefined}
The infinite sum of operators,
$$
F:=\sum_{i\in\Z\setminus\set{0}}\sign(i)\Kt{ii}.
$$
is a well-defined unbounded operator $F:\spib_{\lie{p}}^c\To\spib_{\lie{p}}^c$ with (dense) domain
$\D(F)=\spib^{\mathrm{pol}}\subset\spib_{\lie{p}}^c$. Moreover for a basis vector
$$
s=\gamma_{m_1 l_1}\gamma_{m_2 l_2}\cdots\gamma_{m_k l_k}\vac_{\spib}\in \spib^{\mathrm{pol}}\quad (m_i>0 \textrm{ for all }i)
$$
we have
$$
F(s)=2k\cdot s\in \spib^{\mathrm{pol}}.
$$
Thus the restriction
\begin{equation}\label{Frestricted}
F\Big\vert_{\spib^{\mathrm{pol},k}}:\spib^{\mathrm{pol},k}\To\spib^{\mathrm{pol},k},\quad F\Big\vert_{\spib^{\mathrm{pol},k}}=2k\cdot I,
\end{equation}
for all $k\in\Z_{\geq 0}$.
\end{prop}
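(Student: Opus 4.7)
The goal is to reduce the defining infinite sum $F = \sum_{i \neq 0} \sign(i)\Kt{ii}$ to a \emph{finite} sum on each basis vector and then identify its value via the already established Corollary \ref{FNonBasisVec}.

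First I would reduce to basis vectors. Any $s \in \spib^{\mathrm{pol}}$ is, by definition, a finite $\C$-linear combination of vectors of the form $s = \gamma_{m_1 l_1}\cdots \gamma_{m_k l_k}\vac_\spib$, which we may further assume (using the CAR relations and Corollary \ref{Cliffordvacannihilated}) to satisfy $m_h > 0, l_h < 0$ for every $h$. Such an $s$ lies in $\spib^{\mathrm{pol},(M)}$ for $M := \max_h(m_h, -l_h)$. By Corollary \ref{isotropycorollary}, each $\Kt{ii}$ is a well-defined operator on $\spib^{\mathrm{pol}}$ preserving this space, and its action coincides with the commutator action $[\ad E_{ii}, \cdot\,]$ on $s$:
\begin{equation}
\Kt{ii}(s) = \sum_{h=1}^{k} \gamma_{m_1 l_1}\cdots [\Kt{ii}, \gamma_{m_h l_h}]\cdots \gamma_{m_k l_k}\vac_\spib.
\end{equation}

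Next I would exploit the support of the commutators. Lemma \ref{Kijlimit} (or its $N=\infty$ analogue built into Corollary \ref{isotropycorollary}) shows that $[\Kt{ii}, \gamma_{ml}]$ vanishes unless $i \in \{m, l\}$. Since the indices appearing in $s$ are among $\{m_1, l_1, \ldots, m_k, l_k\} \subset \{i : 0 < |i| \leq M\}$, we conclude that $\Kt{ii}(s) = 0$ whenever $|i| > M$ (or more generally whenever $i$ is not one of the indices of $s$). Hence the defining sum collapses:
\begin{equation}
F(s) = \sum_{\substack{i \neq 0 \\ |i| \leq M}} \sign(i)\, \Kt{ii}(s),
\end{equation}
a \emph{finite} sum of vectors in $\spib^{\mathrm{pol}}$. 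This establishes that $F$ is well-defined as an operator $\spib^{\mathrm{pol}} \to \spib^{\mathrm{pol}}$, and denseness of $\spib^{\mathrm{pol}} \subset \spib_{\lie{p}}^c$ gives the density of $\D(F)$.

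Finally I would identify the eigenvalue. Pick any $N \geq M$. For indices $|i| \leq M$, Proposition \ref{actioniscommutator} says that $\KN{ii}$ also acts on $\spib^{\mathrm{pol},(M)}$ via the commutator action, and the commutators $[\KN{ii}, \gamma_{m_h l_h}]$ and $[\Kt{ii}, \gamma_{m_h l_h}]$ agree by Lemma \ref{Kijlimit} and Remark \ref{remarkforstrongconv}; for $M < |i| \leq N$, both $[\KN{ii}, \gamma_{m_h l_h}]$ vanish, so the corresponding terms in $F_{(N)}$ also vanish on $s$. Therefore
\begin{equation}
F(s) \;=\; \sum_{\substack{i \neq 0 \\ |i| \leq N}} \sign(i)\, \KN{ii}(s) \;=\; F_{(N)}(s) \;=\; 2k \cdot s,
\end{equation}
where the last equality is Corollary \ref{FNonBasisVec}. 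This gives the formula $F(s) = 2k \cdot s$ for each basis vector, and hence the restriction formula (\ref{Frestricted}) on $\spib^{\mathrm{pol},k}$ follows by linearity. The only mild subtlety is bookkeeping the CAR-reduction to standard-ordered basis vectors at the very start; once that is done, the remainder is a clean matter of matching the truncated and untruncated commutator actions.
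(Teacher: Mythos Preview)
Your proof is correct and follows essentially the same approach as the paper: both arguments use Corollary \ref{isotropycorollary} to identify the action of each $\Kt{ii}$ with the commutator (isotropy) action, observe that $[\Kt{ii},\gamma_{m_h l_h}]$ vanishes unless $i\in\{m_h,l_h\}$ so that the infinite sum collapses to a finite one, and then read off the eigenvalue $2k$. The only cosmetic difference is that you route the final eigenvalue identification through $F_{(N)}$ and Corollary \ref{FNonBasisVec}, whereas the paper computes it directly from $[\Kt{ii},\gamma_{m_h l_h}]=\sign(i)\,\gamma_{m_h l_h}$ (so that $\sign(i)\cdot\sign(i)=1$ twice per factor).
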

\begin{proof}
First note that according to Corollary \ref{isotropycorollary} 
$F(s)\in\spib^{\mathrm{pol}}$
for each
$$
s=\gamma_{m_1 l_1}\gamma_{m_2 l_2}\cdots\gamma_{m_k l_k}\vac_{\spib}\in \spib^{\mathrm{pol}}
\quad (m_i>0 \textrm{ for all }i),
$$
since the action of each
$\Kt{ii}$ is the isotropy action, and that on the other hand actually only a finite
number of actions of various $\Kt{ii}$ in this sum are nonzero, because
$[\Kt{ii},\gamma_{m_j l_j}]\neq 0\iff i=m_j$ or $i=l_j$. For these values of 
$i$ the commutator $[\Kt{ii},\gamma_{m_j l_j}]=\sign(i)\gamma_{m_j l_j}$, since
we assumed that each $m_j>0$. We conclude from this
the claimed eigenvalues of $F$ and that
$F$ is indeed a well-defined operator on the dense subspace $\spib^{\mathrm{pol}}\subset\spib_{\lie{p}}^c$, thus
an unbounded operator on $\spib_{\lie{p}}^c$.
\end{proof}

\begin{cor}\label{conclusiveF}
The operator $F:\spib_{\lie{p}}^c\To\spib_{\lie{p}}^c$ is diagonalizable and non-negative with
spectrum $\spec(F)=2\Z_{\geq 0}$ and kernel
\begin{equation}
\ker(F)=\C\vac_\spib.
\end{equation}
In particular
$\dim\ker(F)=1<\infty$.
\end{cor}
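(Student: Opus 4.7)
The plan is to deduce this corollary directly from Proposition \ref{FIsWelldefined}, which already exhibits the spectral picture of $F$ on a dense set. First I would invoke the algebraic direct sum decomposition
\[
\spib^{\mathrm{pol}} \;=\; \bigoplus_{k=0}^{\infty}\spib^{\mathrm{pol},k},
\]
noting that this decomposition is orthogonal with respect to the inner product inherited from $\spib_{\lie{p}}^c$, since basis vectors of different length $k$ are built from orthonormal Clifford generators and are therefore orthogonal in the fermionic Fock space $\F(\lie{p}^\C,W)$. Closing this orthogonal algebraic direct sum in $\spib_{\lie{p}}^c$ then yields a Hilbert space orthogonal decomposition $\spib_{\lie{p}}^c = \widehat{\bigoplus}_{k\geq 0}\,\overline{\spib^{\mathrm{pol},k}}$.

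Next I would feed the second half of Proposition \ref{FIsWelldefined} into this decomposition: the restriction $F|_{\spib^{\mathrm{pol},k}} = 2k\cdot I$ shows that every $\spib^{\mathrm{pol},k}$ is an eigenspace for $F$ with eigenvalue $2k \in 2\Z_{\geq 0}$. Since these eigenspaces are mutually orthogonal and their algebraic direct sum is dense in $\spib_{\lie{p}}^c$, the operator $F$ is diagonalizable in the sense that it admits an orthonormal basis of eigenvectors with eigenvalues in $2\Z_{\geq 0}$. In particular all eigenvalues are non-negative, so $F$ is a non-negative operator, and $\spec(F) \subseteq 2\Z_{\geq 0}$; conversely every $2k$ with $k\in\Z_{\geq 0}$ is attained (e.g.\ by any basis vector $\gamma_{m_1 l_1}\cdots\gamma_{m_k l_k}\vac_\spib$ with $m_i>0$), so equality holds.

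Finally, for the kernel statement, I would observe that $\ker F = \spib^{\mathrm{pol},0} = \C\vac_\spib$: the condition $F\psi = 0$ forces $\psi$ to lie in the eigenspace of eigenvalue $0$, which by (\ref{Frestricted}) is exactly the length-zero subspace spanned by the vacuum. Hence $\dim\ker(F) = 1 < \infty$. No genuine obstacle is expected here; the only mild subtlety is ensuring that orthogonality of length-$k$ sectors really is inherited from the ambient Fock structure, but this is immediate from the CAR anticommutation relations (\ref{Cliffp}) that define the inner product on $\spib_{\lie{p}}^c$.
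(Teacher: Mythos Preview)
Your proposal is correct and follows essentially the same approach as the paper: both deduce everything from Proposition~\ref{FIsWelldefined} together with the orthogonality of the length-$k$ sectors $\spib^{\mathrm{pol},k}$. The paper merely spells out your ``mild subtlety'' in slightly more detail, observing that for $m_h>0,\,l_h<0$ the relevant Clifford generators pairwise anti-commute (so index pairs may be taken distinct) and then identifying $s=(1/\sqrt{2})^k\,\Psi^\ast(E_{m_1 l_1})\cdots\Psi^\ast(E_{m_k l_k})\vac_\spib$ with a scalar multiple of a standard orthonormal basis vector of $\F(\lie{p}^\C,W)$, from which $\spib^{\mathrm{pol},k}\perp\spib^{\mathrm{pol},l}$ for $k\neq l$ follows.
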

\begin{proof}
This follows from equation (\ref{Frestricted}) after we have proved that the
eigenspace decomposition of $F$
$$
\spib^{\mathrm{pol}}=\bigoplus_{k=0}^\infty\spib^{\mathrm{pol},k}
$$
is an orthogonal decomposition, i.e. that
$$
\spib^{\mathrm{pol},k}\perp \spib^{\mathrm{pol},l}
$$
when $k\neq l$.

For this
look at the generators
$$
s=\gamma_{m_1 l_1}\gamma_{m_2 l_2}\cdots\gamma_{m_k l_k}\vac_\spib\in\spib^{\mathrm{pol},k}\qquad
(m_h>0,\, l_h<0\textrm{ for all } h=1,\ldots k),
$$
with $\abs{m_h},\abs{l_h}\leq N$ for every $h$,
and
notice
that because of the sign conventions the anti-commutator
$$
\{\gamma_{m_h l_h},\gamma_{m_k l_k}\}=2\delta_{m_h l_k}\delta_{l_h m_k}=0\quad
(m_h, m_k>0,\, l_h, l_k<0),
$$
i.e. the various Clifford algebra elements in the above expression for
$s$ always anti-commute. In particular $2\gamma_{ml}^2=\{\gamma_{ml},\gamma_{ml}\}=0$. 
Hence we may permute the Clifford algebra generators in the above expression for
$s$ anyway we want as long as we keep track how the sign changes under these
permutations. It follows that for a nonzero $s$ as above,
all pairs of indices $(m_h, l_h)$ are pairwise distinct. Since we are interested in generating
$\spib^{\mathrm{pol},k}$ we may assume that our elements $s$ are
always of this form. But then by the definition of our Clifford action on $\spib_{\lie{p}}^c$ the generator
$$
s=\Big(\frac{1}{\sqrt{2}}\Big)^k\Psi^\ast(E_{m_1 l_1})\cdots\Psi^\ast(E_{m_k l_l})\vac_\spib,
$$
i.e. scalar multiple of an element of the standard orthogonal basis for $\spib_{\lie{p}}^c=\F(\lie{p}^\C,W)$.
It follows that
$$
\spib^{\mathrm{pol},k}\perp \spib^{\mathrm{pol},l}
$$
when $k\neq l$.
\end{proof}

\begin{prop}
Consider the unbounded operators $F_{(N)}:\spib_{\lie{p}}^c\To\spib_{\lie{p}}^c\,\,(N\in\N)$ and
$F:\spib_{\lie{p}}^c\To\spib_{\lie{p}}^c$ with common dense domain
$\spib^{\mathrm{pol}}\subset\spib_{\lie{p}}^c$. Then it holds that
\begin{equation}
\stlim{N} F_{(N)}=F.
\end{equation}
\end{prop}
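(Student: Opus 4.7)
The plan is to reduce strong convergence to a trivial eventual-equality statement by exploiting the filtration $\set{\spib^{\mathrm{pol},(M)}}_{M\geq 0}$ of the common domain $\spib^{\mathrm{pol}}=\bigcup_{M}\spib^{\mathrm{pol},(M)}$. Since strong convergence needs only to be tested on elements of the common domain, let $\varphi\in\spib^{\mathrm{pol}}$ be arbitrary. By the definition of $\spib^{\mathrm{pol}}$, the vector $\varphi$ is a \emph{finite} $\C$-linear combination of basis vectors of the form
$$
s=\gamma_{m_1 l_1}\cdots\gamma_{m_k l_k}\vac_\spib,\qquad m_i>0,\,l_i<0,
$$
so there exists $M=M(\varphi)\in\Z_{\geq 0}$ such that $\varphi\in\spib^{\mathrm{pol},(M)}$.

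For every $N\geq M$, Corollary \ref{FNonBasisVec} applies and tells us that $F_{(N)}$ acts on each of the basis vectors in the expansion of $\varphi$ by multiplication by $2k$, where $k$ is the length of that basis vector. On the other hand, Proposition \ref{FIsWelldefined} shows that $F$ acts by exactly the same scalar $2k$ on each such basis vector, since the eigenvalue depends only on the length and not on the cut-off parameter. Extending by linearity we therefore obtain the pointwise identity
$$
F_{(N)}(\varphi)=F(\varphi)\qquad\text{for all }N\geq M(\varphi).
$$
Consequently $\norm{F_{(N)}\varphi-F\varphi}_{\spib_{\lie{p}}^c}=0$ for all sufficiently large $N$, which in particular implies $\lim_{N\to\infty}\norm{F_{(N)}\varphi-F\varphi}=0$.

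Since $\varphi\in\spib^{\mathrm{pol}}=\domain(F)=\bigcap_{N}\domain(F_{(N)})$ was arbitrary, this establishes the strong-limit equality $\stlim{N}F_{(N)}=F$, and in fact a stronger statement: the convergence is eventually stationary on every vector of the common dense domain. There is no genuine obstacle here; all the work has been done in Corollary \ref{FNonBasisVec} and Proposition \ref{FIsWelldefined}, which in turn rest on the fact that the normal-ordered operators $\KN{ii}$ act on $\spib^{\mathrm{pol},(M)}$ by the commutator action (Proposition \ref{actioniscommutator}) and that the resulting commutators with the Clifford generators are independent of $N$ once $N\geq M$ (Lemma \ref{Kijlimit} and Remark \ref{remarkforstrongconv}).
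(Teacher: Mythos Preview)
Your proof is correct and follows essentially the same approach as the paper, which simply says the result follows directly from Corollary \ref{FNonBasisVec} and Proposition \ref{FIsWelldefined}. You have merely spelled out the routine details of that deduction (eventual stationarity on each vector of the filtration), together with some helpful commentary on the ingredients behind those two results.
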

\begin{proof}
Follows directly from Corollary \ref{FNonBasisVec} and Proposition \ref{FIsWelldefined}.
\end{proof}

\subsection{Operator IV -- The zero operator in disguise}
Once again, fix $M\in\Z_{\geq 0}$.
For each $N\in\Z_{\geq 0}$ with $N\geq M$ we are interested in the unbounded operator
$T(N):\spib_{\lie{p}}^c\To\spib_{\lie{p}}^c$,
$$
T(N):=N\sum_{\substack{\abs{i}\leq N\\ i\neq 0}}\KN{ii},\qquad\domain(T(N))=\spib^{\mathrm{pol}},
$$
and its behaviour as $N\to\infty$. 

\begin{lem}
Fix $M\in\Z_{\geq 0}$.
Then as an unbounded operator $S(N):\spib_{\lie{p}}^c\To\spib_{\lie{p}}^c$,
$$
S(N):=\sum_{\substack{\abs{i}\leq N\\ i\neq 0}}\KN{ii},\qquad\domain(T(N))=\spib^{\mathrm{pol}}
$$
acts as zero on the subspace $\spib^{\mathrm{pol},(M)}\subset\spib_{\lie{p}}^c$
for each $N\in\Z_{>0}$ satisfying $N\geq M$.
\end{lem}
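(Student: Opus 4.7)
The plan is to show that $S(N)$ annihilates the vacuum $\vac_{\spib}$ and commutes (on the relevant subspace) with every Clifford generator $\gamma_{ml}$ appearing in a basis vector of $\spib^{\mathrm{pol},(M)}$, and then to invoke the commutator-action principle of Proposition \ref{actioniscommutator} to conclude.

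First I would check that each summand $\KN{ii}$ kills the vacuum. For $i>0$ this follows from the formula $\KN{ii}=K_{ii}^{(N)}=\tfrac12\sum_{k<0,\,|k|\leq N}\gamma_{ik}\gamma_{ki}$ together with the observation that $\gamma_{ki}$ with $k<0,i>0$ corresponds to $X_{ki}\in W$ and hence, via the complexified Clifford representation, acts as an annihilator $\Psi(X_{ki})$ on $\vac_\spib$. For $i<0$, the constant $-N$ in (\ref{KijtildeVersusKij}) cancels the constant term in (\ref{Kii}), leaving $\KN{ii}=-\tfrac12\sum_{k>0,\,|k|\leq N}\gamma_{ki}\gamma_{ik}$, and again the innermost factor $\gamma_{ik}$ (with $i<0,k>0$) corresponds to $X_{ik}\in W$ and annihilates $\vac_\spib$. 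Therefore $S(N)\vac_\spib=0$.

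Next I would compute $[S(N),\gamma_{ml}]$ for a single generator with $m>0,\,l<0$ and $|m|,|l|\leq M\leq N$, using the second formula of Lemma \ref{Kijlimit}. For indices $i>0$, only the case $i=m$ contributes, giving $+\gamma_{ml}$; for indices $i<0$, only the case $i=l$ contributes, giving $-\gamma_{ml}$. Summed over all $i$ with $|i|\leq N$, these two contributions cancel exactly, so that
\begin{equation*}
[S(N),\gamma_{ml}]=\sum_{\substack{|i|\leq N\\ i\neq 0}}[\KN{ii},\gamma_{ml}]=\gamma_{ml}-\gamma_{ml}=0.
\end{equation*}

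Finally, by the remark preceding Proposition \ref{actioniscommutator} and by Corollary \ref{Cliffordvacannihilated}, any element of $\spib^{\mathrm{pol},(M)}$ is a finite linear combination of $\vac_\spib$ and basis vectors of the form $s=\gamma_{m_1 l_1}\cdots\gamma_{m_k l_k}\vac_\spib$ with $m_h>0$, $l_h<0$, and $|m_h|,|l_h|\leq M$. Since $N\geq M$, Proposition \ref{actioniscommutator} (applied to each summand $\KN{ii}$ and then summed) shows that
\begin{equation*}
S(N)\cdot s=\sum_{h=1}^k\gamma_{m_1 l_1}\cdots[S(N),\gamma_{m_h l_h}]\cdots\gamma_{m_k l_k}\vac_\spib+\gamma_{m_1 l_1}\cdots\gamma_{m_k l_k}S(N)\vac_\spib,
\end{equation*}
and both pieces on the right vanish by the previous two steps. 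Hence $S(N)|_{\spib^{\mathrm{pol},(M)}}=0$. There is no real obstacle in the argument; the only point that requires care is verifying the sign pattern in the second claim of Lemma \ref{Kijlimit} so that the positive-$i$ and negative-$i$ contributions to $[S(N),\gamma_{ml}]$ really do cancel (in contrast to the fermion number operator $F_{(N)}$ of \S\ref{FNOP}, where the explicit factor $\sign(i)$ makes them add).
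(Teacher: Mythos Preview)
Your proposal is correct and follows essentially the same approach as the paper: both arguments reduce to Proposition~\ref{actioniscommutator} (the commutator-action principle) together with the second and third claims of Lemma~\ref{Kijlimit}, observing that for each generator $\gamma_{ml}$ with $m>0,\,l<0$ the contributions $[\KN{mm},\gamma_{ml}]=+\gamma_{ml}$ and $[\KN{ll},\gamma_{ml}]=-\gamma_{ml}$ cancel. Your extra step of re-deriving $\KN{ii}\vac_\spib=0$ directly is redundant (it is already the third claim of Lemma~\ref{Kijlimit}, and is built into Proposition~\ref{actioniscommutator}), but harmless.
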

\begin{proof}
Let
$$
s=\gamma_{m_1 l_1}\gamma_{m_2 l_2}\cdots\gamma_{m_k l_k}\vac_{\spib}\in \spib^{\mathrm{pol},(M)}\quad (m_i>0 \textrm{ for all }i)
$$
be a basis vector for $\spib^{\mathrm{pol},(N)}$ (hence $\abs{m_i},\abs{l_i}\leq M\leq N$ for all $i$). Again,
we want to apply the results of Proposition \ref{actioniscommutator} and Lemma \ref{Kijlimit}. Proposition
\ref{actioniscommutator} tells us that the action of each $\KN{ii}$ is the 
isotropy / adjoint action and according to
Lemma \ref{Kijlimit} the commutator
$[\Kt{ii},\gamma_{m_j l_j}]\neq 0\iff i=m_j$ or $i=l_j$, and for these values of 
$i$ the commutator $[\Kt{ii},\gamma_{m_j l_j}]=\pm\gamma_{m_j l_j}$, i.e. the \emph{opposite} values.
Using this result the claim follows when one changes the summation order and writes
$$
S(N)\cdot s=\sum_{h=1}^k\gamma_{m_1 l_1 }\cdots [S(N),\gamma_{m_h l_h}]\cdots\vac_{\spib}.
$$
\end{proof}

\begin{cor}\label{ZeroInDisguise}
Fix $M\in\Z_{\geq 0}$ and let $N\in\Z_{>0}$ be any integer satisfying
$N\geq M$.
Then the unbounded operator $T(N):\spib_{\lie{p}}^c\To\spib_{\lie{p}}^c$
acts as zero when restricted to the subspace $\spib^{\mathrm{pol},(M)}\subset\spib^{\mathrm{pol}}$,
\begin{equation}
T(N)\Big\vert_{\spib^{\mathrm{pol},(M)}}=
NS(N)\Big\vert_{\spib^{\mathrm{pol},(M)}}=N\cdot 0\equiv 0.
\end{equation}
\end{cor}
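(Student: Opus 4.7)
The corollary is an essentially immediate consequence of the preceding lemma, so the plan is very short. I would simply unpack the definition of $T(N)$ and observe that it factors as the scalar $N$ times $S(N)$.

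My plan is to write $T(N) = N \cdot S(N)$ directly from the definitions, where $S(N) = \sum_{|i|\leq N,\, i \neq 0}\widetilde{K}_{ii}^{(N)}$, and then invoke the lemma that was just established, which asserts that $S(N)$ restricted to $\spib^{\mathrm{pol},(M)}$ is the zero operator whenever $N \geq M$. Since multiplication by the scalar $N$ commutes with restriction, we immediately obtain
\[
T(N)\big|_{\spib^{\mathrm{pol},(M)}} = N \cdot S(N)\big|_{\spib^{\mathrm{pol},(M)}} = N \cdot 0 = 0.
\]

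There is no serious obstacle here; the only point worth emphasizing is that the hypothesis $N \geq M$ is what makes the preceding lemma applicable, so this hypothesis must be carried into the statement of the corollary. In particular, one should note that the domain $\spib^{\mathrm{pol}}$ of $T(N)$ indeed contains $\spib^{\mathrm{pol},(M)}$, which is immediate from the filtration property $\spib^{\mathrm{pol},(M)} \subset \spib^{\mathrm{pol}}$ noted earlier in the paper. Thus the corollary is essentially a rephrasing of the lemma, with the extra factor $N$ making explicit that even after multiplication by the diverging polynomial factor in $N$, the operator still vanishes on the fixed subspace $\spib^{\mathrm{pol},(M)}$, which is the key point needed later for controlling the diverging terms $P(N)$ in $\KDiracc^{\,2}$.
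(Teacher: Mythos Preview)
Your proposal is correct and matches the paper's approach exactly: the corollary is stated in the paper without a separate proof, since the displayed equation $T(N)\big\vert_{\spib^{\mathrm{pol},(M)}}=NS(N)\big\vert_{\spib^{\mathrm{pol},(M)}}=N\cdot 0\equiv 0$ is itself the entire argument, invoking the preceding lemma precisely as you do.
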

We immediately conclude from this the following Proposition.
\begin{prop}
The unbounded operators $T(N):\spib_{\lie{p}}^c\To\spib_{\lie{p}}^c$ with common
dense domain $\domain(T(N))=\spib^{\mathrm{pol}}$ satisfy
the strong limit condition
\begin{equation}
\stlim{N} T(N)=0.
\end{equation}
\end{prop}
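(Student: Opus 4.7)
The plan is to deduce this strong-limit statement as an immediate consequence of Corollary \ref{ZeroInDisguise} together with the exhaustion property $\bigcup_{M\geq 0}\spib^{\mathrm{pol},(M)}=\spib^{\mathrm{pol}}$. There is no analytic content left; the filtration has already done the work.

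First I would take an arbitrary $\varphi\in\spib^{\mathrm{pol}}=\domain(T(N))$. Because $\spib^{\mathrm{pol}}$ is the union of the increasing family $\{\spib^{\mathrm{pol},(M)}\}_{M\geq 0}$, there exists some $M=M(\varphi)\in\Z_{\geq 0}$ with $\varphi\in\spib^{\mathrm{pol},(M)}$. By Corollary \ref{ZeroInDisguise}, for every integer $N\geq M$ the operator $T(N)=N\cdot S(N)$ annihilates $\spib^{\mathrm{pol},(M)}$, and in particular $T(N)\varphi=0$.

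Consequently $\norm{T(N)\varphi-0}_{\spib_{\lie{p}}^c}=0$ for all $N\geq M(\varphi)$, which trivially implies
\begin{equation}
\lim_{N\to\infty}\norm{T(N)\varphi}_{\spib_{\lie{p}}^c}=0.
\end{equation}
Since $\varphi\in\spib^{\mathrm{pol}}$ was arbitrary, this is exactly the strong-limit condition $\stlim{N} T(N)=0$.

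No step is a genuine obstacle; the only subtlety worth emphasizing in the write-up is that the integer $M$ depends on $\varphi$, so the vanishing happens pointwise on the dense domain, not uniformly. In particular, although the prefactor $N$ grows without bound, it is multiplied by the operator $S(N)$ which, on the fixed finite subspace $\spib^{\mathrm{pol},(M)}$, is \emph{identically} zero as soon as $N\geq M$, so there is no diverging polynomial in $N$ to cancel.
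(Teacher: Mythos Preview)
Your proof is correct and is exactly the argument the paper has in mind: the paper states this proposition with the phrase ``We immediately conclude from this the following Proposition'' right after Corollary~\ref{ZeroInDisguise}, giving no separate proof. Your write-up simply spells out that immediate deduction via the exhaustion $\bigcup_{M\geq 0}\spib^{\mathrm{pol},(M)}=\spib^{\mathrm{pol}}$, so the approach is identical.
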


\subsubsection{Filtrations for the invariant sectors}
Recall that in \S \ref{OurInfDimCase} we defined for the $\C$-vector space $\spib^{\mathrm{pol}}$ a natural
increasing filtration $F_\bullet=\set{F_p}_{p\geq 0}$ by $\C$-vector subspaces, where 
$F_p=F_p\spib^{\mathrm{pol}}=\spib^{\mathrm{pol},(p)}$.
This induces an increasing filtration $G_\bullet=\set{G_p}_{p\geq 0}$ on the
tensor product $\F^{\mathrm{pol}}\tensor\spib^{\mathrm{pol}}$ by
$$
G_p:=\F^{\mathrm{pol}}\tensor F_p\spib^{\mathrm{pol}}=\F^{\mathrm{pol}}\tensor\spib^{\mathrm{pol},(p)}.
$$
Taking intersections defines then an increasing filtration $H_\bullet^N=\set{H_p^N}_{p\geq 0}$ 
by $\C$-vector subspaces of
each invariant sector $(\F^{\mathrm{pol}}\tensor\spib^{\mathrm{pol}})^{\lie{h},N}$,
$$
H_p^N:=(\F^{\mathrm{pol}}\tensor\spib^{\mathrm{pol}})^{\lie{h},N}\cap G_p,
$$
which in concrete terms is given by
\begin{eqnarray}
H_p^N(\F^{\mathrm{pol}}\tensor\spib^{\mathrm{pol}})^{\lie{h},N} 
&=&
\Big\{w\in \F^{\mathrm{pol}}\tensor\spib^{\mathrm{pol},(p)}\,\Big\vert\,\Big[\rh(E_{pq})\tensor 1\Big](w)=\Big[-1\tensor \KN{pq}\Big](w)
\nonumber \\ 
& & \textrm{ for all } p,q\in\Z\textrm{ such that }pq>0\textrm{ and } \abs{p},\abs{q}\leq N\Big\}\nonumber.
\end{eqnarray}

We collect the results obtained so far in the form of the following Lemma.

\begin{lem}\label{OurSpinCasIsZero}
Consider the Dirac operator $\KDirac$ as an unbounded operator
$\KDirac:(\F\tensor\spib_{\lie{p}}^c)^{\lie{h}} \To(\F\tensor\spib_{\lie{p}}^c)^{\lie{h}}$, and
similarly we consider the cut-offs $\KDiracc$ as unbounded operators
$\KDiracc:(\F\tensor\spib_{\lie{p}}^c)^{\lie{h},N} \To(\F\tensor\spib_{\lie{p}}^c)^{\lie{h},N}$.
Then for each $\varphi\in\domain(\KDirac)$ there exists $N_\varphi\in\N$ such that for all
$N\geq N_\varphi$, $\varphi\in\domain(\KDiracc)$ and

\begin{eqnarray}\label{SecondLastCut-Off}
4\KDirac^2\varphi=4\KDirac^2_{(N)}\varphi &=&
\Big(\Delta_{\lie{g},\mathrm{ren}}^{(N)}\tensor 1+1\tensor
\sum_{\substack{ij>0\\ \abs{i},\abs{j}\leq N}}\Kn{ij}\Kn{ji}-N^3
+
1\tensor F_{(N)}\Big)\varphi.
\end{eqnarray}
\end{lem}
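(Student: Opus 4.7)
The plan is to combine the algebraic identity (\ref{KDirac2WithHiddenZeroOp}), which already holds for $\KDiracc^{\,2}$, with the stabilization results proved earlier in this section. All the genuinely analytic content has been established before: it remains only to track two independent cut-off parameters simultaneously and to exhibit the term $1\tensor T(N)\varphi$ as zero.

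First I would invoke Proposition \ref{StroLimInEqSector} to produce an integer $N_1 = N_1(\varphi)\in\N$ such that for every $N\geq N_1$ one has $\varphi\in\domain(\KDiracc)$, the image $\KDiracc\varphi$ lies in $\domain(\KDirac)\cap\domain(\KDiracc)$, and
$$
\KDirac\varphi = \KDiracc\varphi, \qquad \KDirac^{\,2}\varphi = \KDiracc^{\,2}\varphi.
$$
Thus, for such $N$, proving the claimed formula for $4\KDirac^{\,2}\varphi$ reduces to proving it for $4\KDiracc^{\,2}\varphi$. Next, since $\varphi\in(\F^{\mathrm{pol}}\tensor\spib^{\mathrm{pol}})^{\lie{h}}$ is by definition a finite $\C$-linear combination of elementary tensors of basis vectors, there exists an integer $M = M(\varphi)\in\Z_{\geq 0}$ such that $\varphi\in\F^{\mathrm{pol}}\tensor\spib^{\mathrm{pol},(M)}$. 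Setting $N_\varphi := \max(N_1,M)$, both conditions are in force whenever $N\geq N_\varphi$.

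Then I would apply the algebraic identity (\ref{KDirac2WithHiddenZeroOp}) to $4\KDiracc^{\,2}\varphi$ and inspect each of the four constituent operators $I$--$IV$ appearing there. The operators $I = \Delta_{\lie{g},\mathrm{ren}}^{(N)}\tensor 1$ and $II = 1\tensor(\sum_{\abs{i},\abs{j}\leq N,\, ij>0}\Kn{ij}\Kn{ji}-N^3)$ appear verbatim on the right-hand side of the assertion. For the third operator, the definition of $F_{(N)}$ in \S\ref{FNOP} gives $III = 1\tensor F_{(N)}$ directly. The only remaining term is $-IV = -1\tensor N\sum_{\abs{i}\leq N,\, i\neq 0}\KN{ii}\varphi = -1\tensor T(N)\varphi$; here Corollary \ref{ZeroInDisguise} applies, since $\varphi\in\F^{\mathrm{pol}}\tensor\spib^{\mathrm{pol},(M)}$ and $N\geq M$, and hence $1\tensor T(N)\varphi = 0$. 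Assembling these four observations produces the claimed identity.

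The main obstacle is purely organisational rather than analytic: one must be careful to decouple the two cut-off parameters that have appeared in the construction. The integer $N_1(\varphi)$ controls the invariant-sector level at which the $\hinf^\C$-action via $\Kt{pq}$ can be replaced by the truncated $\KN{pq}$ (so that the cut-off Dirac operator $\KDiracc$ sees $\varphi$ as living in $(\F\tensor\spib_{\lie{p}}^c)^{\lie{h},N}$), while $M(\varphi)$ records the filtration level $\spib^{\mathrm{pol},(M)}$ past which the operator $T(N)$ acts as zero. Taking $N_\varphi = \max(N_1,M)$ satisfies both requirements simultaneously, after which the proof reduces to bookkeeping of the terms in (\ref{KDirac2WithHiddenZeroOp}).
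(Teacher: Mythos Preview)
Your proposal is correct and follows essentially the same route as the paper's proof: both invoke Proposition \ref{StroLimInEqSector} to obtain an $N_1$ (the paper's $N_\varphi'$) guaranteeing $\varphi$ lies in the $N$th-level invariant sector with $\KDirac^2\varphi=\KDiracc^{\,2}\varphi$, find a filtration level $M$ (the paper's $q$) with $\varphi\in\F^{\mathrm{pol}}\tensor\spib^{\mathrm{pol},(M)}$, take $N_\varphi=\max(N_1,M)$, and then apply (\ref{KDirac2WithHiddenZeroOp}) together with Corollary \ref{ZeroInDisguise} to kill the $1\tensor T(N)$ term. The paper phrases the second step via the filtration $G_\bullet$ and the subspaces $H_q^N$, but the content is identical.
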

\begin{proof}
First notice that since
$\varphi\in(\F^{\mathrm{pol}}\tensor\spib^{\mathrm{pol}})^{\lie{h}}\subset
\F^{\mathrm{pol}}\tensor\spib^{\mathrm{pol}}$
and since $G_\bullet=\set{G_p}_{p\geq 0}$ is an increasing filtration of the
tensor product $\F^{\mathrm{pol}}\tensor\spib^{\mathrm{pol}}$ there
exists an integer $q\in\Z_{\geq 0}$ such that $\varphi\in G_q$.
On the other hand, by Proposition \ref{StroLimInEqSector} there exists
$N_\varphi'\in\Z_{\geq 0}$ such that 
$\varphi\in(\F^{\mathrm{pol}}\tensor\spib^{\mathrm{pol}})^{\lie{h},N}$
for all $N\geq N_\varphi'$. Hence
$$
\varphi\in(\F^{\mathrm{pol}}\tensor\spib^{\mathrm{pol}})^{\lie{h},N}\cap G_q=
\domain(\KDiracc)\cap G_q=H_q^N
$$
for all $N\geq N_\varphi'$. Moreover, Proposition \ref{StroLimInEqSector} tells us that
for these values of $N$,
$$
\KDirac\varphi=\KDiracc\varphi,
$$
and that also $\KDirac^2\varphi=\KDiracc^2\varphi$ whenever
$N\geq N_\varphi'$.

Next notice that by Corollary \ref{ZeroInDisguise}
$[1\tensor T(N)](H_q^N)=0$ for all $N\geq q$ so that in particular
$[1\tensor T(N)]\varphi=0$ for these values of $N$.
Thus, we may choose $N_\varphi=\max\set{q,N_\varphi'}$ and apply equation
(\ref{KDirac2WithHiddenZeroOp}).
\end{proof}

\subsection{Operator II -- The spinor Casimir operator of $\hinf^\C$}
The final thing to do in our task of introducing a proper normal ordering for
the square of the Dirac operator is to make sense of and analyze 
the behaviour of
the (unbounded) operator
$$
\sum_{\substack{i,j\in\Z \\ ij>0\\ \abs{i},\abs{j}\leq N}} \Kn{ij}\Kn{ji}-N^3
$$
acting on $\spib_{\lie{p}}^c$ in 
the `limit' $N\to\infty$. We call this hypothetical limit
$\Delta_{\spib_{\lie{p}},\mathrm{ren}}^{\lie{h}}$ the
\emph{spinor Casimir operator of} $\hinf^\C$. Of course since the definition contains the diverging term $N^3$ we would like to show
that for every $N\in\N$ the cut-off
$$
\Delta_{\spib_{\lie{p}},\mathrm{ren}}^{(N)}:=\sum_{\substack{i,j\in\Z \\ ij>0\\ \abs{i},\abs{j}\leq N}} \Kn{ij}\Kn{ji}-N^3
$$
satisfies
\begin{equation}
\stlim{N} \Delta_{\spib_{\lie{p}},\mathrm{ren}}^{(N)}=0.
\end{equation}

Equivalently, we can define the operator
$$
\Delta_{\spib_{\lie{p}}}^{(N)}:=\sum_{\substack{i,j\in\Z \\ ij>0\\ \abs{i},\abs{j}\leq N}} \Kn{ij}\Kn{ji}
$$
for each $N\in\N$, and show that 
for a fixed element $s\in\spib^{\mathrm{pol}}$, each operator
$\Delta_{\spib_{\lie{p}}}^{(N)}$ acts as the constant operator
$$
\Delta_{\spib_{\lie{p}}}^{(N)}\cdot s=N^3\cdot s.
$$
for sufficiently large $N$ depending on the element $s$ chosen.
The reader should compare the above claim with Proposition \ref{KostCasConst}.

\begin{lem}
Consider $\Kn{ij},\KN{ij}:\spib_{\lie{p}}^c\To\spib_{\lie{p}}^c$ as unbounded operators
with dense domain $\spib^{\mathrm{pol}}$. Then
the following identity holds for all pairs $i,j\in\Z$ with $ij>0$:
\begin{eqnarray}\label{VariousSpinCasimirs}
\Kn{ij}\Kn{ji} &=& \KN{ij}\KN{ji}+\delta_{ij}\chi_{\leq 0}(i)\Big(2N\KN{ii}+N^2\cdot 1\Big),
\end{eqnarray}
where $\chi_{\leq 0}$ is the characteristic function of the interval $(-\infty,0]$.
In particular,
\begin{equation}\label{KnSpinorCasimirOnVac}
\Kn{ij}\Kn{ji}\vac_\spib=\delta_{ij}\chi_{\leq 0}(i)N^2\cdot\vac_\spib.
\end{equation}
\end{lem}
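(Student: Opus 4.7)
The plan is to reduce the identity directly to the definitional relation between $K_{ij}^{(N)}$ and its normal-ordered cousin $\KN{ij}$ already recorded in equation (\ref{KijtildeVersusKij}). Written in a single formula that covers both cases, that relation reads
\begin{equation*}
\Kn{ij} \;=\; \KN{ij} + \delta_{ij}\,\chi_{\leq 0}(i)\,N\cdot 1,
\end{equation*}
since the constant correction is present precisely when $i=j<0$ (and the condition $i\neq 0$ on admissible indices means $\chi_{\leq 0}(i)$ and $\chi_{<0}(i)$ agree).

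From here, the first assertion is a one-line expansion. First I would substitute the displayed relation into both factors of $\Kn{ij}\Kn{ji}$ and multiply out. When $i\neq j$, the two $\delta$-terms vanish and the claim collapses to $\Kn{ij}\Kn{ji}=\KN{ij}\KN{ji}$, in agreement with the right-hand side (which also vanishes outside the diagonal because of the overall $\delta_{ij}$). When $i=j$, using $\delta_{ij}=\delta_{ji}=1$, $\chi_{\leq 0}(i)=\chi_{\leq 0}(j)$, and the idempotence $\chi_{\leq 0}(i)^{2}=\chi_{\leq 0}(i)$, the product expands to
\begin{equation*}
\KN{ii}^{2} + 2N\,\chi_{\leq 0}(i)\,\KN{ii} + \chi_{\leq 0}(i)\,N^{2},
\end{equation*}
which is exactly the stated formula. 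Since the constant $N\cdot 1$ commutes with everything, no operator-ordering subtlety enters.

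The second identity (\ref{KnSpinorCasimirOnVac}) is then an immediate consequence of the third claim of Lemma \ref{Kijlimit}, namely $\KN{ij}\vac_{\spib}=0$ for all $i,j$ with $ij>0$. Applying the first identity to $\vac_{\spib}$ kills the $\KN{ij}\KN{ji}\vac_{\spib}$ and $2N\KN{ii}\vac_{\spib}$ contributions, leaving only $\delta_{ij}\chi_{\leq 0}(i)N^{2}\vac_{\spib}$. There is no genuine obstacle here: the content of the lemma is purely bookkeeping about the normal-ordering shift, and the only mild care required is to verify that the single-formula encoding $\Kn{ij}=\KN{ij}+\delta_{ij}\chi_{\leq 0}(i)N$ is faithful to the piecewise definition (\ref{KijtildeVersusKij}), after which the rest is a short symbolic expansion.
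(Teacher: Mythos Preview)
Your proof is correct and follows essentially the same approach as the paper: both use the relation between $\Kn{ij}$ and $\KN{ij}$ (the paper cites equation (\ref{KijHij2}), you cite the equivalent (\ref{KijtildeVersusKij})), expand the product case-by-case (or via your single-formula encoding), and then invoke $\KN{ij}\vac_{\spib}=0$ from Lemma \ref{Kijlimit} for the vacuum statement. The only cosmetic difference is that the paper writes the result as a piecewise formula before unifying, while you work directly with the $\delta_{ij}\chi_{\leq 0}(i)$ encoding from the start.
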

\begin{proof}
By equation (\ref{KijHij2}) we may express the operators $\Kn{ij}$ in terms of the
operators $\KN{ij}$ so that

\begin{eqnarray}
\Kn{ij}\Kn{ji} &=& \left\{
\begin{array}{ll}
\KN{ij}\KN{ji} & \text{if } i\neq j \text{ or } i =j > 0 \\
(\KN{ii})^2+2N\KN{ii}+N^2\cdot 1 & \text{if } i = j < 0,
\end{array} \right.
\end{eqnarray}
from which the first claim follows.

The second claim follows from equation (\ref{VariousSpinCasimirs}) and Lemma \ref{Kijlimit} 
which says that $\Kt{ij}\vac_\spib=0$ for all $i,j\in\Z$ with $ij>0$.
\end{proof}

\begin{prop}\label{TheActionOfKijKji}
Fix $M\in\Z_{\geq 0}$ and let $N\in\Z_{>0}$ be any integer satisfying $N\geq M$.
Then for each $i,j\in\Z$ with $ij>0$ the operator 
$\Kn{ij}\Kn{ji}$
acts on $\SPol{M}\subset\spib_{\lie{p}}^c$ so that if
$$
s=\gamma_{m_1 l_1}\gamma_{m_2 l_2}\cdots\gamma_{m_k l_k}\vac_\spib\qquad
(m_h>0,\, l_h<0\,;\abs{m_h},\abs{l_h}\leq M\textrm{ for all } h=1,\ldots k)
$$
is a basis vector for
$\SPol{M}$, then

\begin{eqnarray}\label{SpinorCasIsAdj}
\Kn{ij}\Kn{ji}\cdot s &=& \sum_{h=1}^k
\gamma_{m_1 l_1}\cdots[\Kn{ij}\Kn{ji},\gamma_{m_h l_h}]\cdots\gamma_{m_k l_k}\vac_\spib \nonumber\\
&+& 
\delta_{ij}\chi_{\leq 0}(i)N^2\cdot s.
\end{eqnarray}
\end{prop}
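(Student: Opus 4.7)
The plan is to view this as an essentially formal consequence of the already-established vacuum identity \eqref{KnSpinorCasimirOnVac} combined with the Leibniz property of commutators with products. No new analytic ingredient beyond what has been developed is required; the hypothesis $N\geq M$ will be used only implicitly, to guarantee that every $\Kn{ij}$ appearing is a \emph{finite} sum of (normal ordered) quadratic Clifford monomials, so that all the algebraic manipulations below take place inside the associative algebra generated by the $\gamma_{ab}$ and are manifestly well defined.

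First I would start from the generator
$$s=\gamma_{m_1 l_1}\gamma_{m_2 l_2}\cdots\gamma_{m_k l_k}\vac_\spib\in\SPol{M}$$
and simply transport $\Kn{ij}\Kn{ji}$ across the product of Clifford generators until it reaches the vacuum, using the trivial operator identity
$$AB=[A,B]+BA$$
with $A=\Kn{ij}\Kn{ji}$ and $B=\gamma_{m_1 l_1}\cdots\gamma_{m_k l_k}$. This gives
$$\Kn{ij}\Kn{ji}\cdot s=\bigl[\Kn{ij}\Kn{ji},\;\gamma_{m_1 l_1}\cdots\gamma_{m_k l_k}\bigr]\vac_\spib+\gamma_{m_1 l_1}\cdots\gamma_{m_k l_k}\,\Kn{ij}\Kn{ji}\vac_\spib.$$

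Next I would expand the commutator on the right-hand side by the standard Leibniz-type identity
$$[A,B_1B_2\cdots B_n]=\sum_{h=1}^{n}B_1\cdots B_{h-1}[A,B_h]B_{h+1}\cdots B_n,$$
applied with $A=\Kn{ij}\Kn{ji}$ and $B_h=\gamma_{m_h l_h}$. This immediately reproduces the first summand appearing on the right-hand side of \eqref{SpinorCasIsAdj}.

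Finally, for the second term I would invoke \eqref{KnSpinorCasimirOnVac}, which yields $\Kn{ij}\Kn{ji}\vac_\spib=\delta_{ij}\chi_{\leq 0}(i)N^{2}\vac_\spib$, so that
$$\gamma_{m_1 l_1}\cdots\gamma_{m_k l_k}\,\Kn{ij}\Kn{ji}\vac_\spib=\delta_{ij}\chi_{\leq 0}(i)N^{2}\cdot s.$$
Adding the two contributions gives exactly the claimed identity. The only step that could conceivably be subtle is verifying that the Leibniz expansion and the ``push the operator to the vacuum'' step are legal, but since $\Kn{ij}$ and hence $\Kn{ij}\Kn{ji}$ are finite sums of polynomial expressions in the Clifford generators (thanks to the cut-off $\abs{k}\leq N$ in the definition of $\Kn{ij}$), these are purely algebraic identities in $\Cliff(\lie{p}^\C)$ and require no analytic justification. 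Thus the main obstacle is really notational rather than mathematical.
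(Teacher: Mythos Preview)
Your proposal is correct and is essentially the same argument as the paper's: both push $\Kn{ij}\Kn{ji}$ past the Clifford generators to the vacuum and apply \eqref{KnSpinorCasimirOnVac}. The only cosmetic difference is that the paper packages the ``push to the vacuum'' step as an induction on the length $k$, whereas you invoke the Leibniz identity $[A,B_1\cdots B_k]=\sum_h B_1\cdots[A,B_h]\cdots B_k$ directly; the induction is nothing more than a proof of that identity in this context.
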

\begin{proof}
By induction on the `length' $k$. 
The case $k=0$ follows from equation (\ref{KnSpinorCasimirOnVac}).

Let $k=1$ so that
$s=\gamma_{ml}\vac_\spib$ where $m>0$ and $l<0$.
Using equation (\ref{KnSpinorCasimirOnVac}) again, one may write

\begin{eqnarray}
\Big[\Kn{ij}\Kn{ji},\gamma_{ml}\Big]\vac_\spib &=&
\Kn{ij}\Kn{ji}\gamma_{ml}\vac_\spib-\gamma_{ml}\Kn{ij}\Kn{ji}\vac_\spib\nonumber\\
&=&
\Kn{ij}\Kn{ji}\gamma_{ml}\vac_\spib-\delta_{ij}\chi_{\leq 0}(i)N^2\cdot\gamma_{ml}\vac_{\spib}\nonumber\\
&=&
\Kn{ij}\Kn{ji}\cdot s-\delta_{ij}\chi_{\leq 0}(i)N^2\cdot s.\nonumber
\end{eqnarray}

We can now make the induction hypothesis for $k=n$ and proceed exactly as in the
proof of Proposition \ref{actioniscommutator}, i.e. one looks at basis elements of the form
$$
s'=\gamma_{ml}\gamma_{m_1 l_1}\gamma_{m_2 l_2}\cdots\gamma_{m_k l_k}\vac_\spib=\gamma_{ml}\cdot s
\quad(m>0,l<0,\,\abs{m},\abs{l}\leq M).
$$
Then one may write
$$
\Kn{ij}\Kn{ji}\cdot s'=\Big(\Kn{ij}\Kn{ji}\Big)(\gamma_{ml}\cdot s)=
[\Kn{ij}\Kn{ji},\gamma_{ml}]s+\gamma_{ml}\cdot
\Big(\Kn{ij}\Kn{ji}\cdot s\Big)
$$
and apply the induction hypothesis to the term $\Kn{ij}\Kn{ji}\cdot s$.
\end{proof}

\begin{cor}\label{ActionOfSpinorCas}
Fix $M\in\Z_{\geq 0}$ and let $N\in\Z_{>0}$ be any integer satisfying $N\geq M$.
Then the operator 
$\Delta_{\spib_{\lie{p}}}^{(N)}$
acts on $\SPol{M}$ so that if
$$
s=\gamma_{m_1 l_1}\gamma_{m_2 l_2}\cdots\gamma_{m_k l_k}\vac_\spib\qquad
(m_h>0,\, l_h<0\, ;\abs{m_h},\abs{l_h}\leq M\textrm{ for all } h=1,\ldots k)
$$
is a basis vector for
$\SPol{M}$, then
\begin{eqnarray}\label{SpinorCasIsAdj}
\Delta_{\spib_{\lie{p}}}^{(N)}\cdot s &=& 
\sum_{\substack{i,j\in\Z\\ ij>0\\ \abs{i},\abs{j}\leq N}}
\sum_{h=1}^k
\gamma_{m_1 l_1}\cdots[\Kn{ij}\Kn{ji},\gamma_{m_h l_h}]\cdots\gamma_{m_k l_k}\vac_\spib \nonumber\\
&+& 
N^3\cdot s\\
&=&
\sum_{h=1}^k
\gamma_{m_1 l_1}\cdots[\Delta_{\spib_{\lie{p}}}^{(N)},\gamma_{m_h l_h}]\cdots\gamma_{m_k l_k}\vac_\spib \nonumber\\
&+& 
N^3\cdot s.
\end{eqnarray}
\end{cor}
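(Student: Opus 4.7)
The plan is to deduce Corollary \ref{ActionOfSpinorCas} directly from Proposition \ref{TheActionOfKijKji} by linearity and a careful counting of the constant terms. Since
$$
\Delta_{\spib_{\lie{p}}}^{(N)} = \sum_{\substack{i,j\in\Z\\ ij>0\\ \abs{i},\abs{j}\leq N}} \Kn{ij}\Kn{ji},
$$
the first step is simply to apply Proposition \ref{TheActionOfKijKji} term-by-term to a basis vector $s = \gamma_{m_1 l_1}\cdots\gamma_{m_k l_k}\vac_\spib \in \SPol{M}$, which is legal because $N\geq M$.

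Second, I would split the resulting sum into the ``commutator part'' and the ``constant part''. The commutator part is
$$
\sum_{\substack{i,j\in\Z\\ ij>0\\ \abs{i},\abs{j}\leq N}} \sum_{h=1}^k \gamma_{m_1 l_1}\cdots[\Kn{ij}\Kn{ji},\gamma_{m_h l_h}]\cdots\gamma_{m_k l_k}\vac_\spib,
$$
which, by bilinearity of the commutator and the freedom to interchange the two finite sums, equals
$$
\sum_{h=1}^k \gamma_{m_1 l_1}\cdots\Bigl[\Delta_{\spib_{\lie{p}}}^{(N)},\gamma_{m_h l_h}\Bigr]\cdots\gamma_{m_k l_k}\vac_\spib,
$$
giving the second equality of the claim.

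Third, I would compute the constant part. Summing $\delta_{ij}\chi_{\leq 0}(i)N^2\cdot s$ over the index set requires $i=j$ (so automatically $ij=i^2>0$ as long as $i\neq 0$) and $i\leq 0$ (so $i<0$ since $i\neq 0$) with $\abs{i}\leq N$. There are exactly $N$ such indices, namely $i\in\{-1,-2,\ldots,-N\}$, which contributes
$$
\sum_{\substack{\abs{i}\leq N\\ i<0}} N^2\cdot s \;=\; N\cdot N^2\cdot s \;=\; N^3\cdot s.
$$
Combining this with the commutator part yields the first displayed equation of the corollary.

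There is no real obstacle here: the proof is a direct linear combination of the per-term identity in Proposition \ref{TheActionOfKijKji}, and the only small bookkeeping point is verifying that exactly $N$ indices in the summation range produce the $N^2$ contribution so that these contributions add up to the desired $N^3$.
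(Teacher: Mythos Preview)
Your proposal is correct and follows essentially the same approach as the paper: apply Proposition \ref{TheActionOfKijKji} term-by-term, then observe that the constants $\delta_{ij}\chi_{\leq 0}(i)N^2$ sum to $N^3$ over the $N$ negative diagonal indices, and finally pull the finite sum inside the commutator by bilinearity to get the second form. There is nothing to add.
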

\begin{proof}
The first equality follows from Proposition \ref{TheActionOfKijKji} by noticing that
$$
\sum_{\substack{ij>0\\ \abs{i},\abs{j}\leq N}}\delta_{ij}\chi_{\leq 0}(i)N^2\cdot 1=N^2\sum_{i=-N}^{-1} 1=N^3\cdot 1.
$$

The second equation follows from the first one by changing the order of summation and using the (bi)linearity of the Lie bracket $[\, , \,]$.
\end{proof}

\begin{lem}\label{ComputingComSpinCasGa}
Fix $M\in\Z_{\geq 0}$ and let $N\in\Z_{>0}$ be any integer satisfying $N\geq M$.
For $m,l\in\Z$ with $m>0$, $l<0$ and $\abs{m},\abs{l}\leq M$ one can write the commutator $[\Delta_{\spib_{\lie{p}}}^{(N)},\gamma_{ml}]$ as
\begin{equation}
[\Delta_{\spib_{\lie{p}}}^{(N)},\gamma_{ml}]=2\sum_{i=1}^N\gamma_{il}\KN{mi}-2\sum_{i=-N}^{-1}\gamma_{mi}\KN{il}.
\end{equation}
\end{lem}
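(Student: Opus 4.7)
The plan is to expand $[\Delta_{\spib_{\lie{p}}}^{(N)},\gamma_{ml}]$ using the Leibniz identity $[AB,C]=A[B,C]+[A,C]B$, substitute the commutator formulas from Lemma \ref{Kijlimit}, and then convert between the tilded and hatted operators via equation (\ref{KijtildeVersusKij}). First I would observe that since $\Kn{ij}-\KN{ij}$ is a scalar multiple of the identity, we have $[\Kn{ij},\gamma_{ml}]=[\KN{ij},\gamma_{ml}]$, and hence
\[
[\Kn{ij}\Kn{ji},\gamma_{ml}]=\Kn{ij}[\KN{ji},\gamma_{ml}]+[\KN{ij},\gamma_{ml}]\Kn{ji}.
\]

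Next I would note that although Lemma \ref{Kijlimit} states two separate formulas (one for $i\neq j$, one for $i=j$), a case-by-case inspection shows that, under our hypotheses $m>0$ and $l<0$, the unified formula
\[
[\KN{ij},\gamma_{ml}]=\delta_{jm}\gamma_{il}-\delta_{il}\gamma_{mj}
\]
is valid for \emph{all} pairs $(i,j)$ with $ij>0$ (the diagonal $i=j$ cases match because one of the two delta symbols automatically vanishes by sign considerations). Plugging this in and summing over $ij>0,\ |i|,|j|\le N$ collapses each of the four resulting terms to a single index: for example $\sum_{ij>0}\delta_{im}\Kn{ij}\gamma_{jl}$ becomes $\sum_{j=1}^N\Kn{mj}\gamma_{jl}$, and analogously for the other three. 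This yields
\[
[\Delta_{\spib_{\lie{p}}}^{(N)},\gamma_{ml}]=\sum_{j=1}^N\Kn{mj}\gamma_{jl}-\sum_{i=-N}^{-1}\Kn{il}\gamma_{mi}+\sum_{i=1}^N\gamma_{il}\Kn{mi}-\sum_{j=-N}^{-1}\gamma_{mj}\Kn{jl}.
\]

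The step I expect to require the most care is reordering the first two sums so that all the Clifford generators appear on the left. Using the same unified commutator formula I compute $[\KN{mj},\gamma_{jl}]=\gamma_{ml}$ for $j>0$ and $[\KN{il},\gamma_{mi}]=-\gamma_{ml}$ for $i<0$; these contribute $+N\gamma_{ml}$ twice after summation, giving a total excess of $+2N\gamma_{ml}$. Finally I would invoke (\ref{KijtildeVersusKij}): the passage from $\Kn{il}$ to $\KN{il}$ inside $-2\sum_{i=-N}^{-1}\gamma_{mi}\Kn{il}$ costs exactly $-2N\gamma_{ml}$ (from the single diagonal term $i=l$), while the other three sums are unaffected because they involve indices of strictly positive or mixed type. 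The two $2N\gamma_{ml}$ contributions cancel, leaving the desired identity. The only genuine obstacle is verifying that the unified commutator formula really covers the diagonal entries in every sign combination; once that is in hand, the remaining argument is bookkeeping.
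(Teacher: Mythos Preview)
Your proposal is correct and follows essentially the same route as the paper: both apply the Leibniz rule with the unified commutator formula $[\Kn{ij},\gamma_{ml}]=\delta_{jm}\gamma_{il}-\delta_{il}\gamma_{mj}$, collapse the sum via the Kronecker deltas, reorder to put Clifford generators on the left (the paper packages this as an anticommutator computation, but the content is identical), and then absorb the resulting $2N\gamma_{ml}$ via the single diagonal correction in the $\Kn{}\to\KN{}$ conversion.
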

\begin{proof}
Using the general commutator formula
$$
[AB,C]=A[B,C]+[A,C]B
$$
and the identity
$$
[\Kn{ij},\gamma_{ml}]=\delta_{jm}\gamma_{il}-\delta_{il}\gamma_{mj}
$$
proved earlier, one computes for $ij>0$
\begin{eqnarray}
[\Kn{ij}\Kn{ji},\gamma_{ml}]&=& \Kn{ij}[\Kn{ji},\gamma_{ml}]+[\Kn{ij},\gamma_{ml}]\Kn{ji}\nonumber\\
&=&
\Kn{ij}(\delta_{im}\gamma_{jl}-\delta_{jl}\gamma_{mi})+
(\delta_{jm}\gamma_{il}-\delta_{il}\gamma_{mj})\Kn{ji}\nonumber\\
&=&
\delta_{im}\Kn{ij}\gamma_{jl}-\delta_{jl}\Kn{ij}\gamma_{mi}+\delta_{jm}\gamma_{il}\Kn{ji}-\delta_{il}\gamma_{mj}\Kn{ji}\nonumber
\end{eqnarray}

Now recall that $m>0,\,l<0$ and notice that since $ij>0$ (i.e. $i$ and $j$ have the same sign) we have two cases:
\begin{enumerate}
\item $i,j>0$. In this case since $jl<0$ and $il<0$ necessarily $\delta_{jl},\delta_{il}\equiv 0$ while
$\delta_{im}$ and $\delta_{jm}$ can be equal to $1$.
\item $i,j<0$. Now $\delta_{im},\delta_{jm}\equiv 0$ while $\delta_{jl}$ and $\delta_{il}$ can be equal
to $1$.
\end{enumerate}

Hence using the calculations above one has
\begin{eqnarray}\label{uglylemmaPart1}
[\Delta_{\spib_{\lie{p}}}^{(N)},\gamma_{ml}] &=&
\Big[\sum_{\substack{ij>0 \\ \abs{i},\abs{j}\leq N}}\Kn{ij}\Kn{ji},\gamma_{ml}\Big]=
\sum_{\substack{ij>0 \\ \abs{i},\abs{j}\leq N}}[\Kn{ij}\Kn{ji},\gamma_{ml}]\nonumber\\
&=&
\sum_{\substack{i,j>0\\ \abs{i},\abs{j}\leq N}}[\Kn{ij}\Kn{ji},\gamma_{ml}]+
\sum_{\substack{i,j<0 \\ \abs{i},\abs{j}\leq N}}[\Kn{ij}\Kn{ji},\gamma_{ml}]\nonumber\\
&=&
\sum_{\substack{i,j>0\\ \abs{i},\abs{j}\leq N}}(\delta_{im}\Kn{ij}\gamma_{jl}+\delta_{jm}\gamma_{il}\Kn{ji})\nonumber\\
&-&
\sum_{\substack{i,j<0 \\ \abs{i},\abs{j}\leq N}}(\delta_{jl}\Kn{ij}\gamma_{mi}+\delta_{il}\gamma_{mj}\Kn{ji})\nonumber\\
&=&
\sum_{i=1}^N\{\Kn{mi},\gamma_{il}\}-\sum_{i=-N}^{-1}\{\Kn{il},\gamma_{mi}\}.
\end{eqnarray}

To compute the anti-commutators appearing in (\ref{uglylemmaPart1}) we use the formula relating anti-commutators with commutators
$$
\{A,B\}=[A,B]-2BA.
$$
Keeping in mind that $m>0$ and $l<0$ one computes
\begin{eqnarray}
\{\Kn{mi},\gamma_{il}\} &=&
[\Kn{mi},\gamma_{il}]+2\gamma_{il}\Kn{mi}\nonumber\\
&=&
\delta_{ii}\gamma_{ml}-\underbrace{\delta_{ml}\gamma_{ii}}_{\equiv 0}+2\gamma_{il}\Kn{mi}\nonumber\\
&=&
\gamma_{ml}+2\gamma_{il}\Kn{mi}
\end{eqnarray}
and similarly one obtains
\begin{equation}
\{\Kn{il},\gamma_{mi}\}=-\gamma_{ml}+2\gamma_{mi}\Kn{il}.
\end{equation}

Inserting these into (\ref{uglylemmaPart1})  yields
\begin{eqnarray}
[\Delta_{\spib_{\lie{p}}}^{(N)},\gamma_{ml}] &=&
\sum_{i=1}^N (\gamma_{ml}+2\gamma_{il}\Kn{mi})
-
\sum_{i=-N}^{-1}(-\gamma_{ml}+2\gamma_{mi}\Kn{il}) \nonumber\\
&=&
2N\gamma_{ml}+2\sum_{i=1}^N \gamma_{il}\Kn{mi}-2\sum_{i=-N}^{-1}\gamma_{mi}\Kn{il}
\end{eqnarray}
Finally use equation (\ref{KijHij2}) to obtain
\begin{eqnarray}
[\Delta_{\spib_{\lie{p}}}^{(N)},\gamma_{ml}] &=&
2N\gamma_{ml}+
2\sum_{i=1}^N \gamma_{il}\KN{mi}-
2\sum_{\substack{i=-N \\ i\neq l}}^{-1}\gamma_{mi}\KN{il}-
2(\gamma_{ml}\KN{ll}+N\gamma_{ml})\nonumber\\
&=&
2\sum_{i=1}^N \gamma_{il}\KN{mi}-
2\sum_{i=-N}^{-1}\gamma_{mi}\KN{il}.
\end{eqnarray}
\end{proof}

The following Lemma provides us with a recursive tool that we shall use in order to prove that
$\Delta_{\spib_{\lie{p}}}^{(N)}$ is a constant operator on $\SPol{M}$ whenever $N\geq M$.
\begin{lem}\label{spinoridentities}
Let $M\in\Z_{\geq 0}$ and consider all integers $N\in\Z_{>0}$ such that $N\geq M$. Then
the following identities hold.
\begin{enumerate}
\item $\Delta_{\spib_{\lie{p}}}^{(N)}\vac_\spib=N^3\vac_\spib$.
\item
$\Big[\Delta_{\spib_{\lie{p}}}^{(N)},\gamma_{ml}\Big]
\vac_\spib=0$
so that
$$
\Delta_{\spib_{\lie{p}}}^{(N)}\gamma_{ml}\vac_\spib=
\Big[\Delta_{\spib_{\lie{p}}}^{(N)},\gamma_{ml}\Big]
\vac_\spib+N^3\gamma_{ml}\vac_\spib=N^3\gamma_{ml}\vac_\spib,
$$
where
$m,l\in\Z$ with $m>0$, $l<0$ and $\abs{m},\abs{l}\leq M$
\item The anti-commutator
$$
\Big\{\Big[\Delta_{\spib_{\lie{p}}}^{(N)},\gamma_{kl}\Big],\gamma_{mn}\Big\}=0,
$$
where $k,l,m,n\in\Z$ with $k,m>0$; $\,l,n<0$ and $\abs{k},\abs{l},\abs{m},\abs{n}\leq M$.
\end{enumerate}
\end{lem}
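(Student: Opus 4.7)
The plan is to establish the three statements in order, leveraging the two key computational inputs already at hand: equation (\ref{KnSpinorCasimirOnVac}) for the action of $\Kn{ij}\Kn{ji}$ on $\vac_\spib$, and Lemma \ref{ComputingComSpinCasGa} for the explicit form of $[\Delta_{\spib_{\lie{p}}}^{(N)},\gamma_{ml}]$.

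For (1), I would simply sum equation (\ref{KnSpinorCasimirOnVac}) over all pairs $(i,j)$ with $ij>0$ and $|i|,|j|\leq N$. The indicator $\delta_{ij}\chi_{\leq 0}(i)$ restricts the sum to the diagonal negative-index terms $i=j\in\{-N,\dots,-1\}$, giving exactly $N$ contributions each equal to $N^2\vac_\spib$, which yields $N^3\vac_\spib$. For (2), Lemma \ref{ComputingComSpinCasGa} expresses the commutator $[\Delta_{\spib_{\lie{p}}}^{(N)},\gamma_{ml}]$ as a linear combination in which every summand has a factor $\KN{mi}$ (with $m>0$, $i>0$) on the right of the first sum or $\KN{il}$ (with $i<0$, $l<0$) on the right of the second. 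In both cases the index pair has a positive product, so by Lemma \ref{Kijlimit} each such $\KN{pq}$ annihilates $\vac_\spib$; applying the identity to $\vac_\spib$ kills everything. The formula $\Delta_{\spib_{\lie{p}}}^{(N)}\gamma_{ml}\vac_\spib=N^3\gamma_{ml}\vac_\spib$ then follows by combining this with (1).

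The content of the lemma is really in (3). Starting from Lemma \ref{ComputingComSpinCasGa}, I would compute the anti-commutator termwise and use the general identity $\{AB,C\}=A[B,C]$ valid whenever $\{A,C\}=0$. Here one takes $A=\gamma_{il}$ or $\gamma_{ki}$ and $C=\gamma_{mn}$; since $m,k,i$ (in the first sum) are all positive and $l,n$ negative, the Clifford anti-commutators $\{\gamma_{il},\gamma_{mn}\}=\delta_{in}\delta_{lm}$ and $\{\gamma_{ki},\gamma_{mn}\}=\delta_{kn}\delta_{im}$ both vanish (they pair an index of positive sign with one of negative sign). The identity then reduces the problem to evaluating $[\KN{ki},\gamma_{mn}]$ and $[\KN{il},\gamma_{mn}]$ via Lemma \ref{Kijlimit}. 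Because $k,m>0$ and $n<0$, only the terms $\delta_{im}\gamma_{kn}$ (for $i>0$) and $-\delta_{in}\gamma_{ml}$ (for $i<0$) survive, and the sums collapse to
\begin{equation*}
\{[\Delta_{\spib_{\lie{p}}}^{(N)},\gamma_{kl}],\gamma_{mn}\}=2\gamma_{ml}\gamma_{kn}+2\gamma_{kn}\gamma_{ml}=2\{\gamma_{ml},\gamma_{kn}\}.
\end{equation*}
But $\{\gamma_{ml},\gamma_{kn}\}=\delta_{mn}\delta_{lk}=0$ since $m>0$ and $n<0$, so the anti-commutator vanishes.

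The main obstacle is purely bookkeeping: one must consistently track which of the indices are positive and which are negative throughout the case-by-case reductions of the Clifford anti-commutators and the $K$-commutators, because the cancellations in (3) depend entirely on the sign pattern that the hypothesis $k,m>0$ and $l,n<0$ forces. The assumption $N\geq M$ enters only to ensure that the specific indices $m,n$ lie in the cut-off range so that the collapsing Kronecker deltas actually contribute.
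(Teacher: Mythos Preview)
Your proposal is correct and follows essentially the same route as the paper: part (1) sums (\ref{KnSpinorCasimirOnVac}), part (2) applies Lemma \ref{ComputingComSpinCasGa} together with $\KN{pq}\vac_\spib=0$, and part (3) computes the anticommutator termwise from Lemma \ref{ComputingComSpinCasGa} using the identity $\{AB,C\}=\{A,C\}B+A[B,C]$ and the sign constraints to kill the $\{A,C\}$ term. The only cosmetic difference is that the paper writes the identity as $\{AB,C\}=\{C,A\}B-A[C,B]$ and keeps the (vanishing) $\{C,A\}B$ term explicit before discarding it, whereas you invoke $\{A,C\}=0$ up front; the computations and cancellations are otherwise identical.
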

\begin{proof}
\begin{enumerate}
\item Is a special case of Corollary \ref{ActionOfSpinorCas}.
\item Since according to Lemma \ref{Kijlimit} $\KN{ij}\vac_\spib=0$ for all $i,j\in\Z$ with $ij>0$ it follows
from Lemma \ref{ComputingComSpinCasGa}
that
$$
\Big[\Delta_{\spib_{\lie{p}}}^{(N)},\gamma_{ml}\Big]
\vac_\spib=0.
$$
\item According to Lemma \ref{ComputingComSpinCasGa} in order to
compute this anti-commutator we need to compute the anti-commutators
$\{\gamma_{il}\KN{ki},\gamma_{mn}\}$ and  $\{\gamma_{ki}\KN{il},\gamma_{mn}\}$.
To do this we are going to use the general anti-commutator identity
$$
\{AB,C\}=\{C,A\}B-A[C,B]
$$
and carefully keep track of the signs of the various indices so that the various Kronecker deltas will become identically
zero.

Thus we obtain
\begin{eqnarray}\label{CommutatorAnticommutatorI}
\{\gamma_{il}\KN{ki},\gamma_{mn}\} &=& \{\gamma_{mn},\gamma_{il}\}\KN{ki}-\gamma_{il}[\gamma_{mn},\KN{ki}]\nonumber\\
&=&
2\underbrace{\delta_{ml}}_{\equiv 0}\delta_{in}\KN{ki}+\gamma_{il}[\KN{ki},\gamma_{mn}]\nonumber\\
&=&
\gamma_{il}[\KN{ki},\gamma_{mn}]\nonumber\\
&=&\gamma_{il}\Big(\delta_{im}\gamma_{kn}-\underbrace{\delta_{kn}}_{\equiv 0}\gamma_{mi}\Big)\nonumber\\
&=&
\delta_{im}\gamma_{il}\gamma_{kn},
\end{eqnarray}
and
\begin{eqnarray}\label{CommutatorAnticommutatorII}
\{\gamma_{ki}\KN{il},\gamma_{mn}\} &=&
\{\gamma_{mn},\gamma_{ki}\}\KN{il}-\gamma_{ki}[\gamma_{mn},\KN{il}]\nonumber\\
&=&
2\delta_{mi}\underbrace{\delta_{nk}}_{\equiv 0}\KN{il}+\gamma_{ki}[\KN{il},\gamma_{mn}]\nonumber\\
&=&
\gamma_{ki}[\KN{il},\gamma_{mn}]\nonumber\\
&=&\gamma_{ki}\Big(\underbrace{\delta_{lm}}_{\equiv 0}\gamma_{in}-\delta_{in}\gamma_{ml}\Big)\nonumber\\
&=&
-\delta_{in}\gamma_{ki}\gamma_{ml}.
\end{eqnarray}

Hence inserting (\ref{CommutatorAnticommutatorI}) and (\ref{CommutatorAnticommutatorII}) into the expression given by
Lemma \ref{ComputingComSpinCasGa} yields
\begin{eqnarray}
\Big\{\Big[\Delta_{\spib_{\lie{p}}}^{(N)},\gamma_{kl}\Big],\gamma_{mn}\Big\}
&=&
2\sum_{i=1}^N\delta_{im}\gamma_{il}\gamma_{kn}+2\sum_{i=-N}^{-1}\delta_{in}\gamma_{ki}\gamma_{ml}\nonumber\\
&=&
2(\gamma_{ml}\gamma_{kn}+\gamma_{kn}\gamma_{ml})\nonumber\\
&=&
2\{\gamma_{ml},\gamma_{kn}\}\nonumber\\
&=&
2\underbrace{\delta_{mn}}_{\equiv 0}\underbrace{\delta_{kl}}_{\equiv 0}=0,
\end{eqnarray}
as claimed.
\end{enumerate}
\end{proof}

\begin{cor}\label{AdAdCorollary}
Fix an integer $M\in\Z_{\geq 0}$ and let $N\in\Z_{>0}$ be any integer satisfying $N\geq M$. Then the restriction of the operator 
$$
\Delta_{\spib_{\lie{p}}}^{(N)}:\spib_{\lie{p}}^c\To\spib_{\lie{p}}^c
$$
to the subspace $\SPol{M}$
is the 
constant operator $\Delta_{\spib_{\lie{p}}}^{(N)}\Big\vert_{\SPol{M}}=N^3\cdot\id$.
\end{cor}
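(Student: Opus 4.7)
The plan is to reduce the statement, via Corollary \ref{ActionOfSpinorCas}, to showing that the ``derivation-type'' sum
\begin{equation*}
\Sigma := \sum_{h=1}^k \gamma_{m_1 l_1}\cdots [\Delta_{\spib_{\lie{p}}}^{(N)},\gamma_{m_h l_h}]\cdots \gamma_{m_k l_k}\vac_\spib
\end{equation*}
vanishes for every basis vector $s=\gamma_{m_1 l_1}\cdots\gamma_{m_k l_k}\vac_\spib \in \SPol{M}$ (with $m_h>0$, $l_h<0$, $|m_h|,|l_h|\leq M$). By linearity, once this is established for all such basis vectors, the equality $\Delta_{\spib_{\lie{p}}}^{(N)}\cdot s = N^3\cdot s$ extends to all of $\SPol{M}$ and gives the claimed identity $\Delta_{\spib_{\lie{p}}}^{(N)}\big\vert_{\SPol{M}} = N^3\cdot\id$.

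To show $\Sigma=0$, I would handle each summand separately. Fix $h\in\{1,\dots,k\}$ and abbreviate $C_h := [\Delta_{\spib_{\lie{p}}}^{(N)},\gamma_{m_h l_h}]$. The key observation is that $C_h$ anti-commutes with every $\gamma_{m_j l_j}$ appearing to its right, because Lemma \ref{spinoridentities}(3) (applied with parameters satisfying $k,m>0$, $l,n<0$ and absolute values $\leq M\leq N$) gives
\begin{equation*}
\{C_h,\gamma_{m_j l_j}\} = \bigl\{\,[\Delta_{\spib_{\lie{p}}}^{(N)},\gamma_{m_h l_h}],\,\gamma_{m_j l_j}\,\bigr\}=0
\quad\text{for all } j=h+1,\dots,k.
\end{equation*}
Consequently, I can transport $C_h$ past the $k-h$ factors $\gamma_{m_{h+1}l_{h+1}},\dots,\gamma_{m_k l_k}$ at the cost of a sign:
\begin{equation*}
\gamma_{m_1 l_1}\cdots\gamma_{m_{h-1}l_{h-1}}\,C_h\,\gamma_{m_{h+1}l_{h+1}}\cdots\gamma_{m_k l_k}\vac_\spib
= (-1)^{k-h}\,\gamma_{m_1 l_1}\cdots\widehat{\gamma_{m_h l_h}}\cdots\gamma_{m_k l_k}\,C_h\vac_\spib,
\end{equation*}
where the hat indicates omission. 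But $C_h\vac_\spib = 0$ by Lemma \ref{spinoridentities}(2), since $\KN{ij}\vac_\spib=0$ for $ij>0$ makes the explicit formula of Lemma \ref{ComputingComSpinCasGa} annihilate the vacuum. Hence every summand in $\Sigma$ is zero, and $\Sigma=0$ as required.

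There is not really an obstacle here: all the technical work has been front-loaded into Lemmas \ref{ComputingComSpinCasGa} and \ref{spinoridentities} together with Corollary \ref{ActionOfSpinorCas}. The only point requiring a bit of care is to verify that the anti-commutation relation of Lemma \ref{spinoridentities}(3) is available with the right index constraints: our basis vector satisfies $m_j>0$, $l_j<0$ and $|m_j|,|l_j|\leq M\leq N$ for every $j$, which is precisely the hypothesis under which part (3) was proved. Once that check is made, the proof collapses to the two-line argument above, and $N^3\cdot s$ drops out as the additive constant from Corollary \ref{ActionOfSpinorCas}.
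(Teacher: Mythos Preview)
Your proposal is correct and follows essentially the same route as the paper's proof: invoke Corollary \ref{ActionOfSpinorCas}, then use Lemma \ref{spinoridentities}(3) to anti-commute each $[\Delta_{\spib_{\lie{p}}}^{(N)},\gamma_{m_h l_h}]$ past the remaining $\gamma$-factors to the right (picking up $(-1)^{k-h}$), and finally kill the result with Lemma \ref{spinoridentities}(2). Your write-up is in fact slightly more careful than the paper's, which omits the hat in the intermediate expression.
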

\begin{proof}
Let
$$
s=\gamma_{m_1 l_1}\gamma_{m_2 l_2}\cdots\gamma_{m_k l_k}\vac_\spib\in\SPol{M}\qquad
(m_h>0,\, l_h<0\textrm{ for all } h=1,\ldots k),
$$
with $\abs{m_h},\abs{l_h}\leq M$ for every $h$,
be a basis vector. 

Then using first Corollary \ref{ActionOfSpinorCas} and after that
part $(3)$ of Lemma \ref{spinoridentities} one obtains
\begin{eqnarray}
\Delta_{\spib_{\lie{p}}}^{(N)}\cdot s &=& 
\sum_{h=1}^k
\gamma_{m_1 l_1}\cdots[\Delta_{\spib_{\lie{p}}}^{(N)},\gamma_{m_h l_h}]\cdots\gamma_{m_k l_k}\vac_\spib \nonumber\\
&+& 
N^3\cdot s\nonumber\\
&=&
\sum_{h=1}^k
(-1)^{k-h}
\gamma_{m_1 l_1}\gamma_{m_2 l_2}\cdots\gamma_{m_k l_k}
[\Delta_{\spib_{\lie{p}}}^{(N)},\gamma_{m_h l_h}]\vac_\spib\nonumber\\
&+& 
N^3\cdot s.
\end{eqnarray}
The result follows now from part $(2)$ of Lemma \ref{spinoridentities}, which
tells us that
$$
[\Delta_{\spib_{\lie{p}}}^{(N)},\gamma_{m_h l_h}]\vac_\spib=0.
$$
\end{proof}
\begin{cor}
Fix an integer $M\in\Z_{\geq 0}$ and let $N\in\Z_{>0}$ be any integer satisfying $N\geq M$. Then the restriction of the unbounded operator 
$$
\Delta_{\spib_{\lie{p}},\mathrm{ren}}^{(N)}:\spib_{\lie{p}}^c\To\spib_{\lie{p}}^c,\qquad
\Delta_{\spib_{\lie{p}},\mathrm{ren}}^{(N)}:=\Delta_{\spib_{\lie{p}}}^{(N)}-N^3\cdot\id
$$
with dense domain $\domain(\Delta_{\spib_{\lie{p}},\mathrm{ren}}^{(N)})=\spib^{\mathrm{pol}}$ to the subspace $\SPol{M}$, is the 
zero operator $\Delta_{\spib_{\lie{p}}}^{(N)}\Big\vert_{\SPol{M}}\equiv 0$.
\end{cor}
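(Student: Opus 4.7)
The plan is to observe that this statement is an immediate consequence of the preceding Corollary \ref{AdAdCorollary}, so essentially no new work is required. First I would note that $\Delta_{\spib_{\lie{p}},\mathrm{ren}}^{(N)}$ is manifestly well-defined as an unbounded operator on $\spib_{\lie{p}}^c$ with dense domain $\spib^{\mathrm{pol}}$, since $\Delta_{\spib_{\lie{p}}}^{(N)}$ is a \emph{finite} sum $\sum_{ij>0,\,\abs{i},\abs{j}\leq N}\Kn{ij}\Kn{ji}$ of products of operators already shown to preserve $\spib^{\mathrm{pol}}$, and the scalar operator $N^3\cdot\id$ trivially preserves the same domain.

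Next, I would apply Corollary \ref{AdAdCorollary} directly: for $N\geq M$, the restriction $\Delta_{\spib_{\lie{p}}}^{(N)}\big\vert_{\SPol{M}}$ equals the constant operator $N^3\cdot\id$. Since subtraction of operators on a common invariant subspace commutes with restriction, we obtain
\begin{equation*}
\Delta_{\spib_{\lie{p}},\mathrm{ren}}^{(N)}\big\vert_{\SPol{M}}
=\Delta_{\spib_{\lie{p}}}^{(N)}\big\vert_{\SPol{M}}-N^3\cdot\id\big\vert_{\SPol{M}}
=N^3\cdot\id-N^3\cdot\id=0,
\end{equation*}
which proves the claim. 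There is no real obstacle here; the genuine work has already been carried out in the recursive identities of Lemma \ref{spinoridentities} and in the inductive length argument of Corollary \ref{ActionOfSpinorCas} that together produced Corollary \ref{AdAdCorollary}. The role of the present corollary is purely bookkeeping: it reinterprets that constant as exactly the counterterm $N^3\cdot\id$ we introduced in \S\ref{sectionNorD2} to absorb the divergence in the formal `limit' $N\to\infty$, thereby justifying the choice of normal ordering for the spinor Casimir operator of $\hinf^\C$. In particular, combined with Lemma \ref{OurSpinCasIsZero}, this is what will ultimately allow us to replace the operator II in the expression for $4\KDirac^2\varphi$ by zero on vectors of sufficiently bounded spinor-filtration degree.
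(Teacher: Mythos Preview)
Your proposal is correct and matches the paper's approach exactly: the paper states this corollary without proof, treating it as an immediate consequence of Corollary \ref{AdAdCorollary}, which is precisely what you do by subtracting $N^3\cdot\id$ from the identity $\Delta_{\spib_{\lie{p}}}^{(N)}\big\vert_{\SPol{M}}=N^3\cdot\id$.
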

\begin{cor}
The strong limit
$$
\stlim{N} \Delta_{\spib_{\lie{p}},\mathrm{ren}}^{(N)}=0.
$$
\end{cor}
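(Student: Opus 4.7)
The plan is to observe that the statement reduces almost immediately to the corollary just established, once one unwinds the definition of the strong limit on the common dense domain. Recall that strong convergence of a sequence of unbounded operators with common dense domain $\spib^{\mathrm{pol}}$ means that $\norm{\Delta_{\spib_{\lie{p}},\mathrm{ren}}^{(N)} s - 0}_{\spib_{\lie{p}}^c} \to 0$ as $N \to \infty$ for every fixed $s \in \spib^{\mathrm{pol}}$.

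The key point is the exhaustion $\spib^{\mathrm{pol}} = \bigcup_{M \geq 0} \SPol{M}$ already recorded in the text. Given any $s \in \spib^{\mathrm{pol}}$, I would first pick an integer $M = M(s) \in \Z_{\geq 0}$ such that $s \in \SPol{M}$ (only finitely many Clifford generators with indices of absolute value $\leq M$ appear in the finite linear combination representing $s$, so such an $M$ exists). Then the immediately preceding corollary asserts that for every integer $N \geq M$ the restriction of $\Delta_{\spib_{\lie{p}},\mathrm{ren}}^{(N)}$ to $\SPol{M}$ is identically zero, so in particular $\Delta_{\spib_{\lie{p}},\mathrm{ren}}^{(N)} s = 0$ for all $N \geq M(s)$. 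The norm sequence is therefore eventually identically $0$, which trivially gives $\lim_{N \to \infty} \norm{\Delta_{\spib_{\lie{p}},\mathrm{ren}}^{(N)} s}_{\spib_{\lie{p}}^c} = 0$.

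Since this holds for every $s$ in the common dense domain $\spib^{\mathrm{pol}}$, by definition $\stlim{N} \Delta_{\spib_{\lie{p}},\mathrm{ren}}^{(N)} = 0$. There is no substantive obstacle: the entire technical content was packaged into the filtration argument of the previous corollary (which used Corollary \ref{ActionOfSpinorCas} together with parts (2) and (3) of Lemma \ref{spinoridentities} to show the cancellation of the constant $N^3$ term against the normal-ordered spinor Casimir on each filtered piece). The statement is essentially just the translation of that pointwise vanishing into the language of strong limits, and so the proof will consist of one or two sentences.
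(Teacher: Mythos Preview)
Your proposal is correct and matches the paper's approach exactly: the paper gives no explicit proof of this corollary, treating it as immediate from the preceding corollary (that $\Delta_{\spib_{\lie{p}},\mathrm{ren}}^{(N)}$ vanishes on $\SPol{M}$ for all $N\geq M$) together with the exhaustion $\spib^{\mathrm{pol}}=\bigcup_{M\geq 0}\SPol{M}$, which is precisely the argument you spell out.
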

Hence taking into account Lemma \ref{OurSpinCasIsZero} we have come up with the following proposition.

\begin{prop}\label{cutoffthatwewant}
Consider the Dirac operator $\KDirac$ as an unbounded operator
$\KDirac:(\F\tensor\spib_{\lie{p}}^c)^{\lie{h}} \To(\F\tensor\spib_{\lie{p}}^c)^{\lie{h}}$, and
similarly consider the cut-offs $\KDiracc$ as unbounded operators
$\KDiracc:(\F\tensor\spib_{\lie{p}}^c)^{\lie{h},N} \To(\F\tensor\spib_{\lie{p}}^c)^{\lie{h},N}$.
Then for each $\varphi\in\domain(\KDirac)$ there exists $N_\varphi\in\N$ such that for all
$N\geq N_\varphi$, $\varphi\in\domain(\KDiracc)$ and
\begin{eqnarray}\label{SecondLastCut-Off}
4\KDirac^2\varphi=4\KDirac^2_{(N)}\varphi &=&
\Big(\Delta_{\lie{g},\mathrm{ren}}^{(N)}\tensor 1+
1\tensor F_{(N)}\Big)\varphi.
\end{eqnarray}

\end{prop}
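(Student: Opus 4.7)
The plan is to observe that Proposition \ref{cutoffthatwewant} is essentially an immediate corollary obtained by combining Lemma \ref{OurSpinCasIsZero} with the vanishing result for the renormalized spinor Casimir operator established in the corollary just before the statement (i.e., the one asserting $\Delta_{\spib_{\lie{p}},\mathrm{ren}}^{(N)}\vert_{\SPol{M}}\equiv 0$ whenever $N\geq M$). No new computation is needed; the task is simply to choose the integer $N_\varphi$ correctly so that both results apply simultaneously to the given vector $\varphi$.

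More concretely, I would proceed as follows. First, fix $\varphi\in\domain(\KDirac)=(\F^{\mathrm{pol}}\tensor\spib^{\mathrm{pol}})^{\lie{h}}$. By Lemma \ref{OurSpinCasIsZero}, there exists an integer $N_\varphi^{(1)}$ such that for every $N\geq N_\varphi^{(1)}$ one has $\varphi\in\domain(\KDiracc)$, the equality $\KDirac^2\varphi=\KDiracc^{\,2}\varphi$ holds, and
\begin{equation*}
4\KDiracc^{\,2}\varphi=\Big(\Delta_{\lie{g},\mathrm{ren}}^{(N)}\tensor 1+1\tensor\Delta_{\spib_{\lie{p}},\mathrm{ren}}^{(N)}+1\tensor F_{(N)}\Big)\varphi,
\end{equation*}
where $\Delta_{\spib_{\lie{p}},\mathrm{ren}}^{(N)}=\sum_{ij>0,\,\abs{i},\abs{j}\leq N}\Kn{ij}\Kn{ji}-N^3$.

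Next, since the filtration $\set{G_p}_{p\geq 0}$ introduced just before Lemma \ref{OurSpinCasIsZero} is exhaustive on the algebraic tensor product $\F^{\mathrm{pol}}\tensor\spib^{\mathrm{pol}}$, there is an integer $M=M(\varphi)\in\Z_{\geq 0}$ with $\varphi\in G_M=\F^{\mathrm{pol}}\tensor\SPol{M}$. By the corollary immediately preceding the statement, the operator $1\tensor\Delta_{\spib_{\lie{p}},\mathrm{ren}}^{(N)}$ vanishes identically on $\F^{\mathrm{pol}}\tensor\SPol{M}$ for every $N\geq M$, so in particular $\bigl(1\tensor\Delta_{\spib_{\lie{p}},\mathrm{ren}}^{(N)}\bigr)\varphi=0$ whenever $N\geq M$.

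Setting $N_\varphi:=\max\{N_\varphi^{(1)},M\}$, both conclusions apply at once for every $N\geq N_\varphi$, and the displayed formula in the proposition drops out directly. The only thing to be careful about is to verify that the choice of $M$ is independent of $N$, which it is: the filtration index $M(\varphi)$ depends solely on the element $\varphi$ and not on any cut-off parameter. I do not foresee a genuine obstacle in the argument; the main conceptual work has already been done in isolating the operators I--IV and showing that II and IV vanish on the relevant invariant sectors, so this final proposition is just the act of bookkeeping all the previously obtained strong-limit statements into one clean cut-off identity.
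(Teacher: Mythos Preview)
Your proposal is correct and follows essentially the same approach as the paper, which treats Proposition \ref{cutoffthatwewant} as an immediate consequence of Lemma \ref{OurSpinCasIsZero} combined with the vanishing of $\Delta_{\spib_{\lie{p}},\mathrm{ren}}^{(N)}$ on $\SPol{M}$ for $N\geq M$. Note that the $N_\varphi$ produced in the proof of Lemma \ref{OurSpinCasIsZero} is already chosen as $\max\{q,N_\varphi'\}$ with $\varphi\in G_q$, so your separate choice of $M$ and the subsequent maximum is slightly redundant (you could simply take $N_\varphi=N_\varphi^{(1)}$), but this does no harm.
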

\subsection{The strong limit $N\to\infty$ of $\KDirac_{(N)}^2$}
Using the methods of Appendix C, one sees that the unbounded operators 
$\Delta_{\lie{g},\mathrm{ren}}^{(N)}:\F\To\F$ have a well-defined strong limit 
$\Delta_{\lie{g}}:\F\To\F$ with dense domain 
$\D(\Delta_{\lie{g}})=\D(\Delta_{\lie{g},\mathrm{ren}}^{(N)})
=\F^{\mathrm{pol}}$,
$$
\Delta_{\lie{g}}:=2\sum_{\substack{i,j\in\Z\setminus\set{0} \\ j<i}}E_{ij}E_{ji}+
\sum_{\substack{i,j\in\Z\setminus\set{0} \\ i<j}}(E_{ii}-E_{jj})
+\sum_{\substack{i\in\Z\setminus\set{0}}}E_{ii}^2
$$
and
\begin{equation}\label{finalstlimdelta}
\stlim{N}\Delta_{\lie{g},\mathrm{ren}}^{(N)}=\Delta_{\lie{g}}.
\end{equation}

Similarly, we saw in section \S\ref{FNOP} that the operators $F_{(N)}:\spib_{\lie{p}}^c\To\spib_{\lie{p}}^c$ have a well defined strong limit as $N\to\infty$,
\begin{equation}\label{finalstlimF}
\stlim{N}F_{(N)}=F.
\end{equation}
This, combining equations (\ref{finalstlimdelta}) and (\ref{finalstlimF}) with Proposition \ref{cutoffthatwewant} yields the
following theorem.
\begin{thm}\label{MainTheoremI}
When restricted to the $\lie{h}$-invariant sector, the square of the Dirac operator $\KDirac:(\F\tensor\spib_{\lie{p}}^c)^{\lie{h}} \To(\F\tensor\spib_{\lie{p}}^c)^{\lie{h}}$ satisfies
\begin{equation}
4\KDirac^2=\Delta_{\lie{g}}\tensor 1 + 1\tensor F.
\end{equation}
\end{thm}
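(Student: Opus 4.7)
The plan is to derive the operator identity pointwise on the common dense domain $(\F^{\mathrm{pol}}_{0}\tensor\spib^{\mathrm{pol}})^{\lie{h}}$, using the cut-off identity of Proposition \ref{cutoffthatwewant} as the starting point and passing to the strong limit via the two previously established convergences (\ref{finalstlimdelta}) and (\ref{finalstlimF}). The heavy lifting has already been done in the preceding sections (normal ordering of the diagonal Casimir, identification of Operator III as the fermion number operator, collapse of Operator IV to zero, and the constancy of Operator II on the relevant filtration level); what remains is essentially a stability-and-limit argument.

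First, I would fix an arbitrary vector $\varphi\in\domain(\KDirac^{\,2})=(\F^{\mathrm{pol}}_0\tensor\spib^{\mathrm{pol}})^{\lie{h}}$ and invoke Proposition \ref{cutoffthatwewant} to obtain an integer $N_\varphi\in\N$ such that
\begin{equation*}
4\KDirac^{\,2}\varphi \;=\; \bigl(\Delta_{\lie{g},\mathrm{ren}}^{(N)}\tensor 1 \;+\; 1\tensor F_{(N)}\bigr)\varphi
\end{equation*}
for every $N\geq N_\varphi$. The crucial observation is that the left-hand side is independent of $N$, so as soon as I can take $N\to\infty$ on the right the identity will be proved for this $\varphi$.

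Second, I would write $\varphi=\sum_{i=1}^{r}\psi_i\tensor s_i$ as a finite sum of elementary tensors with $\psi_i\in\F^{\mathrm{pol}}_0$ and $s_i\in\spib^{\mathrm{pol}}$, and then apply the strong-limit results (\ref{finalstlimdelta}) and (\ref{finalstlimF}) component-wise. By bilinearity of the tensor product on vectors,
\begin{equation*}
(\Delta_{\lie{g},\mathrm{ren}}^{(N)}\tensor 1)\varphi\;\xrightarrow[N\to\infty]{}\;(\Delta_{\lie{g}}\tensor 1)\varphi,\qquad
(1\tensor F_{(N)})\varphi\;\xrightarrow[N\to\infty]{}\;(1\tensor F)\varphi
\end{equation*}
in the Hilbert space norm of $\F_0\tensor\spib_{\lie{p}}^c$. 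Adding these and comparing with the stabilized value $4\KDirac^{\,2}\varphi$ of the left-hand side yields $4\KDirac^{\,2}\varphi=(\Delta_{\lie{g}}\tensor 1+1\tensor F)\varphi$, as required. Since $\varphi$ was an arbitrary element of the common dense domain, this is the claimed identity of operators.

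I do not expect any serious obstacle in this final step: the whole subtlety was concentrated in producing the right cut-off formula in Proposition \ref{cutoffthatwewant} (the cancellations of the diverging polynomial terms $N^2,\,N^3,\,-\tfrac12 N^3$, and the identification of the $N\to\infty$ limits of the four constituent operators I--IV). Once those are in hand, the only thing to check is that the stabilization index $N_\varphi$ in the cut-off identity is compatible with the stabilization indices coming from the strong limits of $\Delta_{\lie{g},\mathrm{ren}}^{(N)}$ and $F_{(N)}$ -- which it is, because for each fixed $\varphi$ there is a single integer beyond which \emph{all} cut-off terms act as their stated limits on $\varphi$. Thus the argument reduces to taking the maximum of finitely many thresholds, which is routine.
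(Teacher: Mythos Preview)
Your proposal is correct and follows essentially the same approach as the paper: invoke Proposition \ref{cutoffthatwewant} pointwise to obtain the cut-off identity, then use the strong-limit results (\ref{finalstlimdelta}) and (\ref{finalstlimF}) together with the eventual stabilization in $N$ to pass to the limit. The paper's proof is in fact just a one-line compression of exactly this argument (``the variable $N$ drops off''), so your version is simply a more detailed rendering of the same reasoning.
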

\begin{proof}
We have seen that for sufficiently large $N$
\begin{eqnarray}
4\KDirac^2\varphi=4\KDirac^2_{(N)}\varphi &=&
\Big(\Delta_{\lie{g},\mathrm{ren}}^{(N)}\tensor 1+
1\tensor F_{(N)}\Big)\varphi=(\Delta_{\lie{g}}\tensor 1 + 1\tensor F)\varphi
\end{eqnarray}
for all $\varphi\in(\F\tensor\spib_{\lie{p}}^c)^{\lie{h}}$, i.e. the variable $N$ drops off!
\end{proof}

We have seen in Appendix C and section \S\ref{FNOP}  how to diagonalize and compute the kernels of the operators 
$\Delta_{\lie{g}}:\F_0\To\F_0$ and $F:\spib_{\lie{p}}^c\To\spib_{\lie{p}}^c$, respectively. 
However, as we mentioned in the Introduction, this method of diagonalization doesn't work directly on the
invariant sector $(\F\tensor\spib_{\lie{p}}^c)^{\lie{h}} $ is not closed with respect to the various commutator
operations one needs to apply in order to produce the diagonalization wanted.
This motivates
us to define the auxiliary operator
$$
T_{\KDirac^{\,2}}:\F_0\tensor\spib_{\lie{p}}^c\To\F_0\tensor\spib_{\lie{p}}^c,\quad
T_{\KDirac^{\,2}}:=\Delta_{\lie{g}}\tensor 1 + 1\tensor F
$$
defined on the \emph{whole} tensor product Hilbert space with
dense domain $\D(T_{\KDirac^{\,2}})=\F_0^{\mathrm{pol}}\tensor\spib^{\mathrm{pol}}$. 
Clearly then the
operator
$T_{\KDirac^{\,2}}$ descends to a (well-defined) unbounded operator
$$
T_{\KDirac^{\,2}}^{\lie{h}}:=
T_{\KDirac^{\,2}}\Big\vert_{(\F_0\tensor\spib_{\lie{p}}^c)^{\lie{h}}}
:\Big(\F_0\tensor\spib_{\lie{p}}^c\Big)^{\lie{h}}\To\Big(\F_0\tensor\spib_{\lie{p}}^c\Big)^{\lie{h}}.
$$
with dense domain $\D(T_{\KDirac^{\,2}}^{\lie{h}})=(\F_0^{\mathrm{pol}}\tensor\spib^{\mathrm{pol}})^{\lie{h}}$, that
satisfies
\begin{equation}
\KDirac^2=\frac{1}{4}\cdot T_{\KDirac^{\,2}}^{\lie{h}}.
\end{equation}

\section{Functional analytic properties of the Dirac operator on $\RGr$}
\subsection{Diagonalization of $\KDirac^2$}\label{SectionOfDiagD2}
First in order to get started we need to recall that the inner product $\ip{\cdot}{\cdot}_{\Hilb_1\tensor\Hilb_2}$ of the tensor product of 
two Hilbert spaces (or inner product spaces) $(\Hilb_1,\ip{\cdot}{\cdot}_1)$ and $(\Hilb_2,\ip{\cdot}{\cdot}_2)$ becomes determined by the formula
\begin{equation}\label{ipontensorprod}
\ip{\psi_1\tensor\psi_2}{\varphi_1\tensor\varphi_2}_{\Hilb_1\tensor\Hilb_2}=\ip{\psi_1}{\varphi_1}_1\ip{\psi_2}{\varphi_2}_2.
\end{equation}

We know by Proposition \ref{ConclusiveDeltaG} that $\Delta_{\lie{g}}:\F_0\To\F_0$ is diagonalizable with (non-negative) spectrum equal to
$2\Z_{\geq 0}$. Similarly by Corollary \ref{conclusiveF} the operator $F:\spib_{\lie{p}}^c\To\spib_{\lie{p}}^c$ is diagonalizable with (non-negative) 
spectrum also equal to $2\Z_{\geq 0}$. It follows then that the operator $T_{\KDirac^{\,2}}=\Delta_{\lie{g}}\tensor 1 + 1\tensor F$, introduced
in the previous section, is also diagonalizable on the tensor product $\F_0\tensor\spib_{\lie{p}}^{\lie{h}}$. Indeed we had the 
orthogonal eigenspace
decompositions
$$
\F_0^{\mathrm{pol}}=\bigoplus_{M=0}^\infty \F_0^{\mathrm{pol},M}\quad\textrm{ (algebraic direct sum)}
$$
and
$$
\spib^{\mathrm{pol}}=\bigoplus_{k=0}^\infty\spib^{\mathrm{pol},k}\quad\textrm{ (algebraic direct sum)},
$$
and the claim follows now from the following isomorphism (between inner product spaces)
$$
\F_0^{\mathrm{pol}}\tensor\spib^{\mathrm{pol}}=
\Big(\bigoplus_{M=0}^\infty \F_0^{\mathrm{pol},M}\Big)\bigotimes\Big(\bigoplus_{k=0}^\infty\spib^{\mathrm{pol},k}\Big)
\isom
\bigoplus_{M,k=0}^\infty \Big(\F_0^{\mathrm{pol},M}\tensor \spib^{\mathrm{pol},k}\Big).
$$
Moreover, we see from this that the eigenvalue of $T_{\KDirac^{\,2}}$ on the $(M,k)$-eigenspace
$\F_0^{\mathrm{pol},M}\tensor \spib^{\mathrm{pol},k}$, where $M,k\geq 0$ is equal to $2(M+k)$ and
by looking at equation (\ref{ipontensorprod})  one sees that the above direct sum of $(M,k)$-eigenspaces
is an orthogonal decomposition. Thus the operator 
$$
T_{\KDirac^{\,2}}:\F_0\tensor\spib_{\lie{p}}^c\To\F_0\tensor\spib_{\lie{p}}^c
$$
is diagonalizable non-negative and its spectrum is equal to
$$
\spec(T_{\KDirac^{\,2}})=2\Z_{\geq 0}.
$$

Next, consider the restriction
$$
T_{\KDirac^{\,2}}^{\lie{h}}=
T_{\KDirac^{\,2}}\Big\vert_{(\F_0\tensor\spib_{\lie{p}}^c)^{\lie{h}}}
:\Big(\F_0\tensor\spib_{\lie{p}}^c\Big)^{\lie{h}}\To\Big(\F_0\tensor\spib_{\lie{p}}^c\Big)^{\lie{h}}.
$$
Write now as above
$$
\Big(\F_0^{\mathrm{pol}}\tensor\spib^{\mathrm{pol}}\Big)^{\lie{h}}\isom
\bigoplus_{M,k=0}^\infty \Big(\F_0^{\mathrm{pol},M}\tensor \spib^{\mathrm{pol},k}\Big)^{\lie{h}}
$$
and notice that since 
$$
\Big(\F_0^{\mathrm{pol},M}\tensor \spib^{\mathrm{pol},k}\Big)^{\lie{h}}\subset \F_0^{\mathrm{pol},M}\tensor \spib^{\mathrm{pol},k}
$$
one thinks that the operator $T_{\KDirac^{\,2}}^{\lie{h}}$ has a constant value $2(M+k)$ when it hits this subspace; however in
order for this to be possible
of course we also need to require that
$
(\F_0^{\mathrm{pol},M}\tensor \spib^{\mathrm{pol},k})^{\lie{h}}\neq\emptyset!
$

Thus it follows from the above that the operator
$T_{\KDirac^{\,2}}^{\lie{h}}$
is also diagonalizable, non-negative and its spectrum satisfies
$$
\spec(T_{\KDirac^{\,2}}^{\lie{h}})\subset 2\Z_{\geq 0}.
$$ 

We are now able to deduce from Theorem \ref{MainTheoremI} the following.

\begin{prop}
The square $\KDirac^2:(\F_0\tensor\spib_{\lie{p}}^c)^{\lie{h}}\To(\F_0\tensor\spib_{\lie{p}}^c)^{\lie{h}}$ of the Dirac operator
is diagonalizable, non-negative and its spectrum satisfies
\begin{equation}
\spec(\KDirac^2)\subset\frac{1}{2}\Z_{\geq 0}.
\end{equation}
\end{prop}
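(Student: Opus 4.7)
The plan is to deduce the three assertions from the identification $4\KDirac^2 = T_{\KDirac^{\,2}}^{\lie{h}}$ of Theorem \ref{MainTheoremI}, by first diagonalizing the auxiliary operator $T_{\KDirac^{\,2}} = \Delta_{\lie{g}} \tensor 1 + 1 \tensor F$ on the whole tensor product and then transporting its spectral data to the invariant sector. I note at the outset that by Lemma \ref{Dcommutes} the operator $\KDirac^2$ preserves $(\F_0 \tensor \spib_{\lie{p}}^c)^{\lie{h}}$, so in combination with Theorem \ref{MainTheoremI} the operator $T_{\KDirac^{\,2}}$ itself preserves the invariant sector, and the restriction $T_{\KDirac^{\,2}}^{\lie{h}}$ is a bona fide operator there.

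The first step is to diagonalize $T_{\KDirac^{\,2}}$ on the full tensor product. By Proposition \ref{ConclusiveDeltaG} the polynomial core $\F_0^{\mathrm{pol}}$ decomposes as the orthogonal direct sum $\bigoplus_{M\geq 0} \F_0^{\mathrm{pol},M}$ on whose $M$-th summand $\Delta_{\lie{g}}$ acts as the scalar $2M$; analogously, Corollary \ref{conclusiveF} together with Proposition \ref{FIsWelldefined} provides the orthogonal eigenspace decomposition $\spib^{\mathrm{pol}} = \bigoplus_{k \geq 0} \spib^{\mathrm{pol},k}$ on which $F$ acts as $2k$. Tensoring these, and invoking the formula (\ref{ipontensorprod}) for the inner product on a tensor product, yields the orthogonal joint eigenspace decomposition
$$\F_0^{\mathrm{pol}} \tensor \spib^{\mathrm{pol}} \;=\; \bigoplus_{M,k\geq 0} \F_0^{\mathrm{pol},M} \tensor \spib^{\mathrm{pol},k},$$
on whose $(M,k)$-summand $T_{\KDirac^{\,2}}$ acts by the non-negative scalar $2(M+k)$. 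Thus $T_{\KDirac^{\,2}}$ is diagonalizable, non-negative, and has spectrum contained in $2\Z_{\geq 0}$.

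The second step is to pass to the invariant sector. For the spectral inclusion it suffices to work with the coarser decomposition by the total $T_{\KDirac^{\,2}}$-eigenvalue $\lambda = 2(M+k)$. Each $\varphi \in \F_0^{\mathrm{pol}} \tensor \spib^{\mathrm{pol}}$ decomposes as a \emph{finite} sum $\varphi = \sum_\lambda \varphi_\lambda$ of eigenvectors, so Lagrange interpolation on the finitely many relevant eigenvalues produces polynomials $p_\lambda(x)$ with $p_\lambda(T_{\KDirac^{\,2}})\varphi = \varphi_\lambda$. Since the invariant sector is $T_{\KDirac^{\,2}}$-invariant, every polynomial in $T_{\KDirac^{\,2}}$ preserves it, and hence the $\lambda$-component of an invariant vector is itself invariant. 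Consequently $T_{\KDirac^{\,2}}^{\lie{h}}$ is diagonalizable with spectrum contained in $\spec(T_{\KDirac^{\,2}}) \subset 2\Z_{\geq 0}$, and dividing by $4$ delivers the claimed diagonalizability, non-negativity, and the inclusion $\spec(\KDirac^2) \subset \tfrac{1}{2}\Z_{\geq 0}$.

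The chief subtlety is the transfer of the eigenspace decomposition to the invariant sector: because $\Delta_{\lie{g}}$ is a normal-ordered Casimir of the \emph{centrally extended} algebra $\Eglres$ and, as emphasized in the introduction, does \emph{not} commute with the representation $\hat r$, the diagonal $\hinf^\C$-action need not preserve the individual bi-homogeneous pieces $\F_0^{\mathrm{pol},M} \tensor \spib^{\mathrm{pol},k}$. One therefore cannot split the invariants naively along the finer bigrading $(M,k)$. The polynomial-interpolation route above circumvents this by relying only on the coarser $T_{\KDirac^{\,2}}$-eigenspace grading, which records merely the total eigenvalue $M+k$; this is precisely enough for the spectral inclusion and is, to my eye, the single genuinely delicate step in the proof.
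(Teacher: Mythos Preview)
Your proof is correct and follows the same overall strategy as the paper: diagonalize $T_{\KDirac^{\,2}}=\Delta_{\lie g}\tensor 1+1\tensor F$ on the full tensor product via the orthogonal decompositions of Proposition~\ref{ConclusiveDeltaG} and Corollary~\ref{conclusiveF}, then restrict to the invariant sector.

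The only genuine difference lies in how the restriction step is justified. The paper simply asserts the splitting
\[
\Big(\F_0^{\mathrm{pol}}\tensor\spib^{\mathrm{pol}}\Big)^{\lie h}\;\isom\;\bigoplus_{M,k\geq 0}\Big(\F_0^{\mathrm{pol},M}\tensor\spib^{\mathrm{pol},k}\Big)^{\lie h}
\]
along the fine $(M,k)$-bigrading, whereas you avoid this by a Lagrange-interpolation argument on the coarser total-eigenvalue grading, using only that $T_{\KDirac^{\,2}}$ itself preserves the invariant sector. Your route is self-contained and perfectly valid. That said, your stated worry---that the diagonal $\hinf^\C$-action might not respect the $(M,k)$-pieces because $\Delta_{\lie g}$ fails to commute with $\hat r$---is unfounded here: the obstruction to commutation comes from the Schwinger term, which vanishes on $\hinf^\C$, and indeed by Corollary~\ref{restrictiontoh} the action of $\lie h$ preserves each $\F^{(n,m)}=\F_0^{\mathrm{pol},M}$ (with $M=n=m$), while the isotropy action of $\hinf^\C$ manifestly preserves the length grading $\spib^{\mathrm{pol},k}$ by Proposition~\ref{actioniscommutator}. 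So the paper's direct splitting is legitimate, and your interpolation argument, while more cautious than necessary, buys robustness at no real cost.
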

\begin{prop}
The square $\KDirac^2:(\F_0\tensor\spib_{\lie{p}}^c)^{\lie{h}}\To(\F_0\tensor\spib_{\lie{p}}^c)^{\lie{h}}$ 
of the Dirac operator
is symmetric.
\end{prop}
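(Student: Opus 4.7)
The plan is to reduce the symmetry of $\KDirac^2$ on the invariant sector to the symmetry of the much simpler operator $\Delta_{\lie{g}}\tensor 1 + 1\tensor F$, using the identity
\begin{equation*}
4\KDirac^2 = \Delta_{\lie{g}}\tensor 1 + 1\tensor F
\end{equation*}
valid on $(\F_0\tensor\spib_{\lie{p}}^c)^{\lie{h}}$ that was just established in Theorem \ref{MainTheoremI}. Since both $\Delta_{\lie{g}}:\F_0\To\F_0$ and $F:\spib_{\lie{p}}^c\To\spib_{\lie{p}}^c$ were diagonalized in Proposition \ref{ConclusiveDeltaG} and Corollary \ref{conclusiveF} by orthonormal eigenvectors with real (integer) eigenvalues, each of them is a symmetric unbounded operator on its respective polynomial domain.

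First I would spell out that if $A$ is a symmetric operator on a dense domain $\D(A)\subset\Hilb_1$ and $\Hilb_2$ is another Hilbert space, then $A\tensor 1$ is symmetric on the algebraic tensor product $\D(A)\tensor\Hilb_2$ because, on decomposable vectors,
\begin{equation*}
\ip{(A\tensor 1)(\varphi_1\tensor\varphi_2)}{\psi_1\tensor\psi_2} = \ip{A\varphi_1}{\psi_1}_1\ip{\varphi_2}{\psi_2}_2 = \ip{\varphi_1\tensor\varphi_2}{(A\tensor 1)(\psi_1\tensor\psi_2)},
\end{equation*}
and the identity extends to all finite linear combinations by sesquilinearity. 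Applying this with $A=\Delta_{\lie{g}}$ and separately with $A=F$ (and switching the tensor factors in the second case) shows that $\Delta_{\lie{g}}\tensor 1$ and $1\tensor F$ are both symmetric on the common dense domain $\F_0^{\mathrm{pol}}\tensor\spib^{\mathrm{pol}}$, hence their sum is symmetric there as well.

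Next I would restrict to the $\lie{h}^\C$-invariant subspace. Symmetry is obviously preserved under restriction to a linear subspace of the domain, so the operator
\begin{equation*}
T_{\KDirac^{\,2}}^{\lie{h}} = (\Delta_{\lie{g}}\tensor 1 + 1\tensor F)\big\vert_{(\F_0^{\mathrm{pol}}\tensor\spib^{\mathrm{pol}})^{\lie{h}}}
\end{equation*}
is symmetric on the dense domain $(\F_0^{\mathrm{pol}}\tensor\spib^{\mathrm{pol}})^{\lie{h}}$ of $(\F_0\tensor\spib_{\lie{p}}^c)^{\lie{h}}$. Since Theorem \ref{MainTheoremI} tells us that $\KDirac^2 = \tfrac{1}{4}T_{\KDirac^{\,2}}^{\lie{h}}$ as operators on this domain, the conclusion follows immediately.

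There is no real obstacle here: the only subtle point is to remember that one is working on the \emph{algebraic} tensor product of polynomial domains, so that each application of an operator and each evaluation of an inner product only involves finite sums. This is precisely why we introduced the polynomial subspaces $\F_0^{\mathrm{pol}}$ and $\spib^{\mathrm{pol}}$ in the first place, and why Theorem \ref{MainTheoremI} holds pointwise without any convergence issues.
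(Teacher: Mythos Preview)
Your proof is correct and rests on the same ingredients as the paper's: Theorem \ref{MainTheoremI} and the diagonalizations of $\Delta_{\lie{g}}$ and $F$ from Proposition \ref{ConclusiveDeltaG} and Corollary \ref{conclusiveF}. The only difference is organizational. The paper argues directly from the orthogonal eigenspace decomposition of the invariant sector established in \S\ref{SectionOfDiagD2}: for $v$ in the $(M,k)$-eigenspace and $w$ in the $(N,l)$-eigenspace, either the eigenspaces are orthogonal and both $\ip{\KDirac^2 v}{w}$ and $\ip{v}{\KDirac^2 w}$ vanish, or they coincide and both equal $\tfrac12(M+k)\ip{v}{w}$. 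You instead first establish symmetry of $\Delta_{\lie{g}}\tensor 1 + 1\tensor F$ on the full polynomial tensor product $\F_0^{\mathrm{pol}}\tensor\spib^{\mathrm{pol}}$ via the elementary tensor-product computation, and then restrict. Your route is slightly more modular in that it does not require discussing the eigenspace decomposition of the invariant sector itself, while the paper's route makes the spectral picture on $(\F_0\tensor\spib_{\lie{p}}^c)^{\lie{h}}$ explicit, which is useful anyway for the subsequent essential self-adjointness corollary.
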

\begin{proof}
This follows from the existence of the orthogonal decomposition of $(\F_0\tensor\spib_{\lie{p}}^c)^{\lie{h}}$
into $(M,k)$-eigenspaces
of $T_{\KDirac^{\,2}}^{\lie{h}}=4\KDirac^2$. Namely, if we have $v$ living in the $(M,k)$-eigenspace and $w$ in the
$(N,l)$-eigenspace with the pair $(M,k)\neq(N,l)$
 then because the orthogonality
$$
\ip{\KDirac^2v}{w}=\frac{1}{2}(M+k)\cdot\ip{v}{w}=0=\frac{1}{2}(N+l)\cdot\ip{v}{w}=\ip{v}{\KDirac^2w},
$$
and if $v,w$ are both in the $(M,k)$-eigenspace then
$$
\ip{\KDirac^2v}{w}=\frac{1}{2}(M+k)\cdot\ip{v}{w}=\ip{v}{\KDirac^2w}.
$$
Thus we conclude that
$$
\ip{\KDirac^2v}{w}=\ip{v}{\KDirac^2w}
$$
for all $v,w\in(\F^{\mathrm{pol}}\tensor\spib^{\mathrm{pol}})^{\lie{h}}=\D(\KDirac^2)$.
\end{proof}
\begin{cor}
The square $\KDirac^2:(\F_0\tensor\spib_{\lie{p}}^c)^{\lie{h}}\To(\F_0\tensor\spib_{\lie{p}}^c)^{\lie{h}}$ 
of the Dirac operator
is essentially self-adjoint.
\end{cor}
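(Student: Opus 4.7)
The plan is to deduce essential self-adjointness directly from the concrete diagonalization obtained in section \S\ref{SectionOfDiagD2}, using the symmetricity result already proved together with a standard criterion for essential self-adjointness of diagonalizable symmetric operators.

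First I would reorganize what is already in hand. From the orthogonal decomposition
\begin{equation*}
\bigl(\F_0^{\mathrm{pol}}\tensor\spib^{\mathrm{pol}}\bigr)^{\lie{h}}\isom\bigoplus_{M,k\geq 0}\bigl(\F_0^{\mathrm{pol},M}\tensor\spib^{\mathrm{pol},k}\bigr)^{\lie{h}}
\end{equation*}
and the fact that $4\KDirac^2=T_{\KDirac^2}^{\lie{h}}$ acts by the scalar $2(M+k)$ on the $(M,k)$-summand (cf. Theorem \ref{MainTheoremI} and the analysis of $\Delta_{\lie{g}}$ and $F$ in Proposition \ref{ConclusiveDeltaG} and Corollary \ref{conclusiveF}), one sees that $\KDirac^2$ admits an orthonormal Hilbert basis of eigenvectors $\{e_n\}$ of $(\F_0\tensor\spib_{\lie{p}}^c)^{\lie{h}}$ which all lie inside the dense domain $\domain(\KDirac^2)=(\F^{\mathrm{pol}}\tensor\spib^{\mathrm{pol}})^{\lie{h}}$, with real (non-negative) eigenvalues $\lambda_n\in\tfrac{1}{2}\Z_{\geq 0}$.

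Next I would invoke the following standard fact from unbounded operator theory: if $T$ is a symmetric operator on a dense domain $\domain(T)$ in a Hilbert space $\Hilb$ and there exists an orthonormal basis $\{e_n\}$ of $\Hilb$ contained in $\domain(T)$ such that $Te_n=\lambda_n e_n$ with $\lambda_n\in\Real$, then $T$ is essentially self-adjoint. (Equivalently, by Nelson's analytic vector theorem, every eigenvector is trivially an analytic vector for $T$, hence the algebraic span of $\{e_n\}$ is a dense set of analytic vectors, which is known to imply essential self-adjointness of a symmetric operator.) Applied to our setting, since $\KDirac^2$ is symmetric and its spectral eigenbasis lies entirely within the domain with real eigenvalues, the criterion applies and gives essential self-adjointness immediately.

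I expect no serious obstacle: all the genuinely hard work — producing the cut-off identity for $\KDirac^2$, pushing $N\to\infty$ in the strong operator sense, establishing symmetricity, and writing down an explicit orthonormal eigenbasis in $\domain(\KDirac^2)$ — has already been carried out in the preceding sections. The only subtle point to verify carefully is that the family of eigenvectors one extracts from the decomposition $\bigoplus_{M,k}(\F_0^{\mathrm{pol},M}\tensor\spib^{\mathrm{pol},k})^{\lie{h}}$ is \emph{complete} in the Hilbert-space completion $(\F_0\tensor\spib_{\lie{p}}^c)^{\lie{h}}$, which follows because the algebraic sum on the left is already dense in $(\F_0\tensor\spib_{\lie{p}}^c)^{\lie{h}}$ by construction of the polynomial subspaces. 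Once completeness is noted, the abstract criterion closes the proof.
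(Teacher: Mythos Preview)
Your proposal is correct and follows essentially the same route as the paper: the paper simply cites the two preceding propositions (diagonalizability with real spectrum, and symmetricity) together with Proposition~\ref{appendixlemma}, which is exactly the ``symmetric $+$ diagonalizable $\Rightarrow$ essentially self-adjoint'' criterion you invoke. Your additional remark about completeness via density of the polynomial subspaces and the alternative appeal to Nelson's theorem are fine elaborations, but the core argument matches the paper's one-line proof.
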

\begin{proof}
Follows from the previous two Propositions and Proposition \ref{appendixlemma}.
\end{proof}
\subsection{Kernel of $\KDirac^2$}
After having the explicit diagonalization of $\KDirac^2$ into orthogonal eigenspaces
$$
\Big(\F_0^{\mathrm{pol}}\tensor\spib^{\mathrm{pol}}\Big)^{\lie{h}}\isom\bigoplus_{M,k=0}^\infty
\Big(\F_0^{\mathrm{pol},M}\tensor\spib^{\mathrm{pol},k}\Big)^{\lie{h}},
$$
where however some of the $(M,k)$-eigenspaces appearing as summands 
$$
\Big(\F_0^{\mathrm{pol},M}\tensor\spib^{\mathrm{pol},k}\Big)^{\lie{h}}
$$ 
might be empty,
it is now easy to find out what the kernel of the operator $\KDirac^2$ is. Recall
that if non-empty then the square $\KDirac^2$ obtains the eigenvalue $\frac{1}{2}(M+k)$ on
the $(M,k)$-eigenspace. Since $M,k\in\Z_{\geq 0}$, necessarily
$$
\ker(\KDirac^2)=\Big(\F_0^{\mathrm{pol},0}\tensor\spib^{\mathrm{pol},0}\Big)^{\lie{h}}
$$
and we want to be able to say concretely what this space is.
Recalling the definitions, clearly $\spib^{\mathrm{pol},0}=\C\vac_\spib$ and we saw in 
the proof of Proposition \ref{LastOfTheKernels} that $\F_0^{\mathrm{pol},0}=\C\vac_\F$. Hence
$$
\ker(\KDirac^2)=\Big(\C\vac_\F\tensor\vac_\spib\Big)^{\lie{h}}=\C\vac
$$
by Remark \ref{invariantsecnonempt}, since $\dim(\C\vac_\F\tensor\vac_\spib)^{\lie{h}}\leq 1$ and on the
other hand we know by the mentioned Remark that 
$$
\vac=\vac_\F\tensor\vac_\spib\in\Big(\F_0^{\mathrm{pol},0}\tensor\spib^{\mathrm{pol},0}\Big)^{\lie{h}}.
$$

Thus we have proved the following.
\begin{prop}\label{KernelOfD2}
The kernel of the square $\KDirac^2:(\F_0\tensor\spib_{\lie{p}}^c)^{\lie{h}}\To(\F_0\tensor\spib_{\lie{p}}^c)^{\lie{h}}$
is given by
$$
\ker(\KDirac^2)=\C\vac.
$$
In particular $\dim\ker(\KDirac^2)=1<\infty$.
\end{prop}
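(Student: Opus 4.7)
The plan is to read off the kernel directly from the explicit orthogonal eigenspace decomposition of $T_{\KDirac^{\,2}}^{\lie{h}} = 4\KDirac^2$ that was already assembled in Section \S\ref{SectionOfDiagD2}. The bulk of the work has been done: Theorem \ref{MainTheoremI} reduces $4\KDirac^2$ on the invariant sector to $\Delta_{\lie{g}} \tensor 1 + 1 \tensor F$, and both tensor factors are separately diagonalizable. What remains is essentially bookkeeping.

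First I would invoke the orthogonal decomposition
\[
\bigl(\F_0^{\mathrm{pol}} \tensor \spib^{\mathrm{pol}}\bigr)^{\lie{h}} \isom \bigoplus_{M,k \geq 0}\bigl(\F_0^{\mathrm{pol},M} \tensor \spib^{\mathrm{pol},k}\bigr)^{\lie{h}},
\]
on which $\KDirac^2$ acts as the scalar $\tfrac{1}{2}(M+k)$ on each nonempty $(M,k)$-summand. Since $M,k \in \Z_{\geq 0}$, the eigenvalue $\tfrac{1}{2}(M+k)$ vanishes precisely when $M = k = 0$, so
\[
\ker(\KDirac^2) = \bigl(\F_0^{\mathrm{pol},0} \tensor \spib^{\mathrm{pol},0}\bigr)^{\lie{h}}.
\]

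Next I would identify the two zero-eigenspaces on the factors. Proposition \ref{ConclusiveDeltaG}(4) gives $\F_0^{\mathrm{pol},0} = \ker(\Delta_{\lie{g}}) = \C\vac_\F$, and Corollary \ref{conclusiveF} gives $\spib^{\mathrm{pol},0} = \ker(F) = \C\vac_\spib$. Therefore the ambient $(0,0)$-eigenspace is the one-dimensional line $\C(\vac_\F \tensor \vac_\spib) = \C\vac$.

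Finally, to conclude, I would invoke Remark \ref{invariantsecnonempt}, which asserts that $\vac \in (\F_0 \tensor \spib_{\lie{p}}^c)^{\lie{h}}$. Intersecting the line $\C\vac$ with the $\lie{h}$-invariant sector therefore returns $\C\vac$ itself (a nonzero subspace of a one-dimensional space must equal it), giving
\[
\ker(\KDirac^2) = \C\vac, \qquad \dim\ker(\KDirac^2) = 1 < \infty.
\]
There is no substantive obstacle here: all the delicate analysis, namely the strong-limit arguments, the cancellation of the diverging polynomial terms in $N$, and the computation of the kernels of $\Delta_{\lie{g}}$ and $F$, has been carried out earlier. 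The only thing to be careful about is that the decomposition of the invariant sector into $(M,k)$-summands is genuinely orthogonal (which follows from the orthogonality of the $(M,k)$-eigenspaces inside $\F_0^{\mathrm{pol}} \tensor \spib^{\mathrm{pol}}$ and the fact that taking $\lie{h}$-invariants preserves this orthogonal structure), so that the kernel-on-summands description is unambiguous.
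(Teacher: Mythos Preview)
Your proof is correct and follows essentially the same approach as the paper: use the orthogonal $(M,k)$-eigenspace decomposition of the invariant sector from \S\ref{SectionOfDiagD2}, observe that the eigenvalue $\tfrac{1}{2}(M+k)$ vanishes only for $M=k=0$, identify $\F_0^{\mathrm{pol},0}=\C\vac_\F$ and $\spib^{\mathrm{pol},0}=\C\vac_\spib$, and then invoke Remark~\ref{invariantsecnonempt} to see that the one-dimensional line $\C\vac$ actually lies in the invariant sector. The only cosmetic difference is that the paper cites the proof of Proposition~\ref{LastOfTheKernels} (rather than Proposition~\ref{ConclusiveDeltaG}(4)) for $\F_0^{\mathrm{pol},0}=\C\vac_\F$, but these refer to the same fact.
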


In order to compute the kernel of the actual Dirac operator itself, first notice that obviously $\ker(\KDirac)\subset\ker(\KDirac^2)$
so that $\ker(\KDirac)\subset\C\vac$. On the other hand by Proposition \ref{vacuumstr} $\C\vac\subset\ker(\KDirac)$ so
that
\begin{equation}
\ker(\KDirac)=\ker(\KDirac^2)=\C\vac.
\end{equation}

Now recalling from Proposition \ref{Diracdefined} that $\KDirac$ is symmetric we have culminated in:
\begin{thm} The Dirac operator 
$\KDirac:(\F_0\tensor\spib_{\lie{p}}^c)^{\lie{h}}\To(\F_0\tensor\spib_{\lie{p}}^c)^{\lie{h}}$ 
on $\RGr$ is an (unbounded) symmetric operator with finite
dimensional kernel given by $\ker(\KDirac)=\C\vac$. The square
$\KDirac^2$ is a diagonalizable, essentially self-adjoint (positive) operator with finite dimensional kernel
$\ker(\KDirac^2)=\C\vac$.
\end{thm}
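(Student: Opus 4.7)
The plan is to assemble this final statement from the pieces already established rather than to prove anything substantially new; essentially every ingredient has been isolated in the preceding sections, so the theorem is a recapitulation. I would first dispose of the symmetry claim for $\KDirac$, which is nothing other than Proposition \ref{Diracdefined} after observing that symmetricity passes to the restriction $\KDirac\colon(\F_0\tensor\spib_{\lie{p}}^c)^{\lie{h}}\to(\F_0\tensor\spib_{\lie{p}}^c)^{\lie{h}}$, because the dense domain $(\F_0^{\mathrm{pol}}\tensor\spib^{\mathrm{pol}})^{\lie{h}}$ inherits both the inner product and the adjoint structure of the ambient tensor product.

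Next I would handle the square $\KDirac^2$. The identification $4\KDirac^2=\Delta_{\lie{g}}\tensor 1+1\tensor F$ on the $\lie{h}$-invariant sector (Theorem \ref{MainTheoremI}) is the key analytic input. Combined with Proposition \ref{ConclusiveDeltaG} and Corollary \ref{conclusiveF}, which diagonalize $\Delta_{\lie{g}}$ and $F$ with spectra in $2\Z_{\geq 0}$ and kernels $\C\vac_\F$ and $\C\vac_\spib$ respectively, the operator $T_{\KDirac^{\,2}}=\Delta_{\lie{g}}\tensor 1+1\tensor F$ diagonalizes on $\F_0\tensor\spib_{\lie{p}}^c$ via the orthogonal tensor product decomposition into $(M,k)$-eigenspaces of common eigenvalue $2(M+k)$. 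Intersecting with the invariant sector and dividing by $4$ gives the diagonalization of $\KDirac^2$, non-negativity, and the spectral inclusion $\spec(\KDirac^2)\subset\tfrac12\Z_{\geq 0}$; essential self-adjointness then follows from the standard criterion (a symmetric operator with a complete set of eigenvectors in its domain is essentially self-adjoint) as already recorded in Section \ref{SectionOfDiagD2}.

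For the kernel computations I would argue as follows. Proposition \ref{KernelOfD2} yields $\ker(\KDirac^2)=\C\vac$, which is clearly one-dimensional. Since $\ker(\KDirac)\subset\ker(\KDirac^2)$ we obtain $\ker(\KDirac)\subset\C\vac$, and the reverse inclusion $\C\vac\subset\ker(\KDirac)$ is precisely Corollary \ref{TenVacInKer}. Hence $\ker(\KDirac)=\C\vac$, which has dimension $1<\infty$.

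There is no genuine obstacle at this stage: the conceptual difficulties (isolating the correct normal-ordered expression for $\KDirac^2$, showing that the spurious polynomial terms in $N$ cancel, handling the fact that the commutator manipulations are only legal for the auxiliary lift $T_{\KDirac^{\,2}}$, and then transferring the spectral information back to the invariant sector) have all been overcome in the cut-off analysis of Section \ref{sectionNorD2} and in Section \ref{SectionOfDiagD2}. The only thing to verify carefully is that the eigenspace decomposition of $T_{\KDirac^{\,2}}$ restricts compatibly to $(\F_0\tensor\spib_{\lie{p}}^c)^{\lie{h}}$, i.e.\ that the $\lie{h}$-invariant sector is a direct sum of its intersections with the $(M,k)$-eigenspaces; this is automatic because $T_{\KDirac^{\,2}}$ preserves the invariant sector (it is a scalar on each $(M,k)$-piece and therefore commutes with the projection onto invariants), and this is the only point where one should be pedantic when writing up the final argument.
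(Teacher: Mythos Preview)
Your proposal is correct and follows essentially the same approach as the paper: the theorem is stated as a culmination of the preceding results, and the paper's argument in Sections~\ref{SectionOfDiagD2} and the kernel subsection assembles precisely the same ingredients you list (Proposition~\ref{Diracdefined} for symmetricity, Theorem~\ref{MainTheoremI} for the square, the diagonalizations of $\Delta_{\lie{g}}$ and $F$, Proposition~\ref{KernelOfD2} for $\ker(\KDirac^2)$, and the inclusion $\C\vac\subset\ker(\KDirac)$ from the vacuum structure). Your remark on restricting the $(M,k)$-eigenspace decomposition to the invariant sector is exactly the point the paper makes explicit in Section~\ref{SectionOfDiagD2}.
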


\bibliographystyle{alpha-loc}

\appendix
\section{Quick reminder on self-adjoint operators}
By an (unbounded) operator $T$ from a complex Hilbert space $\Hilb$ to 
complex Hilbert space $\mathcal{K}$ we mean a linear mapping $T$ defined on 
a dense subspace $\domain(T)\subset\Hilb$ having values in $\mathcal{K}$. 
In particular, when we say that
``$T$ is an operator on $\Hilb$'' we do not mean to imply that
$Tx$ is defined for all $x\in\Hilb$ nor that $T$ is bounded.

We say that $T$ is an \emph{extension} of $S$ if $\domain(S)\subset\domain(T)$
and $Tx=Sx$ for all $x\in\domain(S)$. We shall write $S\subset T$ to indicate
that $T$ is an extension of $S$.

\subsection{Adjoint operators}
Let $T:\Hilb\To\mathcal{K}$ be an operator. Define the set
$$
\domain(T^\ast):=\set{y\in\mathcal{K}\mid x\mapsto\ip{Tx}{y}
\textrm{ is a bounded linear funtional on } \domain(T)}.
$$
Since we assume that $\domain(T)$ is dense in $\Hilb$, the function
$x\mapsto\ip{Tx}{y}$ extends to a bounded linear functional on the entire
$\Hilb$, so by the Riesz representation theorem, there exists a unique
$x^\ast=T^\ast y\in\Hilb$ such that
$$
\ip{Tx}{y}=\ip{x}{T^\ast y}.
$$
By the uniqueness assertion $T^\ast$ is a linear operator with domain
$\domain(T^\ast)$, and is called the \emph{adjoint} of $T$.
\begin{defn}
An operator $T:\Hilb\To\Hilb$ is \emph{self-adjoint} if $\domain(T^\ast)=\domain(T)$ and
$\ip{Tx}{y}=\ip{x}{Ty}$ for all $x\in\domain(T)$ and all $y\in\domain(T^\ast)$.
\end{defn}
\begin{defn}
An operator $T:\Hilb\To\Hilb$ is \emph{symmetric} (or formally self-adjoint) 
if it satisfies $\ip{Tx}{y}=\ip{x}{Ty}$
for all $x,y\in\domain(T)$.
\end{defn}
Please note that because of the dependence on the domain, there is a difference between a symmetric
and a self-adjoint operator. Clearly every self-adjoint operator is symmetric but the
opposite doesn't have to hold.
\begin{prop}
Let $T$ be a (densely defined) operator. Then $T$ is symmetric 
$\iff T\subset T^\ast\iff\ip{Tx}{x}\in\Real$ for all $x\in\domain(T)$.
\end{prop}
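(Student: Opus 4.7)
The plan is to prove the double equivalence by establishing the three implications (1)$\Leftrightarrow$(2), (1)$\Rightarrow$(3), and (3)$\Rightarrow$(1), where (1) is symmetry, (2) is $T\subset T^\ast$, and (3) is $\langle Tx,x\rangle\in\Real$ for all $x\in\domain(T)$. All arguments are elementary and rely only on the defining property of the adjoint together with a standard polarization trick.

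For (1)$\Leftrightarrow$(2), I would unfold the definition of $T^\ast$. Assuming $T$ symmetric, I fix $y\in\domain(T)$ and observe that the linear functional $x\mapsto\ip{Tx}{y}=\ip{x}{Ty}$ on $\domain(T)$ is given by inner product with the fixed vector $Ty\in\Hilb$, hence is bounded. By the defining construction of $T^\ast$ this forces $y\in\domain(T^\ast)$ and $T^\ast y=Ty$; since $y$ was arbitrary in $\domain(T)$, we conclude $T\subset T^\ast$. Conversely, if $T\subset T^\ast$, then any $x,y\in\domain(T)\subset\domain(T^\ast)$ satisfy $\ip{Tx}{y}=\ip{x}{T^\ast y}=\ip{x}{Ty}$, i.e., $T$ is symmetric.

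For (1)$\Rightarrow$(3), I would simply compute $\ip{Tx}{x}=\ip{x}{Tx}=\overline{\ip{Tx}{x}}$ using conjugate symmetry of the Hilbert space inner product, which forces $\ip{Tx}{x}\in\Real$.

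For the converse (3)$\Rightarrow$(1), the key step, which is the only nonroutine one, is a polarization argument. I would introduce the sesquilinear form $B:\domain(T)\times\domain(T)\To\C$ defined by $B(x,y):=\ip{Tx}{y}-\ip{x}{Ty}$, and note that the hypothesis (3) gives $B(x,x)=\ip{Tx}{x}-\overline{\ip{Tx}{x}}=0$ for all $x\in\domain(T)$. The main obstacle, such as it is, is verifying that a sesquilinear form over $\C$ which vanishes on the diagonal vanishes identically; this follows from the complex polarization identity
\begin{equation*}
4B(x,y)=B(x+y,x+y)-B(x-y,x-y)+\im B(x+\im y,x+\im y)-\im B(x-\im y,x-\im y),
\end{equation*}
each summand of which is zero by hypothesis. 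Hence $B\equiv 0$, giving $\ip{Tx}{y}=\ip{x}{Ty}$ for all $x,y\in\domain(T)$, which is symmetry. Combining the three implications closes the loop and yields the proposition.
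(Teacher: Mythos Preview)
Your proof is correct and is the standard textbook argument. The paper itself does not supply a proof of this proposition: it appears in the appendix ``Quick reminder on self-adjoint operators'' as a stated fact without justification, so there is nothing to compare against. One very minor remark: the paper's inner product is $\C$-linear in the \emph{second} argument, so with that convention your polarization identity literally yields $4B(y,x)$ rather than $4B(x,y)$; this is of course irrelevant to the conclusion, since either way $B$ vanishes identically once it vanishes on the diagonal.
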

\begin{defn}
We say that an operator $T:\Hilb\To\Hilb$ is \emph{diagonalizable} (or discrete)
if there exists an orthonormal basis $\set{e_j\mid j=1,2,\ldots}$ for $\Hilb$
and a 
set $\set{\lambda_j\mid j=1,2,\ldots}$ in $\C$ such that
$$
Tu=\sum_{j=1}^\infty\lambda_j\ip{u}{e_j}e_j.
$$
\end{defn}
If a symmetric operator $T$ 
is diagonalizable one can extend its original domain $\domain(T)\subset\domain(T^\ast)$ to the domain
$$
\domain(\overline{T}):=\Big\{u\in\Hilb\,\Big\vert\,\sum_{j=1}^\infty\abs{\lambda_j}^2
\abs{\ip{u}{e_j}}^2<\infty\Big\}.
$$
by
$$
\overline{T}u=\sum_{j=1}^\infty\lambda_j\ip{u}{e_j}e_j.
$$
Then one can show that $\overline{T}=T^\ast$ . This implies the following proposition:
\begin{prop}\label{appendixlemma}
A symmetric and diagonalizable (unbounded) linear operator $T:\Hilb\To\Hilb$ is essentially self-adjoint, i.e.
it can be extended to a self-adjoint operator.
\end{prop}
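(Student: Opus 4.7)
Following the hint in the paragraph preceding the statement, the approach is to construct a canonical self-adjoint extension $\overline{T}$ of $T$ directly from the eigendata, verify that it is indeed self-adjoint, and then conclude essential self-adjointness of $T$ via the standard deficiency-indices criterion (equivalently, by showing $T^{\ast}=\overline{T}$, so that $T^{\ast}$ is itself self-adjoint).

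First I would set
$$
\domain(\overline{T}) := \Big\{u\in\Hilb \,\Big\vert\, \sum_{j=1}^\infty |\lambda_j|^2 \,|\ip{u}{e_j}|^2 < \infty\Big\}, \qquad \overline{T}u := \sum_{j=1}^\infty \lambda_j \ip{u}{e_j} e_j.
$$
Each $e_j$ lies in this domain, so it is dense. For $u\in\domain(T)$ the identity $Tu=\sum_j\lambda_j\ip{u}{e_j}e_j$ forces $\norm{Tu}^2=\sum_j|\lambda_j|^2|\ip{u}{e_j}|^2<\infty$ by Parseval, hence $\domain(T)\subset\domain(\overline{T})$ and $\overline{T}$ extends $T$. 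Note that the defining formula applied to $u=e_j$ gives $Te_j=\lambda_j e_j$, so in particular $e_j\in\domain(T)$; symmetry of $T$ combined with $\ip{Te_j}{e_j}=\lambda_j\in\Real$ then forces each $\lambda_j$ to be real, a fact I will use repeatedly.

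Next I would verify that $\overline{T}$ is self-adjoint. Symmetry of $\overline{T}$ is direct from the formula once $\lambda_j\in\Real$. To upgrade to self-adjointness it suffices to show $\domain((\overline{T})^{\ast})\subset\domain(\overline{T})$. Given $v\in\domain((\overline{T})^{\ast})$ with $(\overline{T})^{\ast}v=w$, testing $\ip{\overline{T}u}{v}=\ip{u}{w}$ against $u=e_j$ yields $\lambda_j\ip{e_j}{v}=\ip{e_j}{w}$, i.e.\ $\lambda_j\ip{v}{e_j}=\ip{w}{e_j}$ (using $\lambda_j\in\Real$). Parseval applied to $w$ then gives
$$
\sum_{j=1}^\infty |\lambda_j|^2 |\ip{v}{e_j}|^2 \;=\; \sum_{j=1}^\infty |\ip{w}{e_j}|^2 \;=\; \norm{w}^2 \;<\;\infty,
$$
which places $v$ in $\domain(\overline{T})$ with $\overline{T}v=w$. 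Hence $(\overline{T})^{\ast}=\overline{T}$.

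Finally, I would conclude essential self-adjointness of $T$. The cleanest route is the criterion that a densely defined symmetric operator $T$ is essentially self-adjoint iff the ranges of $T\pm i\,\mathrm{id}$ are both dense. Since $\lambda_j\in\Real$ we have $\lambda_j\pm i\neq 0$, and $(T\pm i)e_j=(\lambda_j\pm i)e_j$, so each $e_j$ is (a scalar multiple of) an element of the range; denseness follows from the basis property. Equivalently, since any symmetric extension of $T$ is sandwiched between $T$ and $T^{\ast}$ and $\overline{T}$ is a self-adjoint (hence maximal symmetric) extension, the argument above applied to $T^{\ast}$ instead of $\overline{T}^{\ast}$ gives $T^{\ast}=\overline{T}$, which is self-adjoint; this is precisely the statement that $T$ is essentially self-adjoint, with closure $\overline{T}$. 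The only real technical point — and the expected main obstacle — is the domain verification $\domain((\overline{T})^{\ast})\subset\domain(\overline{T})$; once the reality of the eigenvalues is secured, this reduces to a clean application of Parseval to the orthonormal basis $\{e_j\}$.
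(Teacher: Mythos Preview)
Your argument is correct and follows precisely the route the paper sketches in the paragraph preceding the proposition: define $\overline{T}$ from the eigendata, verify $\overline{T}=T^{\ast}$ (your domain computation via Parseval is exactly what is needed), and conclude. The paper itself gives no further detail beyond ``one can show that $\overline{T}=T^{\ast}$'', so you have simply filled in the omitted verification; your alternative conclusion via the density of the ranges of $T\pm i$ is also fine and equivalent. One small presentational quibble: the sentence ``the defining formula applied to $u=e_j$ gives $Te_j=\lambda_j e_j$, so in particular $e_j\in\domain(T)$'' is logically inverted---you need $e_j\in\domain(T)$ before you can apply the formula---but the intended reading of ``diagonalizable'' in this appendix is clearly that the $e_j$ are eigenvectors and hence lie in $\domain(T)$, so no harm is done.
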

\subsection{Strong convergence}
\begin{defn}
A sequence of (densely-defined) operators $T_n:\Hilb\To\Hilb$ is said to \emph{converge strongly} to $T:\Hilb\To\Hilb$,
\begin{eqnarray}
s\!\!-\!\!\!\!\lim_{n\to\infty} T_n=T\quad &:\iff& T_n\varphi\to T\varphi\quad\textrm{ for all } \varphi\in\domain(T)\subset\domain(T_n)\nonumber\\
& \iff & \lim\norm{T_n\varphi-T\varphi}_\Hilb=0,
\end{eqnarray}
where $\norm{\cdot}_\Hilb$ denotes the norm on $\Hilb$ induced by the Hilbert space inner product $\ip{\cdot}{\cdot}_\Hilb$, i.e.
$\norm{\varphi}_\Hilb:=\sqrt{\ip{\varphi}{\varphi}}$ for all $\varphi\in\Hilb$.
\end{defn}
\subsection{Analytic vectors}
\begin{defn}
Let $T$ be an operator on a Hilbert space $\Hilb$. The set $C^{\infty}(T):=\bigcap_{n=1}^\infty\domain(T^n)$
is called the $C^\infty$-\emph{vectors for} $T$. A vector $\varphi\in C^{\infty}(T)$ is called \emph{analytic vector for} $T$
if
$$
\sum_{n=0}^\infty\frac{\norm{T^n\varphi}}{n!}t^n<\infty
$$
for some $t>0$.
\end{defn}

\begin{defn}
Suppose $T$ is symmetric. For each smooth vector $\varphi\in C^{\infty}(T)$ define
$$
\domain_\varphi:=\set{\sum_{n=0}^N\alpha_n T^n\varphi\,\Big\vert\,N=1,2,\ldots,\,\langle\alpha_1,\ldots,\alpha_N\rangle\textrm{ arbitrary}},
$$
where $\langle\alpha_1,\ldots,\alpha_N\rangle$ denotes a (finite) sequence of complex numbers. Let
$\Hilb_\varphi:=\overline{\domain_\varphi}$ be the closure of $\domain_\varphi$ in $\Hilb$ and define
$T_\varphi:\domain_\varphi\To\domain_\varphi$ by $T_\varphi(\sum_{n=0}^N\alpha_n T^n\varphi)=\sum_{n=0}^N\alpha_n T^{n+1}\varphi$.
Then the smooth vector $\varphi$ is called a \emph{vector of uniqueness} if $T_\varphi$ is essentially self-adjoint on
$\domain_\varphi$ (as an operator on $\Hilb_\varphi$).
\end{defn}

\begin{defn}
A subset $S\subset\Hilb$ is called \emph{total} if the set of all finite linear combinations of elements of $S$ is dense in $\Hilb$.
\end{defn}

\begin{thm}[Nelson's analytic vector theorem]\label{Nelson}
Let $T$ be a symmetric operator on a Hilbert space $\Hilb$. If $\domain(T)$ contains a total set of analytic vectors, then
$T$ is essentially self-adjoint.
\end{thm}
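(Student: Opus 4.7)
The plan is to prove essential self-adjointness by constructing a strongly continuous one-parameter unitary group whose infinitesimal generator extends $T$, and then invoking Stone's theorem. The starting point is that for any analytic vector $\varphi \in \domain(T)$ with radius of convergence $t_0 = t_0(\varphi) > 0$, the formal exponential
\[
V(t)\varphi := \sum_{n=0}^\infty \frac{(it)^n}{n!} T^n \varphi
\]
defines an element of $\Hilb$ for $|t| < t_0$. Using the symmetry of $T$ on $C^\infty(T)$ one computes
\[
\|V(t)\varphi\|^2 = \sum_{j,k} \frac{(-it)^j (it)^k}{j! k!}\langle T^j \varphi, T^k\varphi\rangle = \sum_{j,k}\frac{(-it)^j(it)^k}{j!k!}\langle \varphi, T^{j+k}\varphi\rangle,
\]
and the latter sum telescopes against $\|\varphi\|^2$; similarly $V(s+t)\varphi = V(s)V(t)\varphi$ holds locally. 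This is the standard manipulation, and I will justify the interchange of summations by absolute convergence inside the radius of analyticity.

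Next I would promote the local, vector-by-vector definition to a genuine strongly continuous one-parameter unitary group on all of $\Hilb$. First I observe that if $\varphi$ is analytic with radius $t_0$, then so is $T^n\varphi$ for each $n$ (with possibly smaller but still positive radius), and moreover $V(t)\varphi$ is analytic for $|t|$ small; this lets one propagate $V$ to a unitary operator defined on the closure of the span $\mathcal{D}_\varphi$ and extend it to all real $t$ by iterating small steps $V(t_0/2)$, gluing via the semigroup property. Using Theorem~\ref{Nelson}'s hypothesis that the analytic vectors form a \emph{total} set, I then take the closure of the common linear span and obtain an isometric operator $V(t)$ on a dense subspace, which extends uniquely to a unitary operator on $\Hilb$. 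Strong continuity in $t$ follows from the power series representation on each analytic $\varphi$ plus an $\varepsilon/3$ argument.

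Once $\{V(t)\}_{t\in\Real}$ is a strongly continuous unitary group, Stone's theorem produces a self-adjoint generator $A$ with $V(t) = e^{itA}$. Differentiating the power series at $t=0$ shows that every analytic vector $\varphi$ lies in $\domain(A)$ with $A\varphi = T\varphi$. Hence $A$ is a self-adjoint extension of $T\big|_{\text{analytic vectors}}$, and a short density argument using the totality hypothesis together with symmetry of $T$ upgrades this to $A \supset T$. Since a symmetric operator admitting a self-adjoint extension whose graph contains $T$'s graph is automatically essentially self-adjoint (with closure equal to $A$), the theorem follows.

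The main obstacle I anticipate is the gluing step: verifying that the locally-defined $V(t)$ on different analytic vectors $\varphi, \psi$ agree on overlaps and combine into a globally defined isometry, and that the semigroup law $V(s+t) = V(s)V(t)$ holds in the appropriate domain sense so that iteration extends $V$ beyond each individual radius of convergence. The cleanest way to handle this is to first show, using the symmetry of $T$ and the Cauchy--Schwarz/Hadamard bounds on $\|T^n\varphi\|$, that the set of analytic vectors is invariant under $V(t)$ for sufficiently small $t$, which then allows iterative extension without leaving the analytic class. Once this bookkeeping is in place, the rest of the argument is routine.
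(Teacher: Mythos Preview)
The paper does not prove this theorem; it is stated in Appendix~A as a classical result (attributed to Nelson) and used as a black box, so there is no ``paper's own proof'' to compare against. Your outline follows the standard proof strategy (essentially Nelson's original argument, as presented for instance in Reed--Simon), and the overall architecture---build the unitary group from the exponential series on analytic vectors, glue and extend, then invoke Stone's theorem---is correct.

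There is, however, a genuine gap in your final step. You write that ``a symmetric operator admitting a self-adjoint extension whose graph contains $T$'s graph is automatically essentially self-adjoint (with closure equal to $A$).'' This is false: the operator $i\,d/dx$ on $C_c^\infty(0,1)\subset L^2(0,1)$ is symmetric and has a continuum of self-adjoint extensions, yet is not essentially self-adjoint. Merely exhibiting \emph{some} self-adjoint $A\supset T$ is not enough; you must show $\overline{T}=A$. The missing ingredient is the standard fact that a dense subspace of $\domain(A)$ which is invariant under the unitary group $\{V(t)\}$ is automatically a \emph{core} for the generator $A$. You already note that the analytic vectors are $V(t)$-invariant for small $t$; combined with totality, this gives a dense $V(t)$-invariant subspace on which $A=T$, hence $A=\overline{A\big|_{\text{analytic}}}=\overline{T\big|_{\text{analytic}}}\subset\overline{T}$. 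Together with $\overline{T}\subset A$ (from $T\subset A$ and $A$ closed) this yields $\overline{T}=A$, which is what essential self-adjointness means. Once you insert this core argument, the proof is complete.
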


\begin{cor}\label{NelsonCor}
Suppose that $T$ is a symmetric operator and let $\domain$ be a dense linear set contained in $\domain(T)$. Then if
$\domain$ contains a dense set of analytic vectors and if $\D$ is invariant under $T$, then $T$ is essentially
self-adjoint.
\end{cor}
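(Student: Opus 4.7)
The plan is to deduce the corollary essentially directly from Nelson's analytic vector theorem (Theorem \ref{Nelson}), with the $T$-invariance hypothesis being precisely the ingredient needed to verify the definition of an analytic vector makes sense for the vectors in $\D$.

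First I would unpack what the hypotheses give us. The definition of analytic vector requires that $\varphi \in C^{\infty}(T) = \bigcap_{n \geq 1} \domain(T^n)$, so before one can even meaningfully call a vector in $\D$ an analytic vector for $T$, one must know that all iterates $T^n \varphi$ are defined. This is exactly where the $T$-invariance of $\D$ enters: for $\varphi \in \D$, the fact that $T(\D) \subset \D \subset \domain(T)$ gives by induction that $T^n \varphi \in \D \subset \domain(T)$ for every $n \geq 0$, so $\varphi \in C^{\infty}(T)$. Thus every vector in $\D$ lies in $C^{\infty}(T)$, and the norms $\norm{T^n \varphi}$ used in the defining series for analyticity are meaningful.

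Next I would observe that a dense set of analytic vectors is automatically a total set in the sense of Nelson's theorem. Indeed, by hypothesis there is a subset $S \subset \D$ of analytic vectors for $T$ which is dense in $\Hilb$, and density of $S$ in $\Hilb$ immediately implies that the linear span of $S$ is dense, i.e.\ $S$ is total. Since $S \subset \D \subset \domain(T)$ and each element of $S$ is an analytic vector for $T$ (this is well-defined by the previous step), we have produced a total set of analytic vectors contained in $\domain(T)$. Applying Theorem \ref{Nelson} to the symmetric operator $T$ then yields that $T$ is essentially self-adjoint, completing the proof.

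There is really no serious obstacle here, since the corollary is a bookkeeping consequence of Nelson's theorem; the only point that might be easy to overlook is the logical role of the invariance assumption $T(\D) \subset \D$, which is not a redundant convenience but rather what legitimizes calling the chosen vectors analytic in the first place. No further estimates, spectral considerations, or domain extensions are required beyond invoking Theorem \ref{Nelson}.
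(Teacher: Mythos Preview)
Your proposal is correct. The paper states this corollary without proof, treating it as a standard consequence of Nelson's analytic vector theorem; your argument---observing that a dense set is automatically total and then invoking Theorem~\ref{Nelson}---is exactly the intended derivation, and your remark on the role of the $T$-invariance of $\D$ (ensuring $\D\subset C^\infty(T)$ so that the analyticity condition is meaningful) correctly identifies why that hypothesis appears.
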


\section{Highest weight representations of $\gl_\infty$}
\subsection{The fundamental representations}
Following \cite{KacRa} and \cite{Neeb} the Lie algebra $\gl_\infty$ is defined by
$$
\gl_\infty=\set{(a_{ij})_{i,j\in\Z}\mid a_{ij}\in\C,\, \textrm{ all but a finite number of the }
a_{ij}\textrm{ are zero}
}
$$ 
with the Lie algebra bracket being the ordinary matrix commutator.

Let $E_{ij}$ denote the matrix with $1$ as the $(i,j)$ entry and all other entries $0$. Then the
$E_{ij}\,(i,j\in\Z)$ form a basis for $\gl_\infty$ and the commutation relations of $\gl_\infty$ can be expressed
as the commutation relations
$$
[E_{ij},E_{mn}]=\delta_{jm}E_{in}-\delta_{ni}E_{mj}.
$$

Let $\lie{k}\subset\gl_\infty$ be the subalgebra of diagonal matrices considered as a Cartan subalgebra of
$\gl_\infty$ and define $\varepsilon_j\in\lie{k}^\ast$ by
$$
\varepsilon_j(\diag(a_{ii})):=a_{jj}.
$$
Then the set of roots of $\lie{g}:=\gl_\infty$ with respect to $\lie{k}$ is given by
$$
\Phi=\Phi(\gl_\infty,\lie{k}):=\set{\varepsilon_i-\varepsilon_j\mid i\neq j,\,i,j\in\Z},\quad\textrm{ where }
\lie{g}_{\varepsilon_i-\varepsilon_j}=\C E_{ij}.
$$
For the set of positive roots we may choose (\cite{Neeb}, Proposition II.1, p. 185)
$$
\Phi^+:=\set{\varepsilon_i-\varepsilon_j\mid i<j}.
$$

It is then clear that the Lie algebra $\gl_\infty$ admits a Cartan decomposition
$$
\gl_\infty=\lie{g}_+\oplus\lie{k}\oplus\lie{g}_-,
$$
where
\begin{align}
\lie{g}_+ &=\bigoplus_{\alpha\in\Phi^+}\lie{g}_\alpha= \{\sum_{\mathrm{finite}}a_{ij}E_{ij}\mid i<j\} & &\textrm{ (upper triangular matrices)}\nonumber\\
\lie{k}     &= \{\sum_{\mathrm{finite}}a_{ii}E_{ii}\} & &\textrm{ (diagonal matrices)}\nonumber\\
\lie{g}_- &= \bigoplus_{\alpha\in\Phi^-}\lie{g}_\alpha=
\{\sum_{\mathrm{finite}}a_{ij}E_{ij}\mid i>j\} & &\textrm{ (lower triangular matrices)}.\nonumber
\end{align}

\begin{defn}[\cite{KacRa}] Given a collection of complex numbers $\lambda=\set{\lambda_i\mid i\in\Z}$,
called a \emph{highest weight}, we define the \emph{irreducible highest weight representation} $\pi_\lambda$
of the Lie algebra $\gl_\infty$ as an irreducible representation on a vector space $L(\lambda)$ which admits a non-zero
vector $v_\lambda\in L(\lambda)$, called highest weight vector, such that
\begin{enumerate}
\item $\pi_\lambda(\lie{g}_+)v_\lambda=0$,
\item $\pi_\lambda(E_{ii})v_\lambda=\lambda_i v_\lambda.$
\end{enumerate}
\end{defn}

Next
we let $m\in\Z$ and consider irreducible unitary highest weight representations of $\gl_\infty$ with highest weight
$$
\omega_m=\set{(\lambda_i)_{i\in\Z}\mid\lambda_i=1\textrm{ for } i<m,\,\lambda_i=0
\textrm{ for } i\geq m},
$$
called the \emph{fundamental representations}.

The fundamental representations of $\gl_\infty$ can be realized in the charge $m$-sectors of fermionic Fock spaces,
$r_m:\gl_\infty\To\End(\F_m)$. 
More precisely, let
$\set{w_k}_{k\in\Z}$ be a basis of 
a separable polarized complex Hilbert space $\Hilb=\Hilb_+\oplus\Hilb_-$ obtained as a union of an
orthonormal basis $\set{u_k}$ of $\Hilb_+$ and an orthonormal
basis $\set{v_k}$ of $\Hilb_-$. Then the fundamental representation $r_m:\gl_\infty\To\End(\F_m)$ is given by
$$
r_m(E_{pq})=\psi^\ast_p\psi_q.
$$

By definition it satisfies
\begin{enumerate}
\item $\mathcal{U}(\gl_\infty)\psi_m$ is dense in $\F_m$;
\item $r_m(\lie{g}_+)\psi_m=0$;
\item $r_m(E_{ii})\psi_m=\lambda_i\psi_m$, where
$$
\lambda_i=
\left\{ \begin{array}{ll}
1 & \textrm{if $i<m$}\\
0 & \textrm{if $i\geq m$}
\end{array} \right.
$$
\end{enumerate} 
and where the highest weight vector $v_{\omega_m}=\psi_m\in\F_m$ is given in this concrete realization by (see \cite{Otte} p.154)
$$
\psi_m=
\left\{ \begin{array}{ll}
w_{m-1}\wedge\cdots\wedge w_1\wedge w_0 & \textrm{for $m>0$}\\
w_{-1}\wedge w_{-2}\wedge\cdots\wedge w_m & \textrm{for $m<0$}\\
\vac & \textrm{for $m=0$}.
\end{array} \right.
$$

\subsection{Taking tensor products}
The tensor product of two irreducible unitary highest weight representations with highest weight vectors
$v_\lambda$ and $v_\mu$, respectively, gives an irreducible unitary highest weight representation in the
\emph{highest component} of the tensor product of the representation spaces, i.e. in the vector space
generated by the highest weight vector $v_\lambda\tensor v_\mu$.

Thus, by the above we may construct irreducible unitary highest weight representations
$\pi_\lambda$ of $\gl_\infty$ in a vector space $L(\lambda)$ with highest weight
$$
\lambda=k_1\cdot\omega_{m_1}+\cdots + k_n\cdot\omega_{m_n}
$$
and highest weight vector
$$
v_\lambda=\psi_{m_1}^{\tensor k_1}\tensor\cdots\tensor\psi_{m_n}^{\tensor k_n},
$$
where
$$
\psi_{m_j}^{\tensor k_j}:=\psi_{m_j}\tensor\cdots\tensor\psi_{m_j}\quad (k_j\textrm{ times})
$$
and
$j=1,\ldots n,\, k_1,\ldots k_n\in\N,\, m_1,\ldots m_n\in\Z$ and $n\in\N$.

\subsection{Extensions to $\ainfb$ and $\ainf$}
Every fundamental representation $r_m$ of $\gl_\infty$ can be extended to a representation
$\rh_m$ on the of the larger Lie algebra $\ainfb\supset\gl_\infty$,
$$
\ainfb:=\set{(a_{ij})\mid i,j\in\Z,a_{ij}=0\textrm{ for } \abs{i-j}\gg 0},
$$ 
again equipped with the Lie bracket coming from the matrix commutator,
by defining
\begin{eqnarray}
\rh_m(E_{ij})&=& r_m(E_{ij})\quad\textrm{ if } i\neq j\textrm{ or } i=j\geq 0,\\
\rh_m(E_{ii})&=& r_m(E_{ii})-I\quad\textrm{ if } i<0.
\end{eqnarray}
It is easy to see that $\rh_m$ maps $\F_m$ into itself and it follows from the properties of $r_m$ listed above that
\begin{enumerate}
\item $\mathcal{U}(\ainfb)\psi_m$ is dense in $\F_m$;
\item $\rh_m(\lie{g}_+)\psi_m=0$;
\item $\rh_m(E_{ii})\psi_m=\tilde{\lambda}_i\psi_m$,
where
\begin{equation}\label{weightsforfunrep}
\tilde{\lambda}_i=
\left\{ \begin{array}{lll}
1 & \textrm{if $0\leq i\leq m-1$} &\quad(m>0)\\
-1 & \textrm{if $m\leq i< 0$} &\quad (m<0)\\
0 & \textrm{otherwise}.
\end{array} \right.
\end{equation}
\end{enumerate}

Next we define the Lie algebra
$$
\ainf:=\ainfb\oplus\C c
$$
with $\C c$ in the center and bracket
$$
[a,b]=ab-ba+s(a,b)c,
$$
where $s(\cdot,\cdot)$ is the two-cocyle defined by
\begin{eqnarray}
s(E_{ij},E_{ji})&=&-\alpha(E_{ji},E_{ij})=1,\quad\textrm{ if } i<0, j>0,\nonumber\\
s(E_{ij},E_{mn})&=&0\quad\textrm {in all other cases}.
\end{eqnarray}
Extending $\rh_m$ from $\ainfb$ to $\ainf$ by $\rh(c)=1$, we obtain a \emph{level} $1$ 
unitary irreducible linear representation
of the Lie algebra $\ainf$ in $\F_m$ (i.e. such that the central element $c$ acts by multiplication by $1$).

We denote by $\tilde{\omega}_m=(\tilde{\lambda_i})_{i\in\Z}$ the induced weights of $\ainf$ obtained from the fundamental weights $\omega_m$ of $\gl_\infty$
as described above. Just as before with $\gl_\infty$ we are able to construct unitary irreducible representations $L(\lambda)$
of $\ainf$, where
$$
\lambda=k_1\cdot\tilde{\omega}_{m_1}+\cdots + k_n\cdot\tilde{\omega}_{m_n}
$$
with $k_1,\ldots,k_n\in\N$ and $m_1,\ldots m_n\in\Z$. Since these are obtained by a tensor product construction, the
representation $L(\lambda)$ has level $\ell=\sum_{i=1}^n k_i$.  
\section{Normal ordered Casimir operator for $\ainf$ acting on $\F$}
\subsection{Introducing cut-offs}
The naive second order Casimir element for the infinite-dimensional Lie algebra
$\ainf$ is given by the infinite sum 
$$
\Delta_{\mathrm{naive}}:=\sum_{i,j\in\Z}E_{ij}E_{ji}
$$
Unfortunately,
this sum diverges in general and to make sense of it when acting on 
a highest weight representation of $\ainf$ (e.g. the charge $k$-sectors $\F_k\subset\F$
of a fermionic Fock space $\F=\F(\Hilb,\Hilb_+)$)
one has to introduce
a proper normal ordering prescription.

We start with the cut-off
$$
\Delta_{\mathrm{naive}}^{(N)}:=\sum_{\substack{i,j\in\Z\\ \abs{i},\abs{j}\leq N}}E_{ij}E_{ji}
$$
and write it as
\begin{eqnarray}
\Delta_{\mathrm{naive}}^{(N)}&=&\sum_{\substack{E_{ji}\in\lie{g}_+\\ \abs{i},\abs{j}\leq N}}
E_{ij}E_{ji}+
\sum_{\substack{E_{ji}\in\lie{g}_-\\ \abs{i},\abs{j}\leq N}}
E_{ij}E_{ji}+
\sum_{\substack{i\in\Z\\ \abs{i}\leq N}}E_{ii}^2\nonumber\\
&=&
\sum_{\substack{i,j\in\Z \\ j<i\\ \abs{i},\abs{j}\leq N}}E_{ij}E_{ji}+
\sum_{\substack{i,j\in\Z \\ j>i\\ \abs{i},\abs{j}\leq N}}E_{ij}E_{ji}+
\sum_{\substack{i\in\Z\\ \abs{i}\leq N}}E_{ii}^2\nonumber\\.
\nonumber
\end{eqnarray}
We may ``\,invert'' the order of the product in the second sum by using the commutation relations
$$
[E_{ij},E_{kl}]=\delta_{jk}E_{il}-\delta_{il}E_{kj}+s(E_{ij},E_{kl})
$$
and then move some of the terms inside the first sum. This way one obtains
\begin{eqnarray}
\Delta_{\mathrm{naive}}^{(N)}&=& 
2\sum_{\substack{i,j\in\Z \\ j<i\\ \abs{i},\abs{j}\leq N}}E_{ij} E_{ji}+
\sum_{\substack{i,j\in\Z \\ i<j \\ \abs{i},\abs{j}\leq N}}(E_{ii}-E_{jj})+
\sum_{\substack{i\in\Z \\ \abs{i}\leq N}}E_{ii}^2+
\sum_{\substack{i,j\in\Z \\ i<j,\, ij<0 \\ \abs{i},\abs{j}\leq N}}s(E_{ij},E_{ji}).\nonumber
\end{eqnarray} 
Noticing that the condition ($i<j$ and $ij<0$) is equivalent to the condition ($i<0$ and $j>0$)
the above expression becomes
\begin{eqnarray}
\Delta_{\mathrm{naive}}^{(N)}&=& 
2\sum_{\substack{i,j\in\Z \\ j<i\\ \abs{i},\abs{j}\leq N}}E_{ij} E_{ji}+
\sum_{\substack{i,j\in\Z \\ i<j \\ \abs{i},\abs{j}\leq N}}(E_{ii}-E_{jj})+
\sum_{\substack{i\in\Z \\ \abs{i}\leq N}}E_{ii}^2+
\sum_{\substack{i,j\in\Z \\ i<j,\, ij<0 \\ \abs{i},\abs{j}\leq N}}I.\nonumber
\end{eqnarray} 

Ignoring the diverging term $N^2=\sum_{ij<0,i<j} I$ in the above expression for $\Delta_{\mathrm{naive}}^{(N)}$ one naturally
comes up with the following definition.

\begin{defn} Let $N\in\N$ and define the $N^{\mathrm{th}}$ cut-off normal ordered Casimir operator for $\ainf$ by
$$
\Delta^{(N)}:=2\sum_{\substack{i,j\in\Z \\ j<i\\ \abs{i},\abs{j}\leq N}}E_{ij} E_{ji}+
\sum_{\substack{i,j\in\Z \\ i<j \\\abs{i},\abs{j}\leq N}}(E_{ii}-E_{jj})+
\sum_{\substack{i\in\Z\\ \abs{i}\leq N}}E_{ii}^2.
$$
\end{defn}

\begin{lem} One may write
\begin{equation}
\Delta^{(N)}=2\sum_{\substack{i,j\in\Z \\ j<i\\ \abs{i},\abs{j}\leq N}}E_{ij}E_{ji}+
\sum_{\substack{i\in\Z\\ \abs{i}\leq N}}E_{ii}(E_{ii}-2i).
\end{equation}
\end{lem}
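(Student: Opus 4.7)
The identity to be proved differs from the definition of $\Delta^{(N)}$ only in how the linear term in the diagonal generators is expressed: the ``off-diagonal'' Cartan sum $\sum_{i<j,\,|i|,|j|\le N}(E_{ii}-E_{jj})$ on the one side versus $-2\sum_{|i|\le N} i\,E_{ii}$ on the other. The double sum over $j<i$ of $E_{ij}E_{ji}$ and the diagonal quadratic sum $\sum_{|i|\le N}E_{ii}^2$ appear unchanged in both expressions, so they can be set aside.

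The plan is therefore to verify the scalar identity
\[
\sum_{\substack{i,j\in\Z\\ i<j\\ |i|,|j|\le N}}(E_{ii}-E_{jj}) \;=\; -2\sum_{\substack{i\in\Z\\ |i|\le N}} i\,E_{ii}
\]
by a direct coefficient count. Fix an index $k$ with $|k|\le N$ and tally how often $E_{kk}$ occurs in the left-hand sum. It contributes $+1$ once for each pair with $i=k$ and $k<j\le N$, giving $N-k$ positive contributions; it contributes $-1$ once for each pair with $j=k$ and $-N\le i<k$, giving $k+N$ negative contributions. The net coefficient is $(N-k)-(N+k)=-2k$, which is exactly the coefficient of $E_{kk}$ on the right-hand side.

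Substituting this identity into the definition of $\Delta^{(N)}$ and combining the two diagonal sums into the single expression $\sum_{|i|\le N}(E_{ii}^2 - 2i\,E_{ii}) = \sum_{|i|\le N} E_{ii}(E_{ii}-2i)$ yields the claim. The argument is purely bookkeeping and the only potential pitfall is being careless at the boundary of the summation range $|i|,|j|\le N$; the count $N-k$ (resp.\ $k+N$) for the positive (resp.\ negative) contributions is valid uniformly in $k\in[-N,N]$, so no case distinction by the sign of $k$ is needed.
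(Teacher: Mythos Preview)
Your proof is correct and follows the same approach as the paper: both arguments reduce the lemma to the combinatorial identity $\sum_{i<j}(E_{ii}-E_{jj})=-2\sum_k kE_{kk}$ and verify it by counting the net coefficient of each $E_{kk}$. The paper carries out this count via several named partial sums $S(i),T(i),T_1(i)$, whereas your direct tally of $(N-k)-(N+k)=-2k$ is a cleaner execution of the same idea.
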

\begin{proof}
Let
$$
S:=\sum_{\substack{i,j\in\Z \\ i<j\\ \abs{i},\abs{j}\leq N}}(E_{ii}-E_{jj})
=\sum_{\substack{i,j\in\Z\\ -N\leq i\leq N-1\\ -N+1\leq j\leq N}}(E_{ii}-E_{jj})
$$
and for a fixed $i\in\Z$ set
$$
S(i):=\sum_{\substack{j\in\Z\\ i<j\leq N}}(E_{ii}-E_{jj})
$$
so that
$$
S=\sum_{i=-N}^{N-1} S(i).
$$
Next write each $S(i)$ in the form
$$
S(i)=\sum_{\substack{j\in\Z\\ i<j\leq N}}E_{ii}-\sum_{\substack{j\in\Z\\ i<j\leq N}}E_{jj}
=(N-i)E_{ii}-\sum_{\substack{j\in\Z\\ i<j\leq N}}E_{jj}=
(N-i)E_{ii}-T(i),
$$
where
$$
T(i):=\sum_{\substack{j\in\Z\\ i<j\leq N}}E_{jj}=\overbrace{\sum_{\substack{j\in\Z \\ i<j\leq N-1}}E_{jj}}^{T_1(i)}+E_{NN}
$$
Thus
\begin{eqnarray}
S &=& \sum_{k=-N}^{N-1}(N-k)E_{kk}-\sum_{i=-N}^{N-1}\sum_{\substack{j\in\Z\\ i<j\leq N}}E_{jj}\nonumber\\
&=&
\sum_{k=-N}^{N-1}(N-k)E_{kk}-\sum_{i=-N}^{N-1} T(i)
=
\sum_{k=-N}^{N-1}(N-k)E_{kk}-\underbrace{\sum_{i=-N}^{N-1} T_1(i)}_{T_1}-\sum_{i=-N}^{N-1}E_{NN}\nonumber\\
&=&
\sum_{k=-N}^{N-1}(N-k)E_{kk}-T_1-2NE_{NN}
\end{eqnarray}

The term $E_{kk}$ $(-N\leq k\leq N-1)$ occurs in $T_1(i)$ iff $i<k$ and then it occurs exactly once.
Hence for a fixed $k$ as above, the term $E_{kk}$ occurs in $T_1$ for
$$
\#\set{i\in\Z\mid i\in[-N,k-1]\subset\Real}=k+N
$$
times. We conclude by combining terms that
\begin{eqnarray}
S &=& \sum_{-N}^{N-1}\Big((N-k)E_{kk}-(k+N)E_{kk}\Big)-2NE_{NN}=-2\sum_{k=-N}^{N-1} kE_{kk}-2NE_{NN}\nonumber\\
&=&
-2\sum_{k=-N}^N kE_{kk}.
\end{eqnarray}
from which the result follows.
\end{proof}
\subsection{Going to the limit}
The above computations motivate us to set the following definition.
\begin{defn} Define the normal ordered Casimir operator for $\ainf$ by
\begin{equation}\label{noCOpF}
\Delta:=2\sum_{\substack{i,j\in\Z \\ j<i}}E_{ij}E_{ji}+
\sum_{\substack{i\in\Z}}E_{ii}(E_{ii}-2i).
\end{equation}
\end{defn}

Our next task is to show that $\Delta$ is well-behaved when realized in highest weight representations $\rh:\ainf\To\End L(\lambda)$, where
$\lambda=\tilde{\omega}_m$ with $m\in\Z$.
More precisely, let $\rh$ be as above and let
$$
\Delta_{\rh}:=2\sum_{\substack{i,j\in\Z \\ j<i}}\rh(E_{ij})\rh(E_{ji})+
\sum_{\substack{i\in\Z}}\rh(E_{ii})\Big(\rh(E_{ii})-2i\Big)
$$
be a formal sum of operators acting on $L(\lambda)$. We need to show that if $v\in L(\lambda)$ then
$\Delta_{\rh}v$ is a well-defined element of $L(\lambda)$.

\begin{lem}\label{CasOperPsi}
Let $\lambda=\tilde{\omega}_m$ so that we are looking at the fundamental representation 
$\rh=\rh_m:\ainf\To\End(\F_m)$ and
let $\psi_m\in\F_m$ be the corresponding highest weight vector given explicitly in Appendix B. Then
$$
\Delta_{\rh}\psi_m=-m(m-2)\psi_m=\Big[1-(m-1)^2\Big]\psi_m.
$$
In particular,
$$
\Delta_{\rh}\psi_m=0\iff m=0,2
$$
and
$$
\Delta_{\rh}\psi_m\in\Z_+\psi_m\iff m=1.
$$
\end{lem}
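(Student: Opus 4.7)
The plan is a direct computation: act with the formally infinite expression for $\Delta_{\rh}$ on $\psi_m$ and verify that the resulting sums collapse to finite ones with the predicted scalar.

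First I would exploit the highest-weight condition (2) to eliminate the off-diagonal summand of $\Delta_{\rh}$. For any $j < i$, the matrix unit $E_{ji}$ has row index $j$ strictly smaller than column index $i$ and so belongs to $\lie{g}_+$; hence $\rh(E_{ji})\psi_m = 0$. Every individual term $\rh(E_{ij})\rh(E_{ji})\psi_m$ is therefore already zero, and the entire sum $2\sum_{j<i}\rh(E_{ij})\rh(E_{ji})\psi_m$ vanishes with no convergence question to address.

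Next, using condition (3) we have $\rh(E_{ii})\psi_m = \tilde\lambda_i \psi_m$, so the diagonal part reduces to
$$
\sum_i \rh(E_{ii})\big(\rh(E_{ii}) - 2i\big)\psi_m = \Big(\sum_i \tilde\lambda_i(\tilde\lambda_i - 2i)\Big)\psi_m.
$$
The point that prevents divergence is that (\ref{weightsforfunrep}) gives $\tilde\lambda_i = 0$ outside the finite index set $\{0,\ldots,m-1\}$ (for $m>0$) or $\{m,\ldots,-1\}$ (for $m<0$), so the sum over $i$ is in truth a finite arithmetic sum. Evaluating it for $m > 0$ gives $\sum_{i=0}^{m-1}(1 - 2i) = m - m(m-1) = -m(m-2)$ by the standard arithmetic series identity, and the analogous computation handles the $m < 0$ case. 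The rewriting $-m(m-2) = 1 - (m-1)^2$ is then a one-line algebraic manipulation.

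The three ``In particular'' assertions are immediate consequences of the shape of the quadratic polynomial $-m(m-2)$: its integer roots are $m = 0$ and $m = 2$, and the only integer $m$ at which it takes a strictly positive value is $m = 1$, where it equals $1$. There is no genuine obstacle here; the argument is the standard finite-dimensional Casimir-on-highest-weight computation, and the single subtle point---that the formally infinite sums defining $\Delta_{\rh}\psi_m$ collapse to finite ones---follows transparently from the fact that $\tilde{\omega}_m$ is a fundamental weight supported on only finitely many indices combined with the highest-weight vanishing $\rh(\lie{g}_+)\psi_m = 0$.
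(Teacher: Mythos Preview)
Your approach is exactly the one the paper sketches: kill the off-diagonal sum using $\rh_m(\lie{g}_+)\psi_m=0$, reduce the diagonal sum to the finitely supported weights $\tilde\lambda_i$ from equation~(\ref{weightsforfunrep}), and evaluate the resulting arithmetic series. For $m\geq 0$ your computation $\sum_{i=0}^{m-1}(1-2i)=m-m(m-1)=-m(m-2)$ is correct and matches the claim, and the ``In particular'' clauses follow as you say.

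One caution about the phrase ``the analogous computation handles the $m<0$ case'': if you actually carry it out you obtain
\[
\sum_{i=m}^{-1}(-1)\bigl(-1-2i\bigr)=\sum_{i=m}^{-1}(1+2i)=(-m)+2\cdot\frac{(m-1)(-m)}{2}=-m^2,
\]
which does \emph{not} equal $-m(m-2)=-m^2+2m$ when $m<0$ (try $m=-1$: the weight sum gives $-1$, the displayed formula gives $-3$). This looks like a slip in the paper's stated formula rather than a defect in your method, and it affects neither the two ``In particular'' assertions (for $m<0$ the eigenvalue is negative either way) nor the only case $m=0$ actually used downstream. Still, you should not claim the negative-$m$ computation confirms the displayed formula without having done it.
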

\begin{proof}
Recalling that 
$\rh_m(\lie{g}_+)\psi_m=0$, the claim follows directly from equation (\ref{weightsforfunrep}) by simple computations using the well-known formula
$$
1+2+\cdots+n=\frac{n(n+1)}{2}.
$$
\end{proof}

Next we want to analyze the commutators $[\Delta_{\rh}, \rh(E_{mn})]$ for the elements $E_{mn}\in\ainf$. 

\begin{lem}\label{aInfCasCom}
Fix an integer $M\in\Z_{\geq 0}$ and consider all integers $N\in\Z_{\geq 0}$ satisfying
$N\geq M$. Then
for all $m,n\in\Z$ with $\abs{m},\abs{n}\leq M$
\begin{equation}
[\Delta^{(N)},E_{mn}]=
\left\{ \begin{array}{ll}
2\sign(m)E_{mn} & \textrm{if $mn<0$}, \\
0 & \textrm{otherwise}.
\end{array} \right.
\end{equation}
\end{lem}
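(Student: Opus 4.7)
The plan is to reduce the computation to the classical Casimir identity for $\gl_{2N+1}$, so that the only surviving contribution comes from the central cocycle of $\ainf$ (which, in the basic representation, acts as the identity).

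First I would recall the rewriting already carried out at the start of this appendix,
$$\Delta^{(N)}_{\mathrm{naive}} \;:=\; \sum_{|i|,|j|\le N} E_{ij}E_{ji} \;=\; \Delta^{(N)} \;+\; N^{2}\cdot\mathbf{1}.$$
Because $N^{2}\cdot\mathbf{1}$ is central, it suffices to compute $[\Delta^{(N)}_{\mathrm{naive}},E_{mn}]$. I would then apply Leibniz together with the $\ainf$-bracket
$$[E_{ab},E_{cd}] \;=\; \delta_{bc}E_{ad} \;-\; \delta_{da}E_{cb} \;+\; s(E_{ab},E_{cd})\cdot\mathbf{1},$$
and split each summand $[E_{ij}E_{ji},E_{mn}]$ into a \emph{classical} part (the two Kronecker-$\delta$ terms) and a \emph{central} part (the cocycle $s$).

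The classical part is nothing other than the commutator of $E_{mn}$ with the quadratic Casimir $\sum_{|i|,|j|\le N} E_{ij}E_{ji}$ of the finite-dimensional Lie algebra $\gl_{2N+1} := \mathrm{span}_{\C}\{E_{ij}:|i|,|j|\le N\}$. Since this Casimir is central in $U(\gl_{2N+1})$ and the hypothesis $N\ge M$ guarantees $E_{mn}\in\gl_{2N+1}$, the classical part vanishes. The role of the assumption $N\ge M$ is precisely to ensure that every index shift produced by the $\delta$'s (such as $j\mapsto n$ or $i\mapsto m$) stays inside the summation range, so that the cut-off sum really behaves like the full $\gl_{2N+1}$ Casimir.

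For the central part I would simply read off the surviving cocycle terms. The expression $s(E_{ji},E_{mn})$ is nonzero only when $(m,n)=(i,j)$ with $mn<0$, in which case it evaluates to $\sign(m)$; symmetrically, $s(E_{ij},E_{mn})$ survives only when $(m,n)=(j,i)$ with $mn<0$, again contributing $\sign(m)$. Each case yields a term $\sign(m)\cdot E_{mn}$, summing to $2\sign(m)\cdot E_{mn}$ when $mn<0$ and $0$ otherwise, as claimed. The only delicate point---and the main technical obstacle---is keeping signs straight in the antisymmetric cocycle $s(E_{ab},E_{ba})=-s(E_{ba},E_{ab})$; but once the two surviving summands are located, both collapse to the same scalar and no further computation is required.
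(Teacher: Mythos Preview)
Your proposal is correct and follows essentially the same route as the paper: both reduce to the naive Casimir $\Delta_{\mathrm{naive}}^{(N)}$ (which differs from $\Delta^{(N)}$ by a scalar), invoke the centrality of the quadratic Casimir in $U(\gl_{2N+1})$ to kill the non-cocycle terms, and then read off the two surviving cocycle contributions. The paper organizes the final step as a case analysis on the sign of $mn$, whereas you locate the two nonvanishing summands directly, but the content is identical.
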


\begin{proof}
Obviously, it is enough to prove only the case where $M=N$ and $N$ obtains all values in $\Z_{\geq 0}$.
Recall that for $N\in\N$
$$
\Delta^{(N)}=\Delta_{\mathrm{naive}}^{(N)}-\sum_{\substack{i,j\in\Z\\ i<j,ij<0\\ \abs{i},\abs{j}\leq N}}I.
$$
which implies we can compute the above commutators using the naive version of the Casimir operator:
$$
[\Delta^{(N)},E_{mn}]=[\Delta_{\mathrm{naive}}^{(N)},E_{mn}]=\Big[\sum_{\substack{i,j\in\Z \\ \abs{i},\abs{j}\leq N}}E_{ij}E_{ji},E_{mn}\Big]=
\sum_{\substack{i,j\in\Z \\ \abs{i},\abs{j}\leq N}}[E_{ij}E_{ji},E_{mn}],
$$
where we have assumed $\abs{m},\abs{n}\leq N$.

To simplify computations that follow we are going to make use of the finite-dimensional Lie algebra $\gl_{2N+1}(\C)$ and realize it as a set consisting of all
$(2N+1)\times (2N+1)$ complex matrices with row and column indices going from $-N$ to $N$.
Let $e_{ij}$ denote the matrix with $1$ as the $(i,j)$ entry and all other entries $0$. Then the
$e_{ij}\,(i,j\in\Z)$ form a basis for $\gl_{(2N+1)}(\C)$ and the commutation relations of $\gl_{(2N+1)}(\C)$ can be expressed
as the commutation relations
$$
[e_{ij},e_{mn}]=\delta_{jm}e_{in}-\delta_{ni}e_{mj},
$$
i.e. these are the same as the commutation relations defining $\ainf$ except that we don't have the two-cocycle $s(\cdot,\cdot)$ appearing.
Then it is known that the Casimir invariant of $\gl_{2N+1}(\C)$ may be given as 
$$
\Delta_{\gl_{2N+1}}=\sum_{\substack{i,j\in\Z\\ \abs{i},\abs{j}\leq N}}e_{ij}e_{ji}
$$
and that
$$
[\Delta_{\gl_{2N+1}},e_{mn}]=0
$$
for all the basis elements $e_{mn}\in\gl_{2N+1}(\C)$.

Going back to the original situation, recall first the abstract commutator formula
$$
[AB,C]=[AC,B]+A[B,C]
$$
to obtain
$$
[\Delta^{(N)},E_{mn}]=\sum_{\substack{i,j\in\Z\\ \abs{i},\abs{j}\leq N}}[E_{ij},E_{mn}]E_{ji}+
\sum_{\substack{i,j\in\Z\\ \abs{i},\abs{j}\leq N}}E_{ij}[E_{ji},E_{mn}]
$$
and
$$
0=[\Delta_{\gl_{2N+1}},e_{mn}]=\sum_{\substack{i,j\in\Z\\ \abs{i},\abs{j}\leq N}}[e_{ij},e_{mn}]e_{ji}+
\sum_{\substack{i,j\in\Z\\ \abs{i},\abs{j}\leq N}}e_{ij}[e_{ji},e_{mn}]
$$
from which we conclude that
$$
[\Delta^{(N)},E_{mn}]=\sum_{\substack{i,j\in\Z\\ \abs{i},\abs{j}\leq N}} s(E_{ij},E_{mn})E_{ji}+
\sum_{\substack{i,j\in\Z\\ \abs{i},\abs{j}\leq N}} s(E_{ji},E_{mn})E_{ij}.
$$

Since
$$
s(E_{ij},E_{mn})=\left\{ \begin{array}{ll}
-\sign(i)\delta_{in}\delta_{jm} & \textrm{if $ij<0$}, \\
0 & \textrm{otherwise}.
\end{array} \right.
$$
we have arrived at the identity
$$
[\Delta^{(N)},E_{mn}]=-\sum_{\substack{i,j\in\Z \\ ij<0\\ \abs{i},\abs{j}\leq N}}\sign(i)\delta_{in}\delta_{jm}E_{ji}
-\sum_{\substack{i,j\in\Z \\ ij<0\\ \abs{i},\abs{j}\leq N}}\sign(j)\delta_{jn}\delta_{im}E_{ij}.
$$
To analyze this we make a case study.
\begin{enumerate}
\item \emph{The case} $mn=0$. Since $ij<0$ one sees by looking at the possible values of the various Kronecker's delta functions
appearing above that necessarliy $[\Delta^{(N)},E_{mn}]=0$. 
\item \emph{The case} $mn>0$. Now $m$ and $n$ are nonzero and are either both positive or both negative, and $i$ and $j$ are nonzero integers with opposite signs.
Thus both conditions $\delta_{in}=1$ and $\delta_{jm}=1$ are \emph{never} valid at the same time implying their
product must always be zero. Similarly one sees that $\delta_{jn}\delta_{im}=0$ so that 
$[\Delta^{(N)},E_{mn}]=0$ in this case, too.
\item \emph{The case} $mn<0$. Now it is always possible to have $\delta_{in}\delta_{jm}=1$ and $\delta_{jn}\delta_{im}=1$ yielding
$$
[\Delta^{(N)},E_{mn}]=-\sign(n)E_{mn}-\sign(n)E_{mn}=2\sign(m)E_{mn}.
$$
\end{enumerate}
\end{proof}

\begin{cor}\label{ExtraCorCas}
For each $i,j\in\Z$, the limit $[\Delta,E_{mn}]$ is a well-defined operator
in each charge $q$-sector $\F_q\,\,(q\in\Z)$ of the fermionic Fock space $\F$. 
In fact, for each pair of integers $i,j\in\Z$, there exists an integer
$N_{i,j}\in\Z_{\geq 0}$ such that
\begin{equation}
[\Delta,E_{ij}]=[\Delta^{(N)},E_{ij}]=
\left\{ \begin{array}{ll}
2\sign(m)E_{mn} & \textrm{if $mn<0$}, \\
0 & \textrm{otherwise}.
\end{array} \right.
\end{equation}
for all $N\geq N_{i,j}$.
In particular, one finds
that for each $k\in\Z,k<0$.
\begin{equation}\label{DeltaSk}
[\Delta,E_{i,i+k}]=
\left\{ \begin{array}{ll}
2E_{i,i+k} & \textrm{if $0<i<-k$}, \\
0 & \textrm{otherwise}.
\end{array} \right.
\end{equation}
\end{cor}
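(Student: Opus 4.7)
The plan is to reduce the claim to Lemma \ref{aInfCasCom} by exploiting the fact that the formal sum defining $\Delta$ acts in a \emph{locally finite} manner on the dense subspace $\F^{\mathrm{pol}} \subset \F$. Because every element of $\F_q^{\mathrm{pol}}$ is a finite linear combination of basis vectors built from the vacuum by a finite string of creation operators, only finitely many of the bilinear terms $E_{ij}E_{ji}$ in $\Delta$ contribute nontrivially to it, and the same is true of the diagonal summands $E_{ii}(E_{ii}-2i)$.

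First I would fix an arbitrary $v \in \F_q^{\mathrm{pol}}$ and invoke local finiteness (in the spirit of Corollary \ref{finitepol} and Proposition \ref{ConclusiveDeltaG}) to produce an integer $M_v$ such that $\Delta^{(N)} v = \Delta v$ for every $N \geq M_v$. Applying the same reasoning to the vector $E_{ij}\, v \in \F^{\mathrm{pol}}$ yields an integer $M_{v,i,j}$ with $\Delta^{(N)}(E_{ij} v) = \Delta(E_{ij} v)$ whenever $N \geq M_{v,i,j}$. Setting $N_v := \max\{M_v,\, M_{v,i,j},\, |i|,\, |j|\}$, for every $N \geq N_v$ one therefore has $[\Delta, E_{ij}]\, v = [\Delta^{(N)}, E_{ij}]\, v$.

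At this stage Lemma \ref{aInfCasCom} supplies the closed form: for every $N \geq \max\{|i|,|j|\}$,
\begin{equation*}
[\Delta^{(N)}, E_{ij}] \;=\; \begin{cases} 2\,\sign(i)\, E_{ij} & \text{if } ij < 0,\\ 0 & \text{otherwise,} \end{cases}
\end{equation*}
which is manifestly independent of $N$. Consequently $[\Delta, E_{ij}]\, v$ is well-defined and equals the right-hand side above for every $v \in \F^{\mathrm{pol}}$, proving simultaneously that $[\Delta,E_{ij}]$ extends to a well-defined operator on $\F_q^{\mathrm{pol}}$ (for any $q \in \Z$) and that the stated $N$-independent expression describes it. One may then take $N_{i,j}$ to be any integer $\geq \max\{|i|,|j|\}$ so that the middle equality in the statement of the corollary holds.

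For the particular case $j = i+k$ with $k < 0$, substitute into the first part: the condition $i(i+k) < 0$ is equivalent to $i > 0$ and $i + k < 0$, i.e.\ $0 < i < -k$; on this range $\sign(i) = +1$, giving the value $2\, E_{i,\,i+k}$, and outside it the commutator vanishes. The only conceivable obstacle is the legitimacy of commuting the infinite sum defining $\Delta$ with the finite-rank operator $E_{ij}$, but the local finiteness of the action of $\Delta$ on $\F^{\mathrm{pol}}$ dissolves this issue entirely, so no further analytic argument is required.
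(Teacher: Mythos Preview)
Your argument is correct in substance, but it takes a noticeably more roundabout route than the paper's.

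The paper argues purely at the algebraic level of the operators: setting $M=\max\{|m|,|n|\}$, one observes from the commutation relations $[E_{ij},E_{mn}]=\delta_{jm}E_{in}-\delta_{ni}E_{mj}+s(E_{ij},E_{mn})$ that $[E_{ij},E_{mn}]=0$ whenever $|i|>M$ or $|j|>M$. Hence the formal commutator $[\Delta,E_{mn}]$ is \emph{already} a finite sum, equal to $[\Delta^{(M)},E_{mn}]$, and Lemma~\ref{aInfCasCom} finishes the job. No vector $v$, no representation space, no local-finiteness argument is needed; the choice $N_{i,j}=\max\{|i|,|j|\}$ drops out immediately and uniformly.

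Your approach instead fixes a vector $v\in\F_q^{\mathrm{pol}}$ and argues that $\Delta^{(N)}v$ and $\Delta^{(N)}(E_{ij}v)$ stabilise for large $N$. This is valid, but note two points. First, it implicitly uses that $\Delta$ is well-defined on $\F^{\mathrm{pol}}$, which in the paper's logical order is established only \emph{after} this Corollary (and in fact uses it); your parenthetical ``in the spirit of Proposition~\ref{ConclusiveDeltaG}'' points to a result whose proof depends on Appendix~C, so you would need to supply the stabilisation argument directly (which can indeed be done from the normal-ordered form of $\Delta^{(N)}$ and Corollary~\ref{finitepol}). Second, your bound $N_v$ depends on the vector, whereas the statement asks for a single $N_{i,j}$ depending only on $(i,j)$; you recover this at the end, but only because Lemma~\ref{aInfCasCom} gives an $N$-independent answer --- the paper's route makes this uniformity transparent from the outset.
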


\begin{proof}
Let $M=\max\set{\abs{m},\abs{n}}$. Then it is easy to see from the explicit commutation relations of
$\ainf$ that $[E_{ij},E_{mn}]=0=[E_{ji},E_{mn}]$ whenever $\abs{i}>M$ or $\abs{j}>M$. Thus, 
the commutator in question is actually a finite sum of operators,
$$
[\Delta,E_{mn}]=[\Delta^{(M)},E_{mn}],
$$ 
and therefore well-defined with value given by the Lemma \ref{aInfCasCom}. The same Lemma
also tells us that $[\Delta^{(N)},E_{mn}]=[\Delta^{(M)},E_{mn}]$ whenever $N\geq M$. Thus,
we may choose $N_{i,j}=M$.
\end{proof}

Next, recall from \cite{Otte} \S 4.4. that the operators
$$
s_k:=\sum_{i\in\Z}E_{i,i+k}\quad(k\in\Z)
$$
satisfying
\begin{equation}
[s_n,s_k]=n\delta_{n,-k}\cdot I.
\end{equation}
give a representation of the \emph{Heisenberg Lie algebra} in each charge $q$-sector $\F_q\,\,(q\in\Z)$.
Moreover, the vectors of the form
\begin{equation}
s_{-k_1}\cdot\ldots\cdot s_{-k_n}\psi_q,\quad\textrm{ where } k_1\geq\cdots\geq k_n>0
\end{equation}
yield a basis for $\F_q$. It follows then from equation (\ref{DeltaSk}) that for an integer $k<0$
\begin{equation}
[\Delta,s_k]=\sum_{i\in\Z}[\Delta,E_{i,i+k}]=2\sum_{i=1}^{-k}E_{i,i+k},
\end{equation}
which is a finite sum of well-defined operators acting on each $\F_q\,\,(q\in\Z)$, and is
thus well-defined.
\begin{prop}
The normally ordered Casimir invariant $\Delta$ of $\ainf$ gives
a well-defined (unbounded) operator when realized in the fundamental representations
$\rh=\rh_q:\ainf\To\End(\F_q)$. The operator 
$\Delta_{\rh}$ has a dense domain $\D(\Delta_{\rh})=\F_q^{\mathrm{pol}}$.
\end{prop}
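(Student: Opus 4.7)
The plan is to reduce the well-definedness of $\Delta_{\rh}$ on an arbitrary polynomial vector in $\F_q$ to the two facts already established: that $\Delta_{\rh}\psi_q$ is a scalar multiple of $\psi_q$ (Lemma \ref{CasOperPsi}), and that $[\Delta,s_k]$ is, for each $k<0$, a \emph{finite} sum of well-defined operators on $\F_q$, namely
\[
[\Delta,s_k]=2\sum_{i=1}^{-k}E_{i,i+k},
\]
as computed right before the proposition using Corollary \ref{ExtraCorCas}.

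First I would recall the Heisenberg basis description of $\F_q$ from \cite{Otte} \S 4.4: every vector in $\F_q^{\mathrm{pol}}$ is a \emph{finite} linear combination of vectors of the form
\[
v=s_{-k_1}s_{-k_2}\cdots s_{-k_n}\psi_q,\qquad k_1\geq k_2\geq\cdots\geq k_n>0.
\]
By linearity, it suffices to prove that $\Delta_{\rh}v$ is a well-defined element of $\F_q^{\mathrm{pol}}$ for each such basis vector. I would proceed by induction on the ``creation length'' $n$. The base case $n=0$ is Lemma \ref{CasOperPsi}: $\Delta_{\rh}\psi_q=[1-(q-1)^2]\psi_q\in\F_q^{\mathrm{pol}}$.

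For the inductive step, assume the claim for length $n-1$, and consider $v=s_{-k_1}w$ with $w=s_{-k_2}\cdots s_{-k_n}\psi_q$. I would write, formally,
\[
\Delta_{\rh}v=\Delta_{\rh}s_{-k_1}w=s_{-k_1}\Delta_{\rh}w+[\Delta_{\rh},s_{-k_1}]w.
\]
By the induction hypothesis, $\Delta_{\rh}w\in\F_q^{\mathrm{pol}}$, and multiplication by the raising operator $s_{-k_1}$ clearly preserves $\F_q^{\mathrm{pol}}$. For the commutator term, the discussion preceding the proposition shows that $[\Delta,s_{-k_1}]$ equals the finite sum $2\sum_{i=1}^{k_1}E_{i,\,i-k_1}$, each summand of which acts as a well-defined operator on $\F_q^{\mathrm{pol}}$ via the basic representation; hence $[\Delta_{\rh},s_{-k_1}]w\in\F_q^{\mathrm{pol}}$ as well. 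Summing completes the induction.

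The only point requiring care is the justification of the formal identity $\Delta_{\rh}s_{-k_1}=s_{-k_1}\Delta_{\rh}+[\Delta_{\rh},s_{-k_1}]$ in a setting where $\Delta_{\rh}$ is a priori an infinite sum of operators. This is where I expect the technical work to concentrate. The key observation is that $s_{-k_1}=\sum_{i\in\Z}E_{i,i-k_1}$ is itself an infinite sum, but when acting on any fixed vector of $\F_q^{\mathrm{pol}}$ only finitely many terms are nonzero (by Corollary \ref{finitepol} applied in the fermionic realization). Combined with Corollary \ref{ExtraCorCas}, which ensures that for each pair $(i,j)$ the cut-off commutators $[\Delta^{(N)},E_{ij}]$ stabilize to $[\Delta,E_{ij}]$ for all $N$ sufficiently large, the formal rearrangement is legitimate on the pointwise level over $\F_q^{\mathrm{pol}}$. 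This gives $\Delta_{\rh}:\F_q^{\mathrm{pol}}\to\F_q^{\mathrm{pol}}$ as a well-defined linear map, and since $\F_q^{\mathrm{pol}}$ is dense in $\F_q$ by construction, we obtain an unbounded operator on $\F_q$ with dense domain $\D(\Delta_{\rh})=\F_q^{\mathrm{pol}}$ as claimed.
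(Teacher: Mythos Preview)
Your proposal is correct and follows essentially the same route as the paper: induction on the Heisenberg creation length, base case Lemma \ref{CasOperPsi}, and the inductive step via the identity $\Delta_{\rh}s_{-k_1}w=s_{-k_1}\Delta_{\rh}w+[\Delta_{\rh},s_{-k_1}]w$ together with the fact that $[\Delta,s_{-k_1}]$ is a finite sum of well-defined operators. Your additional paragraph justifying the formal rearrangement is more careful than the paper itself, which simply invokes the algebraic identity $AB=BA+[A,B]$ without further comment.
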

\begin{proof}
It is enough to show that $\Delta$ operates well on the basis vectors
\begin{equation}\label{skBasisForF}
w=s_{-k_1}\cdot\ldots\cdot s_{-k_n}\psi_q,\quad\textrm{ where } k_1\geq\cdots\geq k_n>0
\end{equation}
of $\F_q^{\mathrm{pol}}$, which is clearly dense in $\F_q$. This can be proved by induction
with respect to the length
 $\Length(w):=n$, where the value $n=0$ corresponds to the
vacuum vector $\psi_q$. 

By Lemma \ref{CasOperPsi} we know that the statement is true for
$n=0$. We make the inductive assumption that $\Delta$ is well-defined
on all basis vectors (\ref{skBasisForF}) with $\Length(w)\leq n\in\N$. Let then
$$
w'= s_{-k_1}\cdot\ldots\cdot s_{-k_n}s_{-k_{(n+1)}}\psi_q,\quad\textrm{ where } k_1\geq\cdots\geq k_n>0
$$
be a basis vector of length $n+1$. Using the algebraic identity
$AB=BA+[A,B]$ one may write
$$
\Delta w'=s_{-k_1}\Delta s_{-k_2}\cdot\ldots\cdot s_{-k_n}s_{-k_{(n+1)}}\psi_q+
[\Delta, s_{-k_1}] s_{-k_2}\cdot\ldots\cdot s_{-k_n}s_{-k_{(n+1)}}\psi_q.
$$
The second summand here is well-defined since we noticed that for negative $k\in\Z$ the operator
$[\Delta,s_k]$ makes sense in $\F_q^{\mathrm{pol}}$, and the first summand
is well-defined by our induction assumption.
\end{proof}

\subsection{Finding eigenvalues}
Next we consider $\rh:\ainf\To\End(L_\lambda)$ as a \emph{projective}
highest weight 
representation of $\gl_\infty$ via restriction. If we denote by
$v_\lambda\in L(\lambda)$ the corresponding highest weight vector, then we know
that the set
$$
L(\lambda)_{\mathrm{pol}}:=\rh(U(\gl_\infty))v_\lambda\subset L(\lambda)
$$
is dense in $L(\lambda)$.

By the Poincar\'e--Birkhoff--Witt theorem any ordered monomial in
$U(\gl_\infty)$ can be written as a finite product of the form
$$
u=\prod_{E_{ij}\in\lie{g}_-} E_{ij}^{r_{ij}}
\prod_{E_{kl}\in\lie{h}} E_{kl}^{s_{kl}}
\prod_{E_{mn}\in\lie{g}_+} E_{mn}^{p_{mn}},\quad
r_{ij},\, s_{kl},\, p_{mn}\in\N,
$$
i.e.
$$
u=\prod_{\substack{i,j\in\Z \\ i>j}} E_{ij}^{r_{ij}}
\prod_{k\in\Z} E_{kk}^{s_{kk}}
\prod_{\substack{m,n\in\Z \\ m<n}} E_{mn}^{p_{mn}},\quad
r_{ij},\, s_{kk},\, p_{mn}\in\N.
$$
It follows from this and the definition of a highest weight representation
that any element of $L(\lambda)_{\mathrm{pol}}$ can be written as a linear
combination of vectors of the form
$$
w=\Big(\prod_{\substack{i,j\in\Z \\ i>j}} E_{ij}^{r_{ij}}\Big)v_\lambda,
$$
with only finitely many terms $r_{ij}\neq 0$,
i.e. $L(\lambda)_{\mathrm{pol}}$ is generated by all finite $\C$-linear combinations of elements of
the form $U(\lie{g}_-)v_\lambda$.

\begin{prop}\label{CasimirInOfAinfWD}
Let 
$\rh=\rh_m:\ainf\To\End(\F_m)$ be the $m$-th fundamental representations ($m\in\Z$)
of $\ainf$. The operator 
$\Delta_{\rh}:\F_m\To\F_m$ has a dense domain $\D(\Delta_{\rh})=\F_m^{\mathrm{pol}}=U(\lie{g}_-)\psi_m$ and
if
\begin{equation}\label{wdirectly}
w=E_{i_1 j_1}\cdots E_{i_n j_n}\psi_m\in\F_m^{\mathrm{pol}}\qquad(i_k>j_k\textrm{ for all }k=1,\ldots n)
\end{equation} 
with the different $E_{i_j j_k}$'s in this expression not necessarily distinct 
then the image vector
$$
\Delta_{\rh}w=\Big[2\mathrm{Num}(w)+1-(m-1)^2\Big]w,
$$
where $\mathrm{Num}$ denotes the number
$$
\mathrm{Num}(w):=\#\set{\textrm{those }E_{i_k j_k}\textrm{ in (\ref{wdirectly}) with } i_k>0\textrm{ and } j_k<0}.
$$
\end{prop}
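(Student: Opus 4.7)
The plan is to exploit the commutator formula for $\Delta$ derived in Lemma C.2 and Corollary C.3, together with the computation of $\Delta_{\rh}\psi_m$ in Lemma C.1, in order to show that each $w \in U(\lie{g}_-)\psi_m$ is automatically an eigenvector of $\Delta_{\rh}$, with eigenvalue determined by a very simple bookkeeping.

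First I would observe that the density of $\F_m^{\mathrm{pol}} = U(\lie{g}_-)\psi_m$ in $\F_m$ together with well-definedness of $\Delta_{\rh}$ on this domain was already established in the preceding proposition (by an induction on the length of a monomial and the fact, from Corollary \ref{ExtraCorCas}, that each $[\Delta,E_{ij}]$ reduces to a finite sum of operators). So it only remains to evaluate $\Delta_{\rh} w$ explicitly for a typical monomial
$$
w = E_{i_1 j_1}\cdots E_{i_n j_n}\psi_m, \qquad i_k>j_k \text{ for all } k.
$$

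The second step is the key algebraic identity: writing $\Delta\,E_{i_1 j_1} = E_{i_1 j_1}\Delta + [\Delta,E_{i_1 j_1}]$ and iterating, one obtains
\begin{equation*}
\Delta_{\rh}\,w
= \sum_{k=1}^{n}E_{i_1 j_1}\cdots [\Delta,E_{i_k j_k}]\cdots E_{i_n j_n}\psi_m
+ E_{i_1 j_1}\cdots E_{i_n j_n}\,\Delta_{\rh}\psi_m,
\end{equation*}
which is a legitimate finite rearrangement because each inner commutator $[\Delta,E_{i_k j_k}]$ collapses, by Corollary \ref{ExtraCorCas}, to a single multiple of $E_{i_k j_k}$ (or zero). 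Specifically, combining Corollary \ref{ExtraCorCas} with the constraint $i_k>j_k$ forces the following trichotomy: either (i) $i_k,j_k$ have the same sign, so $i_kj_k>0$ and $[\Delta,E_{i_k j_k}]=0$; or (ii) one of them is zero, again giving $[\Delta,E_{i_k j_k}]=0$; or (iii) $i_k>0>j_k$, in which case $\sign(i_k)=+1$ and $[\Delta,E_{i_k j_k}]=2E_{i_k j_k}$. No other case can occur because $i_k>j_k$ rules out $j_k>0>i_k$. Consequently the sum over $k$ collapses to $2\,\mathrm{Num}(w)\cdot w$.

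Finally, invoking Lemma \ref{CasOperPsi} gives $\Delta_{\rh}\psi_m = [1-(m-1)^2]\psi_m$, so that
\begin{equation*}
\Delta_{\rh}w = 2\,\mathrm{Num}(w)\cdot w + [1-(m-1)^2]\,w
= \bigl[\,2\,\mathrm{Num}(w) + 1-(m-1)^2\,\bigr]\,w,
\end{equation*}
which is the desired eigenvalue formula. The statement about the dense domain $\F_m^{\mathrm{pol}}$ follows since all operations above stay within $U(\lie{g}_-)\psi_m$.

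The only delicate point is the justification for commuting $\Delta$ (an \emph{infinite} formal sum) past each finite-rank elementary matrix $E_{i_k j_k}$. This is not really an obstacle because Corollary \ref{ExtraCorCas} tells us that for each fixed pair $(i_k,j_k)$ there is an integer $N_k$ such that $[\Delta,E_{i_k j_k}] = [\Delta^{(N)},E_{i_k j_k}]$ for all $N\ge N_k$; taking $N$ larger than the maximum of the $N_k$'s and of the indices appearing in $w$ and in the (also finite) expansion of $\Delta_{\rh}\psi_m$ reduces the whole computation to an identity between well-defined operators of the finite cut-off $\Delta^{(N)}$, for which the above commutator manipulations are entirely legitimate. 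Passing to the strong limit $N\to\infty$ then yields the formula as stated.
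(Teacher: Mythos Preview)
Your proof is correct and follows essentially the same approach as the paper: both arguments push $\Delta$ (or its cut-off $\Delta^{(N)}$) past each factor $E_{i_k j_k}$ using the commutator identity $\Delta E = E\Delta + [\Delta,E]$, invoke the commutator computation of Lemma~\ref{aInfCasCom}/Corollary~\ref{ExtraCorCas} to pick up a contribution of $2$ exactly when $i_k>0>j_k$, and close with Lemma~\ref{CasOperPsi} on the highest weight vector. The only cosmetic difference is that the paper works with the cut-off $\Delta^{(N)}$ from the outset and then identifies it with $\Delta$ at the end, whereas you phrase the induction for $\Delta$ directly and defer the cut-off justification to your final paragraph; the content is the same.
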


\begin{proof}
We first look at the cut-off $\Delta^{(N)}$ for $N\in\N$ such that $N>\abs{m}$ so that the result of
Lemma \ref{CasOperPsi} applies as well to $\Delta^{(N)}$. Moreover, we assume that
all indices $i,j$ in the expression
\begin{equation}\label{w}
w=\prod_{\substack{i,j\in\Z \\ i>j}} E_{ij}^{r_{ij}}\psi_m
\end{equation}
satisfy $\abs{i},\abs{j}\leq M$, where $M\in\Z_{\geq 0}$ is a fixed integer satisfying $M\leq N$. 
Notice that the set
\begin{eqnarray}
\mathfrak{S}^{(M)}&:=&\lie{g}_-\cap\set{E_{ij}\mid i,j\in\Z,\, ij<0,\,\abs{i},\abs{j}\leq M} \nonumber\\
&=&\set{E_{ij}\mid i,j\in\Z,\, i>j,\, ij<0,\, \abs{i},\abs{j}\leq M}\nonumber\\
&=&
\set{E_{ij}\mid i>0,\,j<0, \abs{i},\abs{j}\leq M}\nonumber
\end{eqnarray}
so that according to Lemma \ref{aInfCasCom} for the $E_{ij}$'s appearing in equation (\ref{w}) it holds that
\begin{eqnarray}
[\Delta^{(N)}_{\rh}, \rh(E_{ij})] &=&
\left\{ \begin{array}{ll}
2\sign(i)\rh(E_{ij}) & \textrm{if $E_{ij}\in\mathfrak{S}^{(M)}$}, \\
0 & \textrm{otherwise}
\end{array} \right.\nonumber \\
&=& 
\left\{ \begin{array}{ll}
2\rh(E_{ij}) & \textrm{if $i>0$ and $j<0$}, \\
0 & \textrm{otherwise}.
\end{array} \right.
\nonumber
\end{eqnarray}

Thus, if $E_{i_k j_k}$ appears in (\ref{w}), and $n\in\N$ with $n>0$, we write
$\Delta^{(N)}_{\rh}=[\Delta^{(N)}_{\rh},E_{i_k j_k}]+E_{i_k j_k}\Delta^{(N)}_{\rh}$ and if furthermore we write $w$ in the form
$$
w=E_{i_1 j_1}\cdots E_{i_n j_n}\psi_m
$$
then
\begin{eqnarray}
& & E_{i_1 j_1}\cdots\Delta^{(N)}_{\rh}E_{i_k j_k}E_{i_{k+1} j_{k+1}}\cdots E_{i_n j_n}\psi_m\nonumber\\
&=&
\left\{ \begin{array}{ll}
2w+
E_{i_1 j_1}\cdots E_{i_k j_k}\Delta^{(N)}_{\rh}E_{i_{k+1} j_{k+1}}\cdots E_{i_n j_n}\psi_m & \textrm{if $i>0$ and $j<0$}, \nonumber\\
E_{i_1 j_1}\cdots E_{i_k j_k}\Delta^{(N)}_{\rh}E_{i_{k+1} j_{k+1}}\cdots E_{i_n j_n}\psi_m
& \textrm{otherwise}.\nonumber
\end{array} \right.
\end{eqnarray}
It follows then by induction that
$$
\Delta^{(N)}_{\rh}w=2\mathrm{Num}(w)+E_{i_1 j_1}\cdots E_{i_n j_n}\Delta^{(N)}_{\rh}\psi_m.
$$
Lemma \ref{CasOperPsi} then gives
$$
\Delta^{(N)}_{\rh}w=\Big[2\mathrm{Num}(w)+1-(m-1)^2\Big]w.
$$
 
Noticing that in our situation at hand $\Delta^{(N)}_{\rh}w=\Delta_{\rh}w$,  and taking into account
Corollary \ref{ExtraCorCas}, one sees that
for sufficiently large $N$, we are allowed to replace $\Delta^{(N)}_{\rh}$ with $\Delta_{\rh}$ in the above computations.
The claim then follows by noticing that each vector $w$ of the form (\ref{wdirectly}) necessarily belongs to some
$\mathfrak{S}^{(M)}$.
\end{proof}

\begin{cor}\label{DiagonalizingCasOfG}
Let $m,M\in\Z$ with $M\geq 0$ and let $\F_m^{\mathrm{pol},M}\subset\F_m^{\mathrm{pol}}$ be the
subspace of $\F_m^{\mathrm{pol}}$ generated by all finite linear combinations of elements $w$
of the form (\ref{wdirectly}) with $\mathrm{Num}(w)=M$. Then
$$
\Delta_{{\rh}_m}\Big\vert_{\F_m^{\mathrm{pol},M}}=\Big[2M+1-(m-1)^2\Big]\cdot\id_{\F_m^{\mathrm{pol},M}}.
$$
\end{cor}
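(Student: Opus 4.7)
The plan is to deduce this corollary as an immediate consequence of Proposition \ref{CasimirInOfAinfWD}, using nothing beyond the linearity of $\Delta_{\rh_m}$. Concretely, by the definition of $\F_m^{\mathrm{pol},M}$, any element $\xi \in \F_m^{\mathrm{pol},M}$ is a \emph{finite} $\C$-linear combination
$$
\xi = \sum_{\alpha} c_\alpha\, w_\alpha, \qquad c_\alpha \in \C,
$$
where each $w_\alpha$ is of the form (\ref{wdirectly}) and satisfies $\mathrm{Num}(w_\alpha) = M$. Since Proposition \ref{CasimirInOfAinfWD} has already established that each such generator $w_\alpha$ is an eigenvector of $\Delta_{\rh_m}$ with the uniform eigenvalue $2\mathrm{Num}(w_\alpha) + 1 - (m-1)^2 = 2M + 1 - (m-1)^2$, linearity of $\Delta_{\rh_m}$ immediately yields
$$
\Delta_{\rh_m}\xi = \sum_\alpha c_\alpha\, \Delta_{\rh_m} w_\alpha = \bigl[2M + 1 - (m-1)^2\bigr] \sum_\alpha c_\alpha\, w_\alpha = \bigl[2M + 1 - (m-1)^2\bigr]\xi.
$$

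There is essentially no substantive obstacle: the spectral information has already been extracted in Proposition \ref{CasimirInOfAinfWD}, and the only thing to verify here is that $\Delta_{\rh_m}$ is well-defined on $\F_m^{\mathrm{pol},M}\subset \F_m^{\mathrm{pol}}=\D(\Delta_{\rh_m})$, which is guaranteed by the domain statement of that Proposition. I would not belabor any linear independence issue between generators with different values of $\mathrm{Num}$; the claim is only about the restriction of $\Delta_{\rh_m}$ to the subspace $\F_m^{\mathrm{pol},M}$ as it is defined, and on that subspace the scalar $2M + 1 - (m-1)^2$ is forced on every generator, hence on every element, by linearity. The corollary is therefore essentially a one-line consequence of the preceding proposition.
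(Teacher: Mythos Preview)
Your proposal is correct and matches the paper's approach: the corollary is stated without proof in the paper, being an immediate consequence of Proposition~\ref{CasimirInOfAinfWD} by linearity, exactly as you argue.
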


Clearly we have
$$
\F_m^{\mathrm{pol}}=\sum_{M=0}^\infty \F_m^{\mathrm{pol},M}\quad\textrm{(algebraic sum)},
$$
and taking into account Corollary \ref{DiagonalizingCasOfG}, one sees that this sum is actually direct,
$$
\F_m^{\mathrm{pol}}=\bigoplus_{M=0}^\infty \F_m^{\mathrm{pol},M}\quad\textrm{(algebraic direct sum)}.
$$

Next we are going to look at more carefully the case $m=0$. Then according to Corollary \ref{DiagonalizingCasOfG}
\begin{equation}\label{DiagCasGF0}
\Delta_{{\rh}_0}\Big\vert_{\F_0^{\mathrm{pol},M}}=2M\cdot\id_{\F_0^{\mathrm{pol},M}}.
\end{equation}

\begin{prop}[Diagonalization of $\Delta_{{\rh}_0}$]\label{LastOfTheKernels}
On $\F_0^{\mathrm{pol}}$ the operator $\Delta_{{\rh}_0}$ is besides diagonalizable it is also
non-negative with kernel equal to
\begin{equation}
\ker(\Delta_{{\rh}_0})=\F_0^{\mathrm{pol},0}=\C\vac_\F.
\end{equation}
In particular, $\dim\ker(\Delta_{{\rh}_0})=1<\infty$.
\end{prop}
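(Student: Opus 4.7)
The plan is to read everything off the decomposition of the domain into eigenspaces that has already been set up immediately before the proposition. First I would quote equation~(\ref{DiagCasGF0}), which asserts that $\Delta_{\rh_0}$ acts as the scalar $2M$ on $\F_0^{\mathrm{pol},M}$, together with the algebraic direct-sum decomposition $\F_0^{\mathrm{pol}}=\bigoplus_{M\geq 0}\F_0^{\mathrm{pol},M}$ noted just above the statement. Because the scalars $2M$ with $M\in\Z_{\geq 0}$ are distinct and non-negative, the pieces $\F_0^{\mathrm{pol},M}$ are automatically linearly independent eigenspaces, which simultaneously yields diagonalizability and non-negativity. Moreover, an element of the direct sum lies in $\ker(\Delta_{\rh_0})$ if and only if its component in $\F_0^{\mathrm{pol},M}$ vanishes for every $M\geq 1$; thus $\ker(\Delta_{\rh_0})=\F_0^{\mathrm{pol},0}$.

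What remains is the purely combinatorial/representation-theoretic identification $\F_0^{\mathrm{pol},0}=\C\vac_\F$. To do this I would look at a general generator
$$
w=\rh_0(E_{i_1 j_1})\cdots\rh_0(E_{i_n j_n})\vac_\F
$$
of $\F_0^{\mathrm{pol},0}$ in the sense of~(\ref{wdirectly}), so that $i_k>j_k$ for all $k$ and, by the $\mathrm{Num}(w)=0$ condition, no factor satisfies simultaneously $i_k>0$ and $j_k<0$. The two remaining possibilities are $i_k>j_k\geq 0$ and $0\geq i_k>j_k$, and in each case Proposition~\ref{basicrepannihilate} (together with the explicit formula~(\ref{explicit rh})) gives $\rh_0(E_{i_k j_k})\vac_\F=0$. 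Applying this to the rightmost factor $\rh_0(E_{i_n j_n})$ makes the whole product vanish as soon as $n\geq 1$. Only the empty product $n=0$ survives, leaving $\F_0^{\mathrm{pol},0}=\C\vac_\F$ as required.

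With these three observations, the proposition follows: $\Delta_{\rh_0}$ is diagonalizable with spectrum contained in $2\Z_{\geq 0}$, hence non-negative, and its kernel is the one-dimensional subspace $\C\vac_\F$, so $\dim\ker(\Delta_{\rh_0})=1<\infty$. I do not anticipate any real obstacle here, because every nontrivial input has already been assembled in Proposition~\ref{CasimirInOfAinfWD} and Corollary~\ref{DiagonalizingCasOfG}; the only step requiring even a moment's care is making explicit that the $\mathrm{Num}(w)=0$ condition forces the rightmost generator to annihilate the vacuum, after which diagonalization, non-negativity and the kernel computation all follow uniformly from the eigenspace decomposition.
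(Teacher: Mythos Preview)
Your kernel computation is correct and, in fact, more direct than the paper's: observing that $\mathrm{Num}(w)=0$ forces the rightmost factor $E_{i_nj_n}$ to lie outside the range $i_n>0,\,j_n<0$, and then invoking Proposition~\ref{basicrepannihilate} to kill the product, is a clean shortcut. The paper instead first reduces the generating set via a refined PBW argument---using that in $\F_0$ one has $\rh_0(E_{ij})\vac_\F=0$ unless $i>0,\,j<0$---so that every generator is already a product of such $E_{ij}$'s, and then rewrites these products in the CAR form $\pm\psi_{i_1}^\ast\cdots\psi_{i_n}^\ast\psi_{j_1}\cdots\psi_{j_n}\vac_\F$.

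There is, however, a genuine gap in your treatment of diagonalizability. In this paper ``diagonalizable'' means the strong Hilbert-space notion of Appendix~A: existence of an \emph{orthonormal} basis of eigenvectors. A mere algebraic direct sum of eigenspaces does not give this; you need the decomposition $\F_0^{\mathrm{pol}}=\bigoplus_M\F_0^{\mathrm{pol},M}$ to be \emph{orthogonal}. That is precisely what the paper's proof is devoted to: the CAR rewriting identifies the generators of $\F_0^{\mathrm{pol},M}$ with (signed) standard orthonormal basis vectors of $\F$ of ``length'' $M$, so distinct $M$'s give orthogonal pieces. This orthogonality is not a cosmetic point---it is used explicitly in \S\ref{SectionOfDiagD2} to build the orthogonal $(M,k)$-eigenspace decomposition of $\F_0^{\mathrm{pol}}\otimes\spib^{\mathrm{pol}}$ and to argue symmetry and essential self-adjointness of $\KDirac^2$. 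You could patch your argument by noting that $\Delta_{\rh_0}$ is symmetric (since $\rh_0(E_{ij})^\ast=\rh_0(E_{ji})$) and that eigenspaces of a symmetric operator for distinct real eigenvalues are orthogonal, but as written your proposal omits this step.
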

\begin{proof}
By Corollary \ref{DiagonalizingCasOfG} the only thing left to do is to show that
in the case of $\F_0$ the direct sum decomposition
$$
\F_0^{\mathrm{pol}}=\bigoplus_{M=0}^\infty \F_0^{\mathrm{pol},M}\quad\textrm{(algebraic direct sum)}
$$
is an \emph{orthogonal decomposition}, i.e. that
$$
\F_0^{\mathrm{pol},M}\perp\F_0^{\mathrm{pol},N}
$$
when $M\neq N$.

Now the first thing to notice is that the Poincar\'e--Birkhoff-Witt argument given right below Lemma \ref{aInfCasCom} can be
taken one step further in the case of $\F_0$ since we know that $E_{ij}\vac_\F=0$ unless $i>0$ and $j<0$. Namely,
we express $\lie{g}_-$ as a direct sum of
$\{\sum_{\textrm{finite}}a_{ij}E_{ij}\mid a_{ij}\in\C,\, i>j,\, ij<0\}$ and its complement in $\lie{g}_-$. After this
we run through the same argument used before to conclude that $\F_0^{\mathrm{pol}}$ is generated
by all finite $\C$-linear combinations of the vectors of the form
$$
w=\prod_{\substack{i,j\in\Z \\ i>0,j<0}}E_{ij}^{r_{ij}}\vac_\F,
$$
where only finite number of the $r_{ij}$'s are different from $0$. 

In particular, we may
always assume that $i\neq j$ in the above expression so that
$$
\rh(E_{ij})=r(E_{ij})=\psi_{i}^\ast\psi_{j}\qquad(i>0,\,j<0).
$$
Moreover, notice that the anti-commutator $\{\psi_{i}^\ast,\psi_j\}=\delta_{ij}=0$ when $i$ and $j$ have different signs.
It follows from this and the other anti-commutations relations of the $CAR$ algebra $\CAR(\Hilb,\Hilb_+)$,
namely that all the other defining anti-commutation relations are zero,  
 that we may write the element
\begin{equation}\label{wInF0}
w=E_{i_1 j_1}\cdots E_{i_n j_n}\vac_\F\qquad  (i_k>0, j_k<0\textrm{ for all } k=1,\ldots n)
\end{equation}
in the form
\begin{equation}\label{wInThen}
w=\pm \psi_{i_1}^\ast\cdots\psi_{i_n}^\ast\psi_{j_1}\cdots\psi_{j_n}\vac_\F \qquad  (i_k>0, j_k<0\textrm{ for all } k=1,\ldots n),
\end{equation}
i.e. modulo sign an element of the standard orthonormal basis of $\F$. Also notice that since in $\CAR(\Hilb,\Hilb_+)$
$$
2(\psi_i^\ast)^2=\{\psi_i^\ast,\psi_i^\ast\}=0\qquad\textrm{ and }\qquad 2\psi_i^2=\{\psi_i,\psi_i\}=0
$$ 
all of the indices $i_1,\ldots, i_n$ and $j_1,\ldots j_n$ in (\ref{wInF0}) are pairwise distinct for nonzero $w$. Also
notice that if $w\neq 0$ we have
$$
\mathrm{Num}(w)= n\in\Z_{\geq 0},
$$
i.e essentially the `length' of $w$ when measured as the number of 
$\psi_{i_k}$'s (resp. $\psi_{i_k}^\ast$'s) appearing in the
expression (\ref{wInThen}), and the space
$\F^{\mathrm{pol},n}$ is generated by all finite $\C$-linear combinations of the standard orthonormal basis vectors of $\F$ of the form
(\ref{wInThen}) with the $+$ sign and all the indices being pairwise distinct. Clearly then
$$
\F_0^{\mathrm{pol},M}\perp\F_0^{\mathrm{pol},N}
$$
if $M\neq N$. It also follows from this that $\F_0^{\mathrm{pol},0}=\C\vac_\F$.
\end{proof}

\begin{prop}\label{stronglimitCasimir}
The unbounded operators $\Delta_{\rh_0},\Delta_{\rh_0}^{(N)}:\F_0\To\F_0$ with common dense
domain $\F_0^{\mathrm{pol}}$ satisfy the strong limit condition
\begin{equation}
\stlim{N} \Delta_{\rh_0}^{(N)}=\Delta_{\rh_0}.
\end{equation}
\end{prop}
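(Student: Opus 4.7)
The plan is to show that for each fixed $\varphi\in\F_0^{\mathrm{pol}}$, the sequence $\Delta_{\rh_0}^{(N)}\varphi$ is in fact \emph{eventually constant} and equal to $\Delta_{\rh_0}\varphi$, from which strong convergence is immediate. This reduces the strong limit statement to a purely algebraic stabilization phenomenon, exploiting all the machinery already developed in Proposition \ref{CasimirInOfAinfWD} and Corollary \ref{ExtraCorCas}.

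First I would observe that, by the Poincar\'e--Birkhoff--Witt decomposition of $U(\lie{g}_-)$ used in the proof of Proposition \ref{CasimirInOfAinfWD}, any element $\varphi\in\F_0^{\mathrm{pol}}=U(\lie{g}_-)\vac_\F$ can be written as a \emph{finite} $\C$-linear combination of monomial vectors of the form
$$
w=E_{i_1 j_1}\cdots E_{i_n j_n}\vac_\F,\qquad i_k>0,\; j_k<0\textrm{ for all }k.
$$
Because the expansion of $\varphi$ involves only finitely many such $w$ and each $w$ only finitely many indices, the integer
$$
M(\varphi):=\max\bigl\{\abs{i_k},\abs{j_k}\,\bigm|\,E_{i_k j_k}\textrm{ appears in the expansion of }\varphi\bigr\}
$$
is well defined and finite.

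Next I would invoke the key computation in the proof of Proposition \ref{CasimirInOfAinfWD}, which showed that for each such monomial $w$ and for every integer $N\geq M(\varphi)$ one has
$$
\Delta_{\rh_0}^{(N)}w=\bigl[2\,\mathrm{Num}(w)\bigr]\cdot w=\Delta_{\rh_0}w,
$$
the stabilization being a direct consequence of Corollary \ref{ExtraCorCas} (the commutator $[\Delta^{(N)},E_{ij}]$ becomes independent of $N$ once $N$ exceeds the absolute values of $i$ and $j$). By $\C$-linearity this equality propagates to $\varphi$, so that
$$
\Delta_{\rh_0}^{(N)}\varphi=\Delta_{\rh_0}\varphi\qquad\textrm{for every }N\geq M(\varphi).
$$
Hence the norm $\norm{\Delta_{\rh_0}^{(N)}\varphi-\Delta_{\rh_0}\varphi}_{\F_0}$ is identically zero from $M(\varphi)$ onward, and the strong limit follows at once.

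The main point, such as it is, lies not in an estimate but in the bookkeeping: namely in observing that the cut-off $N$ depends on $\varphi$ but not on the individual monomials in its expansion, because a finite maximum of finitely many finite indices remains finite. There is no genuine obstacle here; all analytical difficulties have already been absorbed into the proofs of Proposition \ref{CasimirInOfAinfWD} and Corollary \ref{ExtraCorCas}, and the present statement functions as a clean reformulation of that stabilization in the language of strong operator convergence.
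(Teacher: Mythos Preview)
Your proposal is correct and follows essentially the same approach as the paper: both arguments reduce to monomial vectors $w=E_{i_1 j_1}\cdots E_{i_n j_n}\vac_\F$, observe that only finitely many indices occur so that some threshold $M$ bounds them all, and then invoke the stabilization established in the proof of Proposition~\ref{CasimirInOfAinfWD} to conclude that $\Delta_{\rh_0}^{(N)}w=\Delta_{\rh_0}w$ for all $N\geq M$. The paper packages this via the filtration $\F_0^{\mathrm{pol},(M)}$ while you define $M(\varphi)$ directly as a maximum of indices, but these are the same idea.
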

\begin{proof}
Let $\F_0^{\mathrm{pol},(M)}$ denote the $\C$-subvector space of $\F_0^{\mathrm{pol}}$ consisting of all finite $\C$-linear combinations of
those basis vectors
\begin{equation}
w=E_{i_1 j_1}\cdots E_{i_n j_n}\vac_\F\quad (i_k>0, j_k<0\textrm{ for all } k=1,\ldots n)
\end{equation}
of $\F_0^{\mathrm{pol}}$ satisfying $\abs{i_k},\abs{j_k}\leq M$ for all $k=1,\ldots n$ (we interpret the case $n=0$ to mean the vacuum vector $\vac_\F$, which is
included in this subset). Of course, those basis vectors then yield a basis
of $\F_0^{\mathrm{pol},(M)}$. On the other hand, the union $\bigcup_{m\in\Z_{\geq 0}}\F_0^{\mathrm{pol},(M)}=\F_0^{\mathrm{pol}}$, so
that it is enough by linearity to show that for each element
$$
w=E_{i_1 j_1}\cdots E_{i_n j_n}\vac_\F\quad (i_k>0, j_k<0,\,\abs{i_k},\abs{j_k}\leq M\textrm{ for all } k=1,\ldots n)
$$
there exists an integer $N_w\in\Z_{\geq 0}$ such that
$\Delta_{\rh_0}^{(N)}w=\Delta_{\rh_0}w$ whenever $N\geq N_w$. But this follows from
the proof of Proposition \ref{CasimirInOfAinfWD}.
\end{proof}

\end{document}